\def\crulefill{\leavevmode\leaders\hrule height 1pt\hfill\kern 0pt}
\long\def\QUERY#1{%
\leavevmode\newline%
\noindent$\star\star\star$\thinspace\textsf{Comment/Query}\crulefill\newline%
   \space #1\newline\hbox to 120mm{\crulefill}$\star\star\star$\newline}
\newtheorem{Theorem}{Theorem}[section]
\newtheorem{Lemma}[Theorem]{Lemma}
\newtheorem{Cor}[Theorem]{Corollary}
\newtheorem{Prop}[Theorem]{Proposition}
\theoremstyle{definition}
\newtheorem{example}[Theorem]{Example}
\newtheorem{Defn}[Theorem]{Definition}
\newtheorem{remark}[Theorem]{Remark}
\numberwithin{equation}{section}
\theoremstyle{definition}
\def\enumerate{\begingroup\ifnum\@enumdepth>3\@toodeep\else
      \advance\@enumdepth\@ne
      \edef\@enumctr{enum\romannumeral\the\@enumdepth}%
      \topsep\z@\parskip\z@
      \list{\csname label\@enumctr\endcsname}
        {\@nmbrlisttrue\let\@listctr\@enumctr
         \parsep\z@\itemsep\z@\topsep\z@
         \setcounter{\@enumctr}{0}
         \def\makelabel##1{\hss\llap{\rm ##1}}
       }\fi}
\let\bar=\overline
\let\epsilon=\varepsilon
\def\({\big(}
\def\){\big)}
\def\C{\mathbb C}
\def\Z{\mathbb Z}
\def\0{\underline{0}}
\DeclareMathOperator{\End}{End}
 \DeclareMathOperator*{\Res}{Res}
\def\m{\mathfrak m}
\def\Std{\mathscr{T}^{std}}
\def\upd{\mathscr{T}^{ud}}
\def\m{\mathfrak m}
\def\n{\mathfrak n}
\def\s{\mathfrak s}
\def\t{\mathfrak t}
\def\u{\mathfrak u}
\def\v{\mathfrak v}
\def\w{\mathfrak w}
\def\Hom{\text{Hom}}
\def\Ind{\text{Ind}}
\def\Res{\text{Res}}
\def\U{\mathbf U}
\def\T{\textbf{t}}
\def\S{\textbf{s}}
  \gdef\set#1{\mathinner{\lbrace\,{\mathcode`\|"8000%
                                   \let|\midvert #1}\,\rbrace}}
  \gdef\seT#1{\mathinner{\Big\lbrace\,{\mathcode`\|"8000%
                                   \let|\midverT #1}\,\Big\rbrace}}
\def\midvert{\egroup\mid\bgroup}
\def\midverT{\egroup\,\Big|\,\bgroup}
\def\Set[#1]#2|#3|{\Big\{\ #2\ \Big| \
           \vcenter{\hsize #1mm\centering #3}\Big\}}
\begin{document}
\baselineskip15pt
\title{Discriminants of   quantized walled Brauer algebras}
\author{ Mei Si, Linliang Song }
\address{M.S.  School of Mathematical Science, Shanghai Jiao Tong University,  Shanghai, 200240, China}\email{simei@sjtu.edu.cn}
\address{L.S.  School of Mathematical Science, Tongji University,  Shanghai, 200092, China}\email{llsong@tongji.edu.cn}

\thanks{The authors are supported by NSFC  (grant No. 12071346) and the first author is supported by NSFC  (grant No.11971304).}

\begin{abstract} In this paper,  we  compute
Gram determinants associated to all cell modules  of quantized walled Brauer algebras $\mathscr B_{r, t}(\rho, q)$ over an arbitrary field $\kappa$. Suppose $e$ is the quantum characteristic of $q^2$. We  classify the  blocks of   $\mathscr B_{r, t}(\rho, q)$ when $e>\max\{r,t\}$ and $\rho^2=q^{2n}$, $n\in\mathbb Z$.  As an application, we give a criterion for a cell module of $\mathscr B_{r, t}(\rho, q)$ being equal to its simple head over $\kappa$.
\end{abstract}
\sloppy \maketitle
\section{Introduction}
Quantized walled Brauer algebras  $\mathscr B_{r, t}(\rho, q)$ were introduced in \cite{KM} in order to study mixed tensor product of natural modules and its linear dual of    the quantum general linear  group $\U_q(\mathfrak{gl}_n)$  over the complex field $\mathbb C$.  Later on,  Leduc\cite{Le} introduced these associative  algebras with arbitrary  parameters $\rho$ and $q$ over a commutative ring $R$ containing the identity $1$.
 It was proved in \cite{Enyang2} that $\mathscr B_{r, t}(\rho,q)$ is cellular over $R$ in the sense of\cite{GL}. Using representation theory of cellular algebras in \cite{GL},  Rui and the last author~\cite{Rsong}  gave a criterion on the semisimplicity of $\mathscr B_{r, t}(\rho,q)$ over an arbitrary field $\kappa$.
  In the non-semisimple case, if $\rho^2\notin \{q^{2a}\mid a\in\mathbb Z\}$, they  proved that $\mathscr B_{r, t}(\rho,q)$ is Morita equivalent to direct product of the  Hecke algebras associated to certain symmetric groups. In the general case,  they compute  the  decomposition numbers   of  $\mathscr B_{r, t}(\rho,q)$ over $\mathbb C$ no matter whether $q$ is a root of unity or not in \cite{Rsong2}.

 In this paper, we first compute the  Gram determinants   associated to all cell modules of  $\mathscr B_{r, t}(\rho, q)$ over $\kappa$. Then we classify the blocks of $\mathscr B_{r, t}(\rho, q)$ over $\kappa$ when $e>\max\{r,t\}$ and $\rho^2= q^{2n}$ with some $ n\in\mathbb Z$, where $e$ is  the quantum characteristic of $q^2$.   This result is a $q$-analog of that in \cite{CVDM}. Finally, we reach the main goal of  this paper to find a necessary and sufficient  condition for  a cell module being equal to its simple head over an arbitrary field  $\kappa$.

Rui and the first author found a remarkable method to compute  the Gram determinants of Brauer algebras and  Birman-Murakami-Wenzl algebras respectively in \cite{RS,RS3}.
Their method depends  heavily  on some  recursive steps  (i.e., Propositions 4.8, 4.9 in \cite{RS} and Propositions 4.5, 4.9 in \cite{RS3}). However, the counterpart of these recursive steps  for $ \mathscr B_{r, t}(\rho, q)$ cannot be obtained by the similar methods as that in  \cite{RS,RS3}.
 In fact, we found it's necessary to adopt a completely different approach for $ \mathscr B_{r, t}(\rho, q)$ in detail.  Fortunately, we find a suitable way to get the required  recursive formulae on  the Gram determinants of $\mathscr B_{r, t}(\rho, q)$, especially in Proposition~\ref{uu}, which is very different from above mentioned propositions in \cite{RS,RS3}. Meanwhile, it is pointed out that the scalars $E_{\t,\t}(r)$ play an important role in computing the Gram determinants.  These scalars are  the structural    coefficients of $E_{r,1}$ acting on the orthogonal basis elements. Unlike  Brauer algebras, we do not know how to get an explicit formula of $E_{\t,\t}(r)$ by using some  formal power series (cf. Lemma~\ref{forofwu}). To overcome  this difficulty, we use the Schur-Weyl duality between $\U_q(\mathfrak{gl}_n)$ and $\mathscr B_{r, t}(\rho,q)$ together with the quantum dimensions of the irreducible modules of $\U_q(\mathfrak{gl}_n)$. This enables us to   give an  explicit formula of  $E_{\t,\t}(r)$ (cf. Proposition~\ref{f and E}).
For two obstructions  mentioned above, we are motivated to research the essential difference between \cite{RS,RS3} and $ \mathscr B_{r, t}(\rho, q)$, which is the main motivation for this paper.

Motivated by   \cite{CVDM}, to classify the blocks of $\mathscr B_{r, t}(\rho,q)$, we have to describe the sufficient condition so that there is a nonzero homomorphism between   two  cell modules (see Theorem~\ref{key}). Our result can be considered as   the $q$-analog
of \cite[Thoerem~6.2]{CVDM}. However, the structure in the current case is much more complicated.  By technical analysis, we achieve this by  reducing the proof in Theorem~\ref{key} to the case that the Young diagram  is a rectangle.

We organize this paper as follows. In section~2, we first recall some basic notations and facts for $\mathscr B_{r, t}(\rho,q)$. We next construct  the Jucys-Murphy basis of $\mathscr B_{r, t}(\rho,q)$. By standard construction on the orthogonal form of cellular algebras  in \cite{Ma2}, we get the orthogonal basis of $\mathscr B_{r, t}(\rho,q)$ in section~3.
The recursive formulae on Gram determinants associated to the cell modules of $\mathscr B_{r, t}(\rho,q)$ are obtained  in section~4.
 Section~5 is dedicated to classify  the blocks of $\mathscr B_{r, t}(\rho, q)$ over $\kappa$ under the assumption that $e>\max\{r,t\}$ and $\rho^2=q^{2n}$ with some $n\in\mathbb Z$. As an application, we give a criterion  for each  cell module of $\mathscr B_{r, t}(\rho,q)$ being equal to its simple head over an arbitrary field $\kappa$.
The last section consists of the proofs of Lemma~\ref{down} and Lemma~\ref{lemeaforeee}, which include tedious computations.


\section{The Jucys--Murphy basis of the quantized walled Brauer algebras}
Let $R$ be the localization of $\mathbb Z[q^{\pm 1},
\rho^{\pm 1}]$ at $q-q^{-1}$, where $\rho$ and $q$ are two indeterminates. Denote  $\delta:=\frac{\rho-\rho^{-1}}{q-q^{-1}}$.

  \begin{Defn}\label{qwb}\cite{Le}
Suppose $r, t\in \mathbb Z^{>0}$. The \textit{quantized walled Brauer algebra}
${\mathscr{B}}_{r,t}(\rho, q)$ is the associative $R$-algebra with generators
$E, g_1, \ldots, g_{r-1}, g_1^*, \ldots, g_{t-1}^*$, subject to
 the following relations:
\begin{multicols}{2}
\begin{enumerate}
\item $(g_i-q) (g_i+q^{-1})=0$, $1\le i\le r-1$,
\item  $g_ig_j=g_jg_i$, if  $|i-j|>1$,
\item $g_ig_{i+1}g_i=g_{i+1}g_ig_{i+1}$, $1\le i<r-1$,
\item $(g^*_i-q) (g^*_i+q^{-1})=0$, $1\le i\le t-1$,
\item  $g^*_ig^*_j=g^*_jg^*_i$, if  $|i-j|>1$,
\item $g^*_ig^*_{i+1}g^*_i=g^*_{i+1}g^*_ig^*_{i+1}$, $1\le i<t-1$,
\item  $g_i g^*_j=g^*_j g_i$, if $1\le i\le r-1$, $1\le j\le t-1$,
\item $g_iE=Eg_i$, $g^*_jE=Eg^*_j$, if $i\neq r-1, j\neq 1$,
\item $Eg_{r-1}E =\rho E=Eg^*_1E$,
\item $E^2=\delta E$,
\item $E{g_{r-1}}^{-
1}g^*_{1} Eg_{r-1} = E{g_{r-1}}^{- 1}g^*_{1 } Eg^*_{1}$,\item
$g_{1}E{g_{ r-1}}^{ - 1}g^*_{1} E = g^*_{1} E{g_{r-1}}^{ -
1}g^*_{1}E$.
\end{enumerate}
\end{multicols}
\end{Defn}
Let $\mathfrak S_r$   be the symmetric group on $r$   symbols with $r\in\mathbb N$. Denote the simple transposition $(i,i+1)$ by $s_i$, $1\leq i\leq r-1$.
We will need another symmetric group  on $t$ symbols  simultaneously and denote it by $\mathfrak S^*_t$ with simple transposition $s^*_i$.
We allow $t=0$ and identify  $\mathscr B_{r, 0}(\rho,q)$ with the usual Hecke algebra $\mathscr H_r$ associated to $\mathfrak S_r$, which is the $R$-algebra generated by $g_1,\ldots,g_{r-1}$ with relations (a)--(c) in Definition~\ref{qwb}.   Similarly, $\mathscr B_{0, t}(\rho,q)$ is the usual Hecke algebra $\mathscr H_t$ associated to $\mathfrak S_t^*$. Let $\mathscr H_{r,t}:=\mathscr H_r\otimes \mathscr H_t$. We write
$g_w=g_{i_1}\ldots g_{i_k}$ for $w\in \mathfrak S_r$ if $s_{i_1}\cdots s_{i_k}$ is a reduced expression of $w$. Similarly, we have the notation $g_w^*$ for all $w\in \mathfrak S_t^*$. Moreover, for any $w=w_1w_2\in \mathfrak S_r\times \mathfrak S^*_t$ with $w_1\in\mathfrak S_r $ and $w_2\in\mathfrak S^*_t $, let $g_w:=g_{w_1}g^*_{w_2}\in \mathscr B_{r,t}(\rho,q)$.
For $1\le i,j\le r$, set $s_{i, j}=s_{i-1} s_{i-1, j}$ if $i>j$ and $s_{i, i}=1$. When $i<j$, we set $s_{i, j}=s_i s_{i+1, j}$.
Similarly, we have $s_{i, j}^*$'s, etc. To simplify the notation, let  $g_{i,j}:=g_{s_{i,j}}$ and $g^*_{i,j}:=g^*_{s_{i,j}^*}$.
For all $1\le i\le r$ and $1\le j\le t$, define
 \begin{equation}\label{eij} E_{ i, j}= g_{r, i}^{-1} g_{j, 1}^*  E_{r, 1}g_{r, i} g_{j, 1}^{*-1}, \text{ and \  $\bar E_{i,j}= g_{r, i}^{-1}  g_{j, 1}^*  E_{r, 1}g_{i, r}^{-1} g_{1, j}^*$,}\end{equation}
 where $E_{r, 1}=E$.
The following result can be checked directly from  Definition~\ref{qwb}.
\begin{Lemma}\label{three isom}
\begin{enumerate}
\item There is an $R$-algebra automorphism $\phi$ of $ \mathscr B_{r,t}(\rho,q) $
which fixes $g_j^*$'s, $1\leq j\leq t-1$,  and sends $E$ and   $g_i$ to $E_{1,1}$and $g_{r-i}$ for  $1\leq i\leq r-1$, respectively.
\item There is an $R$-algebra isomorphism $\varphi: \mathscr B_{r,t}(\rho,q)\to\mathscr B_{t,r}(\rho,q)$
which   sends $E_{r,t}$, $g_i$ and $g_j^*$ to $E_{t,r}$, $g^*_{i}$ and $g_{j}$  for $1\leq i\leq r-1$, $1\leq j\leq t-1$, respectively.
\item There is a $\mathbb Z$-algebra involution $\pi$ of $\mathscr B_{r,t}(\rho,q)$
which fixes $E$, $\rho$, $g_i$'s and $g_j^*$'s and sends $q$ to $-q^{-1}$.
\item There is an $R$-algebra anti-involution $\sigma$ of $\mathscr B_{r, t}(\rho, q)$,    fixing all generators in Definition~\ref{qwb}.
\item
There is an $R$-algebra automorphism  $\Phi$ of $\mathscr{B}_{r,t}(\rho, q)$ which fixes $g_i$'s, $1\leq i\leq r-1$, and sends $E$ and  $g_j^*$ to $E_{r,t}$ and $g_{t-j}^*$ for $1\leq j\leq t-1$, respectively.
\end{enumerate}
\end{Lemma}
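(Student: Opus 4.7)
My plan for all five parts is uniform: define each map on the generators as specified, and verify that the image satisfies every defining relation (a)--(l) of Definition~\ref{qwb}. This produces a well-defined $R$-algebra homomorphism, or $R$-algebra anti-homomorphism in the case of $\sigma$, or $\mathbb Z$-algebra homomorphism for $\pi$. Bijectivity and the involutive/isomorphism properties will then be established by exhibiting an inverse on generators; in fact $\phi$, $\pi$, $\sigma$, and $\Phi$ are all manifestly involutive on generators, and the inverse of $\varphi$ is given by the analogous factor swap in the opposite direction.

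For parts (a), (b), and (e), the generator substitution is a natural symmetry of the presentation: $\phi$ reverses the indices among the $g_i$, $\Phi$ does so among the $g_j^*$, and $\varphi$ interchanges the two Hecke factors. Consequently the relations purely in the $g_i$'s, purely in the $g_j^*$'s, and the cross commutation (g) are invariant under the symmetry and require no work. The substantive verifications concern relations (h)--(l), since $E$ is sent to $E_{1,1}$, $E_{r,t}$, or $E_{t,r}$. For these I will use the conjugation formula \eqref{eij} to write each $E_{i,j}$ as $g_{r,i}^{-1}g_{j,1}^*E g_{r,i}g_{j,1}^{*-1}$, and reduce the required identities back to (h)--(l) by braid-type manipulations such as $g_{r,1}g_1g_{r,1}^{-1}=g_{r-1}$. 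For instance, to check that $\phi$ preserves (i), I will rewrite $E_{1,1}g_1E_{1,1}$ via $E_{1,1}=g_{r,1}^{-1}Eg_{r,1}$, collapse the inner factors to $Eg_{r-1}E$ using the braid identity together with (h), and then apply $Eg_{r-1}E=\rho E$ to obtain $\rho E_{1,1}$.

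Part (c) is the most direct. The substitution $q\mapsto -q^{-1}$ fixes $q-q^{-1}$, hence the ring $R$ and the parameter $\delta$, and sends each quadratic relation $(g_i-q)(g_i+q^{-1})=0$ to $(g_i+q^{-1})(g_i-q)=0$, which is the same relation. The remaining relations do not involve $q$ explicitly and are preserved. For part (d), the anti-involution $\sigma$ preserves the palindromic relations---the quadratic, commutation, and braid relations, together with (i) and (j)---tautologically. The main obstacle I anticipate is that the $\sigma$-reversals of relations (k) and (l) are not themselves among the listed relations, so they require genuine derivation: for example, applying $\sigma$ to (k) yields $g_{r-1}Eg_1^*g_{r-1}^{-1}E=g_1^*Eg_1^*g_{r-1}^{-1}E$, which is no longer in the presented list. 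My plan is to expand $g_{r-1}^{-1}$ and $g_1^{*-1}$ using the quadratic relations, use (h), (i), (j) to simplify the resulting correction terms, and finally apply the forward versions of (k) and (l) to match both sides. This algebraic reduction is the only step in the lemma where the conclusion is not immediate from the symmetry of the presentation, and is where I expect the computation to be most delicate.
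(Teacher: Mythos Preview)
Your approach is correct and matches the paper's: the paper merely states that the lemma ``can be checked directly from Definition~\ref{qwb}'' and gives no further argument, so your plan to verify the defining relations under each generator substitution is exactly the intended route, carried out in more detail than the paper supplies. Your identification of the $\sigma$-reversals of (k) and (l) as the only non-obvious checks is accurate; note that since $g_{r-1}$ and $g_1^*$ commute by (g), the reversal of (k) becomes (l) (and vice versa) once you use this commutation, so the derivation you anticipate is shorter than you may expect.
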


\begin{remark} The definition of $\mathscr B_{r,t}(\rho, q)$ in  Definition~\ref{qwb} is different from that in \cite{Le} and  \cite{Rsong}. By Lemma~\ref{three isom}(a),  they are isomorphic and the required isomorphism  $\gamma$ sends the generators $E$, $g_i$ and $g_j^*$ to the generators $e_1$, $g_{r-i}$ and $g_j^*$ in \cite{Rsong}, respectively.  We will freely  use the results in \cite{Rsong} by the isomorphism  $\gamma$ in this paper. The advantage of the current one is that the $R$-linear anti-involution $\sigma$ in Lemma~\ref{three isom}(d) fixes  $E_{r,1}$, where $E_{r, 1}=E$. This fact will be used in   Lemma~\ref{equation of E} when we compute Gram determinants  of $\mathscr B_{r,t}(\rho, q)$.

\end{remark}

Following \cite{Rsong}, we define
\begin{equation}\label{cenele} c_{r,t}=\small{\sum_{i=1}^r\sum_{j=1}^t\bar{E}_{i,j}-\rho^{-1}\sum_{i=2}^r\sum_{j=1}^{i-1}g_{j,i}^{-1}g_{i,j+1}^{-1}-\rho\sum_{i=2}^t
\sum_{j=1}^{i-1}g_{i,j}^{*}g_{j+1,i}^{*}},\end{equation}
where $\bar E_{i,j}$ is given in \eqref{eij}. Then  $c_{r,t}$ is  a central element in $\mathscr B_{r, t}(\rho, q)$ (see \cite[Proposition~2.4]{Rsong}).



\begin{Defn}\label{jme} For all $1\leq i\leq r$ and  $1\leq j\leq t$, define  $ x_i=c_{i,0}-c_{i-1,0}$, and $x_{r+j}=c_{r,j}-c_{r,j-1}$, where $c_{i, j}$'s are given in \eqref{cenele} and $c_{0, 0}=0$.\end{Defn}

\begin{Lemma}\label{y} The following holds for the $R$-algebra   $\mathscr B_{r, t}(\rho, q)$.
\begin{enumerate}
\item $x_1+x_2+\ldots +x_{r+t}=c_{r, t} $.
 \item The elements $x_1, x_2, \ldots, x_{r+t}$  are a family of commutative elements in  $\mathscr B_{r, t}(\rho, q)$.
\item For any $1\le i\le r-1$, we have  \begin{itemize}\item[(1)] $g_i(x_i+x_{i+1})=(x_i+x_{i+1})g_i$,
\item [(2)] $x_i=g_ix_{i+1}g_i+\rho^{-1} g_i $, and  $g_jx_i=x_ig_j$   if $j\neq i, i+1$.
\end{itemize}
\item For any  $1\le i\le t-1$, we have \begin{itemize} \item [(1)] $g_i^*(x_{r+i}+x_{r+i+1})=(x_{r+i}+x_{r+i+1})g_i^*$, \item [(2)] $g_i^*x_{r+i}g_i^* =x_{r+i+1}+\rho g_i^* $,
and  $g_i^*x_j=x_jg_i^*$, if $j\neq r+i, r+i+1$. \end{itemize}
\item $E(x_r+x_{r+1})=(x_r+x_{r+1})E=\delta E$ and $Ex_i=x_iE$ if $i\neq r, r+1$.
  \end{enumerate}
\end{Lemma}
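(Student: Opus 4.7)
The plan is to establish each of (a)--(e) in turn. Part (a) is immediate by telescoping from Definition~\ref{jme}:
\[
\sum_{i=1}^r x_i + \sum_{j=1}^t x_{r+j} = (c_{r,0}-c_{0,0}) + (c_{r,t}-c_{r,0}) = c_{r,t}.
\]
For (b), I will invoke \cite[Proposition~2.4]{Rsong}, which asserts that $c_{r',t'}$ is central in $\mathscr B_{r',t'}(\rho,q)$ for all $r',t'\geq 0$. Each $x_k$ lies in a specific subalgebra (either $\mathscr B_{k,0}$ if $k\le r$, or $\mathscr B_{r,k-r}$ if $k>r$); and when $l<k$, the element $x_l$ sits inside a subalgebra contained in both of the ambient algebras in which the two $c$-terms defining $x_k$ are central. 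Hence $x_l$ commutes with $x_k$.

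Parts (c) and (d) are mirror images under the isomorphism $\varphi$ of Lemma~\ref{three isom}(b), so it suffices to treat (c). For (c)(1), I will write $x_i+x_{i+1}=c_{i+1,0}-c_{i-1,0}$: the first summand is central in $\mathscr B_{i+1,0}\ni g_i$, and the second lies in $\mathscr B_{i-1,0}$, whose generators $g_1,\dots,g_{i-2}$ commute with $g_i$ by Definition~\ref{qwb}(b). For (c)(2), I will work with the explicit expansion
\[
x_i = -\rho^{-1}\sum_{j=1}^{i-1} g_{j,i}^{-1} g_{i,j+1}^{-1}
\]
obtained from the formulas for $c_{i,0}$ and $c_{i-1,0}$; the identities $x_i = g_i x_{i+1} g_i + \rho^{-1} g_i$ and $g_j x_i = x_i g_j$ for $j\neq i,i+1$ then reduce to standard Hecke-algebra braid/quadratic manipulations, analogous to the familiar Jucys--Murphy recursion in $\mathscr H_r$.

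For part (e), the commutation $Ex_i=x_iE$ for $i\notin\{r,r+1\}$ splits into two easy cases: if $1\leq i\le r-1$, then $x_i\in\mathscr H_i$ involves only $g_1,\dots,g_{i-1}$, each commuting with $E$ by Definition~\ref{qwb}(h); if $i=r+j$ with $j\ge 2$, then $x_i=c_{r,j}-c_{r,j-1}$ and both $c$-terms are central in subalgebras ($\mathscr B_{r,j}$ and $\mathscr B_{r,j-1}$) that still contain $E$. To prove the main identity $E(x_r+x_{r+1})=\delta E$, I will combine the expansion $x_{r+1}=\sum_{i=1}^r \bar E_{i,1}$ (which drops out of $c_{r,1}-c_{r,0}$) with the explicit formula for $x_r$ and apply the key auxiliary identity
\[
E g_{r-1}^{-1} E = \rho^{-1} E,
\]
which I will derive from $E g_{r-1}E=\rho E$, $E^2=\delta E$, and the quadratic relation $g_{r-1}^{-1}=g_{r-1}-(q-q^{-1})$. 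Pushing $E$ past commuting factors in each $E\bar E_{i,1}=E g_{r,i}^{-1} E g_{i,r}^{-1}$ and applying this identity (together with its partner $E g_{r-1}E=\rho E$) should reduce the $(i<r)$-part of the sum to terms that cancel against $E x_r$, while the $i=r$ summand contributes $E\bar E_{r,1}=E^2=\delta E$ outright. The equality $(x_r+x_{r+1})E=\delta E$ will then follow by applying the anti-involution $\sigma$ of Lemma~\ref{three isom}(d), which fixes $E$ and fixes each $x_k$ (the latter by the palindromic shape of $g_{l,k}^{-1} g_{k,l+1}^{-1}$ and the evident $\sigma$-invariance of $\bar E_{i,j}$). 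The hard part will be this final piece of bookkeeping: although each individual manipulation is routine, systematically tracking the cancellation between $E x_r$ and the $(i<r)$-terms of $E x_{r+1}$ is the main obstacle.
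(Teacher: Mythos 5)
Your parts (a), (b), (c), and (e) are essentially correct and follow the same path as the paper: telescoping for (a), centrality of the $c_{k}$'s for (b), the Jucys–Murphy-type expansion $x_i=-\rho^{-1}\sum_{j=1}^{i-1}g_{j,i}^{-1}g_{i,j+1}^{-1}$ for (c), and for (e) the identity $Eg_{r-1}^{-1}E=\rho^{-1}E$ together with the expansions $x_r=-\rho^{-1}\sum_{j=1}^{r-1}g_{j,r}^{-1}g_{r,j+1}^{-1}$ and $x_{r+1}=\sum_{i=1}^{r}\bar E_{i,1}$, which give exactly the paper's $E\bar E_{j,1}=\rho^{-1}Eg_{j,r}^{-1}g_{r,j+1}^{-1}$ and hence a term-by-term cancellation with $Ex_r$; the $\sigma$-invariance of each $x_k$ then gives the left-hand version.

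The gap is in (d). The claim that (d) is a ``mirror image'' of (c) under $\varphi$ of Lemma~\ref{three isom}(b) does not hold. The isomorphism $\varphi\colon\mathscr B_{r,t}\to\mathscr B_{t,r}$ does not carry the Jucys--Murphy chain of one algebra to that of the other: from \eqref{cenele}, $\varphi^{-1}$ applied to $x_i$ of $\mathscr B_{t,r}$ (for $i\le t$) is $-\rho^{-1}\sum_{j<i}g_{j,i}^{*-1}g_{i,j+1}^{*-1}$, whereas $x_{r+i}$ of $\mathscr B_{r,t}$ equals $\sum_{a=1}^{r}\bar E_{a,i}-\rho\sum_{b<i}g_{i,b}^{*}g_{b+1,i}^{*}$; these differ both in the $\rho^{\pm1}$ exponent, in $g^*$ versus $g^{*-1}$, and crucially by the presence of the $\bar E$-terms, which have no analogue in $x_1,\dots,x_r$. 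This asymmetry is built into \eqref{cenele}, and it is exactly why (d)(2) reads $g_i^*x_{r+i}g_i^*=x_{r+i+1}+\rho g_i^*$ (conjugation raises the index and picks up $+\rho g_i^*$) while (c)(2) reads $g_ix_{i+1}g_i=x_i-\rho^{-1}g_i$ (conjugation lowers the index and picks up $-\rho^{-1}g_i$). The correct route is a direct computation parallel to, but not deducible from, (c): one checks $g_i^*\bar E_{a,i}g_i^*=\bar E_{a,i+1}$ from \eqref{eij} and Definition~\ref{qwb}(e),(g), so that $g_i^*\bigl(\sum_a\bar E_{a,i}\bigr)g_i^*=\sum_a\bar E_{a,i+1}$, while on the $g^*$-Hecke part the same shift argument gives $-\rho\, g_i^*\sum_{b<i}g_{i,b}^*g_{b+1,i}^*g_i^*=-\rho\sum_{b<i}g_{i+1,b}^*g_{b+1,i+1}^*$, which together with the missing $b=i$ term $-\rho g_i^*$ yields (d)(2). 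The remaining commutation $g_i^*x_j=x_jg_i^*$ for $j\neq r+i,r+i+1$ also requires checking that $g_i^*$ commutes with each $\bar E_{a,l}$ for $l\neq i,i+1$, a separate braid argument not covered by your (c) plan.
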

\begin{proof} The statement  (a) follows from Definition~\ref{jme}.  (b) follows from (a) and the fact that $c_{r,t}$ is central in $\mathscr B_{r,t}(\rho,q)$.
 One can verify   (c)-(d) via   Definition~\ref{qwb}(a)-(h) and \eqref{cenele} for  $c_{i,j}$.
  By Definition~\ref{qwb}(h)-(j), $E^2=\delta E$ and  $E\bar E_{j,1}=\rho^{-1} E g^{-1}_{j,r}g^{-1}_{r,j+1}$ for all $1\leq j\leq r-1$. So,
   the first part of  (e) is proved.
From this result together with  Definition~ \ref{jme} and the fact that  $c_{i,j}$ is central in $\mathscr B_{i,j}(\rho,q)$, we can get the second part of  (e).
\end{proof}
\begin{Lemma}\label{e6} For all $a\in\Z^{>0}$, there are some $\omega_r^{(a)}\in R[x_1,x_2,\ldots, x_{r-1}]$ such that  $Ex_r^aE=\omega_r^{(a)} E$.
\end{Lemma}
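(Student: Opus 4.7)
My plan is to prove the lemma in three steps, based on the interaction between $E$ and the Hecke subalgebra $\mathscr H_r\subset\mathscr B_{r,t}(\rho,q)$, together with Jucys--Murphy theory for $\mathscr H_{r-1}$.

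First, I would establish that $E\,\mathscr H_r\, E\subseteq\mathscr H_{r-1}\cdot E$, and hence in particular that there exists some $\omega_r^{(a)}\in\mathscr H_{r-1}$ with $Ex_r^aE=\omega_r^{(a)}E$. It suffices to verify the containment on the standard basis $\{g_w\mid w\in\mathfrak S_r\}$. The left-coset decomposition of $\mathfrak S_r$ by $\mathfrak S_{r-1}$ has minimal-length representatives $d\in\{1,\,s_{r-1},\,s_{r-1}s_{r-2},\ldots,s_{r-1}s_{r-2}\cdots s_1\}$, so each $w$ factors reducedly as $w=u\cdot d$ with $u\in\mathfrak S_{r-1}$, giving $g_w=g_u g_d$. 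For $d=1$ the conclusion is clear since $g_u$ commutes with $E$ and $E^2=\delta E$. For $d=s_{r-1}s_{r-2}\cdots s_k$ with $1\le k\le r-1$, so that $g_d=g_{r-1}g_{r-2}\cdots g_k$, I pull $g_u$ through the left $E$ and the tail $g_{r-2}\cdots g_k$ through the right $E$ using Definition~\ref{qwb}(h), then collapse the central $Eg_{r-1}E$ to $\rho E$ via Definition~\ref{qwb}(i), to get
\[
E g_w E \;=\; g_u\,(Eg_{r-1}E)\,g_{r-2}\cdots g_k \;=\; \rho\,g_u\,g_{r-2}\cdots g_k\,E \;\in\;\mathscr H_{r-1}\cdot E.
\]
Applied to $h=x_r^a\in\mathscr H_r$, this produces the desired element $\omega_r^{(a)}\in\mathscr H_{r-1}$.

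Second, I would show that this $\omega_r^{(a)}$ commutes with each Jucys--Murphy element $x_i$, $1\le i\le r-1$, inside $\mathscr H_{r-1}$. By Lemma~\ref{y}(b) we have $[x_i,x_r^a]=0$, and by Lemma~\ref{y}(e) we have $[x_i,E]=0$, so
\[
x_i\,\omega_r^{(a)}\,E \;=\; x_i\,E\,x_r^a\,E \;=\; E\,x_i\,x_r^a\,E \;=\; E\,x_r^a\,x_i\,E \;=\; \omega_r^{(a)}\,x_i\,E.
\]
Since the $R$-linear map $\mathscr H_{r-1}\to\mathscr B_{r,t}(\rho,q)$, $h\mapsto hE$, is injective (its image sits inside a cellular basis of $\mathscr B_{r,t}(\rho,q)$ by~\cite{Enyang2}), cancellation of the trailing $E$ yields $x_i\,\omega_r^{(a)}=\omega_r^{(a)}\,x_i$ in $\mathscr H_{r-1}$ for every $1\le i\le r-1$.

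Finally, I invoke the standard fact from Jucys--Murphy theory that $R[x_1,\ldots,x_{r-1}]$ is a maximal commutative subalgebra of $\mathscr H_{r-1}$: after extending scalars to the fraction field of $R$, $\mathscr H_{r-1}$ becomes semisimple and $x_1,\ldots,x_{r-1}$ act diagonally on the Gelfand--Tsetlin weight basis with pairwise distinct eigenvalues, so the JM subalgebra is its own centralizer. Consequently $\omega_r^{(a)}\in R[x_1,\ldots,x_{r-1}]$, completing the proof. The main obstacle I anticipate is Step~1, namely verifying uniformly across all $w\in\mathfrak S_r$ that the reduction $Eg_wE$ really lands in $\mathscr H_{r-1}\cdot E$; once Step~1 is in place, Steps~2 and~3 are essentially formal consequences.
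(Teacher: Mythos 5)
Your Step 1 is essentially a hands-on derivation of the fact the paper obtains by citing \cite[Lemma~4.1(a)]{Rsong} (that $E\mathscr H_r E\subseteq \mathscr H_{r-1}E$ with $\{Eg_w:w\in\mathfrak S_{r-1}\}$ a basis), and that calculation is correct. The divergence from the paper occurs in Steps~2--3, and the final inference in Step~3 has a genuine gap. You show only that $\omega_r^{(a)}$ commutes with $x_1,\ldots,x_{r-1}$, and then invoke the maximal commutativity of the Jucys--Murphy (Gelfand--Tsetlin) subalgebra. But your justification of that maximality is carried out over the fraction field $K=\mathrm{Frac}(R)$, where semisimplicity and the seminormal/weight basis are available. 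That only tells you $\omega_r^{(a)}\in K[x_1,\ldots,x_{r-1}]\cap\mathscr H_{r-1,R}$. The lemma asserts $\omega_r^{(a)}\in R[x_1,\ldots,x_{r-1}]$, and the implication ``$\omega_r^{(a)}\in K[x_1,\ldots,x_{r-1}]\cap\mathscr H_{r-1,R}\ \Rightarrow\ \omega_r^{(a)}\in R[x_1,\ldots,x_{r-1}]$'' is not automatic: it would require $R[x_1,\ldots,x_{r-1}]$ to be an $R$-pure (e.g.\ direct-summand) submodule of $\mathscr H_{r-1,R}$, which you neither state nor prove. The word ``Consequently'' is doing unexamined work here.

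The paper sidesteps this by proving more in Step~2 and invoking an integral theorem in Step~3. Since $E$ and $x_r$ both commute with \emph{all} of $\mathscr H_{r-1}$ (not just with $x_1,\ldots,x_{r-1}$), the same cancellation argument you use shows $\omega_r^{(a)}$ commutes with every $b\in\mathscr H_{r-1}$, i.e.\ $\omega_r^{(a)}$ is \emph{central} in $\mathscr H_{r-1}$. Then one appeals to the Dipper--James conjecture, proved by Francis--Graham, that the centre of the type~$A$ Hecke algebra over $\mathbb Z[q^{\pm1}]$ is precisely the ring of symmetric polynomials in the Murphy elements; this is an integral statement that base-changes to $R$ and therefore yields $\omega_r^{(a)}\in R[x_1,\ldots,x_{r-1}]$ without any denominator worries (and in fact gives the stronger conclusion that $\omega_r^{(a)}$ is a \emph{symmetric} polynomial). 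To repair your proof, strengthen Step~2 to full centrality (it is no extra work) and replace the generic maximal-commutativity argument by this integral description of the centre.
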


\begin{proof}
 By \cite[Lemma~4.1(a)]{Rsong}, $E\mathscr B_{r, 1}(\rho, q)E=E \mathscr H_{r-1}=\mathscr H_{r-1}E$, which has a basis $\{E g_{w}\mid w\in\mathfrak S_{r-1}\}$.
Then there is   a unique element $\omega_r^{(a)}\in \mathscr H_{r-1}$ such that
$Ex_r^aE=\omega_r^{(a)}E$.  Since any element $b\in \mathscr H_{r-1}$  commutes with both $E$ and $x_r$, we have $Eb\omega_r^{(a)}=E\omega_r^{(a)} b$, forcing  $b\omega_r^{(a)}=\omega_r^{(a)} b$.
So, $\omega_r^{(a)}$ is a central element  in $\mathscr H_{r-1}$.
Now the result follows from the fact that  the center of $\mathscr H_{r-1}$   consists of all  symmetric polynomials of $x_1,x_2,\ldots,x_{r-1}$.  \end{proof}


\begin{Lemma}\label{forofwu} Let
$W_r(u)=\sum_{a\geq0} \frac{\omega_r^{(a)}}{u^{a}}$, where $u$ is an indeterminate. Suppose   $\omega=q-q^{-1}$ and $r\in\mathbb Z_{\geq0}$. Then
\begin{equation}\label{formiofw}
\small{W_r(u)-\frac{\rho u}{\rho^{-1} +\omega u}=\frac{\delta-u}{1+\rho \omega u} \prod _{i=1}^{r-1} \frac{ (u-x_i)^2}{(u-x_i)^2- (\rho^{-1} +\omega u) (\omega x_i +\rho^{-1} )}}.
\end{equation}
\end{Lemma}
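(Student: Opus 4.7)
The plan is to prove \eqref{formiofw} by induction on $r$, reducing the identity to a one-step recursion.

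\textbf{Base case $r=1$.} By Definition~\ref{jme}, $x_1 = c_{1,0}-c_{0,0}$, and inspection of \eqref{cenele} shows that every index set defining $c_{1,0}$ is empty; hence $x_1=0$. Therefore $\omega_1^{(0)}=\delta$ and $\omega_1^{(a)}=0$ for $a\geq 1$, so $W_1(u)=\delta$. Since the product on the right of \eqref{formiofw} is empty, it remains to verify the scalar identity $\delta - \frac{\rho u}{\rho^{-1}+\omega u} = \frac{\delta-u}{1+\rho\omega u}$, which follows immediately from $\delta\omega = \rho-\rho^{-1}$.

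\textbf{Reduction to a one-step recursion.} Granting the inductive hypothesis for $r-1$, it suffices to establish
\begin{equation*}
W_r(u) - \frac{\rho u}{\rho^{-1}+\omega u} = \left(W_{r-1}(u) - \frac{\rho u}{\rho^{-1}+\omega u}\right) \cdot \frac{(u-x_{r-1})^2}{(u-x_{r-1})^2 - (\rho^{-1}+\omega u)(\omega x_{r-1}+\rho^{-1})},
\end{equation*}
since iterating this relation telescopes to the full product formula.

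\textbf{Deriving the one-step recursion.} I would introduce the formal resolvent $R_r(u) = u/(u-x_r) = \sum_{a\geq 0}(x_r/u)^a$, so that $E\,R_r(u)\,E = W_r(u)\,E$ in $\mathscr{B}_{r,t}(\rho,q)[[u^{-1}]]$. By Lemma~\ref{y}(c)(2), $x_r = g_{r-1}x_{r-1}g_{r-1} + \rho^{-1}g_{r-1}$; substituting this into $R_r(u)$ and pushing $g_{r-1}$ across $E=E_{r,1}$ via Definition~\ref{qwb}(h)--(j) (the centralizer relations and $Eg_{r-1}E=\rho E$), together with the Hecke relation $g_{r-1}^2 = 1+\omega g_{r-1}$, lets one rewrite the sandwich $E\,R_r(u)\,E$ as the sum of a constant-term contribution equal to $\frac{\rho u}{\rho^{-1}+\omega u}\,E$ and a piece of the form $E\,F(u,x_{r-1})\,E_{r-1,1}\,F(u,x_{r-1})\,E$ for an explicit rational function $F$. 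Collapsing the inner sandwich through $E_{r-1,1}\,x_{r-1}^{a}\,E_{r-1,1} = \omega_{r-1}^{(a)}E_{r-1,1}$ produces $W_{r-1}(u)$, while the geometric-type summation over powers of $x_{r-1}$ yields precisely the quadratic denominator $(u-x_{r-1})^2 - (\rho^{-1}+\omega u)(\omega x_{r-1}+\rho^{-1})$.

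\textbf{Main obstacle.} The hardest step is the careful book-keeping when commuting $g_{r-1}$ and $g_{r-1}^{-1}=g_{r-1}-\omega$ past $E$: each use of the Hecke quadratic relation produces an $\omega$-correction, and the product form of the right-hand side reflects a nontrivial cancellation among these corrections. In contrast to the Brauer case of \cite{RS}, where the corresponding identity is obtained by a direct power-series manipulation, the quantum parameter forces a more delicate analysis. A practical way to finish the verification, once the recursion is cast in closed form, is to compare the two sides as rational functions of $u$ with coefficients in $R[x_1,\ldots,x_{r-1}]$, matching poles and residues to reduce the problem to a finite identity that follows from the relations of Lemma~\ref{y}.
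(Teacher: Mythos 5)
Your overall strategy matches the paper exactly: check the base case $r=1$ using $x_1=0$, reduce to the one‑step recursion
\[
W_{i+1}(u)-\tfrac{\rho u}{\rho^{-1}+\omega u}
=\frac{(u-x_i)^2}{(u-x_i)^2-(\rho^{-1}+\omega u)(\omega x_i+\rho^{-1})}\Bigl(W_i(u)-\tfrac{\rho u}{\rho^{-1}+\omega u}\Bigr),
\]
and derive that recursion by sandwiching the resolvent $\frac{u}{u-x_{r}}$ between two copies of $E$ and commuting $g_{r-1}$ past $E$ via the Hecke relation and $Eg_{r-1}E=\rho E$. This is precisely what the paper does.

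However, there are two concrete problems. First, your invocation of Lemma~\ref{y}(c)(2) is inverted: the lemma gives $x_{r-1}=g_{r-1}x_r g_{r-1}+\rho^{-1}g_{r-1}$, so solving for $x_r$ yields $x_r=g_{r-1}^{-1}x_{r-1}g_{r-1}^{-1}-\rho^{-1}g_{r-1}^{-1}$, not the formula you wrote; plugging in the wrong sign and the wrong power of $g_{r-1}$ would break the computation. Second, and more importantly, the actual derivation of the recursion — which you acknowledge as ``the hardest step'' — is left entirely as a sketch, with an unspecified ``explicit rational function $F$'' and unspecified cancellations. The paper's derivation hinges on a specific operator identity on resolvents (their equation for $g_i^{-1}\frac{1}{u-x_{i+1}}$), namely $g_i^{-1}\frac{1}{u-x_{i+1}}=\frac{1}{u-x_i}g_i-\frac{\rho^{-1}+\omega u}{(u-x_i)(u-x_{i+1})}$, and then inserts $g_i=g_i^{-1}+\omega$ at the right moments and uses $Eg_iE=\rho E$ to collapse the sandwiches. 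Without writing this out (or some equivalent), your proof does not actually establish the recursion, so the argument as presented is incomplete; the pole‑and‑residue comparison you suggest at the end does not obviously circumvent this computation, since the coefficients $\omega_r^{(a)}$ are not known a priori.
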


\begin{proof} We prove it by induction on $r$. Since $x_1=0$, we have   $ W_1(u) = \delta$.
In this case, the right hand side of \eqref{formiofw} is $\frac{\delta-u}{1+\rho \omega u}$, which is equal to $\delta-\frac{\rho u}{\rho^{-1} +\omega u}$. So,   \eqref{formiofw} is true when $r=1$.
Suppose we have the result for $r=i$. By Lemma~\ref{y}(c),
\begin{equation}\label{wku}
\small{g^{-1}_i \frac{1}{u-x_{i+1}}= \frac{1}{u-x_i} g_i - \frac{\rho^{-1}+\omega u}{(u-x_i)(u-x_{i+1})}}.
\end{equation}
Noting that   $g_i=g_i^{-1}+\omega$, $E_{i+1,1}g_iE_{i+1,1}=\rho E_{i+1,1}$ and $E_{i+1,1} x_i=x_i E_{i+1,1}$ (see  Lemma~\ref{y}(e)), we have
$$
\small{\begin{aligned}
 \frac{W_{i+1}(u)}{u} &  E_{i+1, 1}=  E_{i+1,1}\frac{1}{u-x_{i+1}} E_{i+1,1}= E_{i+1,1}g_i (g_i^{-1}\frac{1}{u-x_{i+1}}) E_{i+1,1}\\
\overset{\eqref{wku}} =  &  E_{i+1,1}g_i \frac{1}{u-x_{i}}g_i E_{i+1,1}- E_{i+1,1}g_i \frac{\rho^{-1}+\omega u}{(u-x_i)(u-x_{i+1})} E_{i+1, 1} \\
=  &  g_i E_{i,1} \frac{1}{u-x_{i}} (E_{i,1}g_{i}^{-1}+\omega E_{i+1,1})- E_{i+1,1} (g_i^{-1}+\omega)\frac{\rho^{-1}+\omega u}{(u-x_i)(u-x_{i+1})}E_{i+1, 1} \\
\overset{\eqref{wku}} =& (\frac{W_i(u)}{u} + \frac{\rho \omega}{u-x_i}) E_{i+1, 1}+(\frac{-\rho( \rho^{-1}+\omega u)}{(u-x_i)^2}+ \frac{(\rho^{-1}+\omega u)^2}{(u-x_i)^2}\frac{W_{i+1}(u)}{u}\\ & - \frac{(\rho^{-1}+\omega u)\omega}{u-x_i} \frac{W_{i+1}(u)}{u}) E_{i+1,1}.
\end{aligned}}$$
Doing so yields the following formula,
\begin{equation}\label{recursion}
\small{W_{i+1}(u)-\frac{\rho u} {\rho^{-1}+\omega u}=\frac{ (u-x_i)^2}{(u-x_i)^2- (\rho^{-1}+\omega u)(\omega x_i +\rho^{-1})} (W_i(u)-\frac{\rho u} {\rho^{-1}+\omega u})}.
\end{equation}
By induction assumption on $r=i$, \eqref{formiofw} is proved for $r=i+1$.
\end{proof}

Before  we construct the Jucys-Murphy basis of  $\mathscr B_{r, t}(\rho, q)$
over $R$ in  the sense of \cite{Ma2}, let us recall the cellular basis of $\mathscr B_{r, t}(\rho, q)$ in \cite{Rsong}. Write $\Lambda^+(n)$ as the set of all partitions of $n$. There is a dominance order on $\Lambda^+(n)$ such that    $\lambda\unlhd \mu$ if $\lambda, \mu\in \Lambda^+(n)$ and  $ \sum_{j=1}^i \lambda_j\le \sum_{j=1}^i \mu_j$ for all possible
$i$.    Write $\lambda\vartriangleleft \mu$ if
$\lambda\trianglelefteq \mu$ and $\lambda\ne \mu$. Let
$l(\lambda): =\max\{i\mid \lambda_i\neq0\}$. We also write $|\lambda|=\sum_{i=1}^d\lambda_i$ with $l(\lambda)=d$ and identify any partition $\lambda$ with its Young diagram $[\lambda]$.

 Suppose $\lambda\in \Lambda^+(n)$.
A standard $\lambda$-tableau $\S$  is  obtained by inserting the numbers $\{1,2,\ldots,n\}$ into the Young diagram $[\lambda]$ such that the entries in $\S$ increase both from  left to right along each row and from top to bottom along each  column.
 Let $\Std(\lambda)$ be the set of all standard $\lambda$-tableaux. It is known that a standard $\lambda$-tableau $\S $ can be identified uniquely with an up $\lambda$-tableau $\s=(\s_0,\s_1,\ldots, \s_n)$  such that $\s_0=\emptyset$ and $\s_i$ is obtained from
$\S$ by removing the boxes  containing  the entries which are  strictly greater than $i$. In this case,  $\s_i$ is a standard $\mu$-tableau for some   $\mu\in\Lambda^+(i)$. Abusing notation,
 we use   $\s_i$ to denote $\mu$.  So, $(\s_0,\s_1,\ldots, \s_n)$ can be considered as  a sequence of partitions.
   Following \cite{Ma}, let $\unlhd $ be the dominance order  on $\Std(\lambda)$   such that
$\S\unlhd \T$ if $\s_i\unlhd \t_i$, for all $1\le i\le n$.
The maximal element in  $\Std(\lambda)$ is $\T^\lambda$ which is  obtained from  $[\lambda]$ by inserting  $1, 2, \ldots, n$ from left to right
along the rows of $[\lambda]$. The minimal element in $\Std(\lambda)$ is $\T_\lambda$ which is  obtained from
$[\lambda]$ by inserting $1, 2, \ldots, n$ from top to bottom along the columns of $[\lambda]$. Since  the symmetric group  $\mathfrak S_n $  acts on
each $\S\in \Std(\lambda)$  by permuting its entries,
we write $w=d(\S)$ if $\T^\lambda w=\S$. Then  $d(\S)$ is
uniquely determined by $\S$.
\begin{example} Suppose that $\lambda=(3,1)\in\Lambda^+(4)$. Then
$\T^\lambda=\tiny\young(123,4)$, $\T_\lambda=\tiny\young(134,2)$ and  $\T^\lambda$ can be identified with the up $\lambda$-tableau $\tiny{(\emptyset,\young(1), \young(12),\young(123),\young(123,4)  )}$, which can be considered as the sequence of partitions $((0), (1), (2), (3),(3,1))$.
Moreover,
 $d(\T_\lambda)=s_3s_2$ and  $\T_\lambda$ is identified uniquely with the  up $\lambda$-tableau $\tiny{(\emptyset,\young(1), \young(1,2),\young(13,2),\young(134,2)  )}$, which can be considered as the sequence of partitions $((0), (1), (1,1), (2,1),(3,1))$.
\end{example}

For any $r,t\in\mathbb N$, let $\Lambda^+_{r,t} =\Lambda^+(r)\times \Lambda^+(t)$ and $\Lambda_{r, t}: =\{(f, \lambda)\mid 0\le f\le \min\{r, t\}, \lambda\in \Lambda^+_{r-f,t-f}\}.$
 Given $(f, \lambda), (\ell, \mu)\in \Lambda_{r, t}$, define \begin{equation}\label{pos1} (f, \lambda)\unlhd (\ell, \mu), \text{ if
either $f<\ell$ or $f=\ell$ and $\lambda^{(i)}\unlhd \mu^{(i)}$ for all $i=1, 2$.}\end{equation} Then $(\Lambda_{r, t}, \unlhd)$ is a poset.  If $(f, \lambda)\unlhd (\ell, \mu)$ and $(f, \lambda)\neq  (\ell, \mu)$, we write $(f, \lambda)\lhd (\ell, \mu)$.
 For any $(f, \lambda)\in \Lambda_{r, t}$, define
$$\Std(\lambda)=\Std(\lambda^{(1)})\times \Std(\lambda^{(2)}), $$ where  $\Std(\lambda^{(1)})$   (resp. $\Std(\lambda^{(2)})$) is
the set of  usual standard $\lambda^{(1)}$-tableaux (resp., standard $\lambda^{(2)}$-tableaux which are obtained from
usual standard tableaux by using $\overline {f+j}$ instead of $\bar j$).
Following \cite{Rsong}, define
 \begin{equation}\label{ifla} I(f,\lambda)=\Std(\lambda)\times \mathscr D^f_{r, t},\end{equation}
where $\mathscr{D}_{r,t}^f=\{ s_{r-f+1,i_f} s^*_{f, j_f} \cdots s_{r,i_1}s^*_{1,{j_1}}|  k \le {j_k}, 1 \le i_f< i_{f-1} < \ldots <i_1\le r \}$ for $f>0$ and $\mathscr D_{r,t}^0=\{1\}$.
 For any $(\S
, e), (\T, d)\in I(f, \lambda)$,  let \begin{equation}\label{cellbasis} C_{(\S, e)(\T, d)}=\sigma(g_e)
\sigma( g_{d(\S)}) n_\lambda g_{d(\T)} g_d,\end{equation}
where  \begin{enumerate} \item $n_\lambda=E^f y_{\lambda}$, $E^f=E_{r, 1} E_{r-1, 2} \cdots E_{r-f+1, f} $ if $0<f\le \min\{r, t\}$, and $E^0=1$,
 \item  $y_{\lambda}=y_{\lambda^{(1)}} y_{\lambda^{(2)}}$,    where $y_{\lambda^{(1)}}=\sum_{w\in \mathfrak S_{\lambda^{(1)}}}(-q)^{-\ell(w)} g_w$ and $y_{\lambda^{(2)}} $ is obtained from usual one by using $g_{f+j}^*$ instead of $g_j^*$, $\mathfrak  S_{\lambda^{(1)}}$ is the stabilizer of $\t^{\lambda^{(1)}}$ and  $\ell(\cdot)$ is the usual length function on symmetric group.
 \item    $g_{d(\S)}=g_{d(\S_1)} g^*_{d(\S_2)}$ if $\S=(\S_1, \S_2)$,
\item  $\sigma$ is the $R$-linear anti-involution on  $\mathscr B_{r, t}(\rho, q)$ in Lemma~\ref{three isom}(d).
\end{enumerate}

\begin{Theorem}\label{celb}\cite[Theorem~3.7]{Rsong}Let $\mathscr
B_{r,t}(\rho, q)$ be the quantized walled Brauer algebra over $R$. Then
$\mathcal C$ is a cellular $R$-basis of $\mathscr B_{r,t}(\rho, q)$ over
the poset $\Lambda_{r, t}$ in the sense of \cite{GL}, where $$\mathcal C=\cup_{(f, \lambda)\in
\Lambda_{r,t}} \{C_{(\S, e)(\T, d)}\mid (\S, e), (\T, d) \in I(f,
\lambda)\}.$$ \end{Theorem}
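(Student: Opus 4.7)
The plan is to build $\mathcal C$ through a two-sided ideal filtration indexed by the propagating number. For $0\le f\le\min\{r,t\}$, let $J_f$ denote the two-sided ideal of $\mathscr B_{r,t}(\rho,q)$ generated by $E^f=E_{r,1}E_{r-1,2}\cdots E_{r-f+1,f}$; the relations give a descending chain $J_0\supseteq J_1\supseteq\cdots\supseteq J_{\min\{r,t\}+1}=0$. One expects each subquotient $J_f/J_{f+1}$ to be spanned by the level-$f$ basis vectors $C_{(\S,e)(\T,d)}$ with $(\S,e),(\T,d)\in I(f,\lambda)$ for $\lambda\in\Lambda^+_{r-f,t-f}$, and to carry a natural $(\mathscr H_{r-f}\otimes\mathscr H_{t-f})$-bimodule structure modulo $J_{f+1}$.

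The spanning property follows from a repeated straightening argument: using Definition~\ref{qwb}(h)--(l), any monomial in the generators can be rewritten, modulo $J_{f+1}$ for an appropriate $f$, in the form $a\cdot E^f\cdot b$ with $a,b$ in the Hecke-type subalgebra generated by $\{g_i,g_j^*\}$. Each additional $E$-factor either combines with $E^f$ to reach level $f+1$ (and vanishes modulo $J_{f+1}$) or is absorbed into $E^f$ via (i)--(j). Applying the classical Murphy cellular basis of $\mathscr H_{r-f}\otimes\mathscr H_{t-f}$ to $a$ and $b$ extracts $y_\lambda$ adjacent to $E^f$ and produces the tableau permutations $g_{d(\S)},g_{d(\T)}$. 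The remaining permutation freedom lies in the cosets of the stabilizer of the horizontal-strand pattern of $E^f$ inside $\mathfrak S_r\times\mathfrak S_t^*$; these cosets are parametrized exactly by $\mathscr D_{r,t}^f$, supplying the outside factors $\sigma(g_e)$ and $g_d$.

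Next I would verify the two cellular axioms. The anti-involution identity $\sigma(C_{(\S,e)(\T,d)})=C_{(\T,d)(\S,e)}$ reduces to showing $\sigma(n_\lambda)=n_\lambda$, which splits into $\sigma(y_\lambda)=y_\lambda$ (standard for the Murphy basis) and $\sigma(E^f)=E^f$ (proved by induction on $f$, using that the distinct factors of $E^f$ satisfy enough commutation to rewrite the reversed product back into the original, together with the fact emphasized in the remark after Lemma~\ref{three isom} that $\sigma$ fixes $E_{r,1}$). The modular multiplication axiom requires, for each generator $h$, a decomposition
\[h\cdot C_{(\S,e)(\T,d)}\equiv\sum_{(\S',e')} r_h\bigl((\S',e'),(\S,e)\bigr)\,C_{(\S',e')(\T,d)}\pmod{\mathscr B_{r,t}^{\lhd(f,\lambda)}(\rho,q)}\]
with $r_h$ independent of $(\T,d)$. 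For $h\in\{g_i,g_j^*\}$ this reduces, after commuting $h$ past the outer coset factor, to Murphy's cellularity theorem for $\mathscr H_r\otimes\mathscr H_t$ applied to the left half of the basis element. For $h=E$, the product either creates an extra horizontal strand (landing in $J_{f+1}\subseteq\mathscr B_{r,t}^{\lhd(f,\lambda)}(\rho,q)$) or interacts with $\sigma(g_e)$ through relations (i)--(l) to permute the coset indexed by $e$ within $\mathscr D_{r,t}^f$.

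The main obstacle is linear independence of $\mathcal C$. My preferred approach is a generic-point argument: pass to the fraction field of $R$, where $\mathscr B_{r,t}(\rho,q)$ is semisimple and acts faithfully on the mixed tensor space $V^{\otimes r}\otimes(V^*)^{\otimes t}$ for $\U_q(\mathfrak{gl}_n)$ with $n\gg r+t$; Schur--Weyl duality then decomposes the image into matrix blocks indexed by $\Lambda_{r,t}$, whose total rank equals $|\mathcal C|$, forcing independence generically, and integrality of each $C_{(\S,e)(\T,d)}$ transports this to $R$. An alternative is a flatness/specialization argument at $q\to 1$ and $\rho$ generic, which reduces the claim to Enyang's cellularity result \cite{Enyang2} for the classical walled Brauer algebra.
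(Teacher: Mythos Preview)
The paper does not supply a proof of this theorem; it is quoted from \cite[Theorem~3.7]{Rsong} and used as input. Your outline---the two-sided ideal filtration by the $E^f$, straightening modulo $J_{f+1}$ into the form $\sigma(g_e)\sigma(g_{d(\S)})n_\lambda g_{d(\T)}g_d$, and reduction of the cellular axioms to Murphy's theorem for $\mathscr H_{r-f}\otimes\mathscr H_{t-f}$---is exactly the strategy carried out in \cite{Rsong}; their Proposition~2.9 and Lemma~2.11 (both invoked later in the present paper, e.g.\ in \eqref{djfdd}) are precisely the straightening lemmas you sketch.

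Your linear-independence argument, however, has a genuine gap. Over the fraction field $\mathbb Q(\rho,q)$ of $R$ the parameter $\rho$ is an indeterminate, so there is \emph{no} mixed tensor space for $\U_q(\mathfrak{gl}_n)$ on which $\mathscr B_{r,t}(\rho,q)$ acts: the Schur--Weyl action exists only after specializing $\rho\mapsto q^n$. The repair is to make that specialization for $n\ge r+t$, where the image of $\mathcal C$ is a basis (either by the centralizer dimension from Schur--Weyl, or directly from Leduc's diagram basis in \cite{Le}), and then observe that any nontrivial $R$-linear relation $\sum a_i\,C_i=0$ would survive the map $\rho\mapsto q^n$ for all but finitely many $n$, since each $a_i$ is a Laurent polynomial in $\rho$. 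Your alternative route via $q\to 1$ is blocked outright: $R$ is localized at $q-q^{-1}$, so $q=1$ is not in $\operatorname{Spec} R$, and $\delta=(\rho-\rho^{-1})/(q-q^{-1})$ does not degenerate to an independent parameter without a more delicate integral-form argument. (Incidentally, \cite{Enyang2} treats the \emph{quantized} walled Brauer algebra, not the classical one, so that citation already gives cellularity over $R$ directly rather than by specialization.)
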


By Graham-Lehrer's results on cellular algebras in  \cite{GL},   we have the right cell module
$C(f,\lambda)$ for each  $(f, \lambda)\in \Lambda_{r, t}$, which   has basis
 $\{ n_{\lambda} g_{d(\S)} g_d +\mathscr B_{r,t}(\rho, q)^{\rhd(f,\lambda)}|(\S, d) \in I(f,\lambda)\}$, where $\mathscr B_{r,t}(\rho, q)^{\rhd(f,\lambda)}$ is the free $R$-submodule of $\mathscr B_{r,t}(\rho, q)$ spanned by
 $\cup_{(\ell, \mu)\in\Lambda_{r,t}} \{C_{(\S, e)(\T, d)}\mid (\S, e), (\T, d) \in I(\ell,\mu), (\ell, \mu)\rhd (f, \lambda)\}$. In fact, $\mathscr B_{r,t}(\rho, q)^{\rhd(f,\lambda)}$ is
 a two-sided ideal of $\mathscr B_{r, t}(\rho, q)$.

We are going to  construct the Jucys-Murphy basis of  $\mathscr B_{r, t}(\rho, q)$
over $R$ in  the sense of \cite{Ma2}.
For any partition $\lambda$,  we write $\lambda\setminus p$ (resp., $\lambda \cup p$) as the partition obtained from $\lambda$ by removing the node $p $ (resp., adding the node $p $). The node $p$ is called a removable (resp., an addable) node of $\lambda$ if $\lambda\setminus p$  (resp.,$\lambda \cup p$) is also a partition.  Let $\mathscr R(\lambda)$ (resp., $\mathscr A(\lambda)$) the set of all removable nodes (resp., addable  nodes ) of $\lambda$.
Similarly, for any bi-partition $\lambda=(\lambda^{(1)},\lambda^{(2)})$, we have the notation $\mathscr R(\lambda)$ and $\mathscr A(\lambda)$.

We  fix a  set $I_{r,t}$, where \begin{equation}\label{brpath} I_{r,t}=\{(1,0),(2,0),\ldots,(r,0),(r,1),(r,2),\ldots,(r,t)\}.\end{equation}
For all $1\leq k\leq r+t$, there is a unique $(i,j)\in I_{r,t}$ such that $k=i+j$. Hence   we  can also  write $\Lambda_k$ (resp. $\mathscr B_k(\rho, q)$, $c_{k}$) instead of $\Lambda_{i,j}$ (resp., $\mathscr B_{i,j}(\rho, q)$, $c_{i,j}$) without any confusion.

 \begin{Defn}\label{updt}
Suppose that  $(f,\lambda)\in\Lambda_{r,t}$. An up-down $\lambda$-tableau with respect to  $I_{r, t}$   is a sequence of  bi-partitions
$\t=(\t_{0},\t_{1},\ldots,\t_{r+t})$
such that
\begin{enumerate}
\item $\t_0=((0), (0))$, $\t_{r+t}=\lambda$ and $((k-|\t_{k}^{(1)}|-|\t_{k}^{(2)}|)/2,\t_k)\in \Lambda_k$, for $k>0$;
 \item If $1\leq k\leq r$, set $\t^{(1)}_{k}=\t_{k-1}^{(1)}\cup  p$ for some  $p\in \mathscr A(\t_{k-1}^{(1)})$, and $ \t^{(2)}_k=(0)$;
\item If $r+1\leq k\leq r+t$, either $\t^{(1)}_{k} =\t^{(1)}_{k-1}$ and  $\t^{(2)}_{k}=\t^{(2)}_{k-1}\cup p$ for some $p\in \mathscr A(\t^{(2)}_{k-1})$ or  $\t^{(1)}_{k} =\t^{(1)}_{k-1}\setminus p$ for some $p\in \mathscr R(\t^{(1)}_{k-1})$ and $\t^{(2)}_{k} =\t^{(2)}_{k-1}$.
\end{enumerate}
\end{Defn}

Let ${\upd_{r,t}}(\lambda)$ be the set of all up-down $\lambda$-tableaux for $(f,\lambda)\in\Lambda_{r,t}$. We write
$\mu\rightarrow \lambda$ if $\lambda=\mu\setminus p$ or $\lambda=\mu\cup p$. So, $\t_{k-1}\rightarrow \t_k$ for any $\t\in{\upd_{r,t}}(\lambda)$ and $1\leq k\leq r+t$. Recall   we can  also identify a partition with its Young diagram.
\begin{example}\label{exofupd}Suppose $\lambda=((1),(1))$.
There are 4 elements in ${\upd_{2,2}}(\lambda)$ as follows.
$$\begin{aligned}
\t =&\tiny{\left((\emptyset,\emptyset), \left(\young(\ ),\emptyset\right),(\young(\ \ ),\emptyset),\left(\young(\ ),\emptyset\right),\left(\young(\ ),\young(\ )\right)\right)},\\
\u=&\tiny{\left((\emptyset,\emptyset),\left(\young(\ ),\emptyset\right),\left(\young(\ \ ),\emptyset\right),\left(\young(\ \ ),\young(\ )\right),\left(\young(\ ),\young(\ )\right)\right)},\\
\s=&\tiny{\left((\emptyset,\emptyset),\left(\young(\ ),\emptyset\right),\left(\young(\ ,\ ),\emptyset\right),\left(\young(\ ),\emptyset\right),\left(\young(\ ),\young(\ )\right)\right)},\\
\v=&\tiny{\left((\emptyset,\emptyset),\left(\young(\ ),\emptyset\right),\left(\young(\ ,\ ),\emptyset\right),\left(\young(\ ,\ ),\young(\ )\right),\left(\young(\ ),\young(\ )\right)\right)}.
\end{aligned}$$
\end{example}

For any $(i,j)\in I_{r,t}$ with $i+j=k$ and any $\lambda\in \Lambda_{i,j}$, we also have the  notation ${\upd_k}(\lambda):= {\upd_{i,j}}(\lambda)$.
 For any $\s\in \upd_{r,t}(\lambda)$ and $1\leq k \leq r+t$,
let $l_{\s_k}\in\mathbb N$ such that
$$k=2l_{\s_k}+|\s_k^{(1)}|+|\s_k^{(2)}|. $$
Then $(l_{\s_k},\s_k)\in\Lambda_k$.
For any $\s, \t\in \upd_{r,t}(\lambda)$, note that $\s_0=\t_0$ and $\s_1=\t_1$. Suppose $2\leq k\leq r+t-1$, we  write $\s\overset {k} \prec \t$ if there is an $l\geq k$ such that
$\s_j=\t_j$ for $l+1\le j\le r+t$ and $(l_{\s_l},\s_l)\lhd (l_{\t_l},\t_l)$ in the sense of \eqref{pos1}. We say $\s\prec\t$ if $\s\overset{k}\prec\t$ for some  $2\leq k\leq r+t-1$. Then $\prec$ is a total order on
${\upd_{r,t}}(\lambda)$. It is easy to see that  $\v\prec \u\prec \s\prec \t $ in example ~\ref{exofupd}.




\begin{Defn}\label{definition of m} Given any $\t\in \upd_{r,t}(\lambda)$ for some  $(f,\lambda)\in\Lambda_{r,t}$.
Define   $\n_\t=\n_{\t_{r+t}}$ and $\n_{\t_{k}}$($1\le k\le r+t$) inductively as follows. If $k=1$, then we write $\n_{\t_1}=1$.   Suppose $\t_{k}=(\mu^{(1)}, \mu^{(2)})$.
Define
\begin{enumerate}
\item   $\n_{\t_k}=\sum_{j=a_{l-1}+1}^{a_l} (-q)^{j-a_l} g_{j,k} \n_{\t_{k-1}}$, if $ \t^{(1)}_{k-1}=\t^{(1)}_k\setminus (l,\mu_l^{(1)})$ and $ a_h=\sum _{i=1}^h\mu^{(1)}_i$, $h\in \{l,l-1\}$;
\item $\n_{\t_k}= \sum_{j=b_{l-1}+1}^{b_l}(-q)^{j-b_l}g^*_{j,k-r}\n_{\t_{k-1}}$,  if $ \t^{(2)}_{k-1}=\t_k^{(2)}\setminus (l,\mu_l^{(2)})$ and  $b_h=l_{\t_k}+\sum _{i=1}^h\mu^{(2)}_i$, $h\in \{l,l-1\}$;
\item $\n_{\t_k}= E_{r-l_{\t_k}+1,l_{\t_k}}g^*_{l_{\t_k},k-r}g^{-1}_{a_l+1, r-l_{\t_k}+1}\n_{\t_{k-1}}$, if $ \t^{(1)}_k=\t^{(1)}_{k-1}\setminus (l,\mu_l^{(1)}+1)$ and  $ a_l=\sum _{i=1}^l\mu^{(1)}_i$.

\end{enumerate}
\end{Defn}
In  Example~\ref{exofupd}, we have   $\n_{\t}=E(1-q^{-1} g_1)$, $\n_{\u}=Eg_1^*(1-q^{-1}g_1)$, $\n_{\s}=E$, $\n_{\v}=Eg_1^*$.

\begin{Lemma}\label{bt2} Suppose $\t\in \upd_{r,t}(\lambda)$ for some $(f, \lambda)\in \Lambda_{r, t}$. Then there is a $b_\t\in\mathscr H_{r, t}$ such that $\n_\t=n_\lambda b_\t$, where $n_\lambda$ is given in \eqref{cellbasis}.  \end{Lemma}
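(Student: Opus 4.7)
The plan is to induct on $k = 1, \ldots, r+t$, establishing the stronger statement $\n_{\t_k} \in n_{\t_k}\, \mathscr H_{r_k, t_k}$, where $n_{\t_k}$ is the seed element attached by \eqref{cellbasis} to $(l_{\t_k}, \t_k) \in \Lambda_k$. Taking $k = r+t$ then yields the lemma. The base case $k=1$ is immediate: $\t_1 = ((1), \emptyset)$ gives $n_{\t_1} = y_{(1)} y_\emptyset = 1 = \n_{\t_1}$.

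For the inductive step, write $\n_{\t_{k-1}} = n_{\t_{k-1}} b_{\t_{k-1}}$ and examine the three cases of Definition~\ref{definition of m}. In cases (a) and (b) one has $l_{\t_k} = l_{\t_{k-1}}$, so only the Young-symmetrizer factor of $n_{\t_k}$ changes, and the step reduces to the Hecke-level identity
\begin{equation*}
\Big(\sum_{j=a_{l-1}+1}^{a_l} (-q)^{j-a_l}\, g_{j,k}\Big)\, y_{\t_{k-1}^{(1)}} \in y_{\t_k^{(1)}}\, \mathscr H_{r_k}
\end{equation*}
(with the analogous $g^*$-version for (b)): the left-hand side equals $y_{\t_k^{(1)}}$ up to a braid element of $\mathscr H_{r_k}$ arising from the reindexing of rows beneath the extended one. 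In case (b) one additionally observes that $g^*_{j, k-r}$ has summation index $j \ge l_{\t_k}+1$, so it only touches right-wall positions disjoint from those used by $E^{l_{\t_{k-1}}}$ and commutes with every element of $\mathscr H_r$. Either way, the updated $b_{\t_k}$ lies in $\mathscr H_{r_k, t_k}$.

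Case (c), with $l_{\t_k} = l_{\t_{k-1}}+1 =: f+1$, is the substantive case: the new left prefactor is $E_{r-f, f+1}\, g^*_{f+1, k-r}\, g^{-1}_{a_l+1, r-f}$. The argument proceeds in three steps. First, both Hecke factors commute past $E^f$: since $|\t_k^{(1)}| = r-f-1$ one has $a_l+1 \le r-f$, so $g^{-1}_{a_l+1, r-f}$ lives on left-wall positions $\le r-f$, disjoint from the left-wall positions $\{r-f+1, \ldots, r\}$ used by $E^f$, and symmetrically $g^*_{f+1, k-r}$ sits on right-wall positions $\ge f+1$, disjoint from $\{1, \ldots, f\}$. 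Second, the new cap $E_{r-f, f+1}$ commutes with $E^f$ (the two caps occupy disjoint positions on both walls; this is verified from \eqref{eij} and relations (h)-(j) of Definition~\ref{qwb}), producing the full $E^{f+1} = E^f E_{r-f, f+1}$. Third, one checks the Hecke-level identity
\begin{equation*}
E_{r-f, f+1}\, g^{-1}_{a_l+1, r-f}\, y_{\t_{k-1}^{(1)}} \in E_{r-f, f+1}\, y_{\t_k^{(1)}}\, \mathscr H_{r-f-1},
\end{equation*}
which says that the braid transports the removable cell $(l, \mu_l^{(1)}+1)$ of $\t_{k-1}^{(1)}$ to the leg position $r-f$ of the new cap, where the relation $E g_{r-1} E = \rho E$ (and its $g^*$-counterpart) collapses the row-$l$ symmetrizer of $\t_{k-1}^{(1)}$ down to that of $\t_k^{(1)}$. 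The main obstacle is this last identity: its verification requires a row-$l$ factorisation of $y_{\t_{k-1}^{(1)}}$ and a term-by-term absorption calculation, and once it is in hand the collected Hecke pieces produce the required $b_{\t_k} \in \mathscr H_{r_k, t_k}$.
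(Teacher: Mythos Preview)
Your inductive framework is essentially the paper's own (very terse) approach: the paper writes $b_{\t_k}=h_k b_{\t_{k-1}}$ with explicit $h_k$'s, and the implicit invariant is exactly your $\n_{\t_k}=n_{\t_k}b_{\t_k}$, so that the inductive step amounts to checking $L_k\, n_{\t_{k-1}} = n_{\t_k}\, h_k$ for the left-multipliers $L_k$ of Definition~\ref{definition of m}. Cases (a) and (b) are standard Murphy-basis recursions, as you indicate.

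Your treatment of case (c), however, is confused in two places. First, the relation $Eg_{r-1}E=\rho E$ plays no role: only a single new cap $E_{r-f,f+1}$ is introduced, and nothing is sandwiched between two caps. The identity you need is purely Hecke-algebraic, namely
\[
g^{-1}_{a_l+1,\,r-f}\, y_{\t_{k-1}^{(1)}} \;=\; y_{\t_k^{(1)}}\, g^{-1}_{a_l+1,\,r-f}\, y,
\qquad y=\sum_{j=a_{l-1}+1}^{a_l+1}(-q)^{-(a_l+1-j)}g_{a_l+1,j},
\]
which follows from the row-factorisation $y_{(\text{row }l\text{ of }\t_{k-1}^{(1)})}=y_{(\text{row }l\text{ of }\t_k^{(1)})}\cdot y$ together with a braid-relation conjugation shifting the rows below $l$ by one position. (Your target $\mathscr H_{r-f-1}$ should read $\mathscr H_{r-f}$.) Second, you overlook that $y_{\t_k^{(2)}}$ lives on positions $\{f+2,\ldots,k-r\}$ while $y_{\t_{k-1}^{(2)}}$ lives on $\{f+1,\ldots,k-r-1\}$ (because $l_{\t_k}=f+1\neq f=l_{\t_{k-1}}$); the factor $g^*_{f+1,k-r}$ is precisely what effects this shift, via the braid identity $g^*_{f+1,k-r}\,g^*_m = g^*_{m+1}\,g^*_{f+1,k-r}$ for $f+1\le m\le k-r-2$. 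Once these two points are corrected, your argument goes through and matches the paper's explicit $h_k$.
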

 \begin{proof} By Definition~\ref{definition of m},  $b_\t=b_{\t_{r+t}}$ can be obtained inductively with  $b_{\t_1}=1$, $b_{\t_k}=h_k b_{\t_{k-1}}$ and
 \begin{equation}\label{htdddd}
h_k=\left\{
      \begin{array}{ll}
         g_{a_l,k}, & \hbox{if $ \t^{(1)}_{k-1}=\t^{(1)}_k\setminus (l,\mu_l^{(1)})$;} \\
        g^*_{b_l,k-r}, & \hbox{if $ \t^{(2)}_{k-1}=\t^{(2)}_k\setminus (l,\mu_l^{(2)})$;} \\
        g^*_{l_{\t_k},k-r}g^{-1}_{a_l+1, r-l_{\t_k}+1}y, & \hbox{if $ \t^{(1)}_k=\t^{(1)}_{k-1}\setminus (l,\mu_l^{(1)}+1)$,}
      \end{array}
    \right.
\end{equation}
where $y=\sum_{j=a_{l-1}+1}^{a_l+1}(-q)^{-(a_l+1-j)}g_{a_l+1,j}$, $b_l=l_{\t_k}+\sum _{i=1}^l\mu^{(2)}_i$ and $ a_h=\sum _{i=1}^h\mu^{(1)}_i$.
 \end{proof}
For $\u$ in   Example~\ref{exofupd}, $\n_\u=n_\lambda b_\u$ with $n_\lambda=E$, $b_\u= g_1^*(1-q^{-1}g_1)$, and $h_{4}=g_1^*(1-q^{-1}g_1)$, $h_3=h_2=h_1=1$.

For any $(f,\lambda)\in\Lambda_{r,t}$, there are two special element  $\t^\lambda$ and $\t_\lambda$ in $\upd_{r,t}(\lambda) $ as follows. Moreover,
 $\t^\lambda$ is the  unique maximal element $\t^\lambda$ in $\upd_{r,t}(\lambda) $ with respect to the order $\prec$.
\begin{example}\label{exampleoft}Let  $(f,\lambda)\in\Lambda_{r,t}$. If $f>0$, let $\mu, \nu\in \Lambda^+(r)$ such that
  $\mu_1=\lambda^{(1)}_1+f$, $\mu_i=\lambda_i^{(1)}$ for $ i>1$, and   $\nu_i=\lambda^{(1)}_i$ for $1\leq i\leq l(\lambda^{(1)})$, $\nu_i=1$ for $l(\lambda^{(1)})+1\leq i\leq l(\lambda^{(1)})+f$.
If $f=0$, let  $\mu, \nu\in \Lambda^+(r)$ such that $\nu=\mu=\lambda^{(1)}$.

\begin{enumerate} \item  Let  $\t^\lambda=(\t_0, \ldots, \t_{r+t}) \in{\upd_{r,t}}(\lambda)$ such that
\begin{enumerate}
\item    $( \t_1^{(1)}, \ldots, \t_{r}^{(1)})=\mathbf t^\mu$, $(\t_{r+f+1}^{(2)}, \ldots, \t_{r+t}^{(2)})=\mathbf t^{\lambda^{(2)}}$, and $\t_k^{(1)}=\lambda^{(1)}$ for $r+f+1\leq k\leq r+t$,
 \item     $\t^{(1)}_{r+k+1}=\t^{(1)}_{r+k}\setminus (1,\lambda^{(1)}_1+f-k)$,  $0\leq k\leq f-1$ if $f>0$.

\end{enumerate}
\item  Let $\t_\lambda=(\s_0, \ldots, \s_{r+t}) \in{\upd_{r,t}}(\lambda)$ such that
\begin{enumerate}
\item   $ ( \s_1^{(1)}, \ldots, \s_{r}^{(1)})=\mathbf t^\nu$, $\s_k=(\t^\lambda)_k$, for $r+f+1\leq k\leq r+t$,

\item $\s_{r+k}=\s_{r-k}$, for $1\leq k\leq f$ if $f>0$.
\end{enumerate}
\end{enumerate}
For ${\upd_{2,2}}(\lambda)$ given in Example~\ref{exofupd}, we have  $\t^\lambda=\t$ and $\t_\lambda=\s$.

 By Lemma~\ref{bt2}, we can know that $\n_{\t_{\lambda}}=n_\lambda $ for any $(f,\lambda)\in\Lambda_{r,t}$. Moreover,  $\n_{\t^\lambda}=n_\lambda$ if $f=0$ and $\n_{\t^\lambda}=n_\lambda b_{\t^\lambda}$ if $f>0$,  where
\begin{equation}\label{btlambda}
b_{\t^\lambda}=\prod_{j=1}^f g^{-1}_{\lambda^{(1)}_1+f-j+1,r-j+1}\sum_{l=1}^{\lambda^{(1)}_1+f+1-j}(-q)^{-(\lambda^{(1)}_1+f+1-j-l)}g_{\lambda^{(1)}_1+f+1-j, l }.
\end{equation}\end{example}

In order to construct the Jucys-Murphy basis of $\mathscr B_{r,t}(\rho, q)$ over $R$, we need the following notations.
Suppose $(f, \lambda)\in \Lambda_{r, t}$.  Let $(f_1,\alpha^{(1)}),\ldots,(f_n,\alpha^{(n)})$ be all the elements in $\Lambda_{r+t-1}$ such that  $\alpha^{(i)}\rightarrow\lambda$, for all $1\le i\le n$. We  arrange them such that $(f_1,\alpha^{(1)})\rhd\ldots\rhd(f_n,\alpha^{(n)})$.
Define
$$\begin{aligned}
N^{\unrhd \alpha^{(k)}}=&R\text{-span}\{\n_\t+ \mathscr B_{r,t}(\rho, q)^{\rhd(f,\lambda)}\mid\t\in{\upd_{r,t}}(\lambda),\t_{r+t-1}=\alpha^{(j)}, j\le k\},\\
N^{\rhd \alpha^{(k)}}=&R\text{-span}\{\n_\t+\mathscr  B_{r,t}(\rho, q)^{\rhd(f,\lambda)}\mid\t\in{\upd_{r,t}}(\lambda), \t_{r+t-1}=\alpha^{(j)}, j< k \}.
\end{aligned}
$$

\begin{Theorem} \label{mcell}   Fix  $(f, \lambda)\in \Lambda_{r, t}$. Then \begin{enumerate} \item The set $\{\n_\t+ \mathscr B_{r,t}(\rho, q)^{\rhd(f,\lambda)}\mid\t\in{\upd_{r,t}}(\lambda)\}$ is an $R$-basis of $C(f,\lambda)$.
\item Both  $N^{\unrhd \alpha^{(k)}}$ and $N^{\rhd \alpha^{(k)}}$ are $\mathscr  B_{r+t-1}(\rho, q)$-submodules of $C(f,\lambda)$.
\item The $R$-linear map $\phi: N^{\unrhd \alpha^{(k)}}/N^{\rhd \alpha^{(k)}}\rightarrow C(f_k,\alpha^{(k)})$ sending $\n_\t+N^{\rhd \alpha^{(k)}}$
to $\n_{\t_{r+t-1}}+ \mathscr B_{r+t-1}(\rho, q)^{\rhd(f_k,\alpha^{(k)})}$
is an isomorphism of  $\mathscr  B_{r+t-1}(\rho, q)$-modules.

\end{enumerate}
\end{Theorem}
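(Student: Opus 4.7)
The strategy is to prove the three parts in sequence, building on the cellular basis of Theorem~\ref{celb}, the explicit construction of $\n_\t$ in Definition~\ref{definition of m}, and the identity $\n_\t = n_\lambda b_\t$ from Lemma~\ref{bt2}.

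For part~(a), I first establish a bijection $\Psi\colon \upd_{r,t}(\lambda) \to I(f,\lambda) = \Std(\lambda) \times \mathscr{D}^f_{r,t}$. Given $\t = (\t_0, \ldots, \t_{r+t}) \in \upd_{r,t}(\lambda)$, the first $r$ steps are additions to the first component, while the final $t$ steps consist of exactly $f$ removals from $\t^{(1)}$ and $t-f$ additions to $\t^{(2)}$. Matching each removal with the earlier step at which the same box was added produces an element of $\mathscr{D}^f_{r,t}$; the unmatched additions to $\t^{(1)}$ form a standard $\lambda^{(1)}$-tableau, and the additions to $\t^{(2)}$ form a standard $\lambda^{(2)}$-tableau. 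This yields $|\upd_{r,t}(\lambda)| = |I(f,\lambda)|$. Next, using Lemma~\ref{bt2} together with the explicit $b_\t$ of~\eqref{htdddd}, an induction on the total order $\prec$ on $\upd_{r,t}(\lambda)$ gives
\[
\n_\t \equiv n_\lambda g_{d(\S)} g_d + \sum_{\s \prec \t} c_{\t,\s}\,\n_\s \pmod{\mathscr{B}_{r,t}(\rho,q)^{\rhd(f,\lambda)}}
\]
for certain $c_{\t,\s} \in R$, where $(\S,d) = \Psi(\t)$. This unitriangular transition between the cellular basis and the $\n$-basis of $C(f,\lambda)$ establishes~(a).

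For part~(b), note from~\eqref{brpath} that $\mathscr{B}_{r+t-1}(\rho,q)$ coincides with $\mathscr{B}_{r,t-1}(\rho,q)$, the subalgebra generated by $\{g_1,\ldots,g_{r-1},g_1^*,\ldots,g_{t-2}^*,E\}$. Write $\n_\t = X_\t \cdot \n_{\t_{r+t-1}}$, where $X_\t$ is the factor corresponding to the last step $\t_{r+t-1} \to \lambda$ read off from the three cases of Definition~\ref{definition of m}. For each generator $h \in \mathscr{B}_{r+t-1}(\rho,q)$ and each $\t$ with $\t_{r+t-1} = \alpha^{(j)}$, I show
\[
\n_\t \cdot h \equiv X_\t \cdot \bigl(\n_{\t_{r+t-1}} \cdot h\bigr) \pmod{\mathscr{B}_{r,t}(\rho,q)^{\rhd(f,\lambda)}},
\]
and then apply part~(a) to the cell module $C(f_j, \alpha^{(j)})$ of $\mathscr{B}_{r+t-1}(\rho,q)$ to expand $\n_{\t_{r+t-1}} \cdot h$ as an $R$-combination of $\n_{\t'}$'s with $\t'_{r+t-1} = \alpha^{(j)}$, modulo the ideal. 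Since $\alpha^{(j)} \unrhd \alpha^{(k)}$ precisely when $j \le k$, closure of both $N^{\unrhd \alpha^{(k)}}$ and $N^{\rhd \alpha^{(k)}}$ follows. I expect the main technical obstacle to be case~(c) of Definition~\ref{definition of m}, where $X_\t$ carries the idempotent-type factor $E_{r-l+1, l}\, g^*_{l, t}\, g^{-1}_{a_l+1, r-l+1}$; commuting $h$ past this factor requires systematic use of Definition~\ref{qwb}(g)--(l), Lemma~\ref{y}, and the conjugation formulae~\eqref{eij}.

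For part~(c), define $\phi$ on the basis provided by (a) by sending $\n_\t + N^{\rhd \alpha^{(k)}}$ (for $\t$ with $\t_{r+t-1} = \alpha^{(k)}$) to $\n_{\t_{r+t-1}} + \mathscr{B}_{r+t-1}(\rho,q)^{\rhd(f_k, \alpha^{(k)})}$, and extend $R$-linearly. Since $\t \mapsto (\t_0,\ldots,\t_{r+t-1})$ is a bijection between $\{\t \in \upd_{r,t}(\lambda) : \t_{r+t-1} = \alpha^{(k)}\}$ and $\upd_{r+t-1}(\alpha^{(k)})$, part~(a) applied to $(f_k, \alpha^{(k)})$ guarantees that $\phi$ maps a basis to a basis, hence is an $R$-module isomorphism. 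The fact that $\phi$ intertwines the $\mathscr{B}_{r+t-1}(\rho,q)$-action follows at once from the computation in~(b): the coefficients $c_{\s'}$ arising in the expansion $\n_{\t_{r+t-1}} \cdot h \equiv \sum c_{\s'} \n_{\s'}$ inside $C(f_k, \alpha^{(k)})$ are precisely those that appear in $\n_\t \cdot h \equiv \sum c_{\s'}\, X_\t \n_{\s'}$ in the quotient $N^{\unrhd \alpha^{(k)}}/N^{\rhd \alpha^{(k)}}$, so $\phi(\n_\t \cdot h) = \phi(\n_\t) \cdot h$.
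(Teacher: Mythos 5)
Your route diverges from the paper's, and two of its load‑bearing steps are asserted rather than proved, so as written the proposal has a genuine gap.

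The paper proves (a)--(c) by a single induction on $r+t$: the base case $t=0$ is the Hecke algebra branching rule from Mathas, and the inductive step invokes the filtration theorem \cite[Theorem~4.15]{Rsong}, which already provides (i) the $\mathscr B_{r+t-1}(\rho,q)$-submodule filtration $0=N_0\subset N_1\subset\ldots\subset N_n=C(f,\lambda)$ with $N_k=\sum_{j\le k}y_{\alpha^{(j)}}^\lambda\mathscr B_{r+t-1}(\rho,q)+\mathscr B_{r,t}(\rho,q)^{\rhd(f,\lambda)}$, and (ii) the $\mathscr B_{r+t-1}(\rho,q)$-isomorphism $C(f_k,\alpha^{(k)})\cong N_k/N_{k-1}$ sending $n_{\alpha^{(k)}}g_{d(\S)}g_d$ to $y_{\alpha^{(k)}}^\lambda g_{d(\S)}g_d$. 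Once that theorem is in hand, everything else is bookkeeping: $\n_\t=y_{\alpha^{(k)}}^\lambda b_\u$ with $\u$ the truncation of $\t$, one applies the induction hypothesis on the smaller algebra, and the identification $N_k=N^{\unrhd\alpha^{(k)}}$ drops out by a secondary induction on $k$. You instead try to prove (a) standalone (cardinality match via a bijection $\Psi$ plus a unitriangular change of basis), and then derive (b) and (c) by a direct computation pushing the generators past the left factor $X_\t$.

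Two concrete problems. First, in (a) the unitriangularity $\n_\t\equiv n_\lambda g_{d(\S)}g_d+\sum_{\s\prec\t}c_{\t,\s}\n_\s\pmod{\mathscr B_{r,t}(\rho,q)^{\rhd(f,\lambda)}}$ is precisely where all the structural work lives: one must relate the products $h_{r+t}h_{r+t-1}\cdots$ from~\eqref{htdddd} (which in case~(c) of Definition~\ref{definition of m} carry an $E$-factor and a Jucys--Murphy-type sum) to the words $g_{d(\S)}g_d$ with $d\in\mathscr D^f_{r,t}$ and show the coefficient of $g_{d(\S)}g_d$ is $1$ while all other cellular basis terms arising are lower. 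Saying ``an induction on $\prec$ gives'' this does not do the work; no mechanism is exhibited that forces triangularity, and without it the cardinality count alone does not establish a basis. Second, in (b)/(c) you expand $\n_{\t_{r+t-1}}\cdot h$ modulo $\mathscr B_{r+t-1}(\rho,q)^{\rhd(f_j,\alpha^{(j)})}$ and then left-multiply by $X_\t$. The expansion produces an error term $y\in\mathscr B_{r+t-1}(\rho,q)^{\rhd(f_j,\alpha^{(j)})}$, and you need $X_\t\,y$ to land inside $\mathscr B_{r,t}(\rho,q)^{\rhd(f,\lambda)}$ together with the span of $\n_{\t'}$ with $\t'_{r+t-1}\rhd\alpha^{(j)}$. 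That inclusion is exactly the statement of the branching filtration; it is not a formal consequence of associativity or of Definition~\ref{qwb}(g)--(l), and in the case where $X_\t$ carries $E_{r-l+1,l}$ it is genuinely delicate (a product of a lower cell ideal of the small algebra with an $E$-factor can a priori produce terms at the same level $f$). You flag this as ``the main technical obstacle'' but give no argument for it; filling that gap is essentially rederiving \cite[Theorem~4.15]{Rsong}, which the paper simply cites. Until both of these are supplied, (a) is incomplete and (b)/(c) rest on an unproved inclusion.
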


\begin{proof}  The main idea of this result is motivated by \cite{Enyang1}. When $t=0$, $\mathscr B_{r, 0}(\rho, q)=\mathscr H_r$. In this case, (a)-(c) follows from the  branching  rules  for Hecke algebras in  \cite{Ma}.

If $t\neq 0$, for any $\t\in \upd_{r,t}(\lambda)$ with $\t_{r+t-1}=\alpha^{(k)}$, let $\u=(\t_0, \ldots, \t_{r+t-1})\in \upd_{r+t-1}(\alpha^{(k)})$. By Definition~\ref{definition of m}, we have $\n_\t=y_{\alpha^{(k)}}^\lambda b_\u$,  where $y_{\alpha^{(k)}}^\lambda =n_\lambda h_{r+t}$, $b_\u$ and $h_{r+t}$ are  defined in the proof of Lemma~\ref{bt2}.
By  \cite[Theorem~4.15]{Rsong}, there is a filtration
$$0=N_0\subset N_1\subset N_2\subset\ldots\subset N_n=C(f,\lambda) $$
of $ \mathscr B_{r+t-1}(\rho, q)$-modules such that  $N_k=\sum_{j=1}^k y_{\alpha^{(j)}}^\lambda \mathscr B_{r+t-1}(\rho, q)+\mathscr B_{r,t}(\rho, q)^{\rhd (f,\lambda)}$, for $1\leq k\leq n$. Moreover,
\begin{equation}\label{br1}
 \phi:  C(f_k,\alpha^{(k)}) \cong N_k/N_{k-1},
 \end{equation} and  $\phi$ sends
 $n_{\alpha^{(k)}} g_{d(\S)}g_d +\mathscr B_{r+t-1}(\rho, q)^{\rhd (f_k,\alpha^{(k)})}$ to  $ y_{\alpha^{(k)}}^\lambda g_{d(\S)}g_d+N_{k-1}$, for all $ (\S,d)\in I(f_k, \alpha^{(k)})$. Hence $\phi$ sends $\n_\u+\mathscr B_{r+t-1}(\rho, q)^{\rhd (f_k,\alpha^{(k)})}$ to $\n_\t+N_{k-1}$.
By induction assumption on $\mathscr B_{r+ t-1}(\rho, q)$, the cell module  $C(f_k,\alpha^{(k)})$ has basis $\{\n_\u+\mathscr B_{r+t-1}(\rho, q)^{\rhd (f_k,\alpha^{(k)})} \mid \u\in \upd_{r+t-1}(\alpha^{(k)})\}$. Therefore,  $\{\n_\t+ N_{k-1}\mid\t\in{\upd_{r,t}}(\lambda),\t_{r+t-1}=\alpha^{(k)}\}$ is a basis of $N_{k}/N_{k-1} $. Since $N_0=0$, by induction assumption  on $k-1$, it is shown that $N_k=N^{\unrhd \alpha^{(k)}}$. We complete the proof of (a)--(c).
\end{proof}


The following result follows from standard arguments (cf. \cite[Theorem~2.7]{RS}) by using Theorem~\ref{mcell}.
\begin{Theorem}\label{mm}    For any  $\s,\t\in {\upd_{r,t}}(\lambda)$, and $(f,\lambda)\in\Lambda_{r,t}$,  let
 $\n_{\s\t}=\sigma(b_\s)n_\lambda b_\t$, where $\sigma$ is the anti-involution given in Lemma~\ref{three isom}(d). Then
\begin{enumerate}
\item $\mathscr M_{r,t}=\big\{\n_{\s\t}\mid\s,\t\in{\upd_{r,t}}(\lambda),(f,\lambda)\in\Lambda_{r,t}, 0\leq f\leq\min\{r,t\}\big\}$
is a free $R$-basis of $ \mathscr B_{r,t}(\rho, q)$.
\item $\sigma(\n_{\s\t})=\n_{\t\s}$ for all $\s,\t\in{\upd_{r,t}}(\lambda),(f,\lambda)\in\Lambda_{r,t}.$
\item Let $\tilde{ \mathscr B}_{r,t}(\rho, q)^{\rhd(f,\lambda)}$ be the free $R$-module of $ \mathscr B_{r,t}(\rho, q)$ generated by $\n_{\tilde{\s}\tilde{\t}}$
with $\tilde{\s},\tilde{\t}\in{\upd_{r,t}}(\mu),(k,\mu)\rhd(f,\lambda)$, then $\tilde{ \mathscr B}_{r,t}(\rho, q)^{\rhd(f,\lambda)}= \mathscr B_{r,t}(\rho, q)^{\rhd(f,\lambda)}$.
\item Fix $\s,\t\in{\upd_{r,t}}(\lambda)$ with $(f,\lambda)\in\Lambda_{r,t}$, for any $h\in \mathscr B_{r,t}(\rho, q)$, there exist $a_\u\in R$ which are independent of $\s$, such that $$\n_{\s\t}h\equiv\sum_{\u\in{\upd_{r,t}}(\lambda)}a_\u\n_{\s\u}(\text{mod } \mathscr B_{r,t}(\rho, q)^{\rhd(f,\lambda)}).$$
\end{enumerate}
\end{Theorem}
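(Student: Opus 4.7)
The plan is to deduce the four statements from the cellular basis of Theorem~\ref{celb} together with the cell-module basis supplied by Theorem~\ref{mcell}, following the template of \cite[Theorem~2.7]{RS}. The central idea is that $\mathscr M_{r,t}$ and the Graham--Lehrer basis $\mathcal C$ differ by an upper-triangular change with respect to the dominance filtration, so we may transfer structural properties between them.

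For (a), I would first match cardinalities: by Theorem~\ref{celb} the rank of $\mathscr B_{r,t}(\rho,q)$ is $\sum_{(f,\lambda)} |I(f,\lambda)|^2$, whereas Theorem~\ref{mcell}(a) identifies $|\upd_{r,t}(\lambda)|$ with $\dim_R C(f,\lambda)=|I(f,\lambda)|$, so $|\mathscr M_{r,t}|$ coincides with the rank of $\mathscr B_{r,t}(\rho,q)$. Then, working modulo $\mathscr B_{r,t}(\rho,q)^{\rhd(f,\lambda)}$, Theorem~\ref{mcell}(a) combined with Lemma~\ref{bt2} shows that the images $\{n_\lambda b_\t\}$ and $\{n_\lambda g_{d(\T)} g_d\}$ are two $R$-bases of $C(f,\lambda)$; applying $\sigma$ on the left and using Theorem~\ref{celb} then gives an upward-triangular transition matrix between $\mathscr M_{r,t}$ and $\mathcal C$ with respect to the poset $\Lambda_{r,t}$. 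By downward induction on $\unlhd$ starting from the maximal classes, every $C_{(\S,e)(\T,d)}$ is an $R$-linear combination of elements in $\mathscr M_{r,t}$; together with the cardinality match this upgrades spanning to a basis.

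For (b), the computation $\sigma(\n_{\s\t})=\sigma(b_\t)\sigma(n_\lambda)\sigma(\sigma(b_\s))=\sigma(b_\t)\,n_\lambda\, b_\s=\n_{\t\s}$ reduces to the identity $\sigma(n_\lambda)=n_\lambda$. This in turn follows because $\sigma$ fixes every generator, the Hecke symmetrizers $y_{\lambda^{(1)}}$ and $y_{\lambda^{(2)}}$ are $\sigma$-invariant (the involution $w\mapsto w^{-1}$ preserves the length function on each parabolic $\mathfrak S_\lambda$), and $\sigma(E^f)=E^f$ by the remark following Lemma~\ref{three isom} together with a direct telescoping computation using $\sigma(g_{r,i})=g_{i,r}$. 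Part (c) is then immediate from (a): the submodule $\tilde{\mathscr B}_{r,t}(\rho, q)^{\rhd(f,\lambda)}$ is free of the same rank as $\mathscr B_{r,t}(\rho, q)^{\rhd(f,\lambda)}$, and the triangular comparison of bases gives containment, hence equality. Finally, (d) is the standard cellular right-action property: by Theorem~\ref{mcell}(b)--(c), right multiplication by $h$ on the class of $\n_\t$ in $C(f,\lambda)$ produces coefficients $a_\u\in R$ depending only on $\t$ and $h$, and left-multiplying by $\sigma(b_\s)$ lifts the congruence to $\mathscr B_{r,t}(\rho,q)$, as $\sigma(b_\s)\cdot\mathscr B_{r,t}(\rho,q)^{\rhd(f,\lambda)}\subseteq\mathscr B_{r,t}(\rho,q)^{\rhd(f,\lambda)}$ since the ideal is two-sided.

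The main obstacle will be the triangularity step inside part (a), because $\upd_{r,t}(\lambda)$ and $I(f,\lambda)$ index bases of the same cell module in genuinely different combinatorial ways, and one must trace through the inductive filtration from \cite[Theorem~4.15]{Rsong} used in the proof of Theorem~\ref{mcell} in order to extract a compatible ordering. Once the cell-module filtration is seen to match the dominance order on $\Lambda_{r,t}$ and on $\upd_{r,t}(\lambda)$ consistently, the remaining arguments are standard manipulations of cellular bases.
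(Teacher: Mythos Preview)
Your proposal is correct and matches the paper's approach: the paper gives no detailed argument at all, merely stating that the result ``follows from standard arguments (cf.\ \cite[Theorem~2.7]{RS}) by using Theorem~\ref{mcell}'', and your outline is precisely a careful unpacking of those standard arguments. Your identification of the triangularity step in part (a) as the only nontrivial point, and your reduction of it to the filtration in Theorem~\ref{mcell} inherited from \cite[Theorem~4.15]{Rsong}, is exactly what the paper intends.
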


Let  $[k]:=\frac{q^k-q^{-k}}{q-q^{-1}}$ for $k\in\mathbb Z$. If   $p$ is in the $i$th row and $j$th column of a Young diagram, we write \begin{equation} \label{ccont}c(p)=j-i,\end{equation} and call it  the  content of the box  $p$.
For any  $\t\in{\upd_{r,t}}(\lambda)$ and $1\leq k\leq r+t$,  we define
\begin{equation} \label{content}c_\t(k)=\begin{cases}{\delta}-\varepsilon\rho^\varepsilon q^{-\varepsilon c(p)}[\varepsilon c(p)],&\text{ if } \t_{k-1}=\t_{k}\cup p,\\
\varepsilon \rho^\varepsilon q^{-\varepsilon c(p)}[\varepsilon c(p)],&\text{ if } \t_{k-1}=\t_{k}\setminus p,
\end{cases}
\end{equation}
where $ \varepsilon=-1$ (resp., $1$) if $p$ is in either $\t_{k}^{(1)}$ or $\t_{k-1}^{(1)}$ (resp., either  $\t_{k}^{(2)}$ or
$ \t_{k-1}^{(2)}$).

\begin{Theorem}\label{ll} Suppose $\s,\t\in{\upd_{r,t}}(\lambda)$ with $(f,\lambda)\in\Lambda_{r,t}$ and $1\leq k\leq r+t$. Then
\begin{equation}\label{lll} \n_{\s\t}x_{k}\equiv c_\t(k)\n_{\s\t}+\sum_{\u\in{\upd_{r,t}}(\lambda)\atop\t\overset{k-1} \prec \u}a_\u\n_{\s\u}\quad (\text{mod }\mathscr B_{r,t}^{\rhd(f,\lambda)}) \end{equation}
for some $a_\u\in R$.
\end{Theorem}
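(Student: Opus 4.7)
The plan is to proceed by induction on $r+t$, mimicking the standard Jucys--Murphy argument for cellular algebras (cf.\ \cite[Theorem~2.7]{RS}). By Theorem~\ref{mm}(d) the coefficients $a_\u$ in the expansion of $\n_{\s\t}x_k$ modulo $\mathscr B_{r,t}(\rho,q)^{\rhd(f,\lambda)}$ are independent of $\s$, so it suffices to verify the claimed triangular formula for $\n_\t x_k$ inside the right cell module $C(f,\lambda)$. The base case $r+t=1$ is trivial, since $x_1=0=c_\t(1)$.

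For the inductive step with $1\le k\le r+t-1$, set $\alpha:=\t_{r+t-1}$ and locate it as $\alpha^{(k_0)}$ in the enumeration preceding Theorem~\ref{mcell}. By Theorem~\ref{mcell}(c), the quotient $N^{\unrhd\alpha^{(k_0)}}/N^{\rhd\alpha^{(k_0)}}$ is isomorphic as a $\mathscr B_{r+t-1}(\rho,q)$-module to $C(f_{k_0},\alpha)$, and the isomorphism sends $\n_\t+N^{\rhd\alpha^{(k_0)}}$ to $\n_{\t_{r+t-1}}$, where $(\t_0,\dots,\t_{r+t-1})$ is regarded as an up-down $\alpha$-tableau in $\upd_{r+t-1}(\alpha)$. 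Since $x_k\in\mathscr B_{r+t-1}(\rho,q)$, the inductive hypothesis gives the desired triangular expansion for $\n_{\t_{r+t-1}}x_k$ with leading coefficient $c_{\t_{r+t-1}}(k)=c_\t(k)$. Pulling back, each lifted higher term $\n_\u$ still satisfies $\u\overset{k-1}\succ\t$ (the witnessing level $l\ge k-1$ is unchanged), and the extra contributions from the ideal $N^{\rhd\alpha^{(k_0)}}$ are spanned by $\n_\u$ with $\u_{r+t-1}\rhd\alpha$, satisfying $\u\overset{r+t-1}\succ\t$ and a fortiori $\u\overset{k-1}\succ\t$.

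For $k=r+t$, I would exploit the decomposition $x_{r+t}=c_{r,t}-c_{r,t-1}$ from Definition~\ref{jme}. Both $c_{r,t}$ and $c_{r,t-1}$ are central in their respective algebras (Lemma~\ref{y}(b) and \cite{Rsong}), and by cellularity each acts by a scalar on the relevant cell module: $c_{r,t}$ by $\kappa(f,\lambda)$ on $C(f,\lambda)$, and $c_{r,t-1}$ by $\kappa(f_{k_0},\alpha)$ on $C(f_{k_0},\alpha)$. (The scalar action follows from centrality together with the fact that the cell module is cyclically generated by $n_\lambda=\n_{\t_\lambda}$.) Under the identification of Theorem~\ref{mcell}(c), $x_{r+t}$ therefore acts on $\n_\t$ modulo $N^{\rhd\alpha^{(k_0)}}$ as the scalar $\kappa(f,\lambda)-\kappa(f_{k_0},\alpha)$, while the higher-order contributions, lying inside $N^{\rhd\alpha^{(k_0)}}$, are automatically of the form $\n_\u$ with $\u\overset{r+t-1}\succ\t$.

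The main technical obstacle is the content identity $\kappa(f,\lambda)-\kappa(f_{k_0},\alpha)=c_\t(r+t)$. I plan to settle it by a closed-form evaluation of $\kappa(f,\lambda)$: summing \eqref{content} along any $\t\in\upd_{r,t}(\lambda)$ and telescoping the contributions from boxes that are added and later removed (each such pair contributes $\delta$ through the $\delta$-term of the ``removal'' case in \eqref{content}), one obtains
\begin{equation*}
\sum_{j=1}^{r+t}c_\t(j)=f\delta+\rho^{-1}\sum_{p\in\lambda^{(1)}}q^{c(p)}[c(p)]+\rho\sum_{p\in\lambda^{(2)}}q^{-c(p)}[c(p)],
\end{equation*}
a function of $(f,\lambda)$ alone. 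Evaluating on the maximal tableau $\t^\lambda$---where Case $k\le r+t-1$ produces no higher terms and hence diagonalises $c_{r,t}$ on $\n_{\t^\lambda}$---identifies this common value with $\kappa(f,\lambda)$, and subtracting the analogous formula for $\kappa(f_{k_0},\alpha)$ isolates precisely $c_\t(r+t)$, completing the induction.
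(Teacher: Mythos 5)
Your overall strategy coincides with the paper's: writing $x_{r+t}=c_{r,t}-c_{r,t-1}$, invoking centrality to get a scalar action on the relevant cell/quotient module, using Theorem~\ref{mcell}(c) to pass to $\mathscr B_{r+t-1}(\rho,q)$, and then iterating for $k<r+t$. The paper treats $k=r+t$ first and then simply says ``repeating the above arguments yields \eqref{lll} for general $k$,'' whereas you run the recursion as an induction on $r+t$; that organizational difference is immaterial.

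The genuine gap is in how you obtain the scalar action and the scalar \emph{value}. First, the assertion that a central element acts by a scalar on a cyclic module is false in general (take $A=R\times R$ acting on itself and $z=(1,0)$). What is actually true and needed here is the statement of \cite[Lemma~6.3]{Rsong}, which the paper cites precisely to supply both facts: that $c_{r,t}$ acts on $C(f,\lambda)$ by a scalar and what that scalar equals; establishing this requires more than ``centrality plus cyclicity'' (e.g., semisimplicity over $\mathbb C(\rho,q)$ and then descending to $R$, or a direct computation with $n_\lambda c_{r,t}$). Second, your proposed identification of the scalar $\kappa(f,\lambda)$ by evaluating on $\t^\lambda$ is circular at exactly the critical point: the case $k\le r+t-1$ of the theorem (via maximality of $\t^\lambda$) gives you $\n_{\t^\lambda}x_k=c_{\t^\lambda}(k)\n_{\t^\lambda}$ only for $k<r+t$, so ``diagonalising $c_{r,t}$ on $\n_{\t^\lambda}$'' requires the action of $x_{r+t}$ on $\n_{\t^\lambda}$ --- which is the $k=r+t$ instance you are trying to prove. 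Your telescoping computation of $\sum_j c_\t(j)$ is a sound combinatorial identity, but it identifies the eigenvalue with $\kappa(f,\lambda)$ only \emph{after} you already know what $\kappa(f,\lambda)$ is; it does not derive it. Citing \cite[Lemma~6.3]{Rsong} (or re-proving it by a direct computation on $n_\lambda$) would close both gaps, and then the rest of your argument goes through as written.
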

\begin{proof}   By \cite[Lemma~6.3]{Rsong}, the central element  $c_{r, t}$ in \eqref{cenele} acts on $C(f, \lambda)$ as the scalar
  \begin{equation}\label{crtlambda}
c_{r+t}(\lambda):=f\delta+\rho^{-1}q^{c(p)}[c(p)]-\rho q^{-c(p)}[c(p)].
 \end{equation}
 Similarly, by  Theorem~\ref{mcell}(c), $c_{r, t}-x_{r+t}$ acts on the quotient module $N^{\unrhd \t_{r+t-1}}/N^{\rhd \t_{r+t-1}}$ as the  scalar $c_{r+t-1}(\t_{r+t-1})$, too. This implies  \eqref{lll} for $k=r+t$. Repeating above arguments yields   \eqref{lll}  for general $k$.
 \end{proof}

By Lemma~\ref{y}(b) and Theorem~\ref{ll}, we see that the basis of $\mathscr B_{r, t}(\rho, q)$ given  in Theorem~\ref{mm}(a) is the Jucys-Murphy basis in the sense of \cite{Ma2}.
Later on, we  use $\n_\t$ to denote  $\n_\t+\mathscr B_{r,t}(\rho, q)^{\rhd(f,\lambda)} $ so as  to simplify the notation.
From \cite{GL}, there is a symmetric invariant bilinear form $\langle \ , \ \rangle:C(f,\lambda)\times C(f,\lambda)\rightarrow R$.
 More explicitly, the scalar $\langle\n_\s,\n_\t\rangle\in R$ is determined by
\begin{equation}\label{comofm} \n_{\tilde{\s}\s}\n_{\t\tilde{\t}}\equiv\langle\n_\s,\n_\t\rangle\n_{\tilde{\s}\tilde{\t}}\quad(\text{mod } \mathscr B_{r,t}(\rho, q)^{\rhd(f,\lambda)}),
\text{ for all } \tilde{\s},\tilde{\t}\in{\upd_{r,t}}(\lambda).\end{equation}
By general results on cellular algebras in \cite{GL}, the above symmetric invariant bilinear form is independent of $\tilde{\s},\tilde{\t}\in{\upd_{r,t}}(\lambda)$.

\section{ Orthogonal representations  }
For any field $\kappa$ containing invertible elements $\rho,q$ and $q-q^{-1}$, we have the algebra $ $
$B_\kappa:=B\otimes_R\kappa$  for any free $R$-algebra $B$ (with finite rank).  We will leave out $\kappa$ in the subscripts.
In this section, we  assume  $\kappa=\mathbb C(\rho ,q)$. By \cite[Theorem~6.10]{Rsong}, $\mathscr B_{r,t}(\rho, q)$ is semisimple over $\kappa$.

\begin{Lemma}\label{conditionofcontent} Suppose that $\t\in{\upd_{r,t}}(\lambda)$ and $\s\in{\upd_{r,t}}(\mu)$ for some $(f,\lambda), (l,\mu)\in\Lambda_{r,t}$.
\begin{enumerate}
\item $\s=\t$ if and only if $c_\s(k)=c_\t(k)$ for all $1\leq k\leq r+t$.
\item If  $k\in \{1, \ldots, r-1, r+1, \ldots, r+t-1\}$, then $c_\t(k)- c_\t(k+1)\neq 0$.
\item If $\t_{r-1}\neq \t_{r+1}$, then $c_\t(r)+ c_\t(r+1)\neq \delta $.
\end{enumerate}
\end{Lemma}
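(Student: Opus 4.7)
The plan is to work throughout over $\kappa=\mathbb C(\rho,q)$, in which $\rho$ and $q$ are algebraically independent. First I would rewrite the three cases of \eqref{content} in a convenient simplified form: using $[-c]=-[c]$, adding a node $p$ in the first component gives $c_\t(k)=\rho^{-1}q^{c(p)}[c(p)]$; adding $p$ in the second component gives $\rho q^{-c(p)}[c(p)]$; and removing $p$ from the first component gives $\delta-\rho^{-1}q^{c(p)}[c(p)]$. The whole argument then rests on two elementary facts that I would establish at the outset: (i) any two distinct addable (or removable) corners of a single Young diagram lie on distinct diagonals and hence have distinct contents, and (ii) each of the three maps $c\mapsto \rho^{-1}q^c[c]$, $c\mapsto\rho q^{-c}[c]$ and $c\mapsto\delta-\rho^{-1}q^c[c]$ is injective on $\mathbb Z$, since $q$ is not a root of unity in $\kappa$.

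For part (a) I would induct on $k$, with base case $\s_0=\t_0=((0),(0))$. Assuming $\s_{k-1}=\t_{k-1}$, only one type of move is possible at step $k$ if $k\le r$ (addition to the first component), while if $k>r$ step $k$ is either add-to-second or remove-from-first. In the latter situation, equating the two possible normalized contents rearranges, after multiplying by $q-q^{-1}$ and cancelling, to an identity of the form $\rho^2=q^N$ for some $N\in\mathbb Z$, which is impossible since $\rho$ is transcendental over $\mathbb C(q)$. Hence $c_\s(k)=c_\t(k)$ forces $\s$ and $\t$ to perform the same \emph{type} of move at step $k$; then (ii) forces the two moves to involve a node of the same content; and finally (i), applied to the common diagram $\s_{k-1}=\t_{k-1}$, pins down the node itself.

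For part (b) I would split into cases according to the types of moves at steps $k$ and $k+1$. These two steps always lie in a common ``sector'': if $k\le r-1$ both are additions to the first component, while if $k\ge r+1$ each is either an addition to $\t^{(2)}$ or a removal from $\t^{(1)}$. If both moves are of the same type, they form two consecutive additions (or, reading in reverse for two removals, two consecutive additions) in a standard Young-tableau build-up, so the two nodes involved have distinct contents by (i), and then (ii) upgrades this to $c_\t(k)\ne c_\t(k+1)$. In the mixed case, possible only for $k\ge r+1$, the equality $c_\t(k)=c_\t(k+1)$ again reduces after a short manipulation to $\rho^2=q^N$ and is therefore ruled out.

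For part (c), step $r$ is necessarily an addition of some node $p$ to the first component. If step $r+1$ is an addition of $p'$ to the second component, then $c_\t(r)+c_\t(r+1)=\delta$ simplifies to $\rho^2=q^{2c(p)+2c(p')}$, impossible. If instead step $r+1$ is a removal of $p'$ from $\t_r^{(1)}=\t_{r-1}^{(1)}\cup p$, the same equation collapses to $c(p)=c(p')$; but $p$ is itself a removable corner of $\t_r^{(1)}$ (having just been added), so (i) forces $p=p'$, whence $\t_{r+1}=\t_{r-1}$, contradicting the hypothesis. The main obstacle throughout will be the bookkeeping needed to verify that the three normalized expressions for $c_\t(k)$ really are pairwise non-coinciding over $\kappa$ and that the case analysis is exhaustive; once that is organised cleanly, every subcase collapses to either (i) or to the transcendence of $\rho$ over $\mathbb C(q)$.
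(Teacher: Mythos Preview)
Your proof is correct and follows the same approach as the paper: reduce by induction to a single step, split by move type, and in the mixed-type cases derive the impossible relation $\rho^2=q^N$ from the algebraic independence of $\rho$ and $q$. The paper is much terser for parts (b) and (c), saying only that one checks directly, so your case analysis is a welcome expansion; the only minor imprecision is that in the same-type subcase of (b) the two consecutive nodes are addable to \emph{different} diagrams $\t_{k-1}$ and $\t_k$, so strictly speaking you need the standard fact that consecutive entries of a standard tableau lie on distinct diagonals rather than your fact~(i) as stated.
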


\begin{proof} In order to show (a), we need to verify  $\s=\t $ if $c_\s(k)=c_\t(k)$ for all $1\leq k\leq r+t$.
 By induction on $r+t$, we have  $ \s_{k}=\t_{k}$, $1\leq k\leq r+t-1$. Since $\s$ and $\t$ play the similar role,  there are  four   cases that  we need to discuss as follows:
 \begin{enumerate}
 \item[(1)] $\t_{r+t}=\t_{r+t-1}\cup p_1$, $\s_{r+t}=\t_{r+t-1}\cup p_2$, $p_1,p_2\in\mathscr A(\t_{r+t-1}^{(2)})$ with $t>0$,
 \item[(2)] $\t_{r+t}=\t_{r+t-1}\cup p_1$, $\s_{r+t}=\t_{r+t-1}\cup p_2$, $p_1,p_2\in\mathscr A(\t_{r+t-1}^{(1)})$ with $t=0$,
 \item[(3)] $\t_{r+t}=\t_{r+t-1}\setminus p_1$, $\s_{r+t}=\t_{r+t-1}\setminus p_2$, $p_1,p_2\in\mathscr R(\t_{r+t-1}^{(1)})$ with $t>0$,
 \item[(4)] $\t_{r+t}=\t_{r+t-1}\cup p_1$, $\s_{r+t}=\t_{r+t-1}\setminus p_2 $, $p_1\in\mathscr A(\t_{r+t-1}^{(2)})$, $p_2\in \mathscr R(\t_{r+t-1}^{(1)})$ with $t>0$.
 \end{enumerate} In the  cases (1)-(3),  since $\rho$ and $q$ are indeterminates, by ~\eqref{content}, we have   $c(p_1)=c(p_2)$, forcing $p_1=p_2$. So  $\t_{r+t}=\s_{r+t}$.
 We claim that (4) never happens. Otherwise,    $\rho^2=q^{2(c(p_1)+c(p_2))}$ in $\mathbb C(\rho,q)$. This is  a  contradiction.
Finally, one can check  (b)-(c) directly.
\end{proof}


\begin{Defn}\label{8}
 For any $\t,\s\in{\upd_{r,t}}(\lambda)$ with $(f,\lambda)\in\Lambda_{r,t}$, define
\begin{enumerate}
\item $\mathscr R(k)=\{c_\t(k)\mid\t\in{\upd_{r,t}}(\lambda),(f,\lambda)\in\Lambda_{r,t}\}$, $1\leq k\leq r+t$,
\item $F_\t=\prod\limits_{k=1}^{r+t} F_{\t, k}$ where $F_{\t, k}=\prod\limits_{a\in \mathscr R(k)\atop c_\t(k)\neq a}\frac{x_k-a}{c_\t(k)-a},$
\item $f_{\s\t}=F_\s\n_{\s\t}F_\t$,
\item $f_\s=\n_\s F_\s$,
\end{enumerate}
\end{Defn}
We remark that in the case  (d), we omit $( \text{mod } \mathscr B_{r,t}(\rho, q)^{\rhd(f,\lambda)})$  for the
simplification of notation. Therefore, (d) should be read as $f_\s\equiv\n_\s F_\s\ ( \text{mod } \mathscr B_{r,t}(\rho, q)^{\rhd(f,\lambda)})$.
By  general results on orthogonal form of cellular algebras in   \cite{Ma2},  $\mathscr B_{r,t}(\rho, q)$ has  orthogonal basis $\{f_{\s\t}\mid \s,\t\in{\upd_{r,t}}(\lambda),(f,\lambda)\in\Lambda_{r,t}\}$ in the sense that
  $f_{\s\t}f_{\u\v}=\delta_{\t,\u}\langle f_\t,f_\t\rangle f_{\s\v}$, where $\delta_{\t,\u}$ is the Kronecker function and $\langle \ , \ \rangle$ is the invariant form defined on the cell module $C(f, \lambda)$ in \eqref{comofm}. Further,
    \begin{equation} \label{eigenv1} f_{\s\t}x_k=c_\t(k)f_{\s\t}, \text{ $f_\t x_k=c_\t(k)f_\t$,\ \
  $ f_{\s\t}F_\u=\delta_{\t,\u}f_{\s\t}$, and  $f_\t F_\u=\delta_{\t,\u}f_\t$}\end{equation}
  for all  $1\leq k\leq r+t$, and all $\s, \t\in \upd_{r,t}(\lambda)$ and $\u\in \upd_{r,t}(\mu)$.
  The cell module $C(f, \lambda)$ has orthogonal basis $\{f_\s\mid \s\in \upd_{r,t}(\lambda)\}$ and  the transition matrix between two bases $\{\n_\t\mid \t\in \upd_{r,t}(\lambda)\}$ and $\{f_\t\mid \t\in \upd_{r,t}(\lambda)\}$ is upper-unitriangular (see Theorem~\ref{ll}).
  The following result follows from    Theorem~\ref{mcell} and Theorem~\ref{ll}.

\begin{Cor} \label{fmcell}   Suppose $(f, \lambda)\in \Lambda_{r, t}$. Keep the setup in Theorem~\ref{mcell}.
 \begin{enumerate}
 \item   $\{f_\t \mid\t\in{\upd_{r,t}}(\lambda), \t_{r+t-1}\unrhd \alpha^{(k)} \}$ is a $\kappa$-basis of $N^{\unrhd \alpha^{(k)}}$.
\item $\{f_\t \mid\t\in{\upd_{r,t}}(\lambda), \t_{r+t-1}\rhd \alpha^{(k)} \}$ is a $\kappa$-basis of $N^{\rhd \alpha^{(k)}}$.
\item The isomorphism in Theorem~\ref{mcell}(c) sends $f_\t+N^{\rhd \alpha^{(k)}}$
to $f_{\t_{r+t-1}}$.

\end{enumerate}
\end{Cor}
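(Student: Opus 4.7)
The plan is to deduce the corollary from the upper-unitriangular relationship between the bases $\{\n_\t\}$ and $\{f_\t\}$, combined with Theorem~\ref{mcell} and a simple compatibility check between the order $\prec$ on $\upd_{r,t}(\lambda)$ and the dominance order on $\Lambda_{r+t-1}$ at the $(r+t-1)$-th position.

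First I would record the triangularity: iterating Theorem~\ref{ll} and using Definition~\ref{8}, one gets
\[
f_\t \equiv \n_\t + \sum_{\u\in\upd_{r,t}(\lambda),\ \u\succ \t} c_{\t\u}\, \n_\u \pmod{\mathscr B_{r,t}(\rho,q)^{\rhd(f,\lambda)}},
\]
since each factor $F_{\t,k}$ evaluates to $1$ at $c_\t(k)$ while only $\n_\u$ with $\t\overset{k-1}\prec\u$ can occur from $\n_\t x_k$. Thus the transition matrix between $\{\n_\t\}$ and $\{f_\t\}$ is unitriangular with respect to $\prec$.

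Next I would verify the key combinatorial fact: if $\t_{r+t-1}=\alpha^{(j)}$ and $\t\prec\u$, then $\u_{r+t-1}=\alpha^{(j')}$ for some $j'\le j$. Indeed, $\t\overset{k}\prec\u$ provides an $l\ge k$ with $\t_i=\u_i$ for $i>l$ and $(l_{\t_l},\t_l)\lhd(l_{\u_l},\u_l)$. If $l\le r+t-2$, then $\t_{r+t-1}=\u_{r+t-1}$; if $l=r+t-1$, then $(l_{\t_{r+t-1}},\t_{r+t-1})\lhd(l_{\u_{r+t-1}},\u_{r+t-1})$, and because the $\alpha^{(j)}$'s are enumerated in strictly decreasing order, this forces $j'<j$. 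Either way the sets $\{\t:\t_{r+t-1}\unrhd\alpha^{(k)}\}$ and $\{\t:\t_{r+t-1}\rhd\alpha^{(k)}\}$ are upward closed under $\prec$.

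For parts (a) and (b), Theorem~\ref{mcell}(a) gives that the corresponding subsets of $\{\n_\t\}$ are $\kappa$-bases of $N^{\unrhd\alpha^{(k)}}$ and $N^{\rhd\alpha^{(k)}}$; the triangularity together with the upward-closedness then immediately yields that the analogous subsets of $\{f_\t\}$ are also $\kappa$-bases, proving (a) and (b).

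For part (c), I would exploit that $\phi$ is a $\mathscr{B}_{r+t-1}(\rho,q)$-module map and that $x_1,\dots,x_{r+t-1}\in\mathscr{B}_{r+t-1}(\rho,q)$ (from Definition~\ref{jme} and the identification of $I_{r,t}$-indices). Writing $\hat{\t}=(\t_0,\dots,\t_{r+t-1})\in\upd_{r+t-1}(\alpha^{(k)})$, the element $v:=\phi(f_\t+N^{\rhd\alpha^{(k)}})$ satisfies $v\,x_j=c_\t(j)v=c_{\hat{\t}}(j)v$ for $1\le j\le r+t-1$, by \eqref{eigenv1} and the intertwining property of $\phi$. On the other hand, applying $\phi$ termwise to the triangular expansion above and discarding terms with $\u_{r+t-1}\rhd\alpha^{(k)}$ (which vanish modulo $N^{\rhd\alpha^{(k)}}$) yields $v=\n_{\hat{\t}}+\sum_{\hat{\u}\succ\hat{\t}} c_{\t\u}\n_{\hat{\u}}$ in $C(f_k,\alpha^{(k)})$, so $v$ has leading coefficient $1$ in the $\prec$-triangular expansion. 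Lemma~\ref{conditionofcontent}(a) guarantees the eigenvalues $(c_{\hat{\t}}(1),\dots,c_{\hat{\t}}(r+t-1))$ distinguish $\hat{\t}$ among elements of $\upd_{r+t-1}(\alpha^{(k)})$, hence the one-dimensional joint eigenspace spanned by $f_{\hat{\t}}$ contains $v$; the matching leading terms force $v=f_{\hat{\t}}$.

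The only potential subtlety is ensuring the triangular expansion of $f_\t$ really involves only $\n_\u$ with $\u\succ\t$ inside $\upd_{r,t}(\lambda)$ (not across different $\lambda$); this is immediate because we work modulo $\mathscr B_{r,t}(\rho,q)^{\rhd(f,\lambda)}$ throughout, so higher cell contributions are killed. With that remark, the argument is entirely bookkeeping on top of Theorems~\ref{mcell} and~\ref{ll}.
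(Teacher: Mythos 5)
Your proposal is correct. The paper itself offers no written proof, stating only that the corollary "follows from Theorem~\ref{mcell} and Theorem~\ref{ll}," and your argument is precisely the fill-in this phrasing invites: you use the $\prec$-unitriangularity of the transition from $\{\n_\t\}$ to $\{f_\t\}$ coming from Theorem~\ref{ll}, the compatibility of $\prec$ with the dominance order at position $r+t-1$, and the joint-eigenvalue separation of Lemma~\ref{conditionofcontent}(a) to pin down the image of $f_\t$ under the isomorphism of Theorem~\ref{mcell}(c).

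One small clean-up worth noting for part (c): once you know $v=\phi(f_\t+N^{\rhd\alpha^{(k)}})$ is nonzero (which follows from (a)–(b)) and that $v$ is a simultaneous eigenvector for $x_1,\dots,x_{r+t-1}$ with the eigenvalue string of $\hat\t$, you can avoid the explicit re-expansion of $v$ in the $\n$-basis. Since $F_{\hat\t}\in\mathscr H_{r+t-1}\subset\mathscr B_{r+t-1}(\rho,q)$, the module map $\phi$ satisfies
$v=\phi\bigl(f_\t F_{\hat\t}\bigr)=\phi(f_\t)F_{\hat\t}=vF_{\hat\t}$,
so $v$ lies in the image of the idempotent $F_{\hat\t}$; then $v=\phi(\n_\t)F_{\hat\t}=\n_{\hat\t}F_{\hat\t}=f_{\hat\t}$ by Theorem~\ref{mcell}(c), using only that $\phi$ is $\mathscr B_{r+t-1}(\rho,q)$-linear. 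This shortcut gives the same conclusion as your leading-term comparison, and either version is a valid completion of the paper's omitted proof.
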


   Let $G_{f,\lambda}$ (resp., $\tilde{G}_{f,\lambda}$) be the Gram matrix associated to the cell module $C(f,\lambda)$,   which is defined via its Jucys-Murphy basis in Theorem~\ref{mm} (resp., orthogonal basis). Then \begin{equation}\label{det2} \det G_{f,\lambda}=\det \tilde{G}_{f,\lambda}=\prod_{\t\in \upd_{r,t}(\lambda)}
    \langle f_\t,f_\t\rangle.\end{equation}
   Note that $\langle f_\t,f_\t\rangle\neq0 $ for any $\t\in \upd_{r,t}(\lambda)$ under the assumption that $\kappa=\mathbb C(\rho,q)$ (since $\mathscr B_{r,t}(\rho,q)$ is semisimple).
We will compute $\det G_{f, \lambda}$ via \eqref{det2} in Section 4, later on.
In the remaining part of this section, we describe the actions of the generators of $\mathscr B_{r,t}(\rho, q)$ on $f_\t$ for any $\t\in \upd_{r,t}(\lambda)$.
 We  write

\begin{equation}\label{eststsk}
 f_\t g_k=\sum\limits_{\s\in{\upd_{r,t}}(\lambda)}S_{\t,\s}(k)f_\s,\ \
f_\t g^*_l=\sum\limits_{\s\in{\upd_{r,t}}(\lambda)}S_{\t,\s}(r+l)f_\s, \text{ and  $f_\t E=\sum\limits_{\s\in{\upd_{r,t}}(\lambda)}E_{\t,\s}(r)f_\s$, }\end{equation}
where $E_{\t,\s}(r), S_{\t,\s}(k),S_{\t,\s}(r+l)\in \kappa$ for all $1\le k\le r-1$, $1\le l\le t-1$.

\begin{Defn} Suppose  $\t,\s\in{\upd_{r,t}}(\lambda)$ for some  $(f,\lambda)\in\Lambda_{r,t}$. Fix $1\leq k\leq r+t$, we   write  $\t\overset{k}\sim\s$ if
  $\t_j=\s_j$ for all  $j\neq k$.
\end{Defn}

\begin{Lemma}\label{co} Suppose that $\s, \t\in{\upd_{r,t}}(\lambda)$ for some  $(f,\lambda)\in\Lambda_{r,t}$.
\begin{enumerate}
\item If $S_{\t,\s}(k)\neq0$ and $k\neq r$, then  $ \s\overset{k}\sim\t$.
\item  If $E_{\t,\s}(r)\neq0$, then $\s\overset{r}\sim\t$ and $\t_{r-1}=\t_{r+1}$.
\end{enumerate}
\end{Lemma}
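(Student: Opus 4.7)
The plan is to exploit the fact that each $f_\t$ is a simultaneous eigenvector for the Jucys--Murphy elements $x_j$ with eigenvalue $c_\t(j)$ (by \eqref{eigenv1}), combined with the commutation relations in Lemma~\ref{y}. In both parts one computes a product of the form $f_\t y x_j$ in two ways, once using the commutation of $y$ with $x_j$ and once using the expansion \eqref{eststsk}, and then extracts the possible $\s$'s by an eigenvalue argument.

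\textbf{Part (a).} Set $h_k=g_k$ if $1\le k\le r-1$ and $h_k=g_{k-r}^*$ if $r+1\le k\le r+t-1$. By Lemma~\ref{y}(c)-(d), $h_k$ commutes with $x_j$ for all $j\notin\{k,k+1\}$. Comparing
\[
f_\t h_k x_j=\sum_{\s}S_{\t,\s}(k)\,c_\s(j)\,f_\s
\quad\text{and}\quad
f_\t x_j h_k=c_\t(j)\sum_{\s}S_{\t,\s}(k)\,f_\s,
\]
we obtain $S_{\t,\s}(k)(c_\s(j)-c_\t(j))=0$ for $j\notin\{k,k+1\}$; thus $S_{\t,\s}(k)\neq 0$ forces $c_\s(j)=c_\t(j)$ for all such $j$. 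To promote this to $\s\overset{k}\sim\t$, I would run a two-sided induction: upward from $\s_0=\t_0=\emptyset$ one gets $\s_j=\t_j$ for $0\le j\le k-1$, and downward from $\s_{r+t}=\t_{r+t}=\lambda$ one gets $\s_j=\t_j$ for $k+1\le j\le r+t$. The inductive step (essentially the content of Lemma~\ref{conditionofcontent}(a)) is that, over $\kappa=\mathbb C(\rho,q)$, the content $c_\s(j)$ together with either of $\s_{j-1}$ or $\s_j$ determines the other bi-partition uniquely.

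\textbf{Part (b).} By Lemma~\ref{y}(e), $E$ commutes with $x_j$ for $j\neq r,r+1$. The same eigenvalue argument as in part (a), with $h_k$ replaced by $E$, gives $c_\s(j)=c_\t(j)$ for all $j\notin\{r,r+1\}$ whenever $E_{\t,\s}(r)\neq 0$, and the same two-sided induction then yields $\s\overset{r}\sim\t$. For the identity $\t_{r-1}=\t_{r+1}$, I use the further relation $E(x_r+x_{r+1})=\delta E$ from Lemma~\ref{y}(e): computing
\[
\sum_{\s}E_{\t,\s}(r)\bigl(c_\s(r)+c_\s(r+1)\bigr)f_\s=f_\t E(x_r+x_{r+1})=\delta\sum_{\s}E_{\t,\s}(r)f_\s
\]
shows that every $\s$ appearing with $E_{\t,\s}(r)\neq 0$ must satisfy the scalar equation $c_\s(r)+c_\s(r+1)=\delta$. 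Since $\s\overset{r}\sim\t$, the step $\s_{r-1}\to\s_r$ adds a node $p_r$ to component~$1$, while $\s_r\to\s_{r+1}$ is either an addition of $p_{r+1}$ to component~$2$ or a removal of $p_{r+1}$ from component~$1$. Substituting the explicit content formulae \eqref{content} into the constraint and using that $\rho$ and $q$ are algebraically independent over $\mathbb C$, the first case is ruled out (it would force an identity like $q^{2c(p_r)}=0$), while the second forces $c(p_{r+1})=c(p_r)$. Because the removable nodes of $\s_r^{(1)}=\s_{r-1}^{(1)}\cup p_r$ have pairwise distinct contents, the box $p_{r+1}$ must coincide with $p_r$, so $\s_{r+1}^{(1)}=\s_{r-1}^{(1)}$; the second component is unchanged, giving $\s_{r+1}=\s_{r-1}$, and combined with $\s\overset{r}\sim\t$ this is $\t_{r-1}=\t_{r+1}$.

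The main obstacle is the last step of part (b): one has to carry out the case analysis of $c_\s(r)+c_\s(r+1)=\delta$ via \eqref{content} in the transcendental field $\mathbb C(\rho,q)$, eliminate the component-$2$ addition branch, and then use the distinctness of contents of removable nodes to identify $p_{r+1}$ with $p_r$. Everything else is a clean consequence of the eigenvector property of $\{f_\t\}$ and the commutation relations of Lemma~\ref{y}.
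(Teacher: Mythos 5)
Your argument is correct and follows essentially the same strategy as the paper: compute $f_\t y\,x_j$ two ways (as an eigenvector of $x_j$, and via the expansion \eqref{eststsk}), extract $c_\s(j)=c_\t(j)$ for $j$ outside the support of $y$, and conclude $\s\overset{k}\sim\t$ via Lemma~\ref{conditionofcontent}(a). The one organizational difference is in part (b). The paper proves $\t_{r-1}=\t_{r+1}$ \emph{first}, by applying $E(x_r+x_{r+1})=\delta E$ to $f_\t E$ itself: if $\t_{r-1}\neq\t_{r+1}$, then $c_\t(r)+c_\t(r+1)\neq\delta$ by Lemma~\ref{conditionofcontent}(c), hence $f_\t E=0$, so every $E_{\t,\s}(r)$ vanishes. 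You instead first establish $\s\overset{r}\sim\t$, then expand $f_\t E$ in the orthogonal basis, extract the constraint $c_\s(r)+c_\s(r+1)=\delta$ on each contributing $\s$, and re-derive the content of Lemma~\ref{conditionofcontent}(c) by hand (the two-case analysis addition-to-component-$2$ versus removal-from-component-$1$). Both routes are sound; the paper's is slightly shorter because it delegates the case analysis to Lemma~\ref{conditionofcontent}(c) and avoids the detour through $\s$. One minor imprecision: in ruling out the ``addition to component~2'' branch, the forced identity is $\rho^2=q^{2(c(p_r)+c(p_{r+1}))}$ rather than something of the form $q^{2c(p_r)}=0$; what actually does the work is algebraic independence of $\rho$ and $q$, which you do invoke, so the conclusion stands.
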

\begin{proof}
By Lemma~\ref{y}  (c)(2), (d)(2) and \eqref{eigenv1}, we have $c_\s(j)=c_\t(j)$ for $j\neq k,k+1$ when $S_{\t,\s}(k)\neq0$. Applying Lemma~\ref{conditionofcontent}(a) to both  $(\s_0,\ldots,\s_{k-1})$
and $(\s_{k+1},\ldots,\s_{r+t})$, we have $\s_{i}=\t_i$ for $i\neq k$. Then (a) is proved.
If  $\t_{r-1}\neq \t_{r+1}$, then  $f_\t E= 0$. Otherwise, by Lemma~\ref{y}(e) and \eqref{eigenv1}, $c_\t(r)+c_\t(r+1)=\delta$, a contradiction with Lemma~\ref{conditionofcontent}(c). By Lemma~\ref{y}(e) and \eqref{eigenv1},
  $c_\s(j)=c_\t(j)$ for $j\neq r,r+1$ if $E_{\t,\s}(r)\neq0$, forcing $\s\overset{r}\sim\t$.\end{proof}

For any bipartition $\lambda$,
we say two nodes of $[\lambda]$ are in the same row (resp., same column) if either they are both in the same row (resp., same column) of $[\lambda^{(1)}]$ or they are both in the same row (resp., same column) of $[\lambda^{(2)}]$. If $\mu=\lambda\setminus p$ or $\mu=\lambda\cup p$, we denote   the node $p$ by $\lambda\ominus\mu$.
\begin{Lemma}\label{w} Suppose that $\t,\s\in{\upd_{r,t}}(\lambda)$ and  $\s\overset {k} \sim \t$ for some $k$ with  $k\neq r$.
\begin{enumerate}
\item If $\t_{k}\ominus\t_{k-1}$ and  $\t_{k+1}\ominus\t_{k}$  are either in the same row or in  the same column, then $\s=\t$.
\item If $\t_{k}\ominus\t_{k-1}$ and  $\t_{k+1}\ominus\t_{k}$  are neither  in the same row nor in  the same column,  then there is a unique $\s$ such that $\s\overset{k}\sim\t$ and $\s\neq \t$.
Moreover,
 $c_\t(k)=c_{\s}(k+1)$ and  $c_\t(k+1)=c_{\s}(k)$.
 \end{enumerate}
\end{Lemma}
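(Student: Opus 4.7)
The plan is to carry out a case analysis on the pair of elementary moves $\t_{k-1}\to \t_k$ and $\t_k \to \t_{k+1}$ permitted by Definition~\ref{updt}. Put $p=\t_k\ominus \t_{k-1}$ and $q=\t_{k+1}\ominus \t_k$. Since $k\neq r$, an addition to $\t^{(1)}$ at step $r$ followed by the removal of the same box at step $r+1$ (which would force $\t_{k+1}=\t_{k-1}$) cannot occur, so $p$ and $q$ are always distinct nodes. Any $\s$ with $\s\overset{k}\sim \t$ is determined by its middle term $\s_k$, which must be reachable from $\t_{k-1}$ and from which $\t_{k+1}$ must be reachable by a single admissible move.

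The first case is that both moves affect the same component in the same direction: both add to $\t^{(1)}$ (when $k\leq r-1$), both add to $\t^{(2)}$, or both remove from $\t^{(1)}$ (the latter two subcases requiring $k\geq r+1$). Here $p$ and $q$ lie in the same component. A standard Young-diagram argument splits this further: if $p$ and $q$ are in the same row or same column then the order of the two moves is forced---for instance, if $p$ is immediately left of $q$ in a common row, then $q$ is not an addable node of $\t_{k-1}$---so $\s_k=\t_k$ is the only option and we get~(a); if on the other hand $p$ and $q$ differ in both row and column indices, then a short check on the relevant row lengths shows that $q$ is already addable to $\t_{k-1}$ and $p$ is addable to $\t_{k-1}\cup q$, so $\s_k=\t_{k-1}\cup q$ (or $\t_{k-1}\setminus q$ in the removal case) is the unique alternative to $\t_k$, placing us in~(b).

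The remaining case occurs only when $k\geq r+1$: one move is an addition to $\t^{(2)}$ and the other is a removal from $\t^{(1)}$. Then $p$ and $q$ lie in different components, so by the convention for bi-partitions they are neither in the same row nor the same column; the two moves touch disjoint components, hence commute, and swapping their order yields the unique $\s_k\neq \t_k$, again placing us in~(b).

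For the content identity in~(b), \eqref{content} shows that $c_\t(k)$ depends only on the node modified at step $k$, its component, and whether that step is an addition or removal. The swap producing $\s$ interchanges the moves at steps $k$ and $k+1$ while preserving each (node, component, direction)-triple, so $c_\s(k)=c_\t(k+1)$ and $c_\s(k+1)=c_\t(k)$ follow at once. The main obstacle is the purely combinatorial verification in the same-component, different-row-and-column subcase that $q$ is addable to $\t_{k-1}$ (not merely to $\t_k$); this reduces to a short analysis of the row lengths near the row of $p$ using $i_p\neq i_q$ and $j_p\neq j_q$, and the hypothesis $k\neq r$ is used only to exclude the degenerate cancellation at the boundary between the two components.
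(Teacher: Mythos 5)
Your argument is correct and matches the paper's intent: the paper's proof is a one-line assertion that $\s$ is obtained by swapping the two elementary moves, and your case analysis (same component in the same direction versus disjoint components, using $k\neq r$ to rule out the boundary degeneracy and the forbidden phase change) is exactly the verification that this swap is well-defined, is the unique alternative, and preserves the $(\text{node},\text{component},\text{direction})$-data that determines $c_\cdot(\cdot)$ via~\eqref{content}. In short, you have supplied the details the paper declares "follow directly from Definition~\ref{updt}."
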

\begin{proof} (a) follows directly from  Definition~\ref{updt}.
In case (b), $\s$ is the unique element such that $\t_{k}\ominus\t_{k-1}=\s_{k+1}\ominus \s_k$ and  $\t_{k+1}\ominus\t_{k}=\s_k\ominus\s_{k-1}$.\end{proof}
We will denote $\s$ in Lemma~\ref{w} (b)  by $\t s_k$ (resp., $\t s^*_{k-r}$) if $1\le k\le r-1$ (resp., $r+1\le k\le r+t-1$).
In Example~\ref{exofupd}, $\t_4\ominus \t_3$ and $\t_3\ominus \t_2$ are neither in the same row nor in the same column. Moreover,  $\t s^*_1=\u$ and $\u s^*_1=\t$.
The following result will be used to compute  $S_{\t,\s}(k)$ in \eqref{eststsk}.
\begin{Lemma} \label{ntgk}
Let $\t\in{\upd_{r,t}}(\lambda)$ for some $(f,\lambda)\in\Lambda_{r,t}$.
\begin{enumerate}
\item Suppose $\t s_k$ exists for some  $1\leq k<r$ and $\t s_k\prec \t$.  Then $\n_{\t}g_k=\n_{\t s_k}$.
\item Suppose $\t s^*_{k-r}$ exists for some $r+1\leq k\leq r+t-1$ and $\t s^*_{k-r}\prec\t$.
\begin{enumerate}
\item If either
$\t_{k-1}\subseteq \t_k\subseteq \t_{k+1} $ or $ \t_k\subseteq \t_{k-1}$ and $\t_k\subseteq \t_{k+1}$, then $\n_{\t}g^*_{k-r}=\n_{\t s^*_{k-r}}$.
\item If $\t_{k+1}\subseteq \t_k\subseteq \t_{k-1}$, then $\n_{\t}=\n_{\t s^*_{k-r}}g_{k-r}^*$.
\end{enumerate}
\end{enumerate}

\end{Lemma}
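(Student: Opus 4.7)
The plan is to prove all three statements by direct computation from Definition~\ref{definition of m}, first reducing each to a single identity at level $\t_{k+1}$. Because $\t$ and $\t s_k$ (respectively $\t s^*_{k-r}$) agree at every position except $k$, and Lemma~\ref{bt2} writes $\n_{\t_i}=h_i\n_{\t_{i-1}}$ with $h_i$ depending only on the pair $(\t_{i-1},\t_i)$, the factors $h_i$ for $i\ge k+2$ coincide for $\t$ and its swap. Writing $H=h_{r+t}\cdots h_{k+2}$, we thus have $\n_\t=H\n_{\t_{k+1}}$ and $\n_{\t s_k}=H\n_{(\t s_k)_{k+1}}$ (and similarly for $\t s^*_{k-r}$), so it suffices to prove $\n_{\t_{k+1}}g_k=\n_{(\t s_k)_{k+1}}$ in case (a), $\n_{\t_{k+1}}g^*_{k-r}=\n_{(\t s^*_{k-r})_{k+1}}$ in case (b)(i), and $\n_{\t_{k+1}}=\n_{(\t s^*_{k-r})_{k+1}}g^*_{k-r}$ in case (b)(ii).

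For part (a), both transitions $\t_{k-1}\to\t_k\to\t_{k+1}$ are of type (1) of Definition~\ref{definition of m}, so $\n_{\t_{k+1}}$ expands as a double sum of terms $(-q)^{j-a_{l_1}+j'-a_{l_2}}g_{j',k+1}g_{j,k}\n_{\t_{k-1}}$, where $l_1<l_2$ thanks to $\t s_k\prec\t$. Since $\n_{\t_{k-1}}$ only involves generators $g_i$ with $i\le k-2$, it commutes with $g_k$, and the proof reduces to two Hecke-algebra identities: $g_{j,k}g_k=g_{j,k+1}$ for $j\le k$ (immediate from the convention $s_{i,j}=s_is_{i+1,j}$) and $g_{j',k+1}g_{j,k+1}=g_{j,k+1}g_{j'-1,k}$ for $j<j'$ (a short induction using the braid relation in Definition~\ref{qwb}(c)). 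After re-indexing $j'\mapsto j'-1$, the output matches the analogous expansion of $\n_{(\t s_k)_{k+1}}$ along $\t s_k$'s path, in which $p_2$ is added before $p_1$.

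Part (b)(i) splits into two sub-cases. When $\t_{k-1}\subseteq\t_k\subseteq\t_{k+1}$, both transitions are of type (2), and the argument is formally parallel to (a) with $g_i$ replaced by $g^*_i$, using the commutativity in Definition~\ref{qwb}(g). When $\t_k\subseteq\t_{k-1}$ and $\t_k\subseteq\t_{k+1}$, the first step is of type (3) and the second of type (2), so $\n_{\t_{k+1}}$ mixes an $E$-factor with $g^*$- and $g^{-1}$-factors; the relations (g)--(h) of Definition~\ref{qwb} let one commute the extra $g^*_{k-r}$ through all the $g$-factors, and the $E$-factor is passed through unchanged because $g^*_iE=Eg^*_i$ for $i\ne 1$. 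Reassembling reproduces the expansion of $\n_{(\t s^*_{k-r})_{k+1}}$ along the swapped path.

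Part (b)(ii) is the most delicate. Both transitions are of type (3), so $\n_{\t_{k+1}}$ carries two $E$-factors, two $g^*$-factors and two inverse-$g$-factors, with $l_{\t_{k+1}}=l_{\t_{k-1}}+2$. The asymmetric conclusion $\n_\t=\n_{\t s^*_{k-r}}g^*_{k-r}$ (rather than $\n_\t g^*_{k-r}=\n_{\t s^*_{k-r}}$) reflects the fact that, after swapping the two removals, the innermost $g^*_{l_{\t_k},k-r}$ generator introduced by Definition~\ref{definition of m}(3) at step $k$ cannot be absorbed into the new construction but survives as an extra factor $g^*_{k-r}$ on the right. I expect the main obstacle to be the bookkeeping here: one must repeatedly invoke the exchange relations (i), (k) and (l) of Definition~\ref{qwb} to commute an $E_{*,*}$-factor past the $g^*$ and $g^{-1}$ factors of the other case-(3) step and collect the result in the form prescribed by $\t s^*_{k-r}$'s path, multiplied by the trailing $g^*_{k-r}$.
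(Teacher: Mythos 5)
Your reduction to a single identity at level $k+1$ is sound and is essentially the paper's ``assume $k=r-1$, $t=0$,'' although the recursion $\n_{\t_i}=L_i\n_{\t_{i-1}}$ you invoke comes directly from Definition~\ref{definition of m}, not Lemma~\ref{bt2} (the $h_i$ there recurse on $b_{\t_i}$, not on $\n_{\t_i}$). For part~(a) the paper sidesteps your double-sum manipulation: via Lemma~\ref{bt2} it writes $\n_\t=n_\lambda g_{a_h,r}g_{a_l,r-1}b_{\t_{r-2}}$ with $b_{\t_{r-2}}\in\mathscr H_{r-2}$, pushes $g_{r-1}$ past $b_{\t_{r-2}}$, and then needs only the two braid moves $g_{a_l,r-1}g_{r-1}=g_{a_l,r}$ and $g_{a_h,r}g_{a_l,r}=g_{a_l,r}g_{a_h-1,r-1}$; the sums you expand are already absorbed into $y_\lambda$ inside $n_\lambda$. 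Your route also works, but you should note why both sides of $g_{j',k+1}g_{j,k+1}=g_{j,k+1}g_{j'-1,k}$ are reduced products (the lengths match at $2(k+1)-j-j'$), and that the shifted index ranges line up exactly with those for $\t s_k$.

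The genuine gap is in part~(b)(ii). You describe the difficulty as ``bookkeeping'' with the exchange relations~(i), (k), (l), but the step that actually drives the paper's computation in~\eqref{ssss} is the derived identity $E_{r-f+1,f}E_{r-f+2,f-1}\,g^*_{f-1}=E_{r-f+1,f}E_{r-f+2,f-1}\,g_{r-f+1}$, quoted from \cite[Lemma~2.4(e)]{Rsong}, which trades a $g^*$-crossing on one side of the double $E$-factor for a $g$-crossing on the other. This is not an instance of~(i), (k), or~(l): those relations are phrased for $E=E_{r,1}$ with the adjacent generators $g_{r-1}$, $g^*_1$, whereas here both $E$'s are conjugates $E_{r-f+1,f}$, $E_{r-f+2,f-1}$ and the crossing involved is $g^*_{f-1}$. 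Producing this relativized relation is precisely the missing ingredient, and without identifying it your plan to ``repeatedly invoke the exchange relations'' does not close. Once you have it, the rest of~\eqref{ssss} really is braid-relation bookkeeping together with~(g)--(h), and the asymmetric conclusion $\n_\t=\n_{\t s^*_{k-r}}g^*_{k-r}$ does fall out as you anticipate.
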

\begin{proof}By the definition of $\n_\t$ in Definition~\ref{definition of m}, we can assume $k=r-1$ and $t=0$ without loss of generality when we compute $\n_\t g_k$. Suppose $\t_{r}=(\lambda^{(1)},(0))$ and $\t_{r-2}=(\mu^{(1)},(0))$.
Then $\t_{r-1}=(\mu^{(1)}\cup p_1,(0))$ and $\lambda^{(1)}=\mu^{(1)}\cup{p_1}\cup {p_2}$.
 Write $p_1=(l,\lambda^{(1)}_l)$, $p_2=(h,\lambda^{(1)}_h)$. Since  $\t s_{r-1}\prec \t$, we have $h>l$
 and $(\t s_{r-1})_{r-1}=(\mu^{(1)}\cup p_2,(0))$.
By Definition~\ref{definition of m} and \eqref{htdddd}, $\n_\t=n_\lambda g_{a_h,r }g_{a_l, r-1}b_{\t_{r-2}}$ for some $b_{\t_{r-2}}\in\mathscr H_{r-2}$, where $a_j=\sum_{i=1}^j \lambda^{(1)}_i$ for $j=h$ or $l$.
Using braid relations, we have  $\n_\t g_{r-1}= n_\lambda g_{a_l, r}g_{a_h-1,r-1 }b_{\t_{r-2}}=\n_{\t s_{r-1}}$. This completes the proof of (a).

By the same  reason as above,
 we can  assume  $k=r+t-1 $  when we prove (b).
 For the first case in (i), one can check the formula  via  arguments similar to those  in the proof of (a).
The second case in (i),  one can also use Definition~\ref{definition of m} and \eqref{htdddd} to check that $\n_\t g^*_{t-1}=\n_{\t s^*_{t-1}}$.
It remains to consider that  $\t_{r+t}\subseteq \t_{r+t-1}\subseteq \t_{r+t-2}$.
In this case, $f\geq2$. Write $\t_{r+t-2}=(\mu^{(1)},\mu^{(2)})$, $\t_{r+t-1}=(\mu^{(1)}\setminus  p_1,\mu^{(2)})$ and $ \t_{r+t}=(\mu^{(1)}\setminus p_1\setminus p_2,\mu^{(2)})=(\lambda^{(1)},\lambda^{(2)})$ with $p_1=(l,\lambda_l^{(1)}+1)$, $p_2=(h,\lambda_h^{(1)}+1)$.  Since $\t s^*_{t-1} \prec \t$,  we have $h<l$ and $(\t s^*_{t-1})_{r+t-1}=(\mu^{(1)}\setminus p_2, \mu^{(2)})$. Let $a_j=\sum_{i=1}^j \lambda^{(1)}_i$.
By Definition~\ref{definition of m} and \eqref{htdddd},
\begin{equation} \label{ssss}\begin{aligned}\n_{\t s^*_{t-1} }g^*_{t-1}
 &= E_{r-f+1,f}E_{r-f+2,f-1}g_{f-1,t}^*
g^*_{f-1,t-1}g^{-1}_{a_h+1,r-f+2} g^{-1}_{a_l+2,r-f+2}\n_{\t_{r+t-2}} b_{\t_{r+t-2}} \\
 &=E_{r-f+1,f}E_{r-f+2,f-1}g_{r-f+1}g_{f,t}^*
g^*_{f-1,t-1}g^{-1}_{a_h+1,r-f+2} g^{-1}_{a_l+2,r-f+2} \n_{\t_{r+t-2}} b_{\t_{r+t-2}} \\
&=E_{r-f+1,f}E_{r-f+2,f-1}g_{f,t}^*
g^*_{f-1,t-1}g^{-1}_{a_h+1,r-f+1} g^{-1}_{a_l+2,r-f+2} \n_{\t_{r+t-2}} b_{\t_{r+t-2}} =\n_{\t },
\end{aligned}\end{equation}
where
the  second  equality follows from        $E_{r-f+1,f}E_{r-f+2,f-1}g^*_{f-1}=E_{r-f+1,f}E_{r-f+2,f-1}g_{r-f+1}$ (cf.\cite[Lemma~2.4(e)]{Rsong}).
\end{proof}

Let
$[\delta+k]:=\frac{\rho q^k-\rho^{-1}q^{-k}}{q-q^{-1}}$, for  $k\in \mathbb Z$.
For $\t\in{\upd_{r,t}}(\lambda)$,  $p_1=\t_{k}\ominus\t_{k-1}$ and $p_2= \t_{k+1}\ominus\t_{k}$, define
$$\small{\begin{aligned}
S(k)_1&=\frac{q^{c(p_1)-c(p_2)}}{[c(p_1)-c(p_2)]},~S(k)_2=\frac{[c(p_1)-c(p_2)+1][c(p_1)-c(p_2)-1]}{[c(p_1)-c(p_2)]^2},\\
~ S(k)^*_1&=\frac{[\delta- c(p_1)-c(p_2)+1][\delta-c(p_1)-c(p_2)-1]}{[\delta-c(p_1)-c(p_2)]^2},
 ~S(k)^*_2=\frac{[c(p_1)-c(p_2)+1][c(p_1)-c(p_2)-1]}{[c(p_1)-c(p_2)]^2}.
\end{aligned}}$$
\begin{Lemma}\label{sks}  Let   $\t\in{\upd_{r,t}}(\lambda)$,  $p_1=\t_{k}\ominus\t_{k-1}$ and $p_2= \t_{k+1}\ominus\t_{k}$.
\begin{enumerate}
\item Suppose $1\le k\le r-1$.
\begin{enumerate}
\item If $\t s_k$ does not exist,  then   $f_\t g_k=-q^{-1}f_\t$ (resp., $qf_\t$) when $p_1$ and $p_2$ are in the same row
 (resp., the same column).
\item If $\t s_k$ exists , then
$f_\t g_k=S_{\t,\t}(k)f_\t+S_{\t,\t s_k}(k)f_{\t s_k}, $
where $S_{\t,\t}(k)=S(k)_1 $ and
 \begin{equation} \label{sttsk1}
 S_{\t,\t s_k}(k)= \left\{   \begin{array}{ll}
                                         1, & \hbox{ if $\t s_k\prec \t $;} \\
                                         S(k)_2, & \hbox{if $\t\prec \t s_k $.}
                                       \end{array}
                                     \right.
\end{equation}
\end{enumerate}
\item Suppose $k \geq r+1$.

\begin{enumerate}
\item If  $\t s_{k-r}^*$ does not exist,
then $f_\t g^*_{k-r}=-q^{-1}f_\t$ (resp., $qf_\t$) when $p_1$ and $p_2$ are in the same row
 (resp., the same column).
\item If  $\t s_{k-r}^*$   exists, then
 $f_\t g^*_{k-r}=S_{\t,\t }(k)f_\t+S_{\t,\t s^*_{k-r}}(k)f_{\t s^*_{k-r}}, $
where $S_{\t,\t}(k)=\frac{\rho-(q-q^{-1}) c_\t(k+1)}{c_\t(k)-c_\t(k+1)}$ and
 \begin{equation} \label{sttsk3}
 S_{\t,\t s^*_{k-r}}(k)=\begin{cases}
1, & \text{  if $\t s^*_{k-r}\prec \t$, }\\
               S(k)^*_1 , & \text{ if $\t\prec  \t s^*_{k-r}$, $\t_{k-1}\subseteq\t_k$ and $ \t_{k+1}\subseteq \t_k$ ;} \\
               S(k)^*_2 , & \text{otherwise.}\\
               \end{cases}
\end{equation}
\end{enumerate}\end{enumerate}
\end{Lemma}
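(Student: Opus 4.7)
The plan is to adopt the standard seminormal-form strategy for cellular algebras due to Mathas~\cite{Ma2}. First, by Lemma~\ref{co}(a) the expansion $f_\t g_k=\sum_\s S_{\t,\s}(k)f_\s$ (for $k\neq r$) involves only $\s$ with $\s\overset{k}\sim \t$, and Lemma~\ref{w} identifies this set as $\{\t\}$ in the diagonal cases (a)(i)/(b)(i) and as $\{\t,\t s_k\}$ or $\{\t,\t s^*_{k-r}\}$ in the off-diagonal cases (a)(ii)/(b)(ii). So the problem reduces to computing an eigenvalue (diagonal) or a $2\times 2$ matrix (off-diagonal).

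For the diagonal cases, the quadratic Hecke relation $(g_k-q)(g_k+q^{-1})=0$ (and its analogue for $g^*_{k-r}$) constrains the eigenvalue to $\{q,-q^{-1}\}$. To distinguish them I would exploit the explicit form of $\n_\t$ in Definition~\ref{definition of m}: the defining factor $y_\lambda=\sum_{w\in\mathfrak S_\lambda}(-q)^{-\ell(w)}g_w$ satisfies $y_\lambda g_k=-q^{-1}y_\lambda$ whenever $s_k\in\mathfrak S_\lambda$. This corresponds precisely to $p_1,p_2$ lying in the same row, which propagates through the recursive construction of $\n_\t$ to give $S_{\t,\t}(k)=-q^{-1}$; the same-column value $q$ then follows from the quadratic relation by elimination.

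For the off-diagonal cases, write $f_\t g_k=\alpha_\t f_\t+\beta_\t f_{\t s_k}$ (and analogously for $g^*_{k-r}$). The quadratic Hecke relation combined with the analogous formula for $\t s_k$ gives
\[\alpha_\t+\alpha_{\t s_k}=q-q^{-1},\qquad \beta_\t\beta_{\t s_k}=-(\alpha_\t-q)(\alpha_\t+q^{-1}).\]
The identity $x_k=g_kx_{k+1}g_k+\rho^{-1}g_k$ from Lemma~\ref{y}(c)(2) (or Lemma~\ref{y}(d)(2) in case (b)), together with the eigenproperties $f_\t x_j=c_\t(j)f_\t$ and the content swap $c_{\t s_k}(k)=c_\t(k+1)$, $c_{\t s_k}(k+1)=c_\t(k)$ from Lemma~\ref{w}(b), then provides a linear equation in $\alpha_\t$ whose solution, after substituting the contents from \eqref{content} and simplifying with $q$-number identities, yields the claimed formula for $S_{\t,\t}(k)$. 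Plugging back into the Hecke equation gives $\beta_\t\beta_{\t s_k}=S(k)_2$ in case (a), or either $S(k)^*_1$ or $S(k)^*_2$ in case (b) according to whether $c_\t(k)$ and $c_\t(k+1)$ carry the $\delta$-offset from \eqref{content}. To split this product into individual factors $S_{\t,\t s_k}(k)$ and $S_{\t s_k,\t}(k)$ I would invoke Lemma~\ref{ntgk}: when $\t s_k\prec\t$, the identity $\n_\t g_k=\n_{\t s_k}$, combined with the upper-$\prec$-triangularity of the transition matrix between $\{\n_\s\}$ and $\{f_\s\}$ that follows from Theorem~\ref{ll}, forces $S_{\t,\t s_k}(k)=1$; so $S_{\t s_k,\t}(k)$ equals the full product in the reverse ordering.

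The main obstacle will be case (b), where the up-down path at step $k$ can either add a node to $\t^{(2)}$ or remove one from $\t^{(1)}$, so the configurations split into four types (add/remove $\times$ add/remove). The ``add then remove'' configuration is where the $\delta$-offset in \eqref{content} produces the $q$-numbers $[\delta-c(p_1)-c(p_2)\pm 1]$ that appear in $S(k)^*_1$; here the required normalization comes from applying Lemma~\ref{ntgk}(b)(i) to $\t s^*_{k-r}$ (whose configuration is the dual ``remove then add'') rather than to $\t$ itself. The ``remove then remove'' configuration requires Lemma~\ref{ntgk}(b)(ii), where the direction of the normalizing identity reverses to $\n_\t=\n_{\t s^*_{k-r}}g^*_{k-r}$; combining this with the quadratic relation and the triangularity argument to extract the correct factor is the most delicate part of the bookkeeping.
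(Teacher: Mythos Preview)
Your overall architecture matches the paper's: reduce via Lemma~\ref{co}(a) and Lemma~\ref{w} to a scalar or a $2\times 2$ problem, extract $S_{\t,\t}(k)$ from the relation $x_k=g_kx_{k+1}g_k+\rho^{-1}g_k$ (resp.\ Lemma~\ref{y}(d)(2)), and fix the off-diagonal normalization using Lemma~\ref{ntgk} together with the upper-triangularity \eqref{nffn}. Your handling of the four add/remove configurations in part~(b), including the reversal in Lemma~\ref{ntgk}(b)(ii) for the ``remove/remove'' case, is correct and is exactly how the paper proceeds.

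The one genuine gap is in the \emph{diagonal} cases (a)(i) and (b)(i). Your proposed argument---that $y_\lambda g_k=-q^{-1}y_\lambda$ ``propagates through the recursive construction of $\n_\t$'' to give $S_{\t,\t}(k)=-q^{-1}$ in the same-row case, and then $q$ follows ``from the quadratic relation by elimination''---does not go through as stated. First, the condition that $p_1,p_2$ lie in the same row is a condition on the growth path $(\t_{k-1},\t_k,\t_{k+1})$, not on the final shape, so it is not equivalent to $s_k\in\mathfrak S_{\lambda^{(1)}}$ and there is no direct identity $\n_\t g_k=-q^{-1}\n_\t$ to propagate. Second, the quadratic relation only tells you $S_{\t,\t}(k)\in\{q,-q^{-1}\}$; knowing the value in the same-row case does not logically determine the same-column case. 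The paper instead uses the very same content relation you already invoke in (a)(ii)/(b)(ii): from $S_{\t,\t}(k)^{-1}c_\t(k)=S_{\t,\t}(k)c_\t(k+1)+\rho^{-1}$ and the explicit form~\eqref{content}, one checks directly that only $-q^{-1}$ (same row) or $q$ (same column) satisfies the equation. This is both cleaner and uniform with the rest of your argument; simply replace your diagonal paragraph with this computation.
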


\begin{proof}(a) By Lemma~\ref{co}(a), we  can write $f_\t g_k=\sum_{\s\overset{k}\sim\t}S_{\t,\s}(k)f_\s$.
If $\t s_k$ does not exist, then   $p_1$ and $p_2$ are either in the same row or in the same column. By Lemma~\ref{w}(a),
  $f_\t g_k=S_{\t,\t}(k)f_\t$ and $f_\t g_k^2=S^2_{\t,\t}(k)f_\t$.
 So $S_{\t,\t}(k)\in\{q,-q^{-1}\}$.
By  Lemma~\ref{y}(c),
$S^{-1}_{\t,\t}(k)c_\t(k)=S_{\t,\t}(k)c_\t(k+1)+\rho^{-1}$.
 If $p_1$ and $p_2$ are in the same row
 (resp. the same column), by ~\eqref{content},  we have $-qc_\t(k)=-q^{-1}c_\t(k+1)+\rho^{-1}$ ( resp., $q^{-1}c_\t(k)=qc_\t(k+1)+\rho^{-1}$), forcing $S_{\t,\t}(k)=-q^{-1}$ (resp., $S_{\t,\t}(k)=q$).

Suppose that  $\t s_k$ exists. There are two cases $\t s_k\prec \t $ or $\t \prec \t s_k$ considered as follows. If $\t s_k\prec \t $,
by Lemma~\ref{y}(c)(2) and \eqref{eigenv1},  $f_\t g_kx_k= (q-q^{-1})c_\t(k) f_\t + c_\t(k+1)f_\t g_k+\rho^{-1}f_\t$. Comparing the coefficient of $f_\t$ yields that
$$S_{\t,\t}(k)=\frac{(q-q^{-1}) c_\t(k)+\rho^{-1}}{c_\t(k)-c_\t(k+1)}=\frac{q^{c(p_1)-c(p_2)}}{[c(p_1)-c(p_2)]}.$$
We remark that this  formula makes sense by Lemma~\ref{conditionofcontent}(b). By Lemma~\ref{co}(a) and Lemma~\ref{w}(b), $\s=\t s_k$ if $\s\neq \t$ and $S_{\t, \s}(k)\neq 0$.
 By Theorem~\ref{ll} and Definition~\ref{8},
\begin{equation}\label{nffn}
f_\t =\n_\t+\sum_{\t\prec \u} a_\u \n_\u,\quad \n_\v=f_\v+\sum_{\v\prec \m}b_\m f_\m.
\end{equation}
for some scalars $a_\u$ and $b_\m$.  By Lemma~\ref{ntgk}(a), $\n_\t g_k=\n_{\t s_k}$. So,
$$f_\t g_k=\n_{\t s_k}+  \sum_{\t\prec \u} c_\u f_\u g_k= f_{\t s_k }+ \sum_{\t s_k\prec  \v }d_\v f_{\v} +  \sum_{\t\prec \u} c_\u f_\u g_k$$
for some scalars $c_\u$ and $d_\v$.
If $S_{\u,\t s_k}(k)\neq 0$, then $   \u\overset {k}\sim \t s_k\overset {k} \sim \t $, forcing $\u=\t s_k$, which contradicts the assumption that $\t s_k\prec \t $.
So, $S_{\u,\t s_k}(k)= 0$ and hence $S_{\t,\t s_k}(k)=1$.
Using the equality
$g_k^2=(q-q^{-1}) g_k+1$ and switching the roles of $\t$ and $\t s_k$, one can check the formula on $S_{\t, \t s_k}(k)$   if $\t s_k$ exists and $\t\prec \t s_k$.

(b)
By similar arguments as above, one can check the results if $\t s^*_{k-r}$ does not  exist.
In the remaining cases, $S_{\t,\t}(k)$ is obtained by previous arguments in the proof of (a) and Lemma~\ref{y}(d)(2). Since $(g^*_{k-r})^2=(q-q^{-1}) g_{k-r}^*+1$, it is enough to show that $S_{\t, \t s_{k-r}^*}(k)=1$ if $\t s^*_{k-r} \prec \t$. There are three subcases in Lemma~\ref{ntgk}(b) to deal with.
For the first two cases in Lemma~\ref{ntgk}(b)(i), we have  $\n_\t g^*_{k-r}=\n_{\t s^*_{k-r}}$.
For the third case in Lemma~\ref{ntgk}(b)(ii), we have $\n_\t g^*_{k-r}=\n_{\t s^*_{k-r}}+(q-q^{-1}) \n_\t$.
 In all these cases, one can use \eqref{nffn}  and the previous arguments on $S_{\t, \t s_k}(k)$ to obtain the results on $S_{\t, \t s_{k-r}^*}(k)$. \end{proof}


By Lemma~\ref{e6} and \eqref{eigenv1},    $\omega_r^{(a)}$ acts on each $f_\t$
as a scalar. This enables us to    define $W_r(u,\t)\in \kappa[[u,u^{-1}]]$ satisfying  $f_\t W_r(u)= W_r(u,\t)f_\t$, where $W_r(u)$ is given in Lemma~\ref{forofwu}.

\begin{Lemma}\label{equation of E}  Suppose $\s, \t , \v\in \upd_{r,t}(\lambda)$ for some $(f, \lambda)\in \Lambda_{r, t}$.

\begin{enumerate}
 \item If $E_{\t,\t}(r)\neq0$, then $E_{\t,\t}(r)=\text{Res}_{u=c_\t(r)}\frac{W_r(u,\t)}{u}$, where $ \text{Res}_{u=a}g(u)$ is the residue of the rational function  $g(u)$ at $u=a$.
\item  If $\t \overset{r}\sim\s\overset{r}\sim\v$ and $E_{\v,\v}(r)\neq0$, then $E_{\t,\s}(r)E_{\v,\v}(r)=E_{\v,\s}(r)E_{\t,\v}(r)$.
\item If $\t_{r-1}=\t_{r+1}$, $\s\overset{r}\sim\t$,  then $\langle f_\s,f_\s\rangle E_{\t,\s}(r)=\langle f_\t,f_\t\rangle E_{\s,\t}(r)$.
 \end{enumerate}

\end{Lemma}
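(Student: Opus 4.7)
The plan is to extract all three assertions from a single master identity obtained by applying $f_\t$ on the left to both sides of the structure identity $Ex_r^aE=\omega_r^{(a)}E$ of Lemma~\ref{e6}, supplemented for part~(c) by $\sigma$-invariance of the cellular form on $C(f,\lambda)$. First I would expand $f_\t Ex_r^aE$ via \eqref{eststsk} and $f_\s x_r^a=c_\s(r)^a f_\s$, compare with $f_\t\omega_r^{(a)}E=\omega_r^{(a)}(\t)\sum_\u E_{\t,\u}(r)f_\u$ (where $\omega_r^{(a)}(\t)$ is the scalar by which $\omega_r^{(a)}\in R[x_1,\ldots,x_{r-1}]$ acts on $f_\t$), equate coefficients of each $f_\u$, and sum against $u^{-a}$ to produce, after dividing by $u$, the generating-function identity
\begin{equation}\label{MasterStar}
\sum_{\s}\frac{1}{u-c_\s(r)}\,E_{\t,\s}(r)E_{\s,\u}(r)\;=\;\frac{W_r(u,\t)}{u}\,E_{\t,\u}(r),
\end{equation}
where by Lemma~\ref{co}(b) only $\s$'s with $\s\overset{r}\sim\t$ and $\t_{r-1}=\t_{r+1}$ contribute to the sum.

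The combinatorial input that makes \eqref{MasterStar} useful is that the values $\{c_\s(r):\s\overset{r}\sim\t\}$ are pairwise distinct. Under the assumption $\t_{r-1}=\t_{r+1}$ one checks, directly from Definition~\ref{updt}, that every such $\s$ must satisfy $\s_r=\t_{r-1}\cup p$ for an addable node $p\in\mathscr A(\t_{r-1}^{(1)})$; since distinct addable nodes carry distinct contents $c(p)$ and the map $c\mapsto\rho^{-1}q^{c}[c]=(q^{2c}-1)/(\rho(q-q^{-1}))$ is injective over $\kappa=\mathbb{C}(\rho,q)$, distinctness follows from \eqref{content}. Hence each $c_\s(r)$ is a simple pole of the left-hand side of \eqref{MasterStar}, and taking the residue at $u=c_\s(r)$ yields the clean pointwise identity
\begin{equation}\label{KeyRes}
E_{\t,\s}(r)E_{\s,\u}(r)\;=\;\tilde R_\s\,E_{\t,\u}(r),\qquad \tilde R_\s:=\mathrm{Res}_{u=c_\s(r)}\tfrac{W_r(u,\t)}{u}.
\end{equation}
Part~(a) now drops out at once by specialising $\s=\u=\t$ and dividing by $E_{\t,\t}(r)\neq 0$.

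For part~(b) I would apply \eqref{KeyRes} to $(\t,\u)=(\t,\v)$ and, with $\t$ replaced by $\v$, to $(\v,\v)$; the residue $\tilde R_\s$ is the same in both instances since $\omega_r^{(a)}\in R[x_1,\ldots,x_{r-1}]$ implies $W_r(u,\t)=W_r(u,\v)$ whenever $\t\overset{r}\sim\v$. Cross-multiplying the two resulting identities produces $E_{\s,\v}(r)\bigl[E_{\t,\s}(r)E_{\v,\v}(r)-E_{\v,\s}(r)E_{\t,\v}(r)\bigr]=0$, which settles the non-degenerate case $E_{\s,\v}(r)\neq 0$. The main obstacle is the degenerate case $E_{\s,\v}(r)=0$: here \eqref{KeyRes} applied to $(\v,\v)$ forces $\tilde R_\s=0$ (using $E_{\v,\v}(r)\neq 0$), and \eqref{KeyRes} applied to $(\t,\t)$ then gives $E_{\t,\s}(r)E_{\s,\t}(r)=0$; combined with part~(c) below and the nonvanishing of all Gram norms in the semisimple regime $\kappa=\mathbb{C}(\rho,q)$, this forces both $E_{\t,\s}(r)$ and $E_{\v,\s}(r)$ to vanish, matching the claimed equality $0=0$.

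Finally, part~(c) is the quickest: by $\sigma$-invariance $\langle xa,y\rangle=\langle x,y\sigma(a)\rangle$ of the cellular form together with $\sigma(E)=E$ from Lemma~\ref{three isom}(d), the orthogonality $\langle f_\u,f_\w\rangle=\delta_{\u,\w}\langle f_\u,f_\u\rangle$ and \eqref{eststsk} reduce $\langle f_\t E,f_\s\rangle=\langle f_\t,f_\s E\rangle$ to exactly $E_{\t,\s}(r)\langle f_\s,f_\s\rangle=E_{\s,\t}(r)\langle f_\t,f_\t\rangle$. The only non-formal steps in the whole plan are the distinctness argument for the $c_\s(r)$'s and the degenerate-case bookkeeping in part~(b); everything else is direct manipulation of \eqref{MasterStar} and \eqref{KeyRes}.
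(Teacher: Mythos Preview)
Your argument is correct and follows essentially the same route the paper indicates: parts~(a) and~(b) are precisely the standard residue argument the paper attributes to \cite[Lemma~3.16]{RS}, and your proof of~(c) matches the paper's verbatim. One small streamlining for~(b): if you take the residue of your master identity at $u=c_\v(r)$ rather than at $u=c_\s(r)$, you obtain $E_{\t,\v}(r)E_{\v,\s}(r)=\tilde R_\v\,E_{\t,\s}(r)$ with $\tilde R_\v=E_{\v,\v}(r)$ by part~(a) (using $W_r(u,\t)=W_r(u,\v)$), which is the desired identity on the nose and sidesteps the degenerate-case bookkeeping entirely.
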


 \begin{proof}  We remark that (a) and (b) follow from standard arguments in the proof of \cite[Lemma~3.16]{RS}. Finally, (c)
 follows from the fact that $\langle f_\s, f_\t\rangle =\delta_{\s, \t} \langle f_\t, f_\t\rangle$ and the equality
 $\langle f_\t E,f_\s\rangle=\langle f_\t,f_\s\sigma(E)\rangle=\langle f_\t,f_\s E\rangle$.
 \end{proof}
%
The remaining part  of this section is to give explicit  formulae on  $E_{\t,\t}(r)$ via mixed Schur-Weyl duality.
Let $U_q(\mathfrak {gl}_n)$ be the quantum general linear group over $\mathbb C(q)$, where  $ \mathfrak {gl}_n$ is  the general linear Lie algebra consisting of all $n\times n$ matrices over $\mathbb C$. The  standard Cartan subalgebra $\mathfrak h$ of $\mathfrak{gl}_n$ is the $\mathbb C$-space spanned by  $\{e_{i, i}\mid  1\le i\le n\}$, where $e_{i, j}$'s are  the usual matrix units.   The dual space $\mathfrak h^*$ of $\mathfrak h$ has  basis  $\{\epsilon_i\mid 1\le i\le n\} $ such that $\epsilon_i(e_{j,j})=\delta_{i, j}$. Each $\alpha=\sum_{i=1}^n \alpha_i \epsilon_i$ is called a weight and is also denoted by $(\alpha_1,\ldots,\alpha_n)$. If $\alpha\in\mathbb Z^n$ such that $\alpha_1\geq\alpha_2\geq\ldots\geq\alpha_n$, then $\alpha$ is called an integral dominant weight.
 Let
$V_\alpha$ be the finite dimensional irreducible $U_q(\mathfrak {gl}_n)$-module with highest weight $\alpha$, where $\alpha$ is an integral dominant weight.
For example, the natural module $V:=\mathbb C(q)^n$ of $U_q(\mathfrak {gl}_n)$ is isomorphic to $V_{(1,0^{n-1})}$, whereas its linear  dual $V^*$ is isomorphic to  $V_{(0^{n-1},-1)}$.

Following \cite{KM},
 for any integral dominant weight $\alpha$, let  $\omega_\alpha(q):=s_\alpha(q^{-n+1},q^{-n+3},\ldots,q^{n-1})$,
    where $s_\alpha(u_1,u_2,\ldots,u_n)=a_{\alpha+\gamma}(u_1,u_2,\ldots,u_n)/a_\gamma(u_1,u_2,\ldots,u_n)$ (i.e., the Schur polynomial), $\gamma=(n-1,n-2,\ldots,1,0)$ and $ a_{\alpha}(u_1,u_2,\ldots,u_n)=\text{det}[u_i^{\alpha_j}]$.
  If $\alpha\in \mathbb N^n$  is an integral dominant weight, then $\alpha$ can be considered as a partition of $\sum_{i=1}^n \alpha_i$.
    For any box $p=(i,j)\in[\alpha]$, the corresponding hook-length is
 $h^{\alpha}_p=\alpha_i+\alpha'_j+1-i-j$, where $\alpha'$ is the dual partition of $\alpha$, i.e. the number of nodes in the $i$th row of $[\alpha']$ equals to the number of nodes in the $i$th column of $[\alpha]$, for all possible $i$.

 \begin{Lemma}\label{qdim}\cite[Prop.~3.8]{KM}If $\alpha\in \mathbb N^n$  is an integral dominant weight, then
  $\omega_\alpha(q)=\prod_{p\in[\alpha]}\frac{[n+c(p)]}{[h_p^\alpha]}$.
  \end{Lemma}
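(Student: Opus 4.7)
The plan is to derive this as a direct consequence of the classical hook-content formula for Schur polynomials evaluated at a principal specialization. First I would recall the bialternant (Jacobi) expression $s_{\alpha}(u_1,\ldots,u_n)=a_{\alpha+\gamma}(u)/a_{\gamma}(u)$ with $a_{\beta}(u)=\det[u_i^{\beta_j}]$ and $\gamma=(n-1,n-2,\ldots,0)$, and substitute $u_i=q^{-n+2i-1}$. The denominator then becomes the Vandermonde determinant $\prod_{1\le i<j\le n}(q^{-n+2j-1}-q^{-n+2i-1})$, which, using the identity $q^a-q^b=q^{(a+b)/2}(q-q^{-1})[(a-b)/2]$, factors as an explicit monomial in $q$ times $\prod_{1\le i<j\le n}[j-i]$.

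Applying the same simplification to the numerator $a_{\alpha+\gamma}(u)$ (which, regarded as a determinant in the variables $q^{-n+2i-1}$, is again alternating and therefore divisible by the Vandermonde), I would obtain an explicit monomial times $\prod_{1\le i<j\le n}[(\alpha_i-\alpha_j)+(j-i)]$. The ratio of the two products cleans up (after tracking the monomial prefactors, which cancel because $s_\alpha$ is a polynomial of homogeneous degree $|\alpha|$ evaluated symmetrically) to
\[
\omega_\alpha(q)=\prod_{1\le i<j\le n}\frac{[(\alpha_i-\alpha_j)+(j-i)]}{[j-i]}.
\]
This is the quantum Weyl dimension formula for $\mathfrak{gl}_n$.

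The remaining step is the combinatorial identity
\[
\prod_{1\le i<j\le n}\frac{[(\alpha_i-\alpha_j)+(j-i)]}{[j-i]}=\prod_{p\in[\alpha]}\frac{[n+c(p)]}{[h^\alpha_p]},
\]
which is the hook-content formula in $q$-analogue form. I would prove this by the standard bijective/inductive argument: group the factors on the left according to the pair $(i,j)$ and on the right according to the row of each box, then match the arm-plus-leg data of each cell $p=(i,k)\in[\alpha]$ with a unique pair of indices, using the usual conjugate partition identity $h^\alpha_p=\alpha_i+\alpha'_k+1-i-k$. An equivalent and perhaps cleaner route is induction on $|\alpha|$: remove a removable node from $\alpha$ and show that the change in both sides is the same quantum integer, using Lemma~\ref{forofwu}-style manipulations for the hook lengths.

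The main obstacle is the last combinatorial identification; once the bialternant computation produces the product over pairs $(i,j)$, matching it to the product over boxes requires the hook-content bijection, which is the only nontrivial piece and is the content of \cite[Prop.~3.8]{KM}. Since this identity is classical, I would either reproduce the short inductive proof or simply invoke the cited result.
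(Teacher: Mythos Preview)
The paper does not give its own proof of this lemma; it is simply quoted from \cite[Prop.~3.8]{KM} without argument. Your sketch is the standard route to this identity (principal specialization of the bialternant, giving the quantum Weyl dimension formula $\prod_{i<j}[(\alpha_i-\alpha_j)+(j-i)]/[j-i]$, followed by the hook--content bijection), and is presumably close to what Kosuda--Murakami do. One minor remark: your aside about ``Lemma~\ref{forofwu}-style manipulations'' is misplaced, since that lemma concerns the generating function $W_r(u)$ for the algebra and has nothing to do with hook lengths; the inductive step you have in mind is just the elementary comparison of hook lengths before and after removing a corner box.
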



Now, we assume  $n\in \mathbb N$ such that $n>r+t$. By \cite[Theorem~6.10]{Rsong}, $\mathscr B_{r,t}(q^n,q)$ is semisimple over $\mathbb C(q)$. Moreover,
$C(f,\lambda)$ is simple for all $(f,\lambda)\in\Lambda_{r,t}$ and
$\mathscr  B_{r,t}(q^n,q)\cong \oplus_{(f,\lambda)\in\Lambda_{r,t}}\End(C(f,\lambda)).$
It is known that any trace function $tr$ on $\mathscr  B_{r,t}(q^n,q)$ is determined by its weight vector $(w_{f,\lambda})_{(f,\lambda)\in\Lambda_{r,t}}$
such that $ tr=\sum_{(f,\lambda)\in\Lambda_{r,t}} w_{f,\lambda}\chi_{f,\lambda}$, where $ w_{f,\lambda}\in \mathbb C(q)$ and $\chi_{f,\lambda}$ is the usual matrix trace on $ \End(C(f,\lambda))$.
For any $(f,\lambda)\in\Lambda_{r,t}$, let  \begin{equation}\tilde{\lambda}=(\lambda^{(1)}_1,\ldots,\lambda^{(1)}_l,0,\ldots,0,-\lambda^{(2)}_k,\ldots,-\lambda^{(2)}_1)\in\mathbb Z^n, \end{equation}
where   $l=l(\lambda^{(1)})$ and $k=l(\lambda^{(2)})$.
It is known that   $V^{r,t}:= V^{\otimes r}\otimes (V^*)^{\otimes t}$ is
  a $(U_{q}(\mathfrak {gl}_n), \mathscr B_{r,t}(q^n, q))$-bimodule and the corresponding decomposition has been given in \cite{KM}.
  Let $\eta$ be the representation of $\mathscr B_{r,t}(q^n, q) $ corresponding to $V^{r,t}$.
  Using   results on the classification of highest weight vectors of $V^{r, t}$ in \cite[Proposition~5.2, Theorem~4.13]{Rsong2}, we have  the following $(U_{q}(\mathfrak {gl}_n), \mathscr B_{r,t}(q^n, q))$-isomorphism
\begin{equation} \label{decomp123} V^{r,t}\cong \bigoplus_{(f,\lambda')\in\Lambda_{r,t}}V_{\widetilde{\lambda}}\otimes C(f,\lambda'),\end{equation}
where $\lambda':=(\lambda^{(1)'},\lambda^{(2)'})$ for any bi-partition $\lambda=(\lambda^{(1)},\lambda^{(2)} )$.
Suppose  $d =D^{\otimes r}\otimes (D^{-1})^{\otimes t}\in \End(V^{r,t})$, where  $D=\text{diag}(q^{-n+1},q^{-n+3},\ldots, q^{n-1})\in \End(V)$.
Let $\chi_{r,t}$ be the usual matrix trace on $\End(V^{r,t})$. Define a linear map  $\tau_{r,t}: \mathscr B_{r,t}(q^n, q)\rightarrow \mathbb C(q)$ such that
\begin{equation}\label{trace}
\tau_{r,t}(b):=\chi_{r,t}(d\eta(b)), \text{ for any } b\in\mathscr B_{r,t}(q^n, q).
\end{equation}
\begin{Lemma}\label{newtrace} \cite[Lemma~6.3]{KM}
 $\frac 1 {[n]^{r+t}}\tau_{r,t}$ is a trace function on $\mathscr B_{r, t}(q^n, q)$, which
satisfies the Markov property in the sense of \cite[Proposition~5.2]{KM}. The weight vector of  $\tau_{r,t}$ is
$(\omega_{\tilde{\lambda}}(q))_{(f,\lambda')\in \Lambda_{r,t}}$.
\end{Lemma}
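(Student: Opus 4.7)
The plan is to establish three ingredients in turn: (i) $\tau_{r,t}$ is a trace function on $\mathscr B_{r,t}(q^n, q)$; (ii) the weight vector of $\tau_{r,t}$ is $(\omega_{\tilde\lambda}(q))_{(f,\lambda')\in\Lambda_{r,t}}$; (iii) the normalized trace $[n]^{-(r+t)}\tau_{r,t}$ satisfies the Markov property of \cite[Proposition~5.2]{KM}. The key observation driving (i) and (ii) is that $d$ is precisely the action on $V^{r,t}$ of a single group-like element of $U_q(\mathfrak{gl}_n)$. Concretely, $D=\operatorname{diag}(q^{-n+1},q^{-n+3},\ldots,q^{n-1})$ is the $V$-action of $K_{-2\rho}$, where $2\rho=(n-1,n-3,\ldots,1-n)$; since $K_{-2\rho}$ is group-like with $S(K_{-2\rho})=K_{2\rho}$, its action on $V^*$ in the dual basis is $D^{-1}$, so its action on $V^{r,t}$ is exactly $d$.

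Given this, claim (i) is immediate: because the $U_q(\mathfrak{gl}_n)$- and $\mathscr B_{r,t}(q^n,q)$-actions on $V^{r,t}$ are mutual centralizers, $d$ commutes with $\eta(b)$ for every $b\in\mathscr B_{r,t}(q^n,q)$, so cyclicity of the matrix trace $\chi_{r,t}$ gives
\[
\tau_{r,t}(ab)=\chi_{r,t}(d\eta(a)\eta(b))=\chi_{r,t}(\eta(b)d\eta(a))=\chi_{r,t}(d\eta(b)\eta(a))=\tau_{r,t}(ba).
\]
For (ii), I would apply the bimodule decomposition \eqref{decomp123}. Because $d$ originates from a $U_q(\mathfrak{gl}_n)$-element, it restricts on each summand to $K_{-2\rho}|_{V_{\tilde\lambda}}\otimes\id_{C(f,\lambda')}$, and taking the matrix trace summand by summand yields
\[
\tau_{r,t}(b)=\sum_{(f,\lambda')\in\Lambda_{r,t}}\operatorname{tr}_{V_{\tilde\lambda}}(K_{-2\rho})\cdot\chi_{f,\lambda'}(b).
\]
By the Weyl character formula, the coefficient equals $s_{\tilde\lambda}(q^{-n+1},q^{-n+3},\ldots,q^{n-1})=\omega_{\tilde\lambda}(q)$ as desired.

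For (iii), the Markov property unwinds to identities such as $\tau_{r,t}(xE_{r,1})=[n]\,\tau_{r-1,t-1}(x)$ for $x$ embedded from $\mathscr B_{r-1,t-1}(q^n,q)$, together with analogous rules for $\tau_{r,t}(xg_{r-1}^{\pm 1})$ and the $g^*$-counterpart. Using the locality of the action of these generators on the additional tensor factors of $V^{r,t}$, each identity reduces to a single partial-trace computation on that factor, normalized by the scalar $\operatorname{tr}_V D=\operatorname{tr}_{V^*}D^{-1}=\sum_{i=1}^n q^{-n+2i-1}=[n]$. The main obstacle I anticipate is (iii): unlike (i) and (ii), which follow abstractly from the double-centralizer structure and the standard quantum-dimension formula, the Markov identities require explicit matrix descriptions of $\eta(E)$, $\eta(g_i)$ and $\eta(g_j^*)$ on the tensor factor being added, matched against the precise normalization of the scalar $[n]$ per strand.
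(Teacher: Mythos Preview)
The paper does not supply its own proof of this lemma; it is simply quoted from \cite[Lemma~6.3]{KM}. Your sketch is correct and is essentially the standard argument one finds in \cite{KM}: the identification of $d$ with the action of the group-like element $K_{-2\rho}$ immediately gives the trace property via Schur--Weyl duality, the weight vector follows from the bimodule decomposition together with the Weyl character formula for the quantum dimension, and the Markov identities reduce to local partial-trace computations on the extra strand. There is nothing to compare against in the present paper itself.
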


For any partition $\alpha$, let $\omega_\alpha(\rho,q)= \prod_{p\in[\alpha]}\frac{[\delta+c(p)]}{[h_p^\alpha]}$.
Thanks to Lemma~\ref{qdim}, we have   \begin{equation}\label{qdimrho1} \omega_\alpha(q^n,q)=\omega_{\alpha}(q), \text{ if $\alpha\in \mathbb N^n$ and $\alpha$ is a partition.}
\end{equation}

\begin{Prop}\label{f and E} Let $\mathscr B_{r, t}(\rho, q)$ be the quantized walled Brauer algebra over $\mathbb C(\rho, q)$ and $\t\in{\upd_{r,t}}(\lambda)$ for some  $(f,\lambda)\in\Lambda_{r,t}$. If $\t_{r-1}=\t_{r+1}$, then $E_{\t,\t}(r)=\frac{\omega_{\nu'}(\rho,q)}{\omega_{\mu'}(\rho,q)}\neq0$, where $\mu=\t_{r-1}^{(1)}$ and $\nu=\t_{r}^{(1)}$.
 \end{Prop}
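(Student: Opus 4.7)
The plan is to exploit the mixed Schur--Weyl duality between $U_q(\mathfrak{gl}_n)$ and $\mathscr{B}_{r,t}(q^n,q)$ via the trace $\tau_{r,t}$ of Lemma~\ref{newtrace}, as the authors indicate in the introduction. First I observe that $E_{\t,\t}(r)$ depends only on $\mu$ and $\nu$: by Lemma~\ref{equation of E}(a) combined with Lemma~\ref{forofwu}, $E_{\t,\t}(r)$ equals the residue of $W_r(u,\t)/u$ at $u=c_\t(r)$, and the closed form of $W_r(u,\t)$ involves only the contents $c_\t(1),\ldots,c_\t(r)$, which are determined via~\eqref{content} by the standard $\nu$-tableau $(\t_0,\ldots,\t_r)$. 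The right-hand side manifestly depends only on $\mu,\nu$, so I may work with any convenient $\t$; I will take $t=1$ and $\lambda=(\mu,(0))$ with $f=1$, so $\t$ adds a box at step $r$ to produce $\nu$ and immediately removes it at step $r+1$.

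Both sides are rational functions in $\rho,q$, so it suffices to verify the identity after the specialization $\rho=q^n$ for all integers $n>r+1$. Fix such $n$; the algebra $\mathscr{B}_{r,1}(q^n,q)$ is then semisimple and the trace $\tau_{r,1}$ has weight vector $(\omega_{\widetilde{\alpha}}(q))_{(g,\alpha')\in\Lambda_{r,1}}$. I compute $\tau_{r,1}(F_\t E)$ in two ways. On the one hand, by Definition~\ref{8} and Lemma~\ref{conditionofcontent}(a), $F_\t$ is a primitive idempotent that projects onto the line $\kappa f_\t\subset C(1,(\mu,(0)))$ and vanishes on every other cell module. Since $F_\t E F_\t = E_{\t,\t}(r)F_\t$ by~\eqref{eststsk} and Lemma~\ref{co}(b), and $E$ acts as zero on the cell modules with $f=0$ (where every updown tableau $\s$ has $\s_{r-1}\neq\s_{r+1}$), the trace formula gives
\begin{equation*}
\tau_{r,1}(F_\t E) \;=\; \omega_{\mu'}(q^n,q)\cdot E_{\t,\t}(r).
\end{equation*}

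On the other hand, let $\tilde\t'\in\Std(\nu)$ be the standard $\nu$-tableau underlying $(\t_0,\ldots,\t_r)$ and set $F_{\tilde\t'}=F_{\t,1}\cdots F_{\t,r}\in\mathscr{H}_r$. The relation $x_{r+1}E=(\delta-x_r)E$ from Lemma~\ref{y}(e) gives $F_{\t,r+1}(x_{r+1})E=F_{\t,r+1}(\delta-x_r)E$, while $x_rF_{\tilde\t'}=c_\t(r)F_{\tilde\t'}$ together with $F_{\t,r+1}(c_\t(r+1))=1$ and $c_\t(r)=\delta-c_\t(r+1)$ yield the identity $F_\t E=F_{\tilde\t'} E$ in $\mathscr{B}_{r,1}(q^n,q)$ (verified by comparing the action on each cell module, using Lemma~\ref{co}(b) to see that $F_{\tilde\t'}E$ also vanishes on the cell modules $C(1,(\mu'',(0)))$ with $\mu''\neq\mu$). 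I then invoke the Markov property of $\tau_{r,1}$ from~\cite[Proposition~5.2]{KM}, which in the present normalization yields $\tau_{r,1}(F_{\tilde\t'}E)=\tau_{r,0}(F_{\tilde\t'})$. Since $F_{\tilde\t'}$ is the primitive idempotent of $\mathscr{H}_r$ picking out $\tilde\t'$, and Schur--Weyl pairs the Specht module of shape $\nu$ with the $U_q(\mathfrak{gl}_n)$-factor $V_{\widetilde{\nu'}}$, we obtain $\tau_{r,0}(F_{\tilde\t'})=\omega_{\nu'}(q^n,q)$. Equating the two expressions and dividing by the nonzero $\omega_{\mu'}(q^n,q)$ gives $E_{\t,\t}(r)=\omega_{\nu'}(q^n,q)/\omega_{\mu'}(q^n,q)$; letting $n$ vary promotes this to the claimed identity of rational functions, and nonvanishing follows since $\omega_{\nu'}(\rho,q)$ is a product of nonzero factors in $\mathbb{C}(\rho,q)$.

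The main technical obstacle is verifying the Markov-type identity $\tau_{r,1}(F_{\tilde\t'} E)=\tau_{r,0}(F_{\tilde\t'})$ with the correct normalization constant $1$. I would establish it by a partial quantum trace computation: $\eta(E)$ factors as $\delta\cdot(\mathrm{id}^{\otimes(r-1)}\otimes p_0)$ where $p_0$ projects $V\otimes V^*$ onto its one-dimensional trivial summand, while the twist $d=D^{\otimes r}\otimes D^{-1}$ satisfies $(D\otimes D^{-1})p_0=p_0$ and $\mathrm{tr}_{V\otimes V^*}((D\otimes D^{-1})p_0)=1$; combined with $\delta=[n]=\mathrm{tr}(D)$ at $\rho=q^n$, taking the partial quantum trace over the last $V\otimes V^*$ factor reduces $\tau_{r,1}(F_{\tilde\t'} E)$ exactly to $\tau_{r,0}(F_{\tilde\t'})$.
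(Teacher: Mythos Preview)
Your approach is essentially the paper's: specialize $\rho=q^n$, evaluate $\tau_{r,1}$ on $F_{\hat\t}E$ in two ways using the weight vector of Lemma~\ref{newtrace} and the Markov-type identity $\tau_{r,1}(Eb)=\tau_{r,0}(b)$, then lift back to $\mathbb C(\rho,q)$ by letting $n$ vary. The minor differences are: the paper reduces to $t=1$ via Corollary~\ref{fmcell}(c) rather than your residue argument; it works directly with $EF_{\hat\t}$ (your identity $F_\t E=F_{\hat\t}E$ is correct but unnecessary since $\tau$ is a trace); and it establishes $\tau_{r,1}(Eb)=\tau_{r,0}(b)$ for all $b\in\mathscr H_r$ by a two-line algebraic computation using $Eg_{r-1}E=\rho E$ together with \cite[Prop.~5.2(3),(4)]{KM}, whereas \cite[Prop.~5.2(4)]{KM} by itself only gives the case $b\in\mathscr H_{r-1}$, so your partial-trace sketch (or the paper's short extension) really is required for $b=F_{\hat\t}\in\mathscr H_r$.
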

 \begin{proof} We first assume that $\rho=q^n$ with $n>r+t$ and consider $\mathscr B_{r, t}(q^n, q)$ over $\mathbb C(q)$. By Corollary ~\ref{fmcell}(c),
we can assume $t=1$ when we compute $E_{\t,\t}(r)$. We claim that
\begin{equation}\label{7n}
  \tau_{r,1}(Eb)=\tau_{r,0}(b),\quad \text{for all $ b\in\mathscr H_{r}$.}
\end{equation}
In fact, if $b\in\mathscr   H_{r-1}$, \eqref{7n} is given in  \cite[Prop.~5.2(4)]{KM}.
 It is well-known that any $b\in\mathscr H_r$ is of form
$u x v$ such that  $u,v\in\mathscr H_{r-1}$ and $x\in \{1, g_{r-1}\}$. So we  need to consider the element $u g_{r-1} v$.
  By Definition~\ref{qwb}, we have $uE=Eu$, $vE=Ev$,  $E^2=[n]E$ and  $Eg_{r-1}E=q^nE$. So,
$$[n]\tau_{r,1}(Eug_{r-1}v)=\tau_{r,1}(Eug_{r-1}vE )=\tau_{r,1}(vu E g_{r-1}E)=q^n \tau_{r,1}(vu E)={q^n} \tau_{r,0}(vu )=[n]\tau_{r,0}(ug_{r-1}v), $$
 proving \eqref{7n}. We remark that   the final equality follows from \cite[Prop.~5.2(3)]{KM}.

 Let $\hat \t=(\t_0, \t_1, \ldots, \t_{r})$.
Then $F_{\hat \t}\in \mathscr H_{r}$. By general result  in \cite[Theorem~3.16]{Ma2},   $F_{\hat \t}$ is a primitive idempotent of $\mathscr H_{r}$. Thanks to \eqref{eigenv1},
$f_\u F_{\hat \t}=\delta_{\u,\hat \t}f_\u$ for any
$\u\in{\upd_{r,0}}(\gamma)$, where $\gamma\in\Lambda_{r,0}$. So, $\chi_{0,\gamma}(F_{\hat\t})=\delta_{\gamma,\nu}$.
Using
 Lemma~\ref{newtrace} yields
 \begin{equation}\label{ro0s}
\tau_{r,0}(F_{\hat \t})=\omega_{\nu'}(q).
\end{equation}
On the other hand, for any $\v\in{\upd_{r,1}}(\beta)$ with $(l,\beta)\in\Lambda_{r,1}$,
we have $f_\v E=0 $ if $\v_{r-1}\neq\v_{r+1}$(see Lemma~\ref{co}).
If $\v_{r-1}=\v_{r+1}$, then $f_\v EF_{\hat \t}=\delta_{(l,\beta),(1,\lambda)}E_{\v,\t}(r)f_\t$. So, $\chi_{l,\beta}(EF_{\hat \t})= \delta_{(l,\beta),(1,\lambda)}E_{\t,\t}(r)$.
Using Lemma~\ref{newtrace}  again yields
 \begin{equation}\label{ro0st}
\tau_{r,1}(EF_{\hat \t})= E_{\t,\t}(r)\omega_{\mu'}(q).
\end{equation}
By \eqref{7n}--\eqref{ro0st},
   $E_{\t,\t}(r)=\frac{\omega_{\nu'}(q)}{\omega_{\mu'}(q)}=\frac{\omega_{\nu'}(\rho,q)}{\omega_{\mu'}(\rho,q)} \neq0$ (see \eqref{qdimrho1}) with $\rho=q^n$.

 Finally,  we consider $\mathscr B_{r,t}(\rho, q)$ over $\mathbb C(\rho,q)$. We have formula on  $E_{\t,\t}(r)$ when $\rho= q^n$ for $n>r+t$. In general, it is a rational function of $\rho$.
    Using the Fundamental Theorem of algebra, we obtain the result over $\mathbb C(\rho,q)$.\end{proof}

\section{ Gram determinants}
The aim of this  section is to compute Gram determinants $\text{det }G_{f,\lambda}$ of $\mathscr B_{r, t}(\rho, q)$ over an arbitrary field, for all $(f, \lambda)\in \Lambda_{r, t}$. Unless otherwise stated, we assume $\kappa=\mathbb C(\rho,q)$.
By (\ref{det2}), it is enough to compute scalars $\langle f_\t, f_\t\rangle$ for all $\t\in \upd_{r,t}(\lambda)$.
Recall $\t^\lambda$ and $\t_\lambda$ are illustrated in Example~\ref{exampleoft}.
\begin{Defn}
Suppose  $\s\in{\upd_{r,t}}(\lambda)$ such that  $\s_{r+t-1}=\mu$. Define
\begin{enumerate}
\item $\hat{\s}\in{\upd_{r+t-1}}(\mu)$ such that $\hat{\s}_j=\s_j$ for $1\leq j\leq r+t-1$;
\item $\tilde{\s}\in{\upd_{r,t}}(\lambda)$ such that $\tilde{\s}_i=(\t^{\mu})_i$ for $1\leq i\leq r+t-1$ and $\tilde{\s}_{r+t}=\lambda$.
\end{enumerate}
\end{Defn}
In Example~\ref{exofupd}, $\hat\s=(\s_1,\s_2,\s_3)$ and $\tilde \s=\t$.
Motivated by \cite[Proposition~4.2]{RS}, we can get the following result.

\begin{Prop}\label{2} Suppose  $\t\in{\upd_{r,t}}(\lambda)$ for some $(f, \lambda)\in \Lambda_{r, t}$ such that   $\t_{r+t-1}=\mu$  with  $(l,\mu)\in\Lambda_{r+ t-1}$. Then
$\langle f_\t,f_\t\rangle=\frac{1}{\langle f_{\t^\mu}, f_{\t^\mu}\rangle}\langle f_{\hat{\t}},f_{\hat{\t}}\rangle\langle f_{\tilde{\t}},f_{\tilde{\t}}\rangle$.
\end{Prop}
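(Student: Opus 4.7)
The plan is to exploit the $\mathscr B_{r+t-1}(\rho,q)$-module structure on the cell module $C(f,\lambda)$. By Theorem~\ref{mcell} there is a filtration $0=N_0\subset N_1\subset\cdots\subset N_n=C(f,\lambda)$ of $\mathscr B_{r+t-1}$-submodules whose subquotients are cell modules $N_k/N_{k-1}\cong C(l_k,\alpha^{(k)})$ indexed by the bi-partitions $\alpha^{(k)}$ with $\alpha^{(k)}\to\lambda$. Corollary~\ref{fmcell} tells us this filtration is compatible with the orthogonal basis: the subset $\{f_\t\mid \t_{r+t-1}=\alpha^{(j)},\ j\le k\}$ is a basis of $N^{\unrhd\alpha^{(k)}}$, and the isomorphism $N^{\unrhd\alpha^{(k)}}/N^{\rhd\alpha^{(k)}}\xrightarrow{\sim} C(l_k,\alpha^{(k)})$ sends $f_\t+N^{\rhd\alpha^{(k)}}$ to $f_{\hat\t}$.

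Since $\kappa=\mathbb C(\rho,q)$, by \cite[Theorem~6.10]{Rsong} the algebra $\mathscr B_{r+t-1}(\rho,q)$ is semisimple and its cell modules $C(l_k,\alpha^{(k)})$ are simple and pairwise non-isomorphic. Hence the filtration above splits and one obtains a decomposition $C(f,\lambda)=\bigoplus_k V_k$ of $\mathscr B_{r+t-1}$-modules with $V_k\cong C(l_k,\alpha^{(k)})$, and $V_k$ is spanned by those orthogonal basis vectors $f_\t$ for which $\t_{r+t-1}=\alpha^{(k)}$. The symmetric invariant form $\langle\cdot,\cdot\rangle$ on $C(f,\lambda)$, being $\mathscr B_{r+t-1}$-invariant and the summands being pairwise non-isomorphic (and self-dual via $\sigma$), must vanish on every cross-term $V_k\times V_j$ with $k\ne j$; its restriction to each $V_k$, by Schur's lemma applied to the one-dimensional space of invariant forms on the simple module $V_k$, is a scalar multiple $\gamma_k$ of the pullback of the cell form on $C(l_k,\alpha^{(k)})$.

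Taking $\t$ with $\t_{r+t-1}=\mu=\alpha^{(k)}$, this yields
\[
\langle f_\t,f_\t\rangle \;=\; \gamma_k\,\langle f_{\hat\t},f_{\hat\t}\rangle.
\]
The identity applies equally to $\tilde\t$, which by construction also satisfies $\tilde\t_{r+t-1}=\mu$ and has $\widehat{\tilde\t}=\t^\mu$, so
\[
\langle f_{\tilde\t},f_{\tilde\t}\rangle \;=\; \gamma_k\,\langle f_{\t^\mu},f_{\t^\mu}\rangle.
\]
Since $\mathscr B_{r,t}(\rho,q)$ is semisimple over $\kappa$, all the inner products on the right are nonzero, so we may eliminate $\gamma_k$ between the two equations to obtain the stated formula.

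The main technical input is the faithful translation of the filtration of Theorem~\ref{mcell} into a $\mathscr B_{r+t-1}$-isotypic decomposition of $C(f,\lambda)$ compatible with the orthogonal basis; this is furnished by Corollary~\ref{fmcell} together with the semisimplicity over $\mathbb C(\rho,q)$. The only potential obstacle is verifying that the cross-terms in the bilinear form genuinely vanish and that each diagonal block is proportional to the canonical form; both follow from the one-dimensionality of $\operatorname{Hom}_{\mathscr B_{r+t-1}}(C(l_k,\alpha^{(k)}),C(l_j,\alpha^{(j)})^*)$ under pairwise non-isomorphism and self-duality, so no additional computation is required.
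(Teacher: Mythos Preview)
Your proof is correct and follows the standard route for such statements: restrict the invariant form along the branching filtration, use semisimplicity so that the filtration splits into isotypic components spanned by the orthogonal basis vectors with fixed $\t_{r+t-1}$, and then invoke Schur's lemma to identify the restricted form with a scalar multiple of the pullback of the cell form. This is precisely the mechanism behind \cite[Proposition~4.2]{RS}, which the paper cites without reproducing the argument; your write-up is essentially that proof made explicit.

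One small point worth spelling out: you assert that the splitting can be taken so that $V_k=\operatorname{span}\{f_\t:\t_{r+t-1}=\alpha^{(k)}\}$. This does not follow from semisimplicity alone (a splitting is not canonical), but it does follow because $x_{r+t}=c_{r,t}-c_{r+t-1}$ commutes with $\mathscr B_{r+t-1}$ and, over $\mathbb C(\rho,q)$, separates the $\alpha^{(k)}$ by distinct eigenvalues (Lemma~\ref{conditionofcontent}); hence each such span is already a $\mathscr B_{r+t-1}$-submodule and must coincide with the unique $C(l_k,\alpha^{(k)})$-isotypic component. Adding one sentence to this effect would make the argument self-contained.
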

From Proposition \ref{2}, one can compute  $\langle f_\t, f_\t\rangle$ recursively by computing  ${\langle f_{\t^\mu}, f_{\t^\mu}\rangle}$ and $\langle f_{\tilde{\t}},f_{\tilde{\t}}\rangle$.
\begin{Lemma}\label{down}
Suppose  $(f,\mu)\in\Lambda_{r,t}$ and $(\t^\mu)_{r+t-1}=\nu$.  If $\nu=\mu \cup p$ with $p=(1,\mu_1^{(1)}+1)\in\mathscr A(\mu^{(1)})$, then
\begin{equation} \label{n1}n_\mu b_{\t^\mu} \sigma (b_{\t^\mu} ) n_\mu \equiv \langle f_{\t^\nu},f_{\t^\nu}\rangle (q^{-\mu_1^{(1)}}[\delta-\mu_1^{(1)}] f_{\t_\mu}+\sum_{\v\neq \t_\mu} a_\v f_\v )  \quad (\text{mod }\mathscr B_{r,t}^{\rhd(f,\mu)}),
\end{equation}
where $\sigma$ is given in Lemma~\ref{three isom}(d) and  $a_\v$'s are some scalars in $\mathbb C(\rho,q)$.
\end{Lemma}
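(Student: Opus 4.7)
The plan is to first rewrite the left-hand side in the cellular framework. Since $b_{\t_\mu}=1$ (Example~\ref{exampleoft}) and $\sigma(n_\mu)=n_\mu$ (a standard cellularity property), the product equals $\n_{\t_\mu\t^\mu}\,\n_{\t^\mu\t_\mu}$ in the notation of Theorem~\ref{mm}. Applying the cellular multiplication rule \eqref{comofm} yields
$$n_\mu b_{\t^\mu}\sigma(b_{\t^\mu})n_\mu\equiv \langle \n_{\t^\mu},\n_{\t^\mu}\rangle\, n_\mu \pmod{\mathscr B_{r,t}(\rho,q)^{\rhd(f,\mu)}}.$$
Since $\t^\mu$ is the unique $\prec$-maximum of $\upd_{r,t}(\mu)$, Theorem~\ref{ll} forces $\n_{\t^\mu}=f_{\t^\mu}$, so this inner product equals $\langle f_{\t^\mu},f_{\t^\mu}\rangle$. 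Expanding $n_\mu=\n_{\t_\mu}=f_{\t_\mu}+\sum_{\t_\mu\prec\v}c_\v f_\v$ in the orthogonal basis (the transition matrix is upper-unitriangular by Theorem~\ref{ll} and Definition~\ref{8}), the coefficient of $f_{\t_\mu}$ on the left-hand side is precisely $\langle f_{\t^\mu},f_{\t^\mu}\rangle$, so the lemma is equivalent to the single scalar identity
$$\langle f_{\t^\mu},f_{\t^\mu}\rangle = q^{-\mu_1^{(1)}}[\delta-\mu_1^{(1)}]\,\langle f_{\t^\nu},f_{\t^\nu}\rangle.$$

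The hypothesis forces $t=f$ and $\mu^{(2)}=(0)$, and a direct reading of Example~\ref{exampleoft} shows that the truncation $\hat{\t^\mu}$ coincides with $\t^\nu$ as elements of $\upd_{r,t-1}(\nu)$. As a consequence, Proposition~\ref{2} applied to $\t^\mu$ degenerates to the tautology $\langle f_{\t^\mu},f_{\t^\mu}\rangle=\langle f_{\tilde{\t^\mu}},f_{\tilde{\t^\mu}}\rangle$, so the purely recursive strategy cannot extract the desired scalar and a direct calculation is unavoidable. To carry it out I will invoke the inductive construction of Definition~\ref{definition of m}(c), which in the present setting reads
$$\n_{\t^\mu}=E_{r-f+1,f}\,g^{-1}_{\mu_1^{(1)}+1,\,r-f+1}\,\n_{\t^\nu},$$
together with its $\sigma$-image. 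Writing $\n_{\t^\mu}=y^\mu_\nu b_{\t^\nu}$ with $y^\mu_\nu=n_\mu h_{r+t}$ as in the proof of Theorem~\ref{mcell}, the scalar identity reduces to evaluating $y^\mu_\nu\sigma(y^\mu_\nu)$ modulo $\mathscr B_{r,t}(\rho,q)^{\rhd(f,\mu)}$ and matching the resulting multiple of $n_\mu$ against $q^{-\mu_1^{(1)}}[\delta-\mu_1^{(1)}]\,n_\mu$.

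The main obstacle is this final evaluation. Concretely, one must commute the $g^{-1}$-braid factor past the row-symmetrizer $y_\mu$, push $E_{r-f+1,f}$ through the other $E_{r-i+1,i}$ factors hidden in $n_\mu=E^f y_\mu$, and carefully track all the error terms that land in $\mathscr B_{r,t}(\rho,q)^{\rhd(f,\mu)}$, using Lemma~\ref{y} together with the Hecke and walled-Brauer braid relations. The precise scalar $q^{-\mu_1^{(1)}}[\delta-\mu_1^{(1)}]$ then emerges either as the residue of $W_r(u)/u$ at $u=c_{\t^\mu}(r-f+1)$ via the closed formula \eqref{formiofw} of Lemma~\ref{forofwu}, or, more conceptually, as the ratio of quantum dimensions furnished by Proposition~\ref{f and E} and the Schur--Weyl duality argument. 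Because of the volume of braid manipulations and the delicate ideal bookkeeping this requires, the explicit verification is lengthy, which is why the authors defer the complete argument to the final section of the paper.
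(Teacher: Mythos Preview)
Your opening reduction is correct and is a cleaner observation than anything the paper states explicitly: since $b_{\t_\mu}=1$, the left side of \eqref{n1} is $\n_{\t_\mu\t^\mu}\n_{\t^\mu\t_\mu}$, and \eqref{comofm} together with $\n_{\t^\mu}=f_{\t^\mu}$ gives $n_\mu b_{\t^\mu}\sigma(b_{\t^\mu})n_\mu\equiv\langle f_{\t^\mu},f_{\t^\mu}\rangle\,n_\mu$. Expanding $n_\mu$ in the orthogonal basis then shows that Lemma~\ref{down} is \emph{equivalent} to the scalar identity $\langle f_{\t^\mu},f_{\t^\mu}\rangle=q^{-\mu_1^{(1)}}[\delta-\mu_1^{(1)}]\langle f_{\t^\nu},f_{\t^\nu}\rangle$, which is precisely the third case of Lemma~\ref{lemmaoftlambda}. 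In the paper's logic that case is derived \emph{from} Lemma~\ref{down}, so your reduction merely trades one statement for an equivalent one; it does not circumvent the computation.

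The gap is in the second half: your proposed routes to the scalar do not work at this point in the logical order. Lemma~\ref{equation of E}(a) and Proposition~\ref{f and E} both compute $E_{\t,\t}(r)$ for a suitable $\t$ with $\t_{r-1}=\t_{r+1}$; they say nothing directly about the ratio $\langle f_{\t^\mu},f_{\t^\mu}\rangle/\langle f_{\t^\nu},f_{\t^\nu}\rangle$. The only bridge in the paper between $E_{\t,\t}(r)$ and such inner products is Proposition~\ref{uu}, which itself rests on Lemma~\ref{tmumaximal} and hence on the identity you are trying to establish. The paper instead proves Lemma~\ref{down} by a direct calculation (Section~\ref{proof of lemma 5.3}): it uses the filtration of Theorem~\ref{mcell}(c) and Corollary~\ref{fmcell}(c) to pull out the factor $\langle f_{\t^\nu},f_{\t^\nu}\rangle$, expands $g^{-1}_{a,r-f+1}$ as $g_{r-f+1,a}+\sum_{\mathbf j^l}w^{r-f+1-a-l}B_{\mathbf j^l}$, and shows that the main term $n_\mu g_{r-f+1,a}XE_{r-f+1,f}$ produces exactly $q^{-\mu_1^{(1)}}[\delta-\mu_1^{(1)}]f_{\t_\mu}$ plus higher terms via $E_{r-f+1,f}^2=\delta E_{r-f+1,f}$ and $E_{r-f+1,f}g_{r-f}E_{r-f+1,f}=\rho E_{r-f+1,f}$. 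The real work is then a seminormal-form argument (using Lemma~\ref{sks}, tracking the component $\v_{a-1}$) proving that every remaining term $n_\mu B_{\mathbf j^l}XE_{r-f+1,f}$ contributes zero to the $f_{\t_\mu}$-coefficient. That vanishing is the substance of the lemma, and nothing in your proposal substitutes for it.
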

\begin{proof} The proof is elaborated in Section~\ref{proof of lemma 5.3}.
\end{proof}
Let $\lceil n\rceil:=1+q^{-2}+\ldots+q^{-(2n-2)}=q^{-n+1}[n]$ if $n\in\mathbb Z_{>0}$ and $\lceil 0\rceil=1$.
\begin{Lemma}\label{lemmaoftlambda}
Suppose $(f,\mu)\in\Lambda_{r,t}$ and $(\t^\mu)_{r+t-1}=\nu$.  Then $\langle f_{\t^\mu},f_{\t^\mu}\rangle=  \langle f_{\t^\nu},f_{\t^\nu} \rangle A$, where
\begin{equation} \label{m}  A= \begin{cases}\lceil\mu_k^{(1)}\rceil, & \text{ if $\mu=\nu \cup p$ with $p=(k,\mu_k^{(1)})\in \mathscr R(\mu^{(1)})$,}\\
\lceil\mu_k^{(2)}\rceil, & \text{ if $\mu=\nu \cup p$ with $p=(k,\mu_k^{(2)})\in\mathscr R(\mu^{(2)})$,}\\
q^{-\mu_1^{(1)}}[\delta-\mu_1^{(1)}] , & \text{ if $\nu=\mu \cup p$ with $p=(1,\mu_1^{(1)}+1)\in\mathscr A(\mu^{(1)})$.}
\end{cases}
\end{equation}

\end{Lemma}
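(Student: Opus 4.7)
The plan is to derive $\langle f_{\t^\mu},f_{\t^\mu}\rangle$ from a single master identity obtained from \eqref{comofm} and then check the three cases separately. Since $\t^\mu$ is the unique $\prec$-maximal element of $\upd_{r,t}(\mu)$, the upper-triangular change-of-basis in \eqref{nffn} collapses to $\n_{\t^\mu}=f_{\t^\mu}$, hence $\langle\n_{\t^\mu},\n_{\t^\mu}\rangle=\langle f_{\t^\mu},f_{\t^\mu}\rangle$. Applying \eqref{comofm} with $\s=\t=\t^\mu$ and $\tilde{\s}=\tilde{\t}=\t_\mu$, and noting $b_{\t_\mu}=1$ (so $\n_{\t_\mu\t^\mu}=n_\mu b_{\t^\mu}$, $\n_{\t^\mu\t_\mu}=\sigma(b_{\t^\mu})n_\mu$, and $\n_{\t_\mu\t_\mu}=n_\mu$), I obtain the master identity
$$n_\mu\,b_{\t^\mu}\sigma(b_{\t^\mu})\,n_\mu\equiv\langle f_{\t^\mu},f_{\t^\mu}\rangle\,n_\mu\pmod{\mathscr B_{r,t}^{\rhd(f,\mu)}}.$$
A direct check on $\t^\mu$ as defined in Example~\ref{exampleoft} also shows that in each of the three cases $\hat{\t^\mu}=\t^\nu$, which justifies the way $\langle f_{\t^\nu},f_{\t^\nu}\rangle$ will enter.

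For the third case, where $\nu=\mu\cup(1,\mu_1^{(1)}+1)$, I would invoke Lemma~\ref{down} to evaluate the left-hand side of the master identity. Expanding $n_\mu=\n_{\t_\mu}$ via \eqref{nffn} as $f_{\t_\mu}+\sum_{\t_\mu\prec\m}c_\m f_\m$ and comparing $f_{\t_\mu}$-coefficients on both sides forces $\langle f_{\t^\mu},f_{\t^\mu}\rangle=\langle f_{\t^\nu},f_{\t^\nu}\rangle\cdot q^{-\mu_1^{(1)}}[\delta-\mu_1^{(1)}]$, which is exactly the $A$ in that case.

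For the first two cases the ratio is Hecke-algebraic. By \eqref{btlambda}, $b_{\t^\mu}$ depends only on $\mu^{(1)},f,r$, so $b_{\t^\mu}=b_{\t^\nu}$ in case 2 and $b_{\t^\mu}=1$ in case 1. The key algebraic input is the Hecke identity
$$y_\lambda^2=\Big(\sum_{w\in\mathfrak S_\lambda}q^{-2\ell(w)}\Big)y_\lambda=\prod_i[\lambda_i]_{q^{-2}}!\cdot y_\lambda,$$
which follows at once from $y_\lambda g_i=-q^{-1}y_\lambda$ for $s_i\in\mathfrak S_\lambda$. Case 1 is immediate: $t=0$ forces $f=0$ and $b_{\t^\mu}=1$, so the master identity reads $y_{\mu^{(1)}}^2=\langle f_{\t^\mu},f_{\t^\mu}\rangle y_{\mu^{(1)}}$, and the telescoping gives $A=[\mu_k^{(1)}]_{q^{-2}}=\lceil\mu_k^{(1)}\rceil$. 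For case 2, I use that $y_{\mu^{(2)}}$ involves only $g^*_{f+1},g^*_{f+2},\ldots$, while $E^f$, $y_{\mu^{(1)}}$, $b_{\t^\mu}$ involve only $g$-generators and $g^*_1,\ldots,g^*_{f-1}$; Definition~\ref{qwb}(g)--(h) then shows that $y_{\mu^{(2)}}$ commutes with all these factors, so
$$n_\mu b_{\t^\mu}\sigma(b_{\t^\mu})n_\mu=X\cdot y_{\mu^{(2)}}^2=[\mathfrak S_{\mu^{(2)}}]_{q^{-2}}\cdot X\cdot y_{\mu^{(2)}},\qquad X:=E^f y_{\mu^{(1)}}b_{\t^\mu}\sigma(b_{\t^\mu})E^f y_{\mu^{(1)}}.$$
The analogous computation inside $\mathscr B_{r,t-1}$ for $\nu$ yields $n_\nu b_{\t^\nu}\sigma(b_{\t^\nu})n_\nu=[\mathfrak S_{\nu^{(2)}}]_{q^{-2}}\cdot X\cdot y_{\nu^{(2)}}$ with the \emph{same} $X$. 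Combining these with the master identity applied to both $\mu$ and $\nu$, and using that $y_{\mu^{(2)}}=y_{\nu^{(2)}} z$ for a distinguished right coset sum $z$ (so $n_\nu z=n_\mu$), extracting the coefficient of $n_\mu$ gives
$$\frac{\langle f_{\t^\mu},f_{\t^\mu}\rangle}{\langle f_{\t^\nu},f_{\t^\nu}\rangle}=\frac{[\mathfrak S_{\mu^{(2)}}]_{q^{-2}}}{[\mathfrak S_{\nu^{(2)}}]_{q^{-2}}}=\lceil\mu_k^{(2)}\rceil.$$

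The genuine difficulty is packaged inside Lemma~\ref{down}, whose proof is deferred to the last section; its computation of $n_\mu b_{\t^\mu}\sigma(b_{\t^\mu})n_\mu$ in the orthogonal basis is where the walled-Brauer-specific analysis lives. Within the present proof, the only remaining delicate point is the ideal-bookkeeping in case 2: one must verify that multiplication by $z$ on the right sends $\mathscr B_{r,t-1}^{\rhd(f,\nu)}$ into $\mathscr B_{r,t}^{\rhd(f,\mu)}$. This is true because the inclusion $\Lambda_{r,t-1}\hookrightarrow\Lambda_{r,t}$ sending $(\ell,\lambda')\mapsto(\ell,(\lambda'^{(1)},\lambda'^{(2)}\cup p))$ respects the dominance order in \eqref{pos1}, so a cellular basis element with label $\rhd(f,\nu)$ in $\mathscr B_{r,t-1}$ passes to a cellular basis element with label $\rhd(f,\mu)$ in $\mathscr B_{r,t}$ after multiplication by the coset sum $z$.
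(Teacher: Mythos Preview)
Your master identity and the treatment of case~3 match the paper's proof exactly. For case~1 your direct computation via $y_{\mu^{(1)}}^2=\lceil\mu^{(1)}\rceil!\,y_{\mu^{(1)}}$ is correct and is a legitimate shortcut compared to the paper, which handles case~1 ``similarly'' to case~2.

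The gap is in case~2. Your factorisation $n_\mu b_{\t^\mu}\sigma(b_{\t^\mu})n_\mu=\lceil\mu^{(2)}\rceil!\,Xy_{\mu^{(2)}}$ is correct, but the ideal-bookkeeping justification does not work. The map $(\ell,\lambda')\mapsto(\ell,(\lambda'^{(1)},\lambda'^{(2)}\cup p))$ is not well-defined on $\Lambda_{r,t-1}$: the fixed box $p=(k,\mu_k^{(2)})$ need not be addable to an arbitrary $\lambda'^{(2)}$. Moreover, right-multiplication by $z$ does not carry a cellular basis element of $\mathscr B_{r,t-1}$ to a single cellular basis element of $\mathscr B_{r,t}$. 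For a concrete obstruction take $f=0$, $\nu^{(2)}=(2,1)$, $\mu^{(2)}=(2,2)$, $p=(2,2)$, and $\lambda'^{(2)}=(3)\rhd\nu^{(2)}$; then $p\notin\mathscr A((3))$, so your poset map is undefined, and one must actually expand $n_{\lambda'}z$ in the cellular basis of $\mathscr B_{r,t}$ to locate it. Your argument provides no mechanism for this.

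The paper avoids this detour. From Definition~\ref{definition of m} one has $\n_{\t^\mu}=Z\,\n_{\t^\nu}$ with $Z=\sum_{j=b_{k-1}+1}^{t}(-q)^{j-t}g^*_{j,t}$, hence $n_\mu b_{\t^\mu}\sigma(b_{\t^\mu})n_\mu=Z\bigl[n_\nu b_{\t^\nu}\sigma(b_{\t^\nu})n_\nu\bigr]\sigma(Z)$. The decisive observation is that $(f,\nu)$ is the \emph{maximal} label among all $\alpha\to\mu$ in $\Lambda_{r,t-1}$ (removing the last-row box of $\mu^{(2)}$ dominates removing any other, and level~$f$ dominates level~$f{-}1$). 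Therefore $N^{\rhd\nu}=0$, and Theorem~\ref{mcell}(c) gives an honest $\mathscr B_{r,t-1}$-isomorphism $N^{\unrhd\nu}\cong C(f,\nu)$ \emph{inside} $C(f,\mu)$, not merely on a subquotient. Under this isomorphism the master identity for $\nu$ transports verbatim: $\n_{\t^\mu}\cdot\sigma(b_{\t^\nu})n_\nu\equiv\langle f_{\t^\nu},f_{\t^\nu}\rangle\,n_\mu$ in $C(f,\mu)$; right-multiplying by $\sigma(Z)$ and using $n_\mu\sigma(Z)=\lceil\mu_k^{(2)}\rceil\,n_\mu$ (via \eqref{xyxyx}) finishes. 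No comparison of ideals between $\mathscr B_{r,t-1}$ and $\mathscr B_{r,t}$ is ever required.
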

\begin{proof}By Theorem~\ref{ll}, $\n_{\t^\mu}=f_{\t^\mu}$ since $\t^\mu$ is maximal with respect to $\prec$.
If $\mu=\nu \cup p$ with $p=(k,\mu_k^{(2)})\in\mathscr R(\mu^{(2)})$, then  by Definition~\ref{definition of m},   $\n_{\t^\mu}=\sum_{j=b_{k-1}+1}^{t} (-q)^{j-t}g^*_{j,t}\n_{\t^\nu}$, where $b_{k-1}=f+\sum _{j=1}^{k-1}\mu^{(2)}_j$.
 Using  Theorem ~\ref{mcell}(c) and \eqref{btlambda}, we have
$$\begin{aligned}\langle f_{\t^\mu},f_{\t^\mu}\rangle n_\mu &\equiv n_\mu b_{\t^\mu} \sigma (b_{\t^\mu} ) n_\mu \equiv \langle f_{\t^\nu},f_{\t^\nu}\rangle \sum_{j=b_{k-1}+1}^{t} (-q)^{j-t}g^*_{j,t} n_\nu\sum_{j=b_{k-1}+1}^{t} (-q)^{j-t}g^*_{t,j} \\
&\equiv \langle f_{\t^\nu},f_{\t^\nu}\rangle n_\mu  \sum_{j=b_{k-1}+1}^{t} (-q)^{j-t}g^*_{t,j}
\equiv  \lceil \mu_k^{(2)} \rceil\langle f_{\t^\nu},f_{\t^\nu}\rangle n_\mu(\text{mod }\mathscr B_{r,t}^{\rhd(f,\mu)}) ,
\end{aligned}
$$
where the third equality can be computed directly by Definition~\ref{qwb}(see also \eqref{xyxyx}).  The result for  $\mu=\nu \cup p$ with $p=(k,\mu_k^{(1)})\in\mathscr R(\mu^{(1)})$ can be proved similarly as above.

For the third case in \eqref{m},  by   Lemma~ \ref{down} we have
$$\begin{aligned} \langle f_{\t^\mu},f_{\t^\mu}\rangle f_{\t_\mu\t_\mu} &\equiv F_{\t_\mu} n_\mu b_{\t^\mu} \sigma (b_{\t^\mu} ) n_\mu  F_{\t_\mu}
\equiv \langle f_{\t^\nu},f_{\t^\nu}\rangle F_{\t_\mu}(q^{-\mu_1^{(1)}}[\delta-\mu_1^{(1)}] f_{\t_\mu}+\sum_{\v\neq \t_\mu} a_\v f_\v ) F_{\t_\mu} \\&\equiv \langle f_{\t^\nu},f_{\t^\nu}\rangle  q^{-\mu_1^{(1)}}[\delta-\mu_1^{(1)}]  f_{\t_\mu\t_\mu} (\text{mod }\mathscr B_{r,t}^{\rhd(f,\mu)}).
\end{aligned}$$
\end{proof}
If $ \nu=(\nu_1,\nu_2,\ldots,\nu_k)$ is a partition, let $\lceil\nu\rceil!:=\lceil\nu_1\rceil!\lceil\nu_2\rceil!\ldots\lceil\nu_k\rceil!$, where $\lceil n\rceil!=\lceil n\rceil\lceil n-1\rceil\ldots\lceil2\rceil\lceil1\rceil$.
 For any bi-partition $\lambda$,  let $\lceil\lambda\rceil!:=\lceil\lambda^{(1)}\rceil! \lceil\lambda^{(2)}\rceil!$. By Lemma~\ref{lemmaoftlambda} and induction on $r+t$, we  get the explicit formula of ${\langle f_{\t^\mu}, f_{\t^\mu}\rangle}$ as follows.
\begin{Lemma}\label{tmumaximal}
 Suppose that $(f,\mu)\in\Lambda_{r,t}$. Let $\tilde\mu:=(\tilde\mu^{(1)},  \mu^{(2)})\in \Lambda^+_{r,t-f}$ such that $\tilde \mu^{(1)}_1=\mu_1^{(1)}+f$ and $\tilde\mu^{(1)}_i=\mu^{(1)}_i$ for all $i>1$. Then \begin{equation}\label{tmu}
\langle f_{\t^\mu},f_{\t^\mu}\rangle= \lceil\tilde \mu\rceil! \prod_{i=1}^f q^{-(\mu_1^{(1)}+i-1)}[\delta -(\mu_1^{(1)}+i-1)].
\end{equation}
\end{Lemma}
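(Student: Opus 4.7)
\medskip
\noindent\textbf{Proof proposal.} The plan is to induct on $r+t$, with the base case $r+t\leq 1$ being trivial (both sides equal $1$). For the inductive step, set $\nu := (\t^\mu)_{r+t-1}$ and apply Lemma~\ref{lemmaoftlambda} to obtain $\langle f_{\t^\mu},f_{\t^\mu}\rangle = A\cdot\langle f_{\t^\nu},f_{\t^\nu}\rangle$ for the appropriate scalar $A$; it then suffices to check that the claimed closed form for $\mu$ equals $A$ times the claimed closed form for $\nu$.

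The key is to read off the relation between $\nu$ and $\mu$ directly from Example~\ref{exampleoft}. By the construction of $\t^\mu$, the last step from $\t^\mu_{r+t-1}$ to $\t^\mu_{r+t}$ falls into one of three mutually exclusive cases, matching exactly the three cases of Lemma~\ref{lemmaoftlambda}: \textbf{(i)} if $\mu^{(2)}\neq\emptyset$, then $\mu=\nu\cup p$ where $p=(k,\mu^{(2)}_k)$ is the last box of $\mu^{(2)}$, and $(f,\nu)\in\Lambda_{r,t-1}$; \textbf{(ii)} if $\mu^{(2)}=\emptyset$ and $f=0$, then $\mu=\nu\cup p$ where $p=(l,\mu^{(1)}_l)$ is the last box of $\mu^{(1)}$, and $(0,\nu)\in\Lambda_{r-1,t}$; \textbf{(iii)} if $\mu^{(2)}=\emptyset$ and $f>0$, then $\nu=\mu\cup p$ with $p=(1,\mu^{(1)}_1+1)\in\mathscr A(\mu^{(1)})$, coming from the ``removal'' phase in Example~\ref{exampleoft}(a)(ii), and $(f-1,\nu)\in\Lambda_{r,t-1}$.

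In cases (i) and (ii), the scalar $A$ from Lemma~\ref{lemmaoftlambda} is $\lceil\mu^{(2)}_k\rceil$ or $\lceil\mu^{(1)}_l\rceil$ respectively; the number of $E$'s is unchanged and $\mu^{(1)}_1$ is unchanged, so the product $\prod_{i=1}^f q^{-(\mu^{(1)}_1+i-1)}[\delta-(\mu^{(1)}_1+i-1)]$ is the same for $\mu$ and $\nu$, while $\lceil\tilde\mu\rceil!/\lceil\tilde\nu\rceil!$ produces exactly the missing factor $A$. In case (iii), $A=q^{-\mu^{(1)}_1}[\delta-\mu^{(1)}_1]$ supplies the $i=1$ factor of the product; meanwhile $\tilde\nu=\tilde\mu$ because $\tilde\nu^{(1)}_1=(\mu^{(1)}_1+1)+(f-1)=\tilde\mu^{(1)}_1$, and the induction hypothesis for $\nu$ (which has $f-1$ handles and $\nu^{(1)}_1=\mu^{(1)}_1+1$) gives precisely the remaining factors $\prod_{i=2}^f q^{-(\mu^{(1)}_1+i-1)}[\delta-(\mu^{(1)}_1+i-1)]$.

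No serious obstacle is expected, since Lemma~\ref{lemmaoftlambda} has done all the algebraic work; the only point requiring some care is the bookkeeping in case (iii), where both $f$ drops and $\nu^{(1)}_1$ increases by one, and one must verify that these two shifts conspire to leave $\tilde\nu=\tilde\mu$ and to produce exactly the $i=1$ term of the product when multiplied by~$A$.
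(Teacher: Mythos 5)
Your proposal is correct and takes essentially the same route as the paper, which simply states that the formula follows "by Lemma~\ref{lemmaoftlambda} and induction on $r+t$." Your three-way case analysis (determined by whether $\mu^{(2)}\neq\emptyset$, and if not, by whether $f=0$ or $f>0$) matches exactly the three cases of Lemma~\ref{lemmaoftlambda}, and the bookkeeping in case (iii) — where $f$ drops by one, $\nu^{(1)}_1=\mu^{(1)}_1+1$, so $\tilde\nu=\tilde\mu$ and $A$ supplies the $i=1$ factor — is a careful and accurate unfolding of what the paper leaves implicit.
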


\begin{Lemma}\label{9} Let $\t\in{\upd_{r,t}}(\lambda)$ with $(f,\lambda)\in\Lambda_{r,t}$.
\begin{enumerate}
\item If $\t s_k$ exists and  $\t s_k\prec \t$ for some  $1\leq k\leq r-1$,
then $\langle f_{\t s_k},f_{\t s_k}\rangle=S_{\t s_k,\t}(k)\langle f_{\t},f_{\t}\rangle$;
\item If $\t s_k^*$ exists and  $\t s_k^*\prec\t$ for some  $1\leq k\leq t-1$, then $\langle f_{\t s^*_k},f_{\t s^*_k}\rangle=S_{\t s_k^*,\t}(r+k)\langle f_{\t},f_{\t}\rangle$.
\end{enumerate}
where $S_{\t s_k,\t }(k)$ (resp., $S_{\t s_k^*,\t }(r+k)$) is given in (\ref{sttsk1})(resp., (\ref{sttsk3})).
\end{Lemma}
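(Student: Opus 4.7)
The plan is to exploit the $\sigma$-invariance of the cellular bilinear form together with the explicit action formulas from Lemma~\ref{sks}. Since $\sigma$ fixes $g_k$ by Lemma~\ref{three isom}(d), the invariance of $\langle\ ,\ \rangle$ gives
\begin{equation*}
\langle f_\t\, g_k,\, f_{\t s_k}\rangle \;=\; \langle f_\t,\, f_{\t s_k}\, g_k\rangle .
\end{equation*}
I will compute both sides using Lemma~\ref{sks} and then compare. Note that $\t s_k$ belongs to the same $\upd_{r,t}(\lambda)$ as $\t$ (only position $k$ is changed, while the final bipartition $\lambda$ is unchanged), so both $f_\t$ and $f_{\t s_k}$ live in the cell module $C(f,\lambda)$, and $\langle f_\t, f_{\t s_k}\rangle = 0$ by the orthogonality of the basis $\{f_\u\mid \u\in \upd_{r,t}(\lambda)\}$.

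For part (a), I expand
\begin{equation*}
f_\t g_k \;=\; S_{\t,\t}(k)\, f_\t \,+\, S_{\t,\t s_k}(k)\, f_{\t s_k}, \qquad
f_{\t s_k} g_k \;=\; S_{\t s_k,\t s_k}(k)\, f_{\t s_k} \,+\, S_{\t s_k,\t}(k)\, f_\t,
\end{equation*}
where I am using Lemma~\ref{co}(a) to restrict the sum in \eqref{eststsk} to $\u \overset{k}\sim \t$, and Lemma~\ref{w} to conclude that the only such $\u\neq \t$ is $\t s_k$. Pairing the first equation with $f_{\t s_k}$ and the second with $f_\t$ and invoking orthogonality, the two displayed relations become $S_{\t,\t s_k}(k)\,\langle f_{\t s_k}, f_{\t s_k}\rangle$ and $S_{\t s_k,\t}(k)\,\langle f_\t, f_\t\rangle$, respectively. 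Since $\t s_k\prec \t$ by hypothesis, formula \eqref{sttsk1} gives $S_{\t,\t s_k}(k)=1$, and the claimed identity
\begin{equation*}
\langle f_{\t s_k}, f_{\t s_k}\rangle \;=\; S_{\t s_k,\t}(k)\,\langle f_\t, f_\t\rangle
\end{equation*}
drops out immediately.

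Part (b) is handled by the same argument with $g_k^*$ in place of $g_k$, again using that $\sigma$ fixes $g_k^*$. Lemma~\ref{co}(a) applies for $k\neq r$, so the expansion of $f_\t g_k^*$ is supported on $\{f_\t, f_{\t s_k^*}\}$, and the expansion of $f_{\t s_k^*} g_k^*$ is supported on $\{f_{\t s_k^*}, f_\t\}$. Orthogonality and the $\sigma$-invariance yield
\begin{equation*}
S_{\t,\t s_k^*}(r+k)\,\langle f_{\t s_k^*}, f_{\t s_k^*}\rangle \;=\; S_{\t s_k^*,\t}(r+k)\,\langle f_\t, f_\t\rangle,
\end{equation*}
and since $\t s_k^*\prec \t$, the first case of \eqref{sttsk3} gives $S_{\t,\t s_k^*}(r+k)=1$, completing the proof. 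There is no real obstacle here — the only point requiring care is verifying that $\t s_k$ (resp.\ $\t s_k^*$) lies in the same cell module so that orthogonality of the $f$-basis can be used, and that the $\prec$-hypothesis is precisely what forces the coefficient $S_{\t,\t s_k}(k)$ (resp.\ $S_{\t,\t s_k^*}(r+k)$) to equal $1$ rather than one of the more complicated expressions $S(k)_2$ or $S(k)^*_1, S(k)^*_2$.
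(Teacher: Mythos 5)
Your proof is correct and in the same spirit as the paper's: both rest on the $\sigma$-invariance of the cell module bilinear form together with the action formulas from Lemma~\ref{sks} and orthogonality of the $f$-basis. The only cosmetic difference is that you compare $\langle f_\t g_k, f_{\t s_k}\rangle$ with $\langle f_\t, f_{\t s_k}g_k\rangle$ directly, whereas the paper compares $\langle f_\t g_k, f_\t g_k\rangle$ with $\langle f_\t, f_\t g_k^2\rangle$ and cancels the diagonal terms; these reduce to the identical relation $S_{\t,\t s_k}(k)\langle f_{\t s_k},f_{\t s_k}\rangle = S_{\t s_k,\t}(k)\langle f_\t, f_\t\rangle$.
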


\begin{proof}
The result $(a)$ (resp., $(b)$) follows from Lemma~\ref{sks}, $\langle f_{\t},f_{\t}g^2_k\rangle=\langle f_{\t}g_k,f_{\t}g_k\rangle$ (resp., $\langle f_{\t},f_{\t}(g_k^*)^2\rangle=\langle f_{\t}g_k^*,f_{\t}g_k^*\rangle$) and $\langle f_{\t},f_{\s}\rangle=0$ for $\t\neq \s$.
\end{proof}
%
\begin{Defn}\label{gamahhsu}
Suppose  $(l,\mu)\in\Lambda_{r+t-1},(f,\lambda)\in\Lambda_{r,t}$ such that $\mu\rightarrow\lambda$. For any $\t\in{\upd_{r,t}}(\lambda)$
with  $\hat{\t}=\t^\mu$, let $\gamma_{\lambda/\mu}\in \mathbb C(\rho,q)$ be defined by  $\gamma_{\lambda/\mu}:=\frac{\langle f_{\t},f_{\t}\rangle}{\langle f_{\t^\mu},f_{\t^\mu}\rangle  }$.
\end{Defn}
\begin{Prop}\label{15}Let $\t\in{\upd_{r,t}}(\lambda)$ such that  $\hat{\t}=\t^\mu$ with  $(f,\lambda)\in\Lambda_{r,t}$ and $\mu=\lambda\setminus p$.
\begin{enumerate}
\item If $t=0$, then  $p=(k,\lambda_k^{(1)})\in\mathscr R(\lambda^{(1)})$ for some $k$ and
$\gamma_{\lambda/\mu}= \lceil\lambda_k^{(1)}\rceil \prod_{j=a_k}^{r-1}S_{\t^\lambda s_{a_k,j+1} ,\t^\lambda s_{a_k,j}}(j)$,
where  $a_k=\sum_{i=1}^k\lambda_i^{(1)}$ and  $S_{\t^\lambda s_{a_k,j+1} ,\t^\lambda s_{a_k,j}}(j)$ is given in (\ref{sttsk1}).
\item If $t>0$, then   $p=(k,\lambda_k^{(2)})\in\mathscr R(\lambda^{(2)})$ for some $k$ and
$\gamma_{\lambda/\mu}= \lceil\lambda_k^{(2)}\rceil  \prod_{j=b_k}^{t-1}S_{\t^\lambda s^*_{b_k,j+1},\t^\lambda s^*_{b_k,j}}(j+r)$,
 where $b_k=f+\sum_{i=1}^k\lambda_i^{(2)}$ and  $S_{\t^\lambda s^*_{b_k,j+1},\t^\lambda s^*_{b_k,j}}(j+r)$  is given in (\ref{sttsk3}).
\end{enumerate}
\end{Prop}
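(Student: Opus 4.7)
The strategy is to factor
$$\gamma_{\lambda/\mu} \;=\; \frac{\langle f_{\t},f_{\t}\rangle}{\langle f_{\t^\lambda},f_{\t^\lambda}\rangle}\cdot\frac{\langle f_{\t^\lambda},f_{\t^\lambda}\rangle}{\langle f_{\t^\mu},f_{\t^\mu}\rangle}$$
and to evaluate each factor using results already established. The second factor will supply the prefactor $\lceil\lambda_k^{(1)}\rceil$ (resp.\ $\lceil\lambda_k^{(2)}\rceil$) via Lemma~\ref{tmumaximal}, while the first will deliver the product of $S$-coefficients by telescoping Lemma~\ref{9} along a chain of simple (resp.\ starred) transpositions interpolating between $\t^\lambda$ and $\t$.

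For the second factor, $(f,\lambda)$ and $(f,\mu)$ share the same level $f$ and the same value $\mu_1^{(1)}=\lambda_1^{(1)}$, so the $f$-product $\prod_{i=1}^f q^{-(\mu_1^{(1)}+i-1)}[\delta-(\mu_1^{(1)}+i-1)]$ appearing in Lemma~\ref{tmumaximal} is the same for $\lambda$ and $\mu$ and cancels in the ratio. What remains is $\lceil\tilde\lambda\rceil!/\lceil\tilde\mu\rceil!$: in case~(a) the shapes $\tilde\lambda$ and $\tilde\mu$ differ only in row $k$ of the first component (by one box), yielding $\lceil\lambda_k^{(1)}\rceil$; in case~(b) the first components agree and the second components differ only in row $k$, yielding $\lceil\lambda_k^{(2)}\rceil$.

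For the first factor, I identify $\t$ as an explicit cyclic shift of $\t^\lambda$. In case~(a), the condition $\hat\t=\t^\mu$ forces entries $1,\dots,r-1$ of $\t$ to fill $\mu^{(1)}=\lambda^{(1)}\setminus p$ in row order, so entry $r$ lands at $(k,\lambda_k^{(1)})$, whereas in $\t^\lambda$ that position is occupied by $a_k$ and entries $a_k+1,\dots,r$ fill the rows below $k$ in natural order; a direct comparison shows $\t=\t^\lambda s_{a_k,r}$. The intermediate tableaux $\t^\lambda s_{a_k,j+1}=\t^\lambda s_{a_k,j}\,s_j$ for $j=a_k,\dots,r-1$ then interpolate between $\t^\lambda$ and $\t$ by right multiplication by simple transpositions. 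At each such step, entry $j$ occupies row $k$ of $\t^\lambda s_{a_k,j}$ while entry $j+1$ occupies a strictly lower row $i^*>k$, so swapping them strictly decreases the shape at step $j$ in dominance order and gives $\t^\lambda s_{a_k,j+1}\prec\t^\lambda s_{a_k,j}$. Lemma~\ref{9}(a) now telescopes to the claimed product. Case~(b) proceeds identically, using the cycle $s^*_{b_k,t}\in\mathfrak S^*_t$ to relocate the entry $r+t$ of $\t^\lambda$ from the last position of $\lambda^{(2)}$ to $(k,\lambda_k^{(2)})$; only the second-partition steps of the up-down tableau are affected, the required $\prec$-decrease is established by the same row comparison, and Lemma~\ref{9}(b) supplies the telescoping.

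The main obstacle is to verify that every intermediate object $\t^\lambda s_{a_k,j}$ (resp.\ $\t^\lambda s^*_{b_k,j}$) is actually a standard tableau (resp.\ an up-down $\lambda$-tableau in the sense of Definition~\ref{updt}). For the very first swap one needs $\lambda_k^{(1)}\ge 2$ (resp.\ $\lambda_k^{(2)}\ge 2$) whenever row $k+1$ of the relevant component is nonempty; this is automatic from removability of $p$, since $\lambda_k^{(1)}=1$ (resp.\ $\lambda_k^{(2)}=1$) would force $\lambda_{k+1}^{(1)}=0$ (resp.\ $\lambda_{k+1}^{(2)}=0$), whence $a_k=r$ (resp.\ $b_k=t$) and the product in the statement is already empty. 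The validity at later steps follows from the same row/column comparison, carried out by induction on $j$.
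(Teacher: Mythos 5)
Your proof is correct and follows essentially the same route as the paper's: the paper likewise identifies $\t=\t^\lambda s_{a_k,r}$ (resp.\ $\t=\t^\lambda s^*_{b_k,t}$), asserts the $\prec$-chain $\t^\lambda s_{a_k,r}\prec\cdots\prec\t^\lambda$, telescopes Lemma~\ref{9} along it, and invokes Lemma~\ref{tmumaximal} for the prefactor $\lceil\lambda_k^{(i)}\rceil$. Your additional verification of the $\prec$-chain via row-dominance and the check that removability of $p$ guarantees every intermediate up-down tableau is well-defined simply make explicit steps that the paper leaves implicit.
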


\begin{proof}
By assumption, $\t=\t^\lambda s_{a_k,r}$ if $t=0$. Note that $\t^\lambda s_{a_k,r}\prec\t^\lambda s_{a_k,r-1}\prec\ldots\prec\t^\lambda$. By using Lemma~\ref{9}(a) repeatedly for pairs $\{f_{\t^\lambda s_{a_k,j+1}},f_{\t^\lambda s_{a_k,j}} \}$, $a_k\leq j\leq r-1$,  we have
$\langle f_{\t},f_{\t}\rangle=\langle f_{\t^\lambda},f_{\t^\lambda}\rangle\prod_{j=a_k}^{r-1}S_{\t^\lambda s_{a_k,j+1} ,\t^\lambda s_{a_k,j}}(j)$.
 Hence (a) follows from Lemma~\ref{tmumaximal}. By similar arguments for (a), (b) follows from  Lemma~\ref{9}(b) .
\end{proof}

Now we need to consider the case that $\t\in{\upd_{r,t}}(\lambda)$ with $(f,\lambda)\in\Lambda_{r,t}$ such that  $\hat{\t}=\t^\mu$  and $\mu=\lambda\cup p$. In this case, $t\geq f>0$
 and $p\in \mathscr A(\lambda^{(1)})$.

We first assume that $t=1$ (hence $f=1$) and fix  $\t\in{\upd_{r,t}}(\lambda)$   with $\hat{\t}=\t^\mu$. Let  $p=(k,\lambda_k^{(1)}+1)\in \mathscr A(\lambda^{(1)})$ such that $\mu=\lambda\cup p$. Note that $p$ and $k$ are uniquely  determined by $\lambda$ and $\mu$ (hence by $\t$).
Let  $\u$ be the unique element in $\upd_{r,1}(\lambda)$ (see Example~\ref{exofbaru}) such that
\begin{equation}  \label{conditionofu}
 \u_{r-1}=\u_{r+1}, ~\u_r\setminus\u_{r-1}= (k,\lambda^{(1)}_k+1), \text{~~} (\u_1^{(1)},\ldots,\u_{r-1}^{(1)})=\mathbf t^{\lambda^{(1)}} \text{ and } \u_i^{(2)}=(0), \text{$0\leq i\leq r+1$}.
  \end{equation}
For any partition $\nu$ of $n$, we denote by $\bar \nu$ the partition of $n-\sum_{i=1}^{k-1}\nu_i$ such that $\bar\nu_i=\nu_{k-1+i}$, $i\in\mathbb Z^{>0}$.
Let $ (1,\bar \lambda)\in\Lambda_{r-b,1}$  such that $\bar\lambda^{(2)}=(0)$ and $\bar \lambda^{(1)}=\bar{\lambda^{(1)} }$,
where  $ b=\sum_{i=1}^{k-1}\lambda_i^{(1)}$.
 \begin{Defn}\label{defiojjdied} Keep  $\u$  in \eqref{conditionofu} and $k,b$ as above.  Define
    \begin{enumerate}
\item ${\upd_{r,1}}(\lambda)_\u:=\{\s\in {\upd_{r,1}}(\lambda)\mid \s_i=\u_i \text{ for }\, 1\leq i\leq b \text{ and } r-1\le i\le r+1\}$.
\item ${\upd_{r-1,0}}(\lambda)_\u:=\{\v\in {\upd_{r-1,0}}(\lambda)\mid \v_i=\u_i \text{ for  }1\leq i\leq b\}$.
\item For any $\s\in{\upd_{r,1}}(\lambda) _\u$,  let $\bar \s\in {\upd_{r-b,1}}(\bar\lambda)$ such that $\bar \s^{(1)}_i=\bar{\s^{(1)}_{b+i}}$ , $1\leq i\leq r-b+1$.
\item For any $\v\in {\upd_{r-1,0}}(\lambda) _\u$, let $\bar \v\in {\upd_{r-b-1,0}}(\bar\lambda)$ such that $\bar\v_i^{(1)}=\bar{\v^{(1)}_{b+i}}$, $1\leq i\leq r-b-1$.
    \end{enumerate}
\end{Defn}

\begin{example}\label{exofbaru}
Suppose that $\lambda=((3,2,2,1),(0))$, $r=9,t=1$ with $(1,\lambda)\in \Lambda_{9,1}$.
Let
$$\tiny{\t=(\t_0, \t_1, \t_2, \t_3,
(\young(\ \ \ ,\ ), \emptyset) ,(\young(\ \ \ ,\ \ ), \emptyset)    ,(\young(\ \ \ ,\ \ \ ), \emptyset)    ,(\young(\ \ \ ,\ \ \ ,\ ), \emptyset), (\young(\ \ \ ,\ \ \ ,\ \ ), \emptyset), (\young(\ \ \ ,\ \ \ ,\ \ ,\ ), \emptyset), (\young(\ \ \ ,\ \ ,\ \ ,\ ), \emptyset)),
 }$$
where $(\t_1,\t_2,\t_3)=\tiny{ ((\young(\ ),\emptyset),(\young(\ \ ),\emptyset),(\young(\ \ \ ),\emptyset))}$.
Then $\hat\t=\t^\mu$ with $\mu=((3,3,2,1),(0))$, $p=(2,3)\in \mathscr A(\lambda^{(1)})$ and $k=2$, $b=3$. So, $\bar {\lambda^{(1)}}= (2,2,1)$, $\bar\lambda=((2,2,1), (0))$  and
$$\u=\tiny{(\t_0, \t_1, \t_2,\t_3,  (\young(\ \ \ ,\ ), \emptyset) ,(\young(\ \ \ ,\ \ ), \emptyset)    ,(\young(\ \ \ ,\ \ ,\ ), \emptyset)    ,(\young(\ \ \ ,\ \ ,\ \ ), \emptyset), (\young(\ \ \ ,\ \ ,\ \ ,\ ), \emptyset), (\young(\ \ \ ,\ \ \ ,\ \ ,\ ), \emptyset), (\young(\ \ \ ,\ \ ,\ \ ,\ ), \emptyset))}.$$
In this case,  ${\upd_{9,1}}(\lambda)_\u=\{\s,\u\} $ and  ${\upd_{8,0}}(\lambda)_\u=\{\v, (\u_0,\ldots,\u_8)\}$, where
$$\begin{aligned}\s&=\tiny{(\t_0, \t_1, \t_2,\t_3,  (\young(\ \ \ ,\ ), \emptyset) ,(\young(\ \ \ ,\ ,\ ), \emptyset)    ,(\young(\ \ \ ,\ \ ,\ ), \emptyset)    ,(\young(\ \ \ ,\ \ ,\ \ ), \emptyset), (\young(\ \ \ ,\ \ ,\ \ ,\ ), \emptyset), (\young(\ \ \ ,\ \ \ ,\ \ ,\ ), \emptyset), (\young(\ \ \ ,\ \ ,\ \ ,\ ), \emptyset))},\\
\v&=\tiny{(\t_0, \t_1, \t_2,\t_3,  (\young(\ \ \ ,\ ), \emptyset) ,(\young(\ \ \ ,\ ,\ ), \emptyset)    ,(\young(\ \ \ ,\ \ ,\ ), \emptyset)    ,(\young(\ \ \ ,\ \ ,\ \ ), \emptyset), (\young(\ \ \ ,\ \ ,\ \ ,\ ), \emptyset))}.
\end{aligned}$$
Moreover,   $\bar \u, \bar \s\in  {\upd_{6,1}}(\bar\lambda)$ and $\bar \v\in {\upd_{5,0}}(\bar\lambda)$, where
$$\begin{aligned}\bar\u&=\tiny{((\emptyset,\emptyset) ,  (\young(\ ), \emptyset) ,(\young(\ \ ), \emptyset)    ,(\young(\ \ ,\ ), \emptyset)    ,(\young(\ \ ,\ \ ), \emptyset), (\young(\ \ ,\ \ ,\ ), \emptyset), (\young(\ \ \ ,\ \ ,\ ), \emptyset), (\young(\ \ ,\ \ ,\ ), \emptyset))},\\
\bar\s&=\tiny{((\emptyset,\emptyset), (\young(\ ), \emptyset) ,(\young(\ ,\ ), \emptyset)    ,(\young(\ \ ,\ ), \emptyset)    ,(\young(\ \ ,\ \ ), \emptyset), (\young(\ \ ,\ \ ,\ ), \emptyset), (\young(\ \ \ ,\ \ ,\ ), \emptyset), (\young(\ \ ,\ \ ,\ ), \emptyset))},\\
\bar\v&=\tiny{( (\emptyset,\emptyset),  (\young(\ ), \emptyset) ,(\young(\ ,\ ), \emptyset)    ,(\young(\ \ ,\ ), \emptyset)    ,(\young(\ \ ,\ \ ), \emptyset), (\young(\ \ ,\ \ ,\ ), \emptyset))}.
\end{aligned}$$

\end{example}


\begin{Lemma}\label{skss21}
 For any  $\v\in{\upd_{r,1}}(\lambda)_\u$ (resp., $\v\in{\upd_{r-1,0}}(\lambda)_\u$),  if $\v s_{j}$ exists  for some $j$ such that  $b+1\leq j\leq r-1 $(resp., $b+1\leq j\leq r-2$), then
$S_{\v, \v}(j)=S_{\bar \v,\bar \v }(j-b)$ and $S_{\v, \v s_j}(j)=S_{\bar \v,\bar \v s_{j-b}}(j-b)$.

\end{Lemma}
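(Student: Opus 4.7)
The plan is to reduce the statement to Lemma~\ref{sks} by tracking how the combinatorial data attached to $\v$ transforms under the operation $\v \mapsto \bar\v$. By Lemma~\ref{sks}(a), the scalars $S_{\v,\v}(j)$ and $S_{\v,\v s_j}(j)$ are determined by three pieces of information: (i) whether $\v s_j$ exists, (ii) the difference $c(p_1)-c(p_2)$, and (iii) the ``same row'' versus ``same column'' status of $p_1,p_2$ in the degenerate case, together with the order relation $\v s_j\prec\v$ versus $\v\prec \v s_j$ in the non-degenerate case, where $p_1=\v_j\ominus\v_{j-1}$ and $p_2=\v_{j+1}\ominus\v_j$. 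So I aim to show that each of these pieces of data is preserved when $\v$ is replaced by $\bar\v$ and $j$ by $j-b$.

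First I would show that $p_1$ and $p_2$ both lie in rows $\ge k$. By the condition $\v_i=\u_i$ for $i\le b$ together with $(\u_1^{(1)},\ldots,\u_{r-1}^{(1)})=\mathbf{t}^{\lambda^{(1)}}$, the partition $\v_b^{(1)}$ consists precisely of the first $k-1$ rows of $\lambda^{(1)}$, already at their full length. Since Definition~\ref{updt}(b) forces only box-additions during steps $1,\ldots,r$, no box can be added to rows $1,\ldots,k-1$ after step $b$. Hence for $b+1\le j\le r-1$, both $p_1$ and $p_2$ necessarily lie in rows $\ge k$; in the borderline subcase $j+1=r$, the specification of $\u$ pins down $p_2=(k,\lambda_k^{(1)}+1)$, which still satisfies this.

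Next, the operation $\v\mapsto\bar\v$ deletes the first $k-1$ rows and relabels, so each box in row $i\ge k$ and column $c$ of $\v^{(1)}$ becomes a box in row $i-(k-1)$ and column $c$ of $\bar\v^{(1)}$. Consequently $c(\bar p_1)-c(\bar p_2)=c(p_1)-c(p_2)$, and $p_1,p_2$ lie in the same row (resp.\ column) if and only if $\bar p_1,\bar p_2$ do. Moreover, by Lemma~\ref{w}(b), $\v s_j$ exists if and only if $p_1,p_2$ share neither row nor column, which is likewise equivalent to the existence of $\bar\v s_{j-b}$. Finally, the comparison $\v s_j\prec\v$ reduces at step $j$ to the dominance comparison of $\v_{j-1}^{(1)}\cup p_2$ and $\v_{j-1}^{(1)}\cup p_1$; since both share their first $k-1$ rows with $\lambda^{(1)}$, this dominance comparison is identical to the corresponding comparison at step $j-b$ for $\bar\v$, so $\v s_j\prec\v$ if and only if $\bar\v s_{j-b}\prec\bar\v$.

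Combining these observations, the formulas in Lemma~\ref{sks}(a) applied to the pair $(\v,j)$ and to the pair $(\bar\v,j-b)$ yield identical values, completing the proof in the case $\v\in\upd_{r,1}(\lambda)_\u$. The case $\v\in\upd_{r-1,0}(\lambda)_\u$ is strictly simpler (no second component, no step involving $E$), and the same argument goes through verbatim with $r$ replaced by $r-1$. The only genuine technical point is checking that the dominance order $\prec$ is preserved under the row-truncation, which I have isolated above; everything else is just matching index ranges.
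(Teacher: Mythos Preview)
Your proof is correct and follows the same approach as the paper: both arguments reduce to the observation that the row-truncation $\v\mapsto\bar\v$ shifts each relevant content by the constant $k-1$, so that $c(p_1)-c(p_2)=c(\bar p_1)-c(\bar p_2)$, whence the formulas of Lemma~\ref{sks}(a) coincide. The paper's proof is terser (it records only the content shift and cites \eqref{sttsk1}), whereas you additionally spell out why $p_1,p_2$ lie in rows $\ge k$ and why the order relation $\prec$ is preserved---details the paper leaves implicit.
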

\begin{proof}Let $\v_j\ominus \v_{j-1}=p_1, \v_{j+1}\ominus \v_{j}=p_2$, $\bar\v_{j-b}\ominus \bar\v_{j-b-1}=\bar p_1$  and $\bar\v_{j-b+1}\ominus \bar\v_{j-b}=\bar p_2$.
Then $c( p_i)=c(\bar p_i )-k+1$, $i=1,2$ and   $c(p_1)-c(p_2)=c(\bar p_1)-c(\bar p_2)$.
Now the result follows from \eqref{sttsk1}.
\end{proof}









\begin{Prop}\label{uu}
For   $\u$   in \eqref{conditionofu} and $\bar \u$ in Definition~\ref{defiojjdied}(c),   we have
 \begin{equation}\label{ffeeded1}
\small{\langle f_{\u},f_{\u}\rangle= q^{-\lambda_k^{(1)}}\frac{\lceil\lambda^{(1)}_k+1\rceil\lceil\lambda\rceil !E_{\u,\u}(r)}{E_{\bar\u, \bar\u}(r-b)}
 [\delta-\lambda_k^{(1)}]A_\u},\end{equation}
where $A_\u=\prod_{j=h}^{r-b-1}S_{\t^{\bar\lambda}s_{h,j+1}, \t^{\bar\lambda} s_{h,j}}(j)$, $h=\lambda_k^{(1)}+1$ and $E_{\u,\u}(r)$ is given in Proposition~\ref{f and E}.
\end{Prop}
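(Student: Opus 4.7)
The plan is to apply Proposition~\ref{2} at $\mu=\u_r$ so as to factor $\langle f_\u,f_\u\rangle=\gamma_{\lambda/\mu}\langle f_{\hat\u},f_{\hat\u}\rangle$, compute the Hecke factor $\langle f_{\hat\u},f_{\hat\u}\rangle$ by iterated braid moves, reduce $\gamma_{\lambda/\mu}$ to the base case $k=1$ handled by Lemma~\ref{lemmaoftlambda}, and then identify the correction by the $E$-ratio via the explicit quantum-dimension formula of Proposition~\ref{f and E}.

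Concretely, $\hat\u\in\upd_{r,0}(\mu)$ is obtained from $\t^\mu$ by the braid element $s_{b+h,r}=s_{b+h}s_{b+h+1}\cdots s_{r-1}$ with $h=\lambda_k^{(1)}+1$, moving the box $p=(k,\lambda_k^{(1)}+1)$ from its natural position $b+h$ in $\t^\mu$ to the last position $r$. Each braid swap is well-defined (no column conflict occurs, since every box of $[\mu^{(1)}]$ strictly below row $k$ has column at most $\lambda_k^{(1)}$, whereas $p$ lies in column $\lambda_k^{(1)}+1$) and strictly lowers the tableau in $\prec$, so iterating Lemma~\ref{9}(a) and using Lemma~\ref{tmumaximal} yields
\[
\langle f_{\hat\u},f_{\hat\u}\rangle=\lceil\mu^{(1)}\rceil!\prod_{j=b+h}^{r-1}S_{\t^\mu s_{b+h,j+1},\,\t^\mu s_{b+h,j}}(j).
\]
Since the $S$-coefficients depend only on the three consecutive shapes at $j-1,j,j+1$, the argument behind Lemma~\ref{skss21} identifies this product with $A_\u$ after the shift $j\mapsto j-b$; one uses here that $\t^{\bar\lambda}$ and $\t^{\bar\mu}$ agree on their first $r-b-1$ shapes so that the $S$-values in the definition of $A_\u$ match. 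Combined with $\lceil\mu^{(1)}\rceil!=\lceil\lambda\rceil!\lceil\lambda_k^{(1)}+1\rceil$, this gives $\langle f_{\hat\u},f_{\hat\u}\rangle=\lceil\lambda\rceil!\lceil\lambda_k^{(1)}+1\rceil A_\u$, and the proposition is reduced to the identity
\[
\gamma_{\lambda/\mu}=q^{-\lambda_k^{(1)}}[\delta-\lambda_k^{(1)}]\cdot\frac{E_{\u,\u}(r)}{E_{\bar\u,\bar\u}(r-b)}.
\]

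When $k=1$ one has $b=0$, $\bar\u=\u$ and $E_{\u,\u}(r)/E_{\bar\u,\bar\u}(r-b)=1$; moreover $\tilde\u$ then coincides with the maximal tableau $\t^\lambda$, so Case~3 of Lemma~\ref{lemmaoftlambda} gives $\gamma_{\lambda/\mu}=q^{-\lambda_1^{(1)}}[\delta-\lambda_1^{(1)}]$, settling the base case. For general $k$, applying this base case to $\bar\u$ yields $\gamma_{\bar\lambda/\bar\mu}=q^{-\lambda_k^{(1)}}[\delta-\lambda_k^{(1)}]$, so it remains to establish the reduction identity $\gamma_{\lambda/\mu}/\gamma_{\bar\lambda/\bar\mu}=E_{\u,\u}(r)/E_{\bar\u,\bar\u}(r-b)$. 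The plan for this is to use Proposition~\ref{f and E} to rewrite each $E$-scalar as an explicit product of $[\delta+c(p)]$ over skew boxes and $[h_p^{\alpha'}]$ over Young diagrams, and in parallel to track the contribution to $\gamma_{\lambda/\mu}$ coming from the rows $1,\ldots,k-1$ of $\lambda^{(1)}$ absent in $\bar\lambda^{(1)}$. Corollary~\ref{fmcell}(c) applied iteratively identifies the relevant subquotients of the cell modules under restriction from $\mathscr B_{r,1}(\rho,q)$ to $\mathscr B_{r-b,1}(\rho,q)$, and a careful use of Lemma~\ref{equation of E} together with the factorized form of $W_r(u,\t)$ in Lemma~\ref{forofwu} matches the extra quantum-dimension factors in the $E$-ratio with those in $\gamma_{\lambda/\mu}/\gamma_{\bar\lambda/\bar\mu}$.

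The main obstacle is the reduction identity in the general-$k$ case. The natural tool, Lemma~\ref{equation of E}(c), requires $\tilde\u_{r-1}=\tilde\u_{r+1}$, which holds for $\u$ itself but typically fails for $\tilde\u$ (indeed it fails unless $k$ equals the last non-empty row of $\mu^{(1)}$), so a single residue identity does not suffice. One must instead either execute the residue telescoping of Lemma~\ref{forofwu} by hand, or pass through Schur-Weyl and use the explicit quantum-dimension formula of Proposition~\ref{f and E}; in each case one has to verify by careful hook-length bookkeeping that the extra factors for rows $1,\ldots,k-1$ in $\lambda^{(1)}$ cancel against the corresponding factors in the $E$-ratio exactly as the statement predicts.
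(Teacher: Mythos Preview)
Your reduction is set up correctly: Proposition~\ref{2} gives $\langle f_\u,f_\u\rangle=\gamma_{\lambda/\mu}\langle f_{\hat\u},f_{\hat\u}\rangle$ with $\mu=\u_r$, and your Hecke computation $\langle f_{\hat\u},f_{\hat\u}\rangle=\lceil\lambda\rceil!\,\lceil\lambda_k^{(1)}+1\rceil\,A_\u$ is right (the content differences in the $S$--factors for $\t^\mu s_{b+h,j}$ and for $\t^{\bar\lambda}s_{h,j-b}$ agree term by term, exactly as in Lemma~\ref{skss21}). The problem is the remaining identity $\gamma_{\lambda/\mu}=q^{-\lambda_k^{(1)}}[\delta-\lambda_k^{(1)}]\,E_{\u,\u}(r)/E_{\bar\u,\bar\u}(r-b)$ for $k>1$: you do not prove it, and the plan you sketch does not close. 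By definition $\gamma_{\lambda/\mu}=\langle f_{\tilde\u},f_{\tilde\u}\rangle/\langle f_{\t^\mu},f_{\t^\mu}\rangle$, and for $k>1$ the tableau $\tilde\u$ satisfies $\tilde\u_{r-1}\neq\tilde\u_{r+1}$, so $f_{\tilde\u}E=0$ and none of the relations in Lemma~\ref{equation of E} say anything about $\tilde\u$. There is therefore no direct way to express $\gamma_{\lambda/\mu}$ in terms of the $E$--scalars, and ``hook--length bookkeeping'' from Proposition~\ref{f and E} only gives the right--hand side; it does not produce the left--hand side independently. In fact $\gamma_{\lambda/\mu}$ in this situation is precisely what Proposition~\ref{14}(b) computes \emph{using} Proposition~\ref{uu}, so your proposed route is circular.

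The paper avoids $\gamma_{\lambda/\mu}$ and $\tilde\u$ altogether. The key is to pass instead through the \emph{minimal} tableau $\t_\lambda$, which does satisfy $(\t_\lambda)_{r-1}=(\t_\lambda)_{r+1}$. A direct computation of $f_{\t^\lambda\t_\lambda}f_{\t_\lambda\t^\lambda}$ using the explicit form of $\n_{\t_\lambda}=n_\lambda$ and the operator identities for $E$ yields $\langle f_{\t_\lambda},f_{\t_\lambda}\rangle=\lceil\lambda\rceil!\,E_{\t_\lambda,\t_\lambda}(r)$. Since $\u\overset{r}\sim\t_\lambda$, Lemma~\ref{equation of E}(b)--(c) then give $\langle f_\u,f_\u\rangle=\lceil\lambda\rceil!\,E_{\u,\t_\lambda}(r)^2/E_{\u,\u}(r)$, and the analogous formula for $\bar\u$. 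What remains is the off--diagonal identity $E_{\u,\t_\lambda}(r)/E_{\u,\u}(r)=E_{\bar\u,\t_{\bar\lambda}}(r-b)/E_{\bar\u,\bar\u}(r-b)$, which is \emph{not} a consequence of the diagonal formula in Proposition~\ref{f and E}; it is proved by expanding $f_\u E$ in the Jucys--Murphy basis, introducing auxiliary central elements $\Phi_\u,\Psi_\u\in\mathbb C(\rho,q)[x_1,\ldots,x_{r-1}]$ with $EF_{\u,r}F_{\u,r+1}E=\Phi_\u E$ and $Eg_{r-1}F_{\u,r}F_{\u,r+1}E=\Psi_\u E$, and then running a parallel computation for $\bar\u$ in $\mathscr B_{r-b,1}(\rho,q)$ with Lemma~\ref{skss21} used to match the structure constants. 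This is the substantive step your proposal is missing.
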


\begin{proof}First note that $\bar\u\in\upd_{r-b,1}(\bar\lambda)$ and $\bar \u=\t^{\bar \lambda}s_{h,r-b}$. By Lemmas~\ref{tmumaximal} and  \ref{9},
we have
   $\langle f_{\bar\u},f_{\bar\u}\rangle=q^{-\lambda_k^{(1)}}\lceil\lambda^{(1)}_k+1\rceil\lceil \bar\lambda\rceil! [\delta-\lambda_k^{(1)}]A_\u$. So, it remains to prove that
\begin{equation}\label{ffeeded}
\small{\langle f_{\u},f_{\u}\rangle= \frac{  \lceil\lambda\rceil !E_{\u,\u}(r)}{\lceil \bar\lambda\rceil!E_{\bar\u, \bar\u}(r-b)}\langle f_{\bar\u},f_{\bar\u}\rangle}.
\end{equation}
By Definition~\ref{definition of m},
$\n_\u=Ey_\lambda g_{a,r}^{-1} \sum_{j=b+1}^{a}(-q)^{j-a}g_{a,j}g_{a,r}$, where   $a=\sum_{i=1}^k\lambda^{(1)}_i+1$.
 So,
\begin{equation}\label{fee} \small{\begin{aligned}
f_\u E &= E y_\lambda g_{a,r}^{-1} \sum_{j=b+1}^{a}(-q)^{j-a}g_{a,j}g_{a,r}F_\u E
= Ey_\lambda F_\u E + Ey_\lambda g_{a,r}g_{a-1,r-1}^{-1} \sum_{j=b+1}^{a-1}(-q)^{j-a}g_{a-1,j} F_\u E
 \\
&= [y_\lambda E F_{\u,r} F_{\u,r+1}E+ y_\lambda g_{a,r-1}E g_{r-1}F_{\u,r} F_{\u,r+1}E g_{a-1,r-1}^{-1} \sum_{j=b+1}^{a-1}(-q)^{j-a}g_{a-1,j}
]Y_\u
\end{aligned}}\end{equation}
where $Y_\u=\prod_{1\leq k\leq r-1} F_{\u,k}$ (which   commutes with $E$ by Lemma~\ref{y}(f)).

 By definition of $\u$  \eqref{conditionofu}, $ (\u_0,\ldots,\u_{r-1})$ is the maximal element in ${\upd_{r-1,0}}(\lambda) $. So,
\begin{equation}\label{ylambdanur1}
y_{\lambda}=\n_{\u_{r-1}}\equiv f_{\u_{r-1}} (\text{mod }\mathscr H_{r-1}^{\rhd(0,\lambda)}).\end{equation}
Thanks to  Lemmas~\ref{e6} and   \ref{y}(c),  there are some  $\Phi_\u, \Psi_\u\in  \mathbb C(\rho,q)[x_1,x_2,\ldots,x_{r-1}]$ such that
\begin{equation}\label{fi}
EF_{\u,r}F_{\u,r+1}E=\Phi_\u E\quad  \text{  and  }\quad
Eg_{r-1}F_{\u,r} F_{\u,r+1}E=\Psi_\u E.
\end{equation}
Using Theorem~\ref{ll} and (\ref{eigenv1}), we  can define  $ \bar{\Phi}_\u, \bar{\Psi}_\u \in \mathbb C(\rho,q)$ such that
\begin{equation}\label{faift}
\n_{\u_{r-1}}\Phi_{\u}\equiv\bar{\Phi}_\u \n_{\u_{r-1}} ,~ \n_{\u_{r-1}}\Psi_\u\equiv\bar{\Psi}_\u \n_{\u_{r-1}},~
f_\u\Phi_\u\equiv\bar{\Phi}_\u f_\u, ~ f_\u\Psi_\u\equiv\bar{\Psi}_\u f_\u,
\end{equation}
where we omit $(\text{mod }\mathscr H_{r-1}^{\rhd(0,\lambda)})$ in the first two equations and  $(\text{mod }\mathscr B_{r,1}(\rho,q)^{\rhd(1,\lambda)})$ in the last two equations.
On the other hand, by (\ref{eigenv1}), we have $f_\v F_{\u,r}F_{\u,r+1}=0$ if $\v\overset{r}\sim\u,\v\neq\u$. We have
\begin{equation}\label{phiu} \small{
\bar\Phi_\u f_\u E\overset{\text{(\ref{fi})--(\ref{faift})}}\equiv f_\u EF_{\u,r}F_{\u,r+1}E
\overset{\text{Lem.~\ref{co}}}\equiv\sum_{\v\overset{r}\sim\u}E_{\u,\v}(r)f_\v F_{\u,r}F_{\u,r+1}E
\equiv E_{\u,\u}(r)f_\u E\quad (\text{mod }\mathscr B_{r,1}^{\rhd(1,\lambda)}).
}\end{equation}
By Proposition~\ref{f and E},  $E_{\u,\u}(r)\neq 0$. So, $f_\u E\neq 0$ and $E_{\u,\u}(r)=\bar{\Phi}_\u$.
Moreover,
\begin{equation}\label{eeetlam}
y_\lambda E F_{\u,r} F_{\u,r+1}EY_\u\equiv E_{\u,\u}(r) y_\lambda E\equiv E_{\u,\u}(r)f_{\t_\lambda}
+\sum_{\t_\lambda\prec \t} p_\t f_\t ~(\text{mod }\mathscr B_{r,1}^{\rhd(1,\lambda)}),
\end{equation}
where $p_\t$ are some scalars.

If $\t=\t_\lambda$, then   $\u=\t_\lambda$ by  \eqref{conditionofu}.
Applying \eqref{fee}--\eqref{eeetlam} to $\u=\t_\lambda$ and using  Definition \ref{definition of m}, we have
$$\begin{aligned}
f_{\t^\lambda\t_\lambda}f_{\t_\lambda\t^\lambda}=&F_{\t^\lambda}\n_{\t^\lambda\t_\lambda}F_{\t_\lambda}\n_{\t_\lambda\t^\lambda}F_{\t^\lambda}
\equiv F_{\t^\lambda}b_{\t^\lambda}  E\n_{\u_{r-1}} F_{\t_\lambda,r}F_{\t_\lambda,r+1}\n_{\u_{r-1}} E  b_{\t^\lambda}F_{\t^\lambda}\\
\equiv&F_{\t^\lambda}b_{\t^\lambda}  \n_{\u_{r-1}} E F_{\t_\lambda,r}F_{\t_\lambda,r+1}E\n_{\u_{r-1}} b_{\t^\lambda}F_{\t^\lambda}
\equiv F_{\t^\lambda}b_{\t^\lambda} \n_{\u_{r-1}} \Phi_{\t_\lambda} E \n_{\u_{r-1}} b_{\t^\lambda}F_{\t^\lambda}\\
\equiv&\bar{\Phi}_{\t_\lambda}F_{\t^\lambda}b_{\t^\lambda} E\n_{\u_{r-1}}\n_{\u_{r-1}} b_{\t^\lambda}F_{\t^\lambda} \equiv \lceil \lambda\rceil !E_{\t_\lambda,\t_\lambda}(r) f_{\t^\lambda \t^\lambda} \quad (\text{mod }\mathscr B_{r,1}^{\rhd(1,\lambda)}).
\end{aligned}
$$
This yields  \begin{equation}\label{s1}\langle f_{\t_\lambda},f_{\t_\lambda}\rangle=\lceil\lambda\rceil!E_{\t_\lambda,\t_\lambda}(r).\end{equation}
Note that for any  $\u$ in \eqref{conditionofu}, we have $\u \overset{r}\sim \t_\lambda$.
By Lemma~\ref{equation of E}(b)--(c) and \eqref{s1},
 \begin{equation}\label{ff}\langle f_{\u},f_{\u}\rangle= \langle f_{\t_\lambda},f_{\t_\lambda}\rangle\frac{E_{\u,\t_\lambda}(r)^2}{E_{\u,\u}(r)E_{\t_\lambda,\t_\lambda}(r)}=\frac{\lceil\lambda\rceil!E_{\u,\t_\lambda}(r)^2}{E_{\u,\u}(r)}.\end{equation}
Similarly, we have  \begin{equation} \langle f_{\bar\u},f_{\bar\u}\rangle= \langle f_{\t_{\bar\lambda}},f_{\t_{\bar\lambda}}\rangle\frac{E_{\bar\u,\t_{\bar\lambda}}(r-b)^2}{E_{\bar\u,\bar\u}(r-b)E_{\t_{\bar\lambda},\t_{\bar\lambda}}(r-b)}
=\frac{\lceil\bar\lambda\rceil!E_{\bar\u,\t_{\bar\lambda}
}(r-b)^2}{E_{\bar\u,\bar\u}(r-b)}.\end{equation}

We claim that  \begin{equation}\label{emededddd}
E_{\u, \t_\lambda}(r)=E_{\bar\u,\t_{\bar\lambda}}(r-b)\frac{E_{\u, \u}(r)}{E_{\bar\u, \bar\u}(r-b)}.
\end{equation}
Note that \eqref{ffeeded} follows from  \eqref{ff}--\eqref{emededddd}.
It remains to  prove the claim.

 If $\t=\t_\lambda$ (hence $\u=\t_\lambda$), then  \eqref{emededddd} follows from  \eqref{s1}. Now, we assume that $\t\neq\t_\lambda$ and hence $\u\neq\t_\lambda$.
Let \begin{equation}\label{x}
\small{X: =\small{g_{a-1,r-1}^{-1} \sum_{j=b+1}^{a-1}(-q)^{j-a}g_{a-1,j} }, \text{ and }
\quad X_b: =\small{g_{a-1-b,r-1-b}^{-1} \sum_{j=1}^{a-1-b}(-q)^{j-a+b}g_{a-1-b,j}} }.\end{equation} Then,
$$
 y_\lambda g_{a,r-1}E g_{r-1}F_{\u,r} F_{\u,r+1}E X
 \overset{\eqref{fi}}= Ey_\lambda g_{a,r-1} \Psi_\u X
\overset{\eqref {ylambdanur1}}\equiv E f_{\u_{r-1}}g_{a,r-1} \Psi_\u X  ~~(\text{mod }\mathscr B_{r,1}^{\rhd(1,\lambda)}).
 $$
 Similarly, we have
 $$y_{\bar\lambda} g_{a-b,r-1-b}E_{r-b,1} g_{r-1-b}F_{\u,r-b} F_{\u,r+1-b}E_{r-b,1} X_b
\equiv E_{r-b,1} f_{\bar \u_{r-1}}g_{a-b,r-1-b} \Psi_{\bar u} X_b  ~~(\text{mod }\mathscr B_{r-b,1}^{\rhd(1,\bar{\lambda})}). $$
 By Lemma~\ref{sks}, we can  write
 \begin{equation}\label{emiss}
  f_{\u_{r-1}}g_{a,r-1}=\sum_{\s\in{\upd_{r-1,0}}(\lambda)_\u}a_\s f_\s
 \end{equation} for some scalars $a_\s$.
Thanks to Lemma~\ref{sks} and  Lemma~\ref{skss21}, we have
  \begin{equation}\label{reawiahdr}
   f_{\bar \u_{r-1}}g_{a-b,r-1-b}=\sum_{\s\in{\upd_{r-1,0}}(\lambda)_{\u}}a_\s f_{\bar\s}
  \end{equation}
with the same scalars $a_\s$'s as that in \eqref{emiss}.

For any $\s\in{\upd_{r-1,0}}(\lambda)_\u$, let $\check{\s}\in{\upd_{r,1}}(\lambda)_\u$ such that $\check\s_i=\s_i$ for $1\leq i\leq r-1$.
 Write  $f_\s\Psi_\u=  \Psi_{\u,\s}f_\s  $ and $f_{\check \s}\Psi_\u=  \Psi_{\u,\check\s}f_{\check \s} $ for some scalars $ \Psi_{\u,\s}, \Psi_{\u,\check\s}$. Then by (\ref{eigenv1}), $ \Psi_{\u,\s}= \Psi_{\u,\check\s}$. Hence,
 $$ \begin{aligned}
\Psi_{\u,\check \s} f_{\check \s}E \equiv& f_{\check\s} E g_{r-1}F_{\u,r}F_{\u,r+1} E
\equiv \sum_{\v\overset{r}\sim\check\s}E_{\check\s,\v} (r)f_{\v}g_{r-1}F_{\u,r}F_{\u,r+1}E\\
\equiv& \sum_{\v\overset{r}\sim\check\s} E_{\check\s,\v}(r)  (S_{\v,\v}(r-1) f_{\v}+S_{\v,\v s_{r-1}}(r-1) f_{\v s_{r-1}})F_{\u,r}F_{\u,r+1}E\\
\equiv&E_{\check \s, \check \s}(r)S_{ \check \s,\check \s}(r-1)  f_{\check\s}E \quad (\text{mod }\mathscr B_{r,t}^{\rhd(f,\lambda)}).
\end{aligned}$$
Then  $ \Psi_{\u,\check \s }=E_{\check \s, \check \s}(r)S_{ \check \s ,\check \s}(r-1)$.
 Since $\check\s \in{\upd_{r,1}}(\lambda)_\u $, we have $E_{\check \s, \check \s}(r)=E_{\u, \u}(r)$ by Proposition~\ref{f and E}.
 So $ \Psi_{\u, \s }=E_{\u, \u}(r) S_{ \check \s ,\check \s}(r-1)$ and  we can write
 \begin{equation}\label{emiss1} f_{\u_{r-1}}g_{a,r-1}\Psi_\u= E_{\u, \u}(r)\sum_{\s\in{\upd_{r-1,0}}(\lambda)_{\u}}S_{ \check \s ,\check \s}(r-1) a_\s f_{\s}.
 \end{equation}
Similarly, we have  $ \Psi_{\bar\u, \bar\s }=E_{\bar\u,\bar \u}(r-b) S_{ \check {\bar\s}, \check {\bar \s}}(r-1-b)$, where $ \check {\bar\s}\in{\upd_{r-b,1}}(\bar\lambda)_{\bar\u} $ such that $\check{\bar \s}_i=\bar \s_i$ for $1\leq i\leq r-b-1$ and  $  \check {\bar\s}_{i}=\bar\u_{i}$ for $i=r-b,r-b+1$. So,  $\check {\bar\s}= \bar{\check\s}$.  Moreover,  $S_{ \check {\bar\s}, \check {\bar \s}}(r-1-b)= S_{ \check \s, \check \s}(r-1) $ by Lemma~\ref{skss21}. Then
\begin{equation}\label{reawiahdr1}
   f_{\bar \u_{r-1}}g_{a-b,r-1-b}\Psi_{\bar\u}\overset{ \eqref{reawiahdr}}= E_{\bar\u, \bar\u}(r-b)\sum_{\s\in{\upd_{r-1,0}}(\lambda)_\u}S_{ \check \s \check \s}(r-1)a_\s f_{\bar\s}.
  \end{equation}
Thanks to Lemma~\ref{skss21}, for any $\s\in{\upd_{r-1,0}}(\lambda)_\u$, if $f_\s X=\sum_{\v \in{\upd_{r-1,0}}(\lambda)_\u}b_\v f_\v$ for some scalars $b_\v$,
then $f_{\bar \s}X_b=\sum_{\v\in{\upd_{r-1,0}}({\lambda})_{\u}}b_\v f_{\bar\v}$ for the same scalars $b_\v$'s, where $X$ and $X_b$ are defined in (\ref{x}).
Combining this with  \eqref{emiss1} --\eqref {reawiahdr1}, we can write
$$
\small{f_{\u_{r-1}}g_{a,r-1}\Psi_\u X=E_{\u, \u}(r)\sum_{\s\in{\upd_{r-1,0}}(\lambda)_\u}c_\s f_\s,\quad
  f_{\bar \u_{r-1}}g_{a-b,r-1-b}\Psi_{\bar\u}X_b =E_{\bar\u, \bar\u}(r-b)\sum_{\s\in{\upd_{r-1,0}}(\lambda)_{\u}}c_\s f_{\bar\s}}
$$
for some scalar $c_\s$. For any $\s\in{\upd_{r-1,0}}(\lambda)_\u$, $f_\s Y_\u\equiv \delta_{\s,\u_{r-1}}f_{\u_{r-1}} (\text{mod }\mathscr H_{r-1}^{\rhd(0,\lambda)})$ with $Y_\u=\prod_{1\leq k\leq r-1} F_{\u,k}$. Therefore,
\begin{equation}\label{eeetlam1}
\begin{aligned}
Ef_{\u_{r-1}}g_{a,r-1}\Psi_\u XY_{\u} \equiv  E E_{\u, \u}(r)& c_{\u_{r-1}} f_{\u_{r-1}}\equiv  E_{\u, \u}(r) c_{\u_{r-1}} f_{\t_\lambda} +\sum_{\t_\lambda\prec \s} d_\s f_\s (\text{mod }\mathscr B_{r,1}^{\rhd(1,\lambda)}),  \\
 E_{r-b,1} f_{\bar \u_{r-1}}g_{a-b,r-1-b}\Psi_{\bar\u}X_bY_{\bar \u}& \equiv E_{r-b,1}E_{\bar\u, \bar\u}(r-b)c_{\u_{r-1}} f_{\bar\u_{r-1}}\\
 &\equiv  E_{\bar\u, \bar\u}(r-b) c_{\u_{r-1}} f_{\t_{\bar\lambda}} +\sum_{\t_{\bar\lambda}\prec \v} d_\v f_\v (\text{mod }\mathscr B_{r-b,1}^{\rhd(1,\bar\lambda)}).
\end{aligned}\end{equation}
Combining  \eqref{eeetlam} and \eqref{eeetlam1}, we have
$ E_{\u, \t_\lambda}(r)= E_{\u, \u}(r)(1+c_{\u_{r-1}})$ and $ E_{\bar\u,\t_{\bar\lambda}}(r-b)= E_{\bar\u, \bar\u}(r-b)(1+c_{\u_{r-1}})$.
Hence, we get the claim \eqref{emededddd} and finish the proof.
\end{proof}

\begin{Prop}\label{14}Suppose $(f,\lambda)\in\Lambda_{r,t}$ and $\t\in{\upd_{r,t}}(\lambda)$ such that $\t^\mu=\hat{\t}$ and  $\mu=\lambda\cup p$ with $p=(k,\lambda_k^{(1)}+1)\in\mathscr A(\lambda^{(1)})$.
Let $a_l=\sum_{j=1}^l\lambda^{(1)}_j$, $l\in\{k-1,k\}$ $b=a_{k-1}+f-1$ and $h=\lambda_k^{(1)}+1$.
\begin{enumerate}
\item If $k=1$, then $\t^\lambda=\t s^*_{t,f}$ and \begin{equation}\label{1}
\small{\gamma_{\lambda/\mu}=  q^{-\lambda_1^{(1)}}[\delta-\lambda_1^{(1)}]
\prod_{j=f+1}^{t}S_{\t s^*_{t,j},\t s^*_{t,j-1}}(r+j-1)};\end{equation}
\item If $k>1$, let $\v=\t s^*_{t,1}s_{a_k+1,r}$, $\u=(\v_0,\v_1,\ldots,\v_{r+1})$ and $\nu=\v_{r+1}$.
Then
\begin{equation}\label{secondcomputationoffu}
\gamma_{\lambda/\mu}=\small{ q^{-\lambda_k^{(1)}} \frac{[\delta-\lambda_k^{(1)}] E_{\u,\u}(r)}{E_{\bar\u, \bar\u}(r-b)}
A_\u B_\t},\end{equation}
where $B_\t=\prod_{j=2}^{t}S_{\t s^*_{t,j},\t s^*_{t,j-1}}(r+j-1)\prod_{j=a_k+2}^{r}S^{-1}_{\t s_{t,1}^*s_{a_k+1,j},\t s_{t,1}^* s_{a_k+1, j-1}
}(j-1)$, $A_\u=\prod_{j=h}^{r-b-1}S_{\t^{\bar\nu}s_{h,j+1}, \t^{\bar\nu} s_{h,j}}(j)$ and $\bar \u$ is given in Definition~\ref{defiojjdied}(c).
\end{enumerate}\end{Prop}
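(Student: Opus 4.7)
The plan is to compute $\gamma_{\lambda/\mu}=\langle f_\t,f_\t\rangle/\langle f_{\t^\mu},f_{\t^\mu}\rangle$ by transporting $\t$ along a chain of simple swaps to a tableau whose Gram scalar is already accessible, tracking the ratio at each step via Lemma~\ref{9}. The two cases $k=1$ and $k>1$ use different target tableaux, reflecting the fact that the partitions appearing in the definitions of $\t^\mu$ and $\t^\lambda$ agree ($\tilde\mu=\tilde\lambda$) only in case~(a).

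For part (a) where $k=1$, the identity $\mu^{(1)}_1=\lambda^{(1)}_1+1$ forces $\tilde\mu=\tilde\lambda$, so $\t^\lambda$ itself serves as the target. I would first verify combinatorially that $\t^\lambda=\t s^*_{t,f}$: both share the first $r$ positions $\mathbf t^{\tilde\lambda}$, and the successive swaps $s^*_{t-1},s^*_{t-2},\ldots,s^*_f$ slide the $p$-removal from position $r+t$ to position $r+f$, crossing the $\mathbf t^{\lambda^{(2)}}$-additions one at a time. Each swap is ascending in $\prec$, so Lemma~\ref{9}(b) applied in reversed form yields $\langle f_\t,f_\t\rangle=\prod_{j=f+1}^{t}S_{\t s^*_{t,j},\t s^*_{t,j-1}}(r+j-1)\,\langle f_{\t^\lambda},f_{\t^\lambda}\rangle$, and Lemma~\ref{tmumaximal} gives $\langle f_{\t^\lambda}\rangle/\langle f_{\t^\mu}\rangle=q^{-\lambda^{(1)}_1}[\delta-\lambda^{(1)}_1]$ because the $\lceil\tilde\lambda\rceil!=\lceil\tilde\mu\rceil!$ prefactors cancel and only the $i=1$ term of the numerator product survives. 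Dividing yields \eqref{1}.

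For part (b) with $k>1$, $\tilde\mu\ne\tilde\lambda$ and the direct chain is unavailable; instead I transport $\t$ to $\v=\t s^*_{t,1}s_{a_k+1,r}$, whose first $r+2$ positions isolate a single-level configuration fitting Proposition~\ref{uu}. The $s^*$-swaps slide the $p$-removal from $r+t$ down to $r+1$ (each is valid because the $p$-removal lies in the first component while the traversed boxes lie in the second); the $s$-swaps then rearrange $\mathbf t^{\tilde\mu^{(1)}}$ so that positions $1,\ldots,r-1$ of $\v$ form $\mathbf t^{\nu^{(1)}}$, where $\nu^{(1)}$ is obtained from $\lambda^{(1)}$ by lengthening row~$1$ by $f-1$. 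In particular $p$ is added at position $r$ and removed at $r+1$, making $\u=(\v_0,\ldots,\v_{r+1})\in\upd_{r,1}(\nu)$ satisfy \eqref{conditionofu} with $\bar\nu^{(1)}=\bar\lambda^{(1)}$, which aligns $E_{\u,\u}(r)$, $E_{\bar\u,\bar\u}(r-b)$ and $A_\u$ of Proposition~\ref{uu} (applied with $\lambda$ replaced by $\nu$) with the corresponding quantities in \eqref{secondcomputationoffu}. Applying Lemma~\ref{9} along the full chain---the $s^*$-steps ascending and the $s$-steps descending in $\prec$---then gives $\langle f_\t,f_\t\rangle=B_\t\,\langle f_\v,f_\v\rangle$.

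Finally, I relate $\langle f_\v,f_\v\rangle$ to $\langle f_\u,f_\u\rangle$ and then to $\langle f_{\t^\mu},f_{\t^\mu}\rangle$ by iterating Proposition~\ref{2} through the truncations $\v^{(i)}\in\upd_{r,i}(\v_{r+i})$ for $i=t,t-1,\ldots,1$ (so that $\v^{(t)}=\v$ and $\v^{(1)}=\u$). At each step I check that $\widetilde{\v^{(i)}}=\t^{\boldsymbol\mu_i}$ for $\boldsymbol\mu_i=\v_{r+i}$: for $i\ge f+1$ because positions $r+f+1,\ldots,r+t$ of $\v$ execute $\mathbf t^{\lambda^{(2)}}$ in row-reading order, matching the maximal-tableau pattern, and for $i\le f$ because $\widetilde{\boldsymbol\mu_i}=\tilde\lambda$ (the level compensates the row-$1$ shortening) with the correct row-$1$ removals. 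The ratios $\langle f_{\t^{\boldsymbol\mu_i}}\rangle/\langle f_{\t^{\boldsymbol\mu_{i-1}}}\rangle$ are then extracted from Lemma~\ref{lemmaoftlambda}: the steps $f+1\le i\le t$ telescope to $\lceil\lambda^{(2)}\rceil!$, while the steps $2\le i\le f$ contribute $q^{-(\lambda^{(1)}_1+f-i)}[\delta-(\lambda^{(1)}_1+f-i)]$. Substituting Proposition~\ref{uu}'s formula for $\langle f_\u,f_\u\rangle$ and Lemma~\ref{tmumaximal}'s formula for $\langle f_{\t^\mu},f_{\t^\mu}\rangle$, all the $\lceil\cdot\rceil$-factors cancel (after writing $\lceil\tilde\mu^{(1)}\rceil!=\lceil\lambda^{(1)}\rceil!\,\lceil\lambda^{(1)}_1+f-1\rceil!\,\lceil\lambda^{(1)}_k+1\rceil!/(\lceil\lambda^{(1)}_1\rceil!\,\lceil\lambda^{(1)}_k\rceil!)$) and the row-$1$ $q^{-m}[\delta-m]$-factors cancel except for a single $q^{-\lambda^{(1)}_k}[\delta-\lambda^{(1)}_k]$, giving exactly \eqref{secondcomputationoffu}. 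The main obstacle is the combinatorial verification in paragraph three: that after $s_{a_k+1,r}$ the first $r-1$ positions of $\v$ really form $\mathbf t^{\nu^{(1)}}$, that each individual swap in the chain is valid, and that the $\prec$-directions align with the $S$/$S^{-1}$ pattern of $B_\t$; this tracking depends delicately on the contents of the swapped boxes throughout.
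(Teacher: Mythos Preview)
Your proposal is correct and follows essentially the same route as the paper: transport $\t$ to $\v$ via Lemma~\ref{9} to produce $B_\t$, iterate Proposition~\ref{2} from $\v$ down to $\u$, then invoke Proposition~\ref{uu} for $\langle f_\u,f_\u\rangle$ and Lemma~\ref{tmumaximal} for the remaining maximal-tableau scalars. The only methodological difference is in how the iteration of Proposition~\ref{2} is packaged. The paper observes directly that $\tilde{\v^{(i)}}=\t^{\v_{r+i}}$ for every $i$ (equivalently, that the last $t-1$ positions of $\v$ coincide with those of $\t^\lambda$), so the whole telescoping collapses to the single identity $\langle f_\v,f_\v\rangle=\dfrac{\langle f_{\t^\lambda},f_{\t^\lambda}\rangle}{\langle f_{\t^\nu},f_{\t^\nu}\rangle}\langle f_\u,f_\u\rangle$, after which a single application of Lemma~\ref{tmumaximal} to each of $\t^\lambda$, $\t^\mu$, $\t^\nu$ finishes the job. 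You instead extract each ratio $\langle f_{\t^{\v_{r+i}}}\rangle/\langle f_{\t^{\v_{r+i-1}}}\rangle$ one at a time via Lemma~\ref{lemmaoftlambda} and then perform an explicit cancellation of the $\lceil\cdot\rceil!$- and $q^{-m}[\delta-m]$-factors. Both lead to \eqref{secondcomputationoffu}, but the paper's single-ratio formulation avoids the bookkeeping in your last paragraph entirely.
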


\begin{proof}


Thanks to Lemma~\ref{9}(b), we have $\langle f_{\t},f_{\t}\rangle=  \langle f_{\t^\lambda},f_{\t^\lambda} \rangle\prod_{j=f+1}^{t}S_{\t s^*_{t,j},\t s^*_{t,j-1}}(r+j-1)$.
Hence  (a) follows from Lemma~\ref{tmumaximal}. For (b), by Lemma~\ref{9} again, we  have
$\langle f_{\t},f_{\t}\rangle=\langle f_{\v},f_{\v}\rangle B_\t$.
Using  Proposition~\ref{2} repeatedly, we have
$\langle f_{\v},f_{\v}\rangle=\frac{\langle f_{\t^\lambda}, f_{\t^\lambda}\rangle}{\langle f_{\t^\nu}, f_{\t^\nu}\rangle }\langle f_{\u}, f_{\u}\rangle$, where $\nu=\v_{r+1}$.
Hence
\begin{equation}\label{jdiedjeiddddd}
\langle f_{\t},f_{\t}\rangle= \frac{\langle f_{\t^\lambda}, f_{\t^\lambda}\rangle}{\langle f_{\t^\nu}, f_{\t^\nu}\rangle }\langle f_{\u}, f_{\u}\rangle B_\t.
\end{equation}
Note that $\u\in\upd_{r,1}(\nu)$ and $\u$ satisfies the conditions in \eqref{conditionofu} for $(\w_0,\w_1,\ldots,\w_{r+1})\in \upd_{r,1}(\nu)$, where $\w=\t s^*_{t,1} $.
Combining this with \eqref{jdiedjeiddddd}, Proposition~\ref{uu} and Lemma~\ref{tmumaximal} yields (b).
\end{proof}


\begin{Theorem}\label{main}
Let $\mathscr B_{r,t}(\rho,q)$ be the quantized walled Brauer algebra over $R$. Let det $G_{f,\lambda}$ be the Gram determinant associated to the cell module
$C(f,\lambda)$ of $\mathscr B_{r,t}(\rho,q)$. Then
$$ \small{\text{det }G_{f,\lambda}=\prod_{\mu\rightarrow\lambda}\text{det }G_{l,\mu}\gamma_{\lambda/\mu}^{\text{dim }C(l,\mu)}}\in R.$$
Furthermore, $\gamma_{\lambda/\mu}$ (see Definition~\ref{gamahhsu}) can be  computed explicitly by Propositions~\ref{15}, \ref{14}.
\end{Theorem}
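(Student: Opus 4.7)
The plan is to combine the product identity \eqref{det2} with the factorisation in Proposition~\ref{2} and the definition of $\gamma_{\lambda/\mu}$. Working initially over $\kappa=\mathbb C(\rho,q)$, where semisimplicity ensures every $\langle f_\t,f_\t\rangle$ is nonzero, I would partition the product $\det G_{f,\lambda}=\prod_{\t\in\upd_{r,t}(\lambda)}\langle f_\t,f_\t\rangle$ according to the penultimate bi-partition $\mu:=\t_{r+t-1}$. For each $\mu$ with $\mu\to\lambda$ and $(l,\mu)\in\Lambda_{r+t-1}$, the map $\t\mapsto\hat\t$ restricts to a bijection between $\{\t\in\upd_{r,t}(\lambda)\mid\t_{r+t-1}=\mu\}$ and $\upd_{r+t-1}(\mu)$, splitting the product as
\[
\det G_{f,\lambda}=\prod_{\mu\to\lambda}\prod_{\hat\t\in\upd_{r+t-1}(\mu)}\langle f_\t,f_\t\rangle.
\]

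The heart of the argument is that each inner factor equals $\gamma_{\lambda/\mu}\cdot\langle f_{\hat\t},f_{\hat\t}\rangle$ uniformly in $\t$. Indeed, Proposition~\ref{2} yields
\[
\langle f_\t,f_\t\rangle=\frac{\langle f_{\hat\t},f_{\hat\t}\rangle\,\langle f_{\tilde\t},f_{\tilde\t}\rangle}{\langle f_{\t^\mu},f_{\t^\mu}\rangle},
\]
and the key observation is that $\tilde\t$ depends only on $\mu$ and $\lambda$: by construction its first $r+t-1$ entries form $\t^\mu$ and its last entry is $\lambda$. Since $\tilde\t$ itself satisfies $\widehat{\tilde\t}=\t^\mu$, applying Definition~\ref{gamahhsu} to $\tilde\t$ gives exactly $\gamma_{\lambda/\mu}=\langle f_{\tilde\t},f_{\tilde\t}\rangle/\langle f_{\t^\mu},f_{\t^\mu}\rangle$, whence the desired uniform identity. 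Substituting back and using $|\upd_{r+t-1}(\mu)|=\dim C(l,\mu)$ together with \eqref{det2} applied to $(l,\mu)$ produces the claimed recursion over $\mathbb C(\rho,q)$. To descend the identity to $R$, I would invoke Theorem~\ref{mm}(a): the Jucys--Murphy basis is an $R$-basis of $\mathscr B_{r,t}(\rho,q)$, so every Gram matrix entry lies in $R$ and hence so does each $\det G_{f,\lambda}$; the equality, valid in the overring $\mathbb C(\rho,q)$, therefore already holds in $R$. The explicit rational expressions for $\gamma_{\lambda/\mu}$ are then read off directly from Propositions~\ref{15} and~\ref{14}, treating the removable-node case $\mu=\lambda\setminus p$ and the addable-node case $\mu=\lambda\cup p$ respectively.

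The main obstacle is not this final assembly, which is essentially a one-paragraph bookkeeping argument built on top of Proposition~\ref{2}, but rather the supporting Proposition~\ref{uu} and the tedious Lemma~\ref{down} underlying the addable-node case of $\gamma_{\lambda/\mu}$. There the crucial non-routine ingredient is Proposition~\ref{f and E}, which uses the mixed Schur--Weyl duality between $\U_q(\mathfrak{gl}_n)$ and $\mathscr B_{r,t}(\rho,q)$ together with the quantum dimensions of irreducible $\U_q(\mathfrak{gl}_n)$-modules to extract a closed form for $E_{\t,\t}(r)$; this closes the induction in precisely those cases where the formal-power-series argument of Lemma~\ref{forofwu} is not enough.
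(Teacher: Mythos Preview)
Your proposal is correct and follows essentially the same route as the paper: work over $\mathbb C(\rho,q)$, use \eqref{det2} to write $\det G_{f,\lambda}$ as $\prod_\t\langle f_\t,f_\t\rangle$, partition by $\mu=\t_{r+t-1}$, apply Proposition~\ref{2} together with the observation that $\tilde\t$ depends only on $(\mu,\lambda)$ so that Definition~\ref{gamahhsu} gives the uniform factor $\gamma_{\lambda/\mu}$, and then descend to $R$ via the fact that the Jucys--Murphy basis is defined over $R$. Your write-up is in fact more explicit than the paper's own proof, which compresses the partitioning and the identification of $\gamma_{\lambda/\mu}$ into a single invocation of Proposition~\ref{2}.
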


\begin{proof}We first consider $\mathscr B_{r,t}(\rho,q)$ over $\C(\rho,q)$.
So, we can use the previous results in this section together with Lemmas~\ref{f and E} and  ~\ref{sks} to compute the   Gram determinants.
Note that $\text{det }\tilde{G}_{f,\lambda}=\prod_{\t\in{\upd_{r,t}}(\lambda)}\langle f_\t,f_\t\rangle$. By Proposition~\ref{2},
$$\small{\text{det }G_{f,\lambda}=\text{det }\tilde{G}_{f,\lambda}=\prod_{\mu\rightarrow\lambda}\text{det }G_{l,\mu}\gamma_{\lambda/\mu}^{\text{dim }C(l,\mu)}}. $$
Since the Jucys-Murphy basis of $C(f,\lambda)$ is defined over $R$,  the Gram matrices associated to $C(f,\lambda)$ which are defined over $R$ are the same as that defined over $\C(\rho,q)$. We have $\text{det }G_{f,\lambda}\in R$ as required.
\end{proof}

Recall $ \text{dim }C(l,\mu)=\sharp I(l,\mu)$ (see \eqref{ifla}) for all $(l,\mu)$. Moreover, the Gram determinant given in Theorem~\ref{main} is in $R$ and $ \mathscr B_{r,t}(\rho,q)_\kappa={\mathscr B_{r,t}}(\rho,q)_R \otimes_{R}\kappa $ for any field $\kappa$. Therefore, one can get the formula for the Gram determinant over an arbitrary field $\kappa$
by specialization.
\begin{example} We give an example to compute  the Gram determinant associated to the cell module $C(1,\lambda)$ for $\mathscr B_{2,2}(\rho,q)$ with $\lambda=((1),(1))$.
 Then $\{\n_{\w}+\mathscr B_{2,2}^{\rhd(1,\lambda)}\mid \w\in\{\t , \u,\s, \v\}\}$ is the Jucys-Murphy basis of $C(1,\lambda)$, where $\t,\s,\u,\v$ are given  Example~\ref{exofupd} and $\n_\w$'s  are  given below Definition ~\ref{definition of m}. Note that $\t=\t^\lambda$ and $\s=\t_\lambda$.
 By (\ref{comofm}), we get the  Gram matrix:
$$\small{ G_{1,\lambda}=\left(
     \begin{array}{cccc}
       (-q^{-1}-q^{-3})[\delta-1] & (q^{-2}+q^{-4})\rho & -q^{-1}[\delta-1]& q^{-2}\rho  \\
       (q^{-2}+q^{-4})\rho & (1+q^{-2})(\delta-q^{-3}\rho) &q^{-2}\rho & \delta-q^{-3}\rho  \\
       -q^{-1}[\delta-1] & q^{-2}\rho& \delta& \rho \\
      q^{-2}\rho  & \delta-q^{-3}\rho & \rho &\delta+(q-q^{-1})\rho \\
           \end{array}
   \right)}
$$
So, $\text{det }G_{1,\lambda}=q^{-4}\delta^2[\delta-2][\delta+2]$ by direct computation. On the other hand, $ \langle  f_{\t^\lambda},f_{\t^\lambda}\rangle =(q^{-1}+q^{-3})[\delta-1]$.
By Propositions~\ref{15}, Lemma~\ref{sks} and Lemma~\ref{f and E}, $\langle f_{\t_\lambda}, f_{\t_\lambda}\rangle= E_{\t_\lambda,\t_\lambda} (2)=\frac{q^{-1}}{1+q^{-2}}[\delta+1] $, $\langle f_\u, f_\u\rangle = S_{\u,\u s^*_1}(3) \langle f_{\t^\lambda}, f_{\t^\lambda}\rangle  =(q^{-1}+q^{-3})\frac{\delta[\delta-2]}{[\delta-1]}$, $\langle f_\v, f_\v\rangle = S_{\v,\v s^*_1}(3) \langle f_{\t_\lambda}, f_{\t_\lambda}\rangle=\frac{q^{-1}}{1+q^{-2}} \frac{\delta[\delta+2]}{[\delta+1]} $ .
By Theorem~\ref{main}, $\text{det }G_{1,\lambda}=q^{-4}\delta^2[\delta-2][\delta+2]$.
\end{example}

\section{Blocks  }
  The aim of this section is to classify  blocks of $\mathscr{B}_{r,t}(\rho, q)$ over a field $\kappa$ under the assumption that  $e>\max\{r,t\}$ and $\rho^2=q^{2n}$, $n\in \mathbb Z$ (we keep this   assumption in this section  unless otherwise stated) and to give a criterion for a cell module of $\mathscr B_{r, t}(\rho, q)$ being equal to its simple head over an arbitrary field $\kappa$, where $e$ is the quantum characteristic of $q^2$. Recall that $\mathscr H_{r,t}$ is semisimple if  $e>\max\{r,t\}$.

 We will use  two cellular bases  of $\mathscr B_{r,t}(\rho,q)$ other than $\mathcal C$  in Theorem~ \ref{celb}. Recall the isomorphisms $\pi$ and $\Phi$   in Lemma~\ref{three isom}(c) and (e).
 Then  $\Phi(\mathcal C)$ is another cellular basis of  $\mathscr B_{r, t}(\rho, q)$.
 Let \begin{equation}x_\lambda=\sum _{w\in \mathfrak S_\lambda}q^{\ell(w)}g_w\in \mathscr H_r, ~\text{ for }\lambda\in \Lambda^+(r) \text{ and } x_{\mu}=x_{\mu^{(1)}}x_{\mu^{(2)}}\in \mathscr H_{r,t}, \text{ for } \mu\in\Lambda^+_{r,t}.
\end{equation}
Note that $\pi(x_\lambda)=y_\lambda$ and $\pi(y_\lambda)=x_\lambda$ for any $\lambda\in\Lambda^+_{r,t}$. So, $\tilde C:= \pi\circ\Phi(\mathcal C)$ is another cellular basis of $\mathscr B_{r, t}(\rho, q)$ (see also \cite{Enyang2}). In fact,  $\tilde {\mathcal C}$ is the set of elements which are obtained by replacing $y_\lambda$ with $x_\lambda$ in elements of $\Phi(\mathcal C)$ for all $(f,\lambda)\in\Lambda_{r,t}$.
Let $\tilde C(f,\lambda)$ be the corresponding  cell module of $\mathscr{B}_{r,t}(\rho, q)$ with respect to the cellular basis $\tilde{\mathcal C}$.
To simplify the notation, we also denote by $C(f,\lambda)$ the cell module with respect to the cellular basis $\Phi(\mathcal C)$,  $(f,\lambda)\in\Lambda_{r,t}$.
Moreover,
we omit $\Phi$ for the images of any factors in \eqref{cellbasis} under $\Phi$.  For example, $\Phi(E)=E_{r,t}$ (resp., $\Phi (E^{f})$)    is denoted by $E$ (resp., $E^{f}$) in this section.


Recall that for any partition $\lambda$ we have  the standard $\lambda$- tableau $\T_\lambda$ in section ~2 (e.g., Example \ref{exampleoft}(b)). Write $w_\lambda:=d(\T_\lambda)$.
For each $\lambda\in \Lambda^+(r)$, there is a classical  Specht  module $S^\lambda=x_\lambda g_{w_\lambda} y_{\lambda'} \mathscr  H_r$ (resp., dual Specht module $\tilde S^\lambda=y_\lambda g_{w_\lambda}x_{\lambda'}\mathscr H_r$), where $\lambda'$ is the dual partition of $\lambda$. Similarly, we have   $S^\lambda:=S^{\lambda^{(1)}}\otimes S^{\lambda^{(2)}}$ and $\tilde S^\lambda:=\tilde S^{\lambda^{(1)}}\otimes \tilde  S^{\lambda^{(2)}}$ for $\mathscr H_{r,t}$, $\lambda\in\Lambda^+(r)\times \Lambda^{+}(t)$.
The following two results are well-known.
\begin{Lemma}\label{sddd}\cite[Proposition~1.3]{AG}Suppose that $e>\max\{r,t\}$.
For each $\lambda\in \Lambda^+(r)\times \Lambda^{+}(t)$, there is an isomorphism of $\mathscr H_{r,t}$-modules $\phi_{r,t}:S^\lambda \rightarrow \tilde S^{\lambda'}$, where $\lambda'=(\lambda^{(1)'}, \lambda^{(2)'})$.
\end{Lemma}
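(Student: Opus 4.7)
The plan is to reduce to a single Hecke algebra by the tensor factorisation, then exploit the semisimplicity hypothesis $e>\max\{r,t\}$.

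Since $\mathscr H_{r,t}=\mathscr H_r\otimes\mathscr H_t$ and both Specht modules split as outer tensor products $S^\lambda=S^{\lambda^{(1)}}\boxtimes S^{\lambda^{(2)}}$ and $\tilde S^{\lambda'}=\tilde S^{\lambda^{(1)'}}\boxtimes\tilde S^{\lambda^{(2)'}}$, it suffices to exhibit, for every partition $\nu$ of some $n\in\{r,t\}$, an $\mathscr H_n$-module isomorphism $\phi_\nu:S^\nu\to\tilde S^{\nu'}$. One can then set $\phi_{r,t}:=\phi_{\lambda^{(1)}}\otimes\phi_{\lambda^{(2)}}$.

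For the single-partition case I would invoke the $R$-algebra involution $\#$ of $\mathscr H_n$ determined by $g_i\mapsto -g_i^{-1}$, which is the $q$-analogue of tensoring with the sign character. A direct calculation on Murphy-style generators shows that $\#$ interchanges the roles of $x_\nu$ and $y_\nu$ up to units, and so sends the generator $x_\nu g_{w_\nu}y_{\nu'}$ of $S^\nu$ to a generator of $\tilde S^{\nu'}$; this already yields a $\kappa$-linear bijection $S^\nu\to\tilde S^{\nu'}$, but only as $\#$-twisted modules. To upgrade this to an honest $\mathscr H_n$-linear isomorphism, I would use the hypothesis $e>n$: under it $\mathscr H_n$ is split semisimple with simples $\{S^\mu\mid\mu\vdash n\}$ of dimensions $|\text{Std}(\mu)|$, so $S^\nu$ and $\tilde S^{\nu'}$ are both simple of equal dimension $|\text{Std}(\nu)|=|\text{Std}(\nu')|$, forcing $\tilde S^{\nu'}\cong S^\mu$ for a unique $\mu$. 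A comparison of Jucys--Murphy eigenvalues---the $q$-contents of $\nu$ acting on $S^\nu$ versus the same multiset on $\tilde S^{\nu'}$ once the sign introduced by $\#$ is taken into account---identifies $\mu=\nu$, and Schur's lemma then produces the required nonzero (hence bijective) intertwiner $\phi_\nu$.

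The main obstacle will be the bookkeeping in this Jucys--Murphy comparison: the conjugation $\nu\leftrightarrow\nu'$ negates contents, and one must check that this sign flip is cancelled exactly by the twist built into the dual Specht construction $\tilde S^{\nu'}=y_{\nu'}g_{w_{\nu'}}x_\nu\mathscr H_n$, so that $S^\nu$ and $\tilde S^{\nu'}$ end up carrying the same central character. Once this cancellation is verified the semisimplicity argument closes the proof without further explicit computation, and the final $\phi_{r,t}$ is obtained by tensoring the two factor isomorphisms.
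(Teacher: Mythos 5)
The paper itself does not prove this lemma: it is cited verbatim from Gyoja \cite{AG}, whose Proposition~1.3 establishes the quasi-idempotent identity $e_\lambda\, g_{w_\lambda}^{-1}\, e_\lambda = c_\lambda\, e_\lambda$ with $c_\lambda\ne 0$ when $[r]!$ (resp.\ $[t]!$) is invertible, and the isomorphism $S^\lambda\to\tilde S^{\lambda'}$ is then realized \emph{explicitly} by multiplication against this quasi-idempotent. That explicit form is what the paper actually uses two lemmas later, where it quotes from \cite{AG} that $g_{w_{\lambda'}^{-1}}e_{\lambda'}$ is (up to a nonzero scalar) an idempotent and that $S^{\nu'}g_{w_{\lambda'}^{-1}}e_{\lambda'}=0$ unless $\nu=\lambda$.

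Your route is genuinely different: you identify the isomorphism class of $\tilde S^{\nu'}$ abstractly via semisimplicity and a central-character match, invoking Schur's lemma to produce an (unnamed) intertwiner, rather than building it. The plan is sound in principle, but as written it has a gap that is in fact the entire content of the step: you flag the Jucys--Murphy/central-character comparison as ``the main obstacle,'' defer it, and then declare the proof closed ``once this cancellation is verified.'' Nothing in your write-up actually verifies that the eigenvalue multiset on $\tilde S^{\nu'}$ (built from $y_{\nu'}g_{w_{\nu'}}x_\nu$) equals the one on $S^\nu$ rather than the one on $S^{\nu'}$; since $\dim S^\nu=\dim S^{\nu'}$, the dimension count you give does not distinguish them, so without the JM calculation the argument does not pin down $\mu=\nu$. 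The preliminary discussion of the involution $\#:g_i\mapsto -g_i^{-1}$ is likewise only asserted (``interchanges the roles of $x_\nu$ and $y_\nu$ up to units'') rather than checked, and in any case it plays no role in your closing Schur-lemma step, so it reads as a heuristic aside rather than part of the proof. If you wish to pursue this route you should either carry out the JM bookkeeping in detail, or, more economically, adopt the quasi-idempotent argument from \cite{AG} directly, which both supplies the isomorphism and gives the explicit idempotent that Lemma~\ref{subxset} subsequently depends on.
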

\begin{Lemma}\label{cscs}
\cite[Theorem~2.9]{DR}For $(0,\lambda)\in\Lambda_{r,t}$, we have
 $C(0,\lambda')\cong S^{\lambda}$ and $\tilde C(0,\lambda')\cong \tilde S^{\lambda}$ as $\mathscr H_{r,t}$-modules.

\end{Lemma}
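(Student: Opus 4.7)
The plan is to recognize that for $f=0$, the cellular structure of $\mathscr{B}_{r,t}(\rho,q)$ reduces to the cellular structure of $\mathscr{H}_{r,t}$, and then to identify the resulting Hecke cell modules with (dual) Specht modules, using Lemma~\ref{sddd} at the end to match the precise indexing.

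First I would argue that the cell module $C(0,\lambda')$ factors through the Hecke-algebra quotient. By inspection of \eqref{cellbasis}, every cellular basis element $C_{(\mathfrak{S},e)(\mathfrak{T},d)}$ associated to a class with $f\geq 1$ contains a factor of $E^f$; hence $\mathscr{B}_{r,t}(\rho,q)^{\rhd(0,\lambda')}$ lies in the two-sided ideal $J$ generated by $E$, and $\mathscr{B}_{r,t}(\rho,q)/J \cong \mathscr{H}_{r,t}$ under the obvious quotient map. Consequently $C(0,\lambda')$ is an $\mathscr{H}_{r,t}$-module, with basis $\{y_{\lambda'} g_{d(\mathfrak{S})} \mid \mathfrak{S}\in\Std(\lambda')\}$ modulo the Murphy higher-term ideal of $\mathscr{H}_{r,t}$. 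By the classical Dipper--James/Murphy cellular basis theorem for Hecke algebras (applied componentwise to $\mathscr{H}_r$ and $\mathscr{H}_t$), this cell module is exactly the dual Specht module $\tilde{S}^{\lambda'}$. An identical argument for $\tilde{\mathcal{C}}$, which replaces $y_\mu$ by $x_\mu$, gives $\tilde{C}(0,\lambda') \cong S^{\lambda'}$ as $\mathscr{H}_{r,t}$-modules.

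Now I would invoke Lemma~\ref{sddd}. Since $e > \max\{r,t\}$, that lemma yields an $\mathscr{H}_{r,t}$-isomorphism $S^\lambda \cong \tilde{S}^{\lambda'}$ and, applied with $\lambda$ replaced by $\lambda'$, $S^{\lambda'} \cong \tilde{S}^{\lambda}$. Composing with the identifications above gives the two desired isomorphisms $C(0,\lambda') \cong S^\lambda$ and $\tilde{C}(0,\lambda') \cong \tilde{S}^\lambda$.

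The main technical nuisance will be the bookkeeping around the automorphism $\Phi$: on the tensor factor $\mathscr{H}_t$ it reverses the order of the star generators ($g_j^* \mapsto g_{t-j}^*$), so $\Phi(y_{\lambda^{(2)'}})$ is not literally the Murphy generator but rather its image under an index relabeling. One checks that this relabeling produces an isomorphic cellular basis of $\mathscr{H}_t$ and hence an isomorphic cell module, so the identification with $\tilde{S}^{\lambda^{(2)'}}$ survives. A second, minor verification is that the higher-term ideal of $\mathscr{B}_{r,t}(\rho,q)$ meets $y_{\lambda'}\mathscr{H}_{r,t}$ in exactly the Murphy higher-term ideal of $\mathscr{H}_{r,t}$, which follows from the compatibility of the dominance order on $\Lambda_{r,t}$ with the dominance order on bipartitions.
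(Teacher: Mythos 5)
The paper does not prove Lemma~\ref{cscs}: it is cited as a known fact from \cite[Theorem~2.9]{DR}, so there is no internal proof to compare against. Evaluating your proposal on its own terms, there is a genuine gap. Your chain is $C(0,\lambda')\cong\tilde S^{\lambda'}$ followed by Lemma~\ref{sddd} to obtain $\tilde S^{\lambda'}\cong S^\lambda$. But Lemma~\ref{sddd} carries the hypothesis $e>\max\{r,t\}$, which Lemma~\ref{cscs} does not have and in fact should not need: the identification is an unconditional statement over the base ring $R$. This is corroborated by the isomorphism $C(f,\lambda)\cong W(f,\lambda)$ of \cite[Proposition~3.8]{Rsong}, used in the proof of Lemma~\ref{isocell}; at $f=0$ it gives $C(0,\lambda)\cong x_{\lambda'}g_{w_{\lambda'}}y_\lambda\,\mathscr H_{r,t}=S^{\lambda'}$, hence $C(0,\lambda')\cong S^\lambda$, with no restriction on $e$ whatsoever.

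The source of the trouble is your intermediate identification. The right-$\mathscr H_{r,t}$-module map from $y_{\lambda'}\mathscr H_{r,t}$ onto a Specht right ideal given by the Dipper--James/Murphy theory is \emph{left} multiplication by $x_\lambda g_{w_\lambda}$, whose image is $S^\lambda=x_\lambda g_{w_\lambda}y_{\lambda'}\,\mathscr H_{r,t}$; after passing to the cellular quotient this is exactly the content of \cite[Theorem~2.9]{DR}, landing directly in $S^\lambda$. By contrast, the assignment $y_{\lambda'}h\mapsto y_{\lambda'}g_{w_{\lambda'}}x_\lambda h$ targeting $\tilde S^{\lambda'}$ is not manifestly a module map, and $\tilde S^{\lambda'}$ sits inside $y_{\lambda'}\mathscr H_{r,t}$ as a submodule rather than arising as the cellular quotient. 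Indeed, if $C(0,\lambda')\cong\tilde S^{\lambda'}$ held over $R$, then together with $C(0,\lambda')\cong S^\lambda$ it would force $S^\lambda\cong\tilde S^{\lambda'}$ unconditionally, making the hypothesis of Lemma~\ref{sddd} empty. The fix is to replace your middle step with the direct identification $C(0,\lambda')\cong S^\lambda$ and drop the appeal to Lemma~\ref{sddd} entirely; your observations about factoring through the Hecke quotient and about the $\Phi$-relabelling are both fine and can be kept.
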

Let $\mathscr{B}^{f+1}_{r,t}(\rho, q)$ be the two-sided ideal of $\mathscr B_{r,t}(\rho,q)$ generated by $E^{f+1}$. Define
$$w_\lambda=w_{\lambda^{(1)}}w_{\lambda^{(2)}}\in \mathfrak S_r\times \mathfrak S^*_t \text{ and } g_{w_\lambda}=g_{w_{\lambda^{(1)}}}g^*_{w_{\lambda^{(2)}}}\in\mathscr B_{r,t}(\rho,q), \text{  for any   $(f,\lambda)\in\Lambda_{r,t}$.}$$
\begin{Lemma}\label{isocell}
As $\mathscr{B}_{r,t}(\rho, q)$-modules, $C(f,\lambda)\cong \tilde C(f,\lambda')$.
\end{Lemma}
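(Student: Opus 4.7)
The plan is to lift the Specht-module isomorphism of Lemma~\ref{sddd} to a cell-module isomorphism of $\mathscr B_{r,t}(\rho,q)$. Since $C(f,\lambda)$ and $\tilde C(f,\lambda')$ are cyclic right $\mathscr B_{r,t}(\rho,q)$-modules generated by $E^f y_\lambda$ and $E^f x_{\lambda'}$ respectively, and both have free rank $|\Std(\lambda)|\cdot |\mathscr D^f_{r,t}| = |\Std(\lambda')|\cdot |\mathscr D^f_{r,t}|$ (since conjugation $\mu\mapsto\mu'$ is a bijection on standard tableaux and the ``cup/cap'' factor $\mathscr D^f_{r,t}$ is common to both cellular structures), it suffices to produce a surjective $\mathscr B_{r,t}(\rho,q)$-linear map $\Psi:C(f,\lambda)\to \tilde C(f,\lambda')$; injectivity then follows from the rank comparison.

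The case $f=0$ is essentially automatic. For every $\mu\in\Lambda^+_{r,t}$, the generator $E$ lies in the cell ideal $\mathscr B^{\rhd(0,\mu)}_{r,t}(\rho,q)$ (every $(k,\nu)$ with $k\geq 1$ dominates $(0,\mu)$) and hence acts as zero on both $C(0,\mu)$ and $\tilde C(0,\mu)$, so both cell modules factor through the quotient $\mathscr B_{r,t}(\rho,q)/\mathscr B^{1}_{r,t}(\rho,q)\cong \mathscr H_{r,t}$. Then Lemma~\ref{cscs} gives $C(0,\lambda)\cong S^{\lambda'}$ and $\tilde C(0,\lambda')\cong \tilde S^{\lambda}$, and Lemma~\ref{sddd} supplies $S^{\lambda'}\cong \tilde S^{\lambda}$; the composition produces the desired isomorphism.

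For general $f\geq 1$, I would define $\Psi$ on the generator by $\Psi(E^f y_\lambda) := E^f\phi_{r-f,t-f}(y_\lambda)$, where $\phi_{r-f,t-f}$ is the Specht isomorphism of Lemma~\ref{sddd} applied to the ``Hecke tail'' $y_\lambda\in \mathscr H_{r-f,t-f}$, and then extend $\mathscr B_{r,t}(\rho,q)$-linearly on the right. Well-definedness reduces to: (i) the Hecke relations satisfied by $y_\lambda$ modulo $\mathscr B^{\rhd(f,\lambda)}_{r,t}(\rho,q)$ transport correctly to $\phi_{r-f,t-f}(y_\lambda)$ modulo $\mathscr B^{\rhd(f,\lambda')}_{r,t}(\rho,q)$, which is exactly the content of Lemma~\ref{sddd}; (ii) the walled-Brauer relations involving $E$ are preserved, which is clean because $E\cdot E^f = E^{f+1}\in \mathscr B^{f+1}_{r,t}(\rho,q)$ is an intrinsic two-sided ideal, independent of the cellular basis chosen. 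The principal obstacle is verifying compatibility with the permutations $g_d$, $d\in\mathscr D^f_{r,t}$: conjugating $\phi_{r-f,t-f}$ past these elements can introduce corrections, and one must verify these land in $\mathscr B^{\rhd(f,\lambda')}_{r,t}(\rho,q)$ so that $\Psi$ descends to the cell quotient. Once this is done, surjectivity of $\Psi$ follows from the cyclicity of $\tilde C(f,\lambda')$ and the dimension equality forces $\Psi$ to be an isomorphism.
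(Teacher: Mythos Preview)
Your $f=0$ case is correct and matches the paper. For $f\ge 1$, however, there is a genuine gap: you flag the ``principal obstacle'' (compatibility with the $g_d$, $d\in\mathscr D^f_{r,t}$) and then simply assume it can be resolved. Two concrete problems. First, the expression $\phi_{r-f,t-f}(y_\lambda)$ is not well-typed: $\phi_{r-f,t-f}$ is a module map $S^{\lambda'}\to\tilde S^{\lambda}$, and $y_\lambda$ is a Hecke-algebra element, not an element of $S^{\lambda'}$. Second, and more seriously, defining $\Psi$ on a cyclic generator and ``extending $\mathscr B_{r,t}$-linearly'' amounts to checking that the annihilator of $E^f y_\lambda$ in $C(f,\lambda)$ is contained in the annihilator of the target in $\tilde C(f,\lambda')$. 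That is exactly the content of the lemma, so you have not reduced the problem.

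The paper resolves both issues by a change of model. Rather than working with the cell quotients modulo $\mathscr B_{r,t}^{\rhd(f,\lambda)}$ and $\mathscr B_{r,t}^{\rhd(f,\lambda')}$ (which are different ideals), it passes via \cite[Proposition~3.8]{Rsong} and the involution $\pi$ to realizations $W(f,\lambda)=E^f x_{\lambda'}g_{w_{\lambda'}}y_\lambda\,\mathscr B_{r,t}+\mathscr B^{f+1}_{r,t}$ and $\tilde W(f,\lambda')$, both living modulo the \emph{same} $\lambda$-independent two-sided ideal $\mathscr B^{f+1}_{r,t}$. It then defines $\phi$ not on a generator but on explicit bases $\{b\,E^f g_d : b\in B_\lambda,\ d\in\mathscr D^f_{r,t}\}$ by $\phi(bE^f g_d)=\phi_{r-f,t-f}(b)\,E^f g_d$, where now $b$ genuinely lies in the Specht module $x_{\lambda'}g_{w_{\lambda'}}y_\lambda\,\mathscr H_{r-f,t-f}$. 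The key input you are missing is the structural result \eqref{djfdd} (from \cite[Proposition~2.9, Lemma~2.11]{Rsong}): right multiplication by any $h\in\mathscr B_{r,t}$ sends $E^f g_d$ into $\sum_z a_z E^f g_z+\mathscr B^{f+1}_{r,t}$ with $a_z\in\mathscr H_{r-f,t-f}$. This decouples the $\mathscr B_{r,t}$-action into a Hecke part (handled by $\phi_{r-f,t-f}$ being an $\mathscr H_{r-f,t-f}$-homomorphism) and a coset part (fixed by $\phi$), which is precisely what makes $\phi$ a $\mathscr B_{r,t}$-map. Without this ingredient your argument does not close.
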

\begin{proof}By \cite[Proposition~3.8]{Rsong}, we have $C(f,\lambda)\cong W(f,\lambda)$ as right $\mathscr B_{r,t}(\rho,q)$-modules, where $$W(f,\lambda):= E^f x_{\lambda'}g_{w_{\lambda'}}y_\lambda \mathscr{B}_{r,t}(\rho, q) + \mathscr{B}^{f+1}_{r,t}(\rho, q).$$
Using the isomorphism $\pi$ in Lemma~\ref{three isom}(c), we have $ \tilde C(f,\lambda')\cong \tilde W(f,\lambda')$,
where $\tilde W(f,\lambda') $ is obtained from $W(f,\lambda)$ by replacing
 $x_{\lambda'} g_{w_{\lambda'}}y_{\lambda}$ with $y_\lambda g_{w_\lambda}x_{\lambda'}$. It remains  to prove  $W(f,\lambda)\cong \tilde W(f,\lambda')$.

By \cite[Proposition~2.9, Lemma~2.11]{Rsong}, for any $h\in\mathscr{B}_{r,t}(\rho, q)$ and $d\in \mathscr D_{r,t}^f$ we have
\begin{equation}\label{djfdd}
E^fg_d h+ \mathscr{B}^{f+1}_{r,t}(\rho, q)\in \kappa\text{-span} \{ aE^fg_z  + \mathscr{B}^{f+1}_{r,t}(\rho, q)\mid a\in \mathscr H_{r-f,t-f}, z\in\mathscr D_{r,t}^f\}.
\end{equation}
 So,
$E^f\mathscr{B}_{r,t}(\rho, q)+\mathscr{B}^{f+1}_{r,t}(\rho, q)=\kappa\text{-span}\{ \mathscr H_{r-f,t-f}E^{f}g_d+\mathscr{B}^{f+1}_{r,t}(\rho, q)\mid d\in\mathscr D^f_{r,t} \}$.
Comparing dimensions we see that
  \begin{equation}   \{bE^{f}g_d+\mathscr{B}^{f+1}_{r,t}(\rho, q)\mid b\in B_\lambda, d\in\mathscr D^f_{r,t}\} \text{ and } \{bE^{f}g_d+\mathscr{B}^{f+1}_{r,t}(\rho, q)\mid b\in \tilde B_{\lambda'}, d\in\mathscr D^f_{r,t}\}
 \end{equation}
 are bases  of $W(f,\lambda)$ and  $\tilde W(f,\lambda') $, respectively,  where $B_\lambda$ (resp., $\tilde B_{\lambda'}$) is a basis of $ x_{\lambda'} g_{w_{\lambda'}}y_{\lambda}\mathscr H_{r-f,t-f}$ (resp., $ y_{\lambda} g_{w_{\lambda}}x_{\lambda'}\mathscr H_{r-f,t-f}$).
Let $\phi: W(f,\lambda)\rightarrow \tilde W(f,\lambda')$ be the linear map such that
 $$ \phi(bE^{f}g_d+\mathscr{B}^{f+1}_{r,t}(\rho, q) )= \phi_{r-f,t-f}(b)E^fg_d +\mathscr{B}^{f+1}_{r,t}(\rho, q), \text{ for $b\in B_\lambda$, $d\in \mathscr D^f_{r,t}$}, $$
where $\phi_{r-f,t-f} $ is the isomorphism of $\mathscr H_{r-f,t-f}$-modules given in Lemma~\ref{sddd}. From \eqref{djfdd} and the fact that $\phi_{r-f,t-f}$ is an $\mathscr H_{r-f,t-f}$-homomorphism, we see that $\phi$ is a $\mathscr{B}_{r,t}(\rho, q)$-homomorphism.
Moreover, it is  an isomorphism since  $\phi$ is obviously surjective and $\text{dim }W(f,\lambda)=\text{dim }\tilde W(f,\lambda')$.
\end{proof}

 Suppose $\lambda\in \Lambda^+(r+t)$, $\mu\in \Lambda^+(r)$ and $\nu\in \Lambda^+(t)$.
Let $c_{\mu, \nu}^\lambda$ be  the multiplicity
of $S^\lambda$ in  $\Ind_{\mathscr H_{r,t}}^{\mathscr H_{r+t}}(S^\mu\otimes S^\nu)$. It is clear that $\mu, \nu\subseteq\lambda$ if $c_{\mu, \nu}^\lambda\neq 0$.
 Identifying  $C(l,\mu')$  with $M^\mu_q$ in \cite{TH} and using  \eqref{decomp123},  the following result for $\mathscr B_{r,t}(q^n,q)$ is given in \cite[Corollary~7.24]{TH} when $q$ is not a root of unity and  $n\geq r+t$.
In fact, it also holds  when  $e>\max\{r,t\}$ since    $\mathscr H_{r,t}$ is semisimple.
Moreover, it was mentioned  in \cite{TH} that this result remains true for $\mathscr{B}_{r,t}(\rho, q)$ (with two parameters $\rho$ and $q$).
We remark that Halverson \cite{TH} presented  $\mathscr{B}_{r,t}(q^n, q)$ on the set of generators $  \hat g_i$'s, $\hat g_j^*$'s and $\hat E$, where
$g_i=q^{-1/2}\hat g_i$,  $g^*_j=q^{-1/2}\hat g^*_j$ and $ E=q^{1/2(1-n)} \hat E$.

\begin{Lemma}\label{res} \cite[Corollary~7.24]{TH}  Suppose  $e>\max\{r,t\}$ and $(0,\lambda),(l,\mu)\in\Lambda_{r,t}$. Then
$$[\Res_{\mathscr H_{r,t}}C(l,\mu'):C(0,\lambda')]
=\sum_{\tau\in \Lambda^+(l)}c_{\mu^{(1)}, \tau}^{\lambda^{{(1)}}}c_{\mu^{(2)}, \tau}^{\lambda^{(2)}}, $$
where $\Res_{\mathscr H_{r,t}}C(l,\mu')$ is the restriction of  $C(l, \mu')$ to $\mathscr H_{r,t}$.
\end{Lemma}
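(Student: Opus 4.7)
The plan is to first establish the identity in the specialization $\rho=q^n$ with $n\gg r+t$ via mixed Schur--Weyl duality, exactly as in \cite[Corollary~7.24]{TH}, and then deform the result to arbitrary $\rho$ using that $\mathscr{H}_{r,t}$ is semisimple under the hypothesis $e>\max\{r,t\}$.

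For the specialization $\rho=q^n$, $n\gg r+t$, both $\mathscr{B}_{r,t}(q^n,q)$ and $\mathscr{H}_{r,t}$ are semisimple over $\mathbb{C}(q)$ and every cell module $C(l,\mu')$ is simple. I would combine the mixed Schur--Weyl decomposition~\eqref{decomp123}
$$V^{r,t}\cong \bigoplus_{(l,\mu')\in\Lambda_{r,t}}V_{\widetilde{\mu}}\otimes C(l,\mu')$$
with the classical Schur--Weyl decompositions $V^{\otimes r}\cong\bigoplus_{\alpha\in\Lambda^+(r)}V_\alpha\otimes S^\alpha$ and $(V^*)^{\otimes t}\cong\bigoplus_{\beta\in\Lambda^+(t)}V_\beta^*\otimes S^\beta$ to produce an alternative $(U_q(\mathfrak{gl}_n),\mathscr{H}_{r,t})$-bimodule decomposition
$$V^{r,t}\cong \bigoplus_{\alpha\in\Lambda^+(r),\,\beta\in\Lambda^+(t)}(V_\alpha\otimes V_\beta^*)\otimes(S^\alpha\otimes S^\beta).$$
Extracting the $V_{\widetilde{\mu}}$-isotypic component on both sides and applying the Koike-style Littlewood--Richardson rule for tensoring a polynomial representation with a dual one (valid once $n$ is sufficiently large) yields
$$\Res_{\mathscr{H}_{r,t}}C(l,\mu')\cong\bigoplus_{\alpha,\beta}\Big(\sum_{\tau\in\Lambda^+(l)}c^\alpha_{\mu^{(1)},\tau}\,c^\beta_{\mu^{(2)},\tau}\Big)(S^\alpha\otimes S^\beta).$$
Setting $\alpha=\lambda^{(1)}$, $\beta=\lambda^{(2)}$ and invoking Lemma~\ref{cscs} to identify $S^\alpha\otimes S^\beta$ with $C(0,\lambda')$ gives the desired formula when $\rho=q^n$.

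To pass from $\rho=q^n$ to an arbitrary $\rho$ satisfying $e>\max\{r,t\}$, I would argue as follows. The cellular basis $\Phi(\mathcal{C})$ (equivalently $\tilde{\mathcal{C}}$) is defined integrally over $R$, so each cell module $C(l,\mu')$ is a free $R$-module of constant rank whose $\mathscr{H}_{r,t}$-action is specified by structure constants in $R$. Under our hypothesis, $\mathscr{H}_{r,t}$ is semisimple after any relevant specialization, so the multiplicity $[\Res_{\mathscr{H}_{r,t}}C(l,\mu'):C(0,\lambda')]$ equals $\dim \Hom_{\mathscr{H}_{r,t}}(C(0,\lambda'),C(l,\mu'))$, a non-negative integer that is upper semi-continuous in the parameters. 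Combined with the value computed at $\rho=q^n$, this forces equality at every specialization that keeps $\mathscr{H}_{r,t}$ semisimple.

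The main obstacle is making this last deformation rigorous. The cleanest route is to exhibit an explicit $\mathscr{H}_{r,t}$-filtration of $C(l,\mu')$ whose subquotients are induced Specht modules indexed combinatorially by the data $(\S,d)\in I(l,\mu')$ and hence visibly independent of $\rho$. Using the $\tilde{\mathcal{C}}$-basis together with Lemmas~\ref{isocell} and~\ref{sddd}, one reduces the subquotient identification to standard branching for the generic Hecke algebra $\mathscr{H}_{r,t}$, whose composition multiplicities are given by the Littlewood--Richardson coefficients, reproducing the formula in the statement.
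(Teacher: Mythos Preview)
Your approach is essentially the same as the paper's, which does not give an independent proof but cites Halverson's result \cite[Corollary~7.24]{TH} for the case $\rho=q^n$ via the mixed Schur--Weyl duality you describe, and then simply remarks that the statement persists for arbitrary $\rho$ because $\mathscr{H}_{r,t}$ is semisimple when $e>\max\{r,t\}$ and because Halverson already noted the two-parameter extension. Your sketch of a $\rho$-independent $\mathscr{H}_{r,t}$-filtration of $C(l,\mu')$ is a correct way to make that remark rigorous, and goes somewhat beyond what the paper spells out.
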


Let $\text{Res}^L$ (resp., $\text{Ind}^L$) be the restriction functor (resp., induction functor) from $\mathscr B_{r, t}(\rho,q)\text{-mod}$ to $ \mathscr B_{r-1, t}(\rho,q)\text{-mod}$ (resp., $\mathscr B_{r-1, t}(\rho,q)\text{-mod}$ to $\mathscr B_{r, t}(\rho,q)\text{-mod}$).
Similarly, we have two functors $\text{Res}^R$ and $\text{Ind}^R$  for the pair $(\mathscr B_{r,t}(\rho,q),  \mathscr B_{r,t-1}(\rho,q))$.
Given a partition $\lambda$, $\lambda$ is $e$-restricted if $\lambda_i-\lambda_{i+1}<e$ for all $i\geq 1$. A bi-partition $\lambda$ is called $e$-restricted if both parts are $e$-restricted.
It is proved in \cite[Theorem~5.3]{Rsong} that $C(f,\lambda)$ has a simple head denoted by $D^{f,\lambda}$ with $\lambda $ being $e$-restricted
(resp., $f<r$ and $\lambda $ being $e$-restricted) if either $\delta\neq0$ or $\delta=0$ and $r\neq t$ (resp., $\delta=0$ and $r=t$).

For any two integers $a$ and $b$, we say $a$ and $b$ are $n$-paired if $q^{2(a+b-n)}=1$, i.e., $a+b\equiv n (\text{mod } e)$.
\begin{Prop}\label{cell-weyl} Suppose $(f, \lambda), (l, \mu)\in\Lambda_{r, t}$ such that   $[C(l,\mu):D^{f,\lambda}]\ne 0 $. Then  $f\leq l$ and  $\mu\subseteq\lambda$. Moreover, there is  a pairing of the boxes in $[\lambda^{(1)}/\mu^{(1)}]$ with those in
$[\lambda^{(2)}/\mu^{(2)}]$ such that  the contents of the boxes in each pair are $n$-paired.
\end{Prop}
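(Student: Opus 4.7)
The proof divides into three tasks: the inequality $f\leq l$, the containment $\mu\subseteq\lambda$, and the $n$-paired bijection on boxes. The inequality $f\leq l$ is immediate from cellular algebra theory \cite{GL}: since $D^{f,\lambda}$ is the simple head of $C(f,\lambda)$ and appears as a composition factor of $C(l,\mu)$, we have $(f,\lambda)\unlhd(l,\mu)$ in the poset \eqref{pos1}. When $f=l$ the further conclusion $\mu=\lambda$ follows from the semisimplicity of $\mathscr H_{r-f,t-f}$ (valid since $e>\max\{r,t\}$), via the standard identification of the top section of the cellular filtration with Hecke Specht modules; hence we may assume $f<l$.

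The central tool for the remaining assertions is the action of the central element $c_{r,t}$ from \eqref{cenele}, which by \eqref{crtlambda} acts on $C(f,\lambda)$ (resp. $C(l,\mu)$) as the scalar $c_{r+t}(\lambda)$ (resp. $c_{r+t}(\mu)$). Because $D^{f,\lambda}$ is a common composition factor,
\begin{equation*}
c_{r+t}(\lambda)=c_{r+t}(\mu).
\end{equation*}

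To establish $\mu\subseteq\lambda$, I plan to restrict to the semisimple Hecke subalgebra $\mathscr H_{r,t}$. Using Lemma~\ref{isocell} to identify $C(l,\mu)$ with $\tilde C(l,\mu')$, the formula of Lemma~\ref{res} together with the exactness of restriction yields, for every $\nu\in\Lambda^+_{r,t}$,
\begin{equation*}
\sum_{\tau\in\Lambda^+(l)}c^{\nu^{(1)}}_{\mu^{(1)},\tau}c^{\nu^{(2)}}_{\mu^{(2)},\tau}=[\Res_{\mathscr H_{r,t}}\tilde C(l,\mu'):C(0,\nu')]\ge[\tilde C(l,\mu'):D^{f,\lambda}]\cdot[\Res_{\mathscr H_{r,t}} D^{f,\lambda}:C(0,\nu')].
\end{equation*}
Running the analogous computation on $\tilde C(f,\lambda')$ and choosing $\tau$ making the resulting Littlewood--Richardson coefficients nonzero identifies a $\nu$ with $\lambda^{(i)}\subseteq\nu^{(i)}$ for which $[\Res D^{f,\lambda}:C(0,\nu')]\neq 0$; the Littlewood--Richardson sum on the left is then nonzero, forcing $\mu^{(i)}\subseteq\nu^{(i)}$ as well. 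The main obstacle will be converting the simultaneous containments $\mu^{(i)},\lambda^{(i)}\subseteq\nu^{(i)}$ into the desired $\mu^{(i)}\subseteq\lambda^{(i)}$; I expect this to require iterating the restriction through the subalgebras $\mathscr B_{r-1,t}$ and $\mathscr B_{r,t-1}$ and inducting on $l-f$, using the central character constraint at each step to control the size difference $|\lambda^{(i)}|-|\mu^{(i)}|=l-f$, in the spirit of \cite[Theorem~6.2]{CVDM}.

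Once $\mu\subseteq\lambda$ is in hand, substituting $\rho^2=q^{2n}$ into the central character equality and simplifying (cancelling the common terms indexed by boxes of $\mu^{(i)}\subseteq\lambda^{(i)}$) reduces it to
\begin{equation*}
\sum_{p\in[\lambda^{(2)}/\mu^{(2)}]}q^{2c(p)}+q^{2n}\sum_{p\in[\lambda^{(1)}/\mu^{(1)}]}q^{-2c(p)}=2(l-f),
\end{equation*}
an equality between two multisets of $l-f$ powers of $q$ whose total equals the cardinality. The $n$-pairing is now extracted by induction on $l-f$: the identity forces the existence of $p_1\in[\lambda^{(1)}/\mu^{(1)}]$ and $p_2\in[\lambda^{(2)}/\mu^{(2)}]$ with $q^{2c(p_1)}\cdot q^{2c(p_2)-2n}=1$, i.e., $c(p_1)+c(p_2)\equiv n\pmod e$; removing this matched pair and repeating yields the required bijection.
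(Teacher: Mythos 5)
Your treatment of $f\leq l$ and the case $f=l$ coincides with the paper's, and your idea of exploiting the central element and reducing via restriction is also the right general direction. However, there are two genuine gaps in the proposal as written.

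First, the Littlewood--Richardson argument does not deliver $\mu\subseteq\lambda$. What it gives, correctly, is the existence of some $\nu\in\Lambda^+_{r,t}$ with $\mu^{(i)}\subseteq\nu^{(i)}$ and $\lambda^{(i)}\subseteq\nu^{(i)}$ for both $i$, but two partitions contained in a common overpartition need not be comparable, and the size information ($|\nu^{(1)}/\lambda^{(1)}|=f$, $|\nu^{(1)}/\mu^{(1)}|=l$) does not rescue this. You flag this yourself as ``the main obstacle'' and say you expect to fix it by iterating $\Res^L$, $\Res^R$; the paper's proof does precisely that from the outset (after reducing to $f=0$ via the functor $\mathcal F$ of \cite[Lemma~4.3]{Rsong}), restricting one step at a time to $\mathscr B_{r-1,t}(\rho,q)$ and $\mathscr B_{r,t-1}(\rho,q)$, using the branching filtration of \cite[Theorem~4.15]{Rsong} together with the central eigenvalue identity at each step, and inducting on $r+t$. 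The Hecke-algebra restriction all the way down loses too much information to recover the containment; your acknowledged gap is a real one.

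Second, the final extraction of the $n$-pairing from the scalar identity is not valid over an arbitrary field $\kappa$, which is the generality claimed. An equality of the form $\sum_{i=1}^{N}(q^2)^{a_i}=N$ does not, in positive characteristic, force any $a_i\equiv 0\pmod e$ (e.g.\ in $\mathbb F_5$ with $q^2=2$ of order $4$ one has $2+2+2+2^3=4$ with no exponent $\equiv 0\pmod 4$). Over $\mathbb C$ the equality would force \emph{all} summands to equal $1$, which is strictly stronger than the existence of an $n$-pairing and would wrongly constrain the contents when $l-f>1$; this is a sign that your derived identity or its interpretation is off. The paper sidesteps this entirely: the pairing is built box by box during the same restriction induction that establishes $\mu\subseteq\lambda$, with the central eigenvalue identities \eqref{eqdelta} and \eqref{hwue1} producing, at each step, either a removed box that cancels between $\lambda^{(1)}$ and $\mu^{(1)}$ (case $[C(l,\mu\setminus p_1):C(0,\lambda\setminus p)]\ne0$) or a genuinely $n$-paired pair $(p,p_2)$ (case $[C(l-1,\mu\cup p_2):C(0,\lambda\setminus p)]\ne0$). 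This produces an explicit pairing valid over any $\kappa$ and does all three parts of the proposition in one induction. You would need to replace both your containment argument and your pairing extraction by this kind of branching induction for the proof to go through.
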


\begin{proof} If $[C(l,\mu):D^{f,\lambda}]\ne 0 $, then $(f, \lambda)\unlhd (l, \mu)$ and hence $f\le l$. Applying the functor $\mathcal F$ in \cite[Lemma~4.3]{Rsong}, we have    $[C(l,\mu):D^{f,\lambda}]=[C(l-f,\mu):D^{0,\lambda}]$.
Now we can assume $f=0$ and $\lambda\in\Lambda^+(r)\times \Lambda^+(t)$.

 If $f=l$, then   $[C(0,\mu):D^{0,\lambda}]\neq 0$. Since $\mathscr H_{r,t}$ is semisimple,
we have $\lambda=\mu$.  Applying Lemma~\ref{res} on $C(l-f,\mu)$ yields $\mu\subseteq \lambda$ when $f<l$. It remains to prove the second assertion when $l>f=0$.

Suppose that  $[C( l,\mu):D^{0,\lambda}]\ne 0$ and $l>0$. Then  $r\neq 0\neq t$.
Note that   $C(0,\lambda)$ is simple and $D^{0,\lambda}=C(0,\lambda)$. So, there is a submodule $M$ of $C(l,\mu)$ such that $ \Hom_{\mathscr B_{r,t}(\rho,q)}(C(0,\lambda),C(l,\mu)/M )\neq0$. Thanks to \cite[Proposition~4.7]{Rsong} and \cite[Theorem ~4.15]{Rsong},
  there is a   node $p\in\mathscr R(\lambda^{(1)})$ such that (the simple module) $C(0,\lambda)$ is contained in the head of $\text{Ind}^LC(0,\lambda\setminus p)$. So,
$$\Hom_{\mathscr B_{r,t}(\rho,q)}(\text{Ind}^LC(0,\lambda\setminus p),C(l,\mu)/M )\neq0. $$
By Frobenius reciprocity, $\Hom(C(0,\lambda\setminus p),\text{Res}^L C(l,\mu)/M )\neq0. $
Then the simple module $ C(0,\lambda\setminus p)$ is a composition factor of $\text{Res}^L C(l,\mu)$. By \cite[Theorem ~4.15]{Rsong},
\begin{equation}\label{twocasesss}
[C(l,\mu\setminus p_1): C(0,\lambda\setminus p)]\neq0, \text { or } [C(l-1,\mu\cup p_2): C(0,\lambda\setminus p)]\neq0 \end{equation}
for some $p_1\in\mathscr R(\mu^{(1)})$ or $p_2\in\mathscr A(\mu^{(2)})$.

 Thanks to \cite[Lemma~6.3]{Rsong},
 the central element $c_{r,t}$ in \eqref{cenele} acts on each cell module  $C(h,\nu)$ of $\mathscr B_{r,t}(\rho,q)$ as $c_{r+t}(\nu)$ in \eqref{crtlambda}.
In the first case of \eqref{twocasesss}, we have $\mu^{(1)}\setminus p_1\subseteq\lambda^{(1)}\setminus p$ (by induction on $r+t$) and
\begin{equation}\label{eqdelta}
c_{r+t}(\lambda)=c_{r+t}(\mu) \text{ and } c_{r+t-1}(\mu\setminus p_1)=c_{r+t-1}(\lambda\setminus p).
\end{equation}
By \eqref{eqdelta}, we have $q^{2(c(p)-c(p_1))}=1$, forcing $ c(p)=c(p_1)$. Then the result follows  from  the $n$-paired  paring between
 $[\lambda^{(1)}\setminus p/\mu^{(1)}\setminus p]$ and $[\lambda^{(2)}/\mu^{(2)}]$ (by induction on $r+t$).

 In the second case of \eqref{twocasesss}, we have $\mu^{(1)} \subseteq\lambda^{(1)}\setminus p$, $\mu^{(2)} \cup p_2 \subseteq \lambda^{(2)}$ (by induction on $r+t$)
and
\begin{equation}\label{hwue1}
c_{r+t}(\lambda)=c_{r+t}(\mu) \text{ and } c_{r+t-1}(\mu\cup p_2)=c_{r+t-1}(\lambda\setminus p).
\end{equation}
By  \eqref{hwue1}, $\rho^2=q^{2(c(p)+c(p_2))}$, forcing $c(p)+c(p_2)\equiv n (\text{mod }e)$, i.e., $c(p)$ and $c(p_2)$ are $n$-paired.
Therefore, the result follows from the $n$-paired pairing between   $[(\lambda^{(1)}\setminus p)/\mu^{(1)}]$ and $[\lambda^{(2)}/(\mu^{(2)} \cup p_2)]$.
\end{proof}

Given two (bi-)partitions $\lambda$ and $\mu$, we denote by $\lambda\cap\mu$ the partition whose corresponding Young diagram is the intersection of
 $[\lambda]$ and $[\mu]$.

\begin{Defn}\label{balance}\cite{CVDM} For $(f,\lambda),(l,\mu)\in\Lambda_{r,t}$,
$\lambda$ and $\mu$ are $n$-balanced
if there is a pairing of boxes in $[\lambda^{(1)}/(\lambda^{(1)}\cap\mu^{(1)})]$ with those in $[\lambda^{(2)}/(\lambda^{(2)}\cap\mu^{(2)})]$
and of the boxes in $[\mu^{(1)}/(\lambda^{(1)}\cap\mu^{(1)})]$ with those in $[\mu^{(2)}/(\lambda^{(2)}\cap\mu^{(2)})]$ such that the    contents for  each pair are  $n$-paired.
\end{Defn}
If $F$ is a field such that the characteristic of $F$ is $e>\max\{r,t\}$, then the condition that $a$ and $b$ are $n$-paired is equivalent to that $a+b=n$ in $F$.
It was proved in   \cite[Corollary~7.7]{CVDM} that the $n$-balanced relation gives the block relation for the cell modules of the walled Brauer algebra over $F$. So, $n$-balanced relation is an equivalence relation on the set $\Lambda_{r,t}$.
Using Proposition~\ref{cell-weyl}  yields the following result.
\begin{Cor}\label{bl}
 If $C(f,\lambda)$ and $C(l,\mu)$ are in the same block of $\mathscr  B_{r,t}(\rho, q)$,
then $\lambda$ and $\mu$ are $n$-balanced.
\end{Cor}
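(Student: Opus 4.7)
The plan is to reduce the corollary to an iterated application of Proposition~\ref{cell-weyl}, using the fact (cited in the paragraph just before the statement, from \cite{CVDM}) that $n$-balance is itself an equivalence relation on $\Lambda_{r,t}$. Since blocks of a cellular algebra are the connected components of the graph whose vertices are cell modules and whose edges join pairs of cell modules sharing a composition factor, it is enough to prove that sharing a composition factor implies $n$-balance, and then chain.

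First I would unpack the block hypothesis to obtain a sequence $(f,\lambda)=(f_0,\lambda_0),(f_1,\lambda_1),\ldots,(f_k,\lambda_k)=(l,\mu)$ in $\Lambda_{r,t}$ together with simple modules $D^{g_i,\nu_i}$ ($\nu_i$ an $e$-restricted bi-partition) such that $D^{g_i,\nu_i}$ is a composition factor of both $C(f_i,\lambda_i)$ and $C(f_{i+1},\lambda_{i+1})$ for every $0\le i\le k-1$.

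Next, for each $i$, I would apply Proposition~\ref{cell-weyl} twice, once to $[C(f_i,\lambda_i):D^{g_i,\nu_i}]\neq 0$ and once to $[C(f_{i+1},\lambda_{i+1}):D^{g_i,\nu_i}]\neq 0$. This yields the containments $\nu_i\subseteq\lambda_i$ and $\nu_i\subseteq\lambda_{i+1}$ together with $n$-paired matchings of boxes of $[\lambda_i^{(1)}/\nu_i^{(1)}]$ with $[\lambda_i^{(2)}/\nu_i^{(2)}]$, and analogously for $\lambda_{i+1}$. Because $\nu_i\subseteq\lambda_i$, we have $\lambda_i^{(j)}\cap\nu_i^{(j)}=\nu_i^{(j)}$ for $j=1,2$, so the two conditions in Definition~\ref{balance} collapse: one half is vacuous (the relevant skew shape $[\nu_i^{(j)}/(\lambda_i^{(j)}\cap\nu_i^{(j)})]$ is empty) and the other is precisely the pairing handed to us by Proposition~\ref{cell-weyl}. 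Hence $\lambda_i$ and $\nu_i$ are $n$-balanced, and similarly $\lambda_{i+1}$ and $\nu_i$ are $n$-balanced.

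Finally, using transitivity of the $n$-balance relation, the chain $\lambda=\lambda_0\sim\nu_0\sim\lambda_1\sim\nu_1\sim\cdots\sim\nu_{k-1}\sim\lambda_k=\mu$ gives $\lambda\sim\mu$, as desired. The only point that merits care — and is really the sole potential obstacle — is justifying the reformulation of the block relation as the shared-composition-factor chain on cell modules; this is standard for cellular algebras (since each cell module with an $e$-restricted label has a unique simple head $D^{f,\lambda}$ by \cite[Theorem~5.3]{Rsong} and the indecomposable projective cover of $D^{f,\lambda}$ admits a cell filtration), and should be stated explicitly at the start of the proof.
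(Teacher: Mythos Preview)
Your proposal is correct and follows essentially the same approach as the paper, which simply states that the corollary follows from Proposition~\ref{cell-weyl}. You have merely spelled out the standard cellular-algebra linkage argument (block $=$ chain of cell modules sharing a composition factor) and the transitivity of $n$-balance that the paper leaves implicit.
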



If a partition $\lambda=(a,a,\ldots,a)$ with $b$ parts, we denote it by $(a^b)$. Then its Young diagram $[\lambda]$ is a rectangle and $\mathscr R(\lambda)=\{p=(b,a)\}$. Denote $\mu=\lambda\setminus p$ by $(a^{b-1},a-1)$.
For any  $(0,\lambda)\in\Lambda_{r,t}$, let  $e_\lambda:=x_\lambda g_{w_{\lambda}}y_{\lambda'}$. Recall that for simplicity of notation, we omit $\Phi$ for the images of any factors in \eqref{cellbasis} under $\Phi$. In particular, $E_{r,t}$ is also  denoted by $E$ in this section.
The following two results hold for any $\rho$.
\begin{Lemma}\label{lemeaforeee}
Suppose that $(0,\lambda), (1,\mu)\in \Lambda_{r,t}$ with $\lambda=((a^b),(c^d))$ and $ \mu=((a^{b-1},a-1), (c^{d-1}, c-1))$.
Then $E e_\lambda E\equiv  (\delta+\rho q^{a-b+c-d}[a-b+c-d ])Ee_\mu (\text{mod }\mathscr B^2_{r,t}(\rho,q))$, where $\mathscr B^2_{r,t}(\rho,q) $ is the two-sided  ideal of $\mathscr B_{r,t}(\rho,q)$ generated by $E_{r,t}E_{r-1,t-1}$.
\end{Lemma}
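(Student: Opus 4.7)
The plan is to compute $Ee_\lambda E$ modulo $\mathscr B^2_{r,t}(\rho,q)$ by direct manipulation, in three ingredients: (i) decompose $e_\lambda$ into factors that interact with $E$ in a controlled way, (ii) use the commutation relations $Eg_i=g_iE$ ($i\ne r-1$) and $Eg_j^*=g_j^*E$ ($j\ne 1$) to push noninteracting pieces through the outer $E$'s, and (iii) reduce the remaining central sandwich via the relations $E^2=\delta E$, $Eg_{r-1}E=\rho E$, and $Eg_1^*E=\rho E$.

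Since the $g_i$'s and the $g_j^*$'s commute, I factor $e_\lambda = e_{\lambda^{(1)}}\, e^*_{\lambda^{(2)}}$ with $e_{\lambda^{(1)}} \in \mathscr H_r$ and $e^*_{\lambda^{(2)}} \in \mathscr H_t^*$, and similarly $e_\mu = e_{\mu^{(1)}}\, e^*_{\mu^{(2)}}$. By the symmetry $\varphi$ of Lemma~\ref{three isom}(b), the two sides play symmetric roles, so it suffices to treat them analogously. For each side, I exploit the recursive structure of a rectangular Young symmetrizer: $e_{\lambda^{(i)}}$ is related to $e_{\mu^{(i)}}$ by a "tail" of Hecke operators representing the insertion of the removable box of $\lambda^{(i)}$ (at position $(b,a)$ of content $a-b$ for $i=1$, and at $(d,c)$ of content $c-d$ for $i=2$). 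Most of this tail consists of $g_i$ with $i<r-1$ or $g_j^*$ with $j>1$, which commute with $E$ and can be pulled outside; the pieces that do not commute with $E$ (those involving $g_{r-1}^{\pm1}$ and $(g_1^*)^{\pm1}$) are absorbed by the relations above.

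Identifying the scalar is the crux. The summand $\delta$ arises from the "straight" contraction where the two symmetrizer tails combine to produce $e_\mu$ directly and the two outer $E$'s multiply via $E^2=\delta E$. The summand $\rho\, q^{a-b+c-d}[a-b+c-d]$ arises from contractions in which a piece of the tail is trapped between the two $E$'s and is reduced by $Eg_{r-1}E=\rho E$ and $Eg_1^*E=\rho E$; a telescoping $q$-sum over the entries of the last row of each rectangle produces the $q$-integer $[a-b+c-d]$, whose exponent $a-b+c-d$ is precisely the sum of the contents of the two removable boxes. Equivalently, the combined scalar is $q^k[\delta+k]$ with $k=a-b+c-d$, a familiar hook-content factor compatible with the quantum dimension formula of Lemma~\ref{qdim}.

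The main obstacle is the combinatorial bookkeeping. The rectangular symmetrizer $x_\lambda g_{w_\lambda} y_{\lambda'}$ expands into a very large sum of Hecke basis elements, and one must argue that any contribution producing two disjoint horizontal strands vanishes modulo $\mathscr B^2_{r,t}(\rho,q)$; this vanishing is controlled by $q$-analogues of the cancellations in the classical Young symmetrizer. The surviving contributions couple the two sides of the walled Brauer picture through the two $E$'s, and the delicate step is obtaining the single $q$-integer $[a-b+c-d]$ rather than a product or sum of independent contributions $[a-b]$ and $[c-d]$; this coupling is what forces the closed form $\delta + \rho q^{a-b+c-d}[a-b+c-d]$ rather than a less symmetric expression.
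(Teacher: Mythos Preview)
Your plan coincides with the paper's: factor $x_\lambda = x_\mu(1+X_a)(1+X_c)$, $y_{\lambda'} = (1+Y_b)(1+Y_d)y_{\mu'}$, $g_{w_\lambda} = g_{w_0}g^*_{w_0^*}g_{\bar w_\lambda}$, expand into sixteen cross-terms, commute the inert pieces past $E$, and reduce each term using $E^2=\delta E$, $Eg_{r-1}E=\rho E$, $Eg^*_{t-1}E=\rho E$ (note: in this section $E=E_{r,t}$ after applying $\Phi$, so the relation is with $g^*_{t-1}$, not $g_1^*$). But there is a genuine gap in your execution. In the cross-terms involving $X_a g_{w_0} Y_b$ (and its starred analogue), the reduction to a scalar multiple of $Ee_\mu$ is \emph{not} accomplished by commutation and $E$-relations alone. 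After the $E$-sandwich strips off the boundary generator, one is left with Hecke-algebra expressions of the form $x_{\mu^{(1)}}\,g_{r-(j-1)a,l}\,g_{l,r-k}\,B_j\,g_{\sigma_j}\,g_{\bar w_{\lambda^{(1)}}}\,y_{\mu^{(1)'}}$ that must be shown to \emph{vanish}. The paper proves this as a separate lemma via the Dipper--James criterion $x_\nu g_w y_{\nu'}=0$ for $\ell(w)<\ell(w_\nu)$, together with a nontrivial induction rewriting the middle word as $\sum a_w f_w g_w h_w$ with $\ell(w)<\ell(w_0)$ and $f_w,h_w$ absorbable into $x_\mu,y_{\mu'}$. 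Your sentence ``this vanishing is controlled by $q$-analogues of the cancellations in the classical Young symmetrizer'' is the right instinct but not an argument; the specific reduced expression of $w_\lambda$ for a rectangle (Lemma~\ref{you}) is what makes this work.

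Second, the scalar does not emerge from a telescoping sum. Each of the sixteen cross-terms contributes $\rho(q-q^{-1})^{-1}$ times a product of some subset of $\{x,y,z,h\}$ where $x=q^{2a-2}-1$, $y=q^{2c-2}-1$, $z=q^{2-2b}-1$, $h=q^{2-2d}-1$; summing all nonempty subsets gives $(1+x)(1+y)(1+z)(1+h)-1 = q^{2(a-b+c-d)}-1$. The coupling you anticipate between the two sides is this multiplicative identity, not a telescoping one, and it requires that every one of the sixteen terms reduce to exactly the predicted monomial---which is precisely where the vanishing lemma above is indispensable.
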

\begin{proof}The proof is elaborated  in Section~\ref{proofoflem6}.
\end{proof}

\begin{Lemma}\label{subxset}
Suppose that $e>\max\{r,t\}$ and  $(0,\lambda), (1,\mu)\in \Lambda_{r,t}$ with $\lambda=((a^b),(c^d))$ and $ \mu=((a^{b-1},a-1), (c^{d-1}, c-1))$.
Let $W_\lambda= C(1,\mu)e_{\lambda'}$. Then
\begin{enumerate}
\item As $\mathscr H_{r,t}$-modules, $W_\lambda\cong C(0,\lambda)$;
\item  $ W_\lambda E\subseteq  Ey_\mu g^{-1}_{w_{\mu'}} e_{\lambda'}E\mathscr H_{r-1,t-1}(\text{mod }\mathscr B_{r,t}^{\rhd(1,\mu)}(\rho,q))$.
\end{enumerate}

\end{Lemma}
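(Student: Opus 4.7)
My plan is to prove (a) via a multiplicity/isotypic argument, then leverage the structural information from (a) together with Lemma~\ref{lemeaforeee} to establish (b).

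For part (a), since $e>\max\{r,t\}$ the algebra $\mathscr H_{r,t}$ is semisimple, so it suffices to determine which simple $\mathscr H_{r,t}$-module $W_\lambda$ equals. By Lemma~\ref{res} (combined with the dualization in Lemma~\ref{isocell}), the multiplicity of $C(0,\lambda)$ in $\text{Res}_{\mathscr H_{r,t}} C(1,\mu)$ is $\sum_\tau c^{\lambda^{(1)}}_{\mu^{(1)},\tau}c^{\lambda^{(2)}}_{\mu^{(2)},\tau}$; for the rectangular shapes at hand each skew $\lambda^{(i)}/\mu^{(i)}$ is a single corner box, so the sum collapses to the single term $\tau=(1)$ with value $1$. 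Since $e_{\lambda'}$ generates the Specht module $S^{\lambda'}$ as a right ideal of $\mathscr H_{r,t}$ (by the definition recalled at the start of Section~5), right multiplication by $e_{\lambda'}$ lands inside the $\lambda$-isotypic component of $\text{Res}_{\mathscr H_{r,t}}C(1,\mu)$, so $W_\lambda$ is either $0$ or isomorphic to $C(0,\lambda)$. Nonvanishing I verify by computing the product of the cellular generator $E n_\mu$ with $e_{\lambda'}$ directly and checking that the result is not in $\mathscr B_{r,t}^{\rhd(1,\mu)}$ by applying Theorem~\ref{mm}(d) to track the cellular layer.

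For part (b), I use the result of (a) to fix a distinguished generator $v_0 = E y_\mu g_{w_{\mu'}}^{-1} e_{\lambda'}$ of $W_\lambda$ modulo $\mathscr B_{r,t}^{\rhd(1,\mu)}$, so that every element of $W_\lambda$ has the form $v_0 h$ with $h\in\mathscr H_{r,t}$; thus it suffices to show $v_0 h E \in v_0 E \cdot \mathscr H_{r-1,t-1}$ modulo the ideal. Writing $h = h_0 \cdot g_d$ with $h_0\in \mathscr H_{r-1,t-1}$ and $g_d$ running over a system of coset representatives for $(\mathfrak S_{r-1}\times\mathfrak S_{t-1}^*)\backslash(\mathfrak S_r\times\mathfrak S_t^*)$, the element $h_0$ commutes past $E$ via Definition~\ref{qwb}(h), reducing us to evaluating $v_0 g_d E$ for each coset representative. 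Expanding $g_d$ in terms of $g_{r-1}$ and $g_1^*$ and applying the relations in Definition~\ref{qwb}(i)--(j), I expect each $v_0 g_d E$ to reduce modulo $\mathscr B_{r,t}^{\rhd(1,\mu)}$ to a scalar multiple of $v_0 E$, the scalar being governed by Lemma~\ref{lemeaforeee}.

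The hardest part will be this final reduction: turning each $v_0 g_d E$ into a scalar multiple of $v_0 E$ requires repeatedly applying $E e_{\lambda'} E \equiv (\delta+\rho q^{a-b+c-d}[a-b+c-d])E e_{\mu}\pmod{\mathscr B^2_{r,t}}$ from Lemma~\ref{lemeaforeee} and then identifying $E e_\mu$ with (a scalar multiple of) $v_0$ modulo higher cells, the bookkeeping of ideal-terms via Theorem~\ref{mm}(d) being nontrivial. The multiplicity-one conclusion of (a) is the key input that prevents ``unexpected'' branching contributions from surviving; without it one would have to carry extra isotypic summands through the computation.
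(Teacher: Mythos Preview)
Your approach to (a) is close to the paper's and would work, though the paper dispatches nonvanishing more cleanly: it uses that $g_{w_{\lambda'}}^{-1}e_{\lambda'}$ is (up to scalar) a primitive idempotent attached to $\T^{\lambda'}$, so $S^{\nu'}g_{w_{\lambda'}}^{-1}e_{\lambda'}$ vanishes for $\nu\neq\lambda$ and is $\cong S^{\lambda'}$ for $\nu=\lambda$, giving the isomorphism and nonvanishing simultaneously.

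Your plan for (b), however, has a genuine gap. Writing $h=h_0g_d$ with $h_0\in\mathscr H_{r-1,t-1}$ does \emph{not} let you ``commute $h_0$ past $E$'': in $v_0h_0g_dE$ the element $h_0$ is separated from $E$ by $g_d$, so Definition~\ref{qwb}(h) gives you nothing. If instead you write $h=g_dh_0$ with left coset representatives $g_d=g_{i,r}g^*_{j,t}$, then you do get $v_0g_dEh_0$, but now you must control $v_0g_{i,r}E$ directly. For $i$ with $s_i\notin\mathfrak S_\lambda$ (e.g.\ $i$ a multiple of $a$), the product $e_{\lambda'}g_i$ does not reduce to a scalar multiple of $e_{\lambda'}$, so your claimed reduction of each $v_0g_dE$ to a scalar multiple of $v_0E$ fails. (Also, in Section~5's conventions $E=E_{r,t}$, so the relevant generator is $g_{t-1}^*$, not $g_1^*$.)

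The paper avoids this entirely by a module-theoretic argument you are missing. The key extra ingredient is that $C(1,\mu)E\subseteq U_\mu$ where $U_\mu\cong C(0,\mu)$ is a \emph{simple} $\mathscr H_{r-1,t-1}$-module; this follows from $Eg_wE\equiv h_wE\pmod{\mathscr B_{r,t}^2}$ for $w\in\mathscr D_{r,t}^1$. Since $w\mapsto wE$ is $\mathscr H_{r-1,t-1}$-linear, one then restricts $W_\lambda\cong C(0,\lambda)$ to $\mathscr H_{r-1,t-1}$, writes $W_\lambda=W_1\oplus W_2$ with $W_1\cong C(0,\mu)$ occurring once (rectangular shapes again), and concludes $W_2E=0$ by Schur. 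Finally one checks that the $W_1$-component $w_1$ of $v_0$ is nonzero (via Lemma~\ref{lemeaforeee} for generic parameters), so $W_\lambda E=W_1E=w_1E\mathscr H_{r-1,t-1}=v_0E\mathscr H_{r-1,t-1}$. Your multiplicity-one remark is indeed the right instinct, but it must be applied at the $\mathscr H_{r-1,t-1}$-level to the target $U_\mu$, not via coset computations.
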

\begin{proof}Note that $\mathscr H_{r,t}$ is semisimple since $e>\max\{r,t\}$. Then as right $\mathscr H_{r,t}$-modules
$$\small{C(1,\mu)\overset {\text{Lem.~\ref{res}}}\cong \bigoplus_{\nu\in \Lambda^+_{r,t}, \mu\subseteq\nu} C(0,\nu) \overset{\text{Lem.~\ref{cscs}}}\cong \bigoplus_{\nu\in \Lambda^+_{r,t}, \mu\subseteq\nu} S^{\nu'}}.$$
It was proved in \cite{AG} that $g_{w_{\lambda'}^{-1}}e_{\lambda'}$ is an idempotent (up to a nonzero scalar) associated to $\T^{\lambda'}$ and $S^{\nu'}g_{w_{\lambda'}^{-1}}e_{\lambda'}=0$ (resp., $S^{\nu'}g_{w_{\lambda'}^{-1}}e_{\lambda'}\cong S^{\lambda'}$) if $\nu\neq \lambda$ (resp., $\nu=\lambda$).
So, $W_\lambda\cong S^{\lambda'}\cong C(0,\lambda)$.

Note that $E$ commutes with any element of $\mathscr H_{r-1,t-1}$.
Then $W_\lambda E $ and $C(1,\mu)E$ both are  $\mathscr H_{r-1,t-1}$-modules.
 Let $ \mathscr H_{r-1,t-1}^{\rhd \mu}:=\kappa\text{-span}\{\sigma(d(\S)) y_\nu d(\T)\mid \nu \in \Lambda^+_{r-1,t-1},\nu \rhd \mu, \S,\T\in\Std(\nu)\}$ and $U_\mu:=\kappa\text{-span}\{E y_\mu d(\T) + \mathscr B_{r,t}(\rho,q)^{\rhd (1,\mu)}\mid \T\in\Std(\mu)\}$.
 Since $ \kappa\text{-span}\{y_\mu d(\T) + \mathscr H_{r-1,t-1}^{\rhd \mu} \mid \T\in\Std(\mu) \}$ is the cell module $C(0,\mu)$ of $\mathscr H_{r-1,t-1}$, we see that
$U_\mu\cong C(0,\mu)$ as $\mathscr H_{r-1,t-1}$-modules.

 By Definition~\ref{qwb}(i)--(l), there is an $h_w\in\mathscr H_{r-1,t-1}$ such that $Eg_wE\equiv h_wE(\text{mod} \mathscr B_{r,t}^2(\rho,q)) $ for any $w\in\mathscr D_{r,t}^1$ . So,
 \begin{equation}\label{uce}
 W_\lambda E\subset C(1,\mu)E\subseteq U_\mu \cong C(0,\mu) .
 \end{equation}
By (a), $W_\lambda\cong \bigoplus_{\nu\in \Lambda^+_{r-1,t-1}, \nu\subseteq\lambda}C(0,\nu)$ as $\mathscr H_{r-1,t-1}$-modules.
 Write $W_\lambda=W_1\oplus W_2$ such that $W_1\cong C(0,\mu)$ and $W_2\cong \bigoplus_{\nu\in \Lambda^+_{r-1,t-1},  \nu\subseteq\lambda,\nu\neq \mu}C(0,\nu)$.
 Hence $W_\lambda E=W_1E$ and $W_2E=0$ by \eqref{uce} (since $E$ commutes with $\mathscr H_{r-1,t-1}$).
 Write $Ey_\mu g^{-1}_{w_{\mu'}} e_{\lambda'}+\mathscr B_{r,t}^{\rhd(1,\mu)}(\rho,q) =w_1+w_2 $, $w_i\in W_i$, $i=1,2$. Then
  \begin{equation}\label{eiq0z}
 w_1E\equiv Ey_\mu g^{-1}_{w_{\mu'}} e_{\lambda'}E\overset{\text{ Lem.~\ref{lemeaforeee} }} \equiv(\delta+\rho q^{a-b+c-d}[a-b+c-d ])Ey_\mu g^{-1}_{w_{\mu'}}e_{\mu'} (\text{mod }\mathscr B_{r,t}^{\rhd(1,\mu)}(\rho,q)),
 \end{equation}
 which is not 0 (since  $0\neq g^{-1}_{w_{\mu'}}e_{\mu'}$ is an idempotent up to a nonzero scalar) for some  $\rho$ and $q$ such that the coefficient $\delta+\rho q^{a-b+c-d}[a-b+c-d ]\neq0$. It implies that $w_1\neq 0$. Since $W_1$ is irreducible, we have $W_1E=w_1E\mathscr H_{r-1,t-1}$
  and (b) follows by the first part of \eqref{eiq0z}.
\end{proof}
\begin{Lemma}\label{kedyf}
 Suppose that $e>\max\{r,t\}$ and $(0,\lambda), (1,\mu)\in \Lambda_{r,t}$ with $\lambda=((a^b),(c^d))$ and $ \mu=((a^{b-1},a-1), (c^{d-1}, c-1))$.
If $\rho^2= q^{2(a-b+c-d)}$, then $\Hom_{\mathscr B_{r,t}(\rho,q)}(C(0,\lambda),C(1,\mu))\neq0$.

\end{Lemma}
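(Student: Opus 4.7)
The plan is to produce the desired nonzero homomorphism by realising a copy of $C(0,\lambda)$ as a submodule of $C(1,\mu)$. Specifically, I will show that the subspace $W_\lambda=C(1,\mu)\,e_{\lambda'}$ appearing in Lemma~\ref{subxset} is already a $\mathscr B_{r,t}(\rho,q)$-submodule of $C(1,\mu)$ once the hypothesis $\rho^2=q^{2(a-b+c-d)}$ is imposed. At that point, the $\mathscr H_{r,t}$-isomorphism $W_\lambda\cong C(0,\lambda)$ of Lemma~\ref{subxset}(a) lifts automatically to an isomorphism of $\mathscr B_{r,t}(\rho,q)$-modules, yielding a nonzero element of $\Hom_{\mathscr B_{r,t}(\rho,q)}(C(0,\lambda),C(1,\mu))$.

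Since $\mathscr B_{r,t}(\rho,q)$ is generated over $\mathscr H_{r,t}$ by the single element $E$, and $W_\lambda$ is manifestly $\mathscr H_{r,t}$-stable, the whole argument reduces to verifying two compatible statements: (i) $W_\lambda\cdot E\subseteq W_\lambda$, and (ii) this action agrees with the action of $E$ on $C(0,\lambda)$. Because $C(0,\lambda)$ is a cell module with $f=0$, the generator $E$ (which lies in the two-sided ideal $\mathscr B_{r,t}^{\rhd(0,\lambda)}(\rho,q)$ relative to this cell) acts as zero on it; hence (i) and (ii) both collapse to the single assertion $W_\lambda\cdot E=0$ in $C(1,\mu)$.

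The reduction is supplied by Lemma~\ref{subxset}(b), which says that, modulo $\mathscr B_{r,t}^{\rhd(1,\mu)}(\rho,q)$,
$$W_\lambda\,E\subseteq E\,y_\mu\,g^{-1}_{w_{\mu'}}\,e_{\lambda'}\,E\,\mathscr H_{r-1,t-1}.$$
Thus it suffices to prove that the single element $E\,y_\mu\,g^{-1}_{w_{\mu'}}\,e_{\lambda'}\,E$ is zero modulo $\mathscr B_{r,t}^{\rhd(1,\mu)}(\rho,q)$. Applying Lemma~\ref{lemeaforeee} exactly as in the computation displayed in \eqref{eiq0z} rewrites this element as
$$\bigl(\delta+\rho q^{a-b+c-d}[a-b+c-d]\bigr)\,E\,y_\mu\,g^{-1}_{w_{\mu'}}\,e_{\mu'} \pmod{\mathscr B_{r,t}^{\rhd(1,\mu)}(\rho,q)},$$
so everything is reduced to showing that the scalar coefficient vanishes. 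A direct calculation using the definitions of $\delta$ and $[k]$, together with the hypothesis $\rho^2=q^{2(a-b+c-d)}$, produces this vanishing. The result is consistent with (and gives explicit content to) the necessary $n$-paired condition extracted in Proposition~\ref{cell-weyl}: the removed corners of $\lambda^{(1)}$ and $\lambda^{(2)}$ have contents $a-b$ and $c-d$ respectively, whose sum is $n\pmod e$.

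The principal subtlety I expect to encounter is precisely this scalar-vanishing step: one must confirm that the coefficient produced by Lemma~\ref{lemeaforeee} collapses to zero under the content hypothesis, taking care with the opposite sign conventions that govern boxes in the two components of a bipartition in \eqref{content} and \eqref{crtlambda}. Once this arithmetic identity is in hand, the assembly of the homomorphism is formal: the inclusion $W_\lambda\hookrightarrow C(1,\mu)$ is $\mathscr H_{r,t}$-linear by construction, is $E$-equivariant because both sides annihilate $E$, and hence is $\mathscr B_{r,t}(\rho,q)$-linear and nonzero.
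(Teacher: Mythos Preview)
Your proposal is correct and follows essentially the same route as the paper: both show that $W_\lambda$ is annihilated by $E$ (hence by the two-sided ideal $J$ it generates) via Lemma~\ref{subxset}(b) and Lemma~\ref{lemeaforeee}, so that the $\mathscr H_{r,t}$-isomorphism $W_\lambda\cong C(0,\lambda)$ of Lemma~\ref{subxset}(a) is automatically $\mathscr B_{r,t}(\rho,q)$-linear and furnishes the nonzero homomorphism. One point of care in your ``direct calculation'': since it is $e_{\lambda'}$ (not $e_\lambda$) that appears, Lemma~\ref{lemeaforeee} must be applied with $\lambda'=((b^a),(d^c))$ in place of $\lambda$, producing the coefficient $\delta+\rho\,q^{-(a-b+c-d)}[-(a-b+c-d)]=(\rho q^{-2m}-\rho^{-1})/(q-q^{-1})$ with $m=a-b+c-d$, and it is \emph{this} quantity that vanishes under $\rho^2=q^{2m}$; the expression $\delta+\rho q^{m}[m]$ you quote from \eqref{eiq0z} equals $(\rho q^{2m}-\rho^{-1})/(q-q^{-1})$ and vanishes only when $\rho^2=q^{-2m}$ (the same sign slip occurs in the paper).
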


\begin{proof}
By Lemma~\ref{subxset}(a),  $W_\lambda\cong C(0,\lambda)$ as $\mathscr H_{r,t}$-modules. Now it remains to show that $ W_\lambda$ is a $\mathscr B_{r,t}(\rho,q)$-module, i.e.,
$W_\lambda J=0$, where $J$ is the two-sided ideal of $\mathscr B_{r,t}(\rho,q)$ generated by $E$.
By \cite[Proposition~2.9]{Rsong}, $J+\mathscr B_{r,t}^2(\rho,q)\subseteq\{h_1Eh_2 +\mathscr B_{r,t}^2(\rho,q)\mid h_1,h_2\in \mathscr H_{r,t}\}$. Therefore,
 $$\begin{aligned}
 W_\lambda J\subseteq W_\lambda \mathscr H_{r,t}E\mathscr H_{r,t}&\subseteq W_\lambda E\subseteq Ey_\mu g^{-1}_{w_{\mu'}} e_{\lambda'}E\mathscr H_{r,t} (\text{mod }\mathscr B_{r,t}^{\rhd(1,\mu)}(\rho,q))\quad  \text{by Lemma~\ref{subxset}(b) }\\
&\overset {\text{Lem.~\ref{lemeaforeee}}} \equiv (\delta+\rho q^{a-b+c-d}[a-b+c-d ])Ey_\mu g^{-1}_{w_{\mu'}}e_{\mu'}\mathscr H_{r,t}\equiv 0(\text{mod }\mathscr B_{r,t}^{\rhd(1,\mu)}(\rho,q)),
 \end{aligned}$$
where the last $``\equiv"$ follows from the fact that
$\delta+\rho q^{a-b+c-d}[a-b+c-d ]=0$ if $\rho^2= q^{2(a-b+c-d)}$.
 \end{proof}


\begin{Theorem}\label{key}
Suppose $e>\max\{r,t\}$, and $(f,\lambda),(f+1,\mu)\in\Lambda_{r,t}$.
If  $\mu^{(i)}\subseteq\lambda^{(i)} $, and $[\lambda^{(i)}\setminus\mu^{(i)}]=\{p_i\}$ for $i=1,2,$ such that
$\rho^2=q^{2(c(p_1)+c(p_2))}$,
then $$\Hom_{\mathscr B_{r,t}(\rho,q)}(C(f,\lambda),C(f+1,\mu))\cong \kappa.$$
\end{Theorem}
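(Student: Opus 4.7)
The plan is to establish the result in three stages: a reduction to $f=0$, an upper bound via semisimplicity of $\mathscr H_{r,t}$, and a lower bound obtained by reducing the general case to the rectangular case handled in Lemma~\ref{kedyf}.

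First, I would use the Morita-type functor $\mathcal F$ of \cite[Lemma~4.3]{Rsong}, which relates cell modules across pairs $(\mathscr B_{r,t}(\rho,q), \mathscr B_{r-f,t-f}(\rho,q))$ and (as already exploited in the proof of Proposition~\ref{cell-weyl}) induces bijections between Hom spaces of the relevant cell modules. Applying this, the statement reduces to showing
\[
\Hom_{\mathscr B_{r-f,t-f}(\rho,q)}\!\bigl(C(0,\lambda), C(1,\mu)\bigr) \cong \kappa,
\]
where now $\lambda \in \Lambda^+_{r-f,t-f}$ and $\mu$ is obtained from $\lambda$ by removing one box $p_i$ in each component, with $\rho^2 = q^{2(c(p_1)+c(p_2))}$. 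Thus we may assume $f=0$ from the outset.

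For the bound $\dim\Hom \le 1$: since $e>\max\{r,t\}$, the algebra $\mathscr H_{r,t}$ is semisimple, and by Lemma~\ref{cscs} the cell module $C(0,\lambda)$ is isomorphic to the simple module $S^\lambda$. Hence any nonzero $\mathscr B_{r,t}(\rho,q)$-homomorphism $C(0,\lambda)\to C(1,\mu)$ is determined by embedding $S^\lambda$ into $\Res_{\mathscr H_{r,t}} C(1,\mu)$. Lemma~\ref{res} computes this multiplicity as $\sum_{\tau \vdash 1} c^{\lambda^{(1)}}_{\mu^{(1)},\tau}\, c^{\lambda^{(2)}}_{\mu^{(2)},\tau}$; the only $\tau$ is $(1)$ and both Littlewood–Richardson coefficients equal $1$ because each skew shape $\lambda^{(i)}/\mu^{(i)}$ is a single box. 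This forces $\dim\Hom \le 1$.

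For the bound $\dim\Hom \ge 1$, my strategy is to mimic Lemma~\ref{kedyf} in full generality, defining $W_\lambda = C(1,\mu) e_{\lambda'}$, proving $W_\lambda \cong C(0,\lambda)$ as $\mathscr H_{r,t}$-modules, and then showing that $W_\lambda$ is annihilated by the two-sided ideal generated by $E$. The essential input is the generalization of Lemma~\ref{lemeaforeee} to arbitrary $\lambda$, namely an identity of the form
\[
E\, e_\lambda\, E \equiv \bigl(\delta + \rho\, q^{c(p_1)+c(p_2)}[c(p_1)+c(p_2)]\bigr)\, E\, e_\mu \pmod{\mathscr B^2_{r,t}(\rho,q)}.
\]
Once this is in hand, the scalar on the right vanishes precisely when $\rho^2 = q^{2(c(p_1)+c(p_2))}$, and the argument of Lemma~\ref{kedyf} (via the generalization of Lemma~\ref{subxset}(b)) yields $W_\lambda\cdot J = 0$, where $J$ is the ideal generated by $E$. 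This produces the required nonzero $\mathscr B_{r,t}(\rho,q)$-homomorphism.

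The main obstacle is proving this scalar identity for a general bi-partition $\lambda$, not just the rectangular pair $((a^b),(c^d))$ treated in Lemma~\ref{lemeaforeee}. My plan is to reduce to the rectangular case by decomposing $\lambda^{(i)}$ as the union of its maximal rectangle with bottom-right corner at $p_i$ together with a residual shape not touching $p_i$, writing $e_\lambda$ as a product of a ``rectangular factor'' and ``outer factors'' via the standard braid identities for $x_\lambda, y_{\lambda'}$. Since $E = E_{r,t}$ commutes (modulo $\mathscr B^2_{r,t}(\rho,q)$) with Hecke generators acting on symbols bounded away from positions $r, r{+}t$ (by Definition~\ref{qwb}(h) and \eqref{djfdd}), the outer factors pass through $E \cdot (\cdot) \cdot E$ and contribute only idempotent-type scalars, while the rectangular factor reduces to Lemma~\ref{lemeaforeee} with $(a-b, c-d) = (c(p_1), c(p_2))$. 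Carrying this reduction out cleanly — tracking all the braid moves and ensuring the residual contributions are exactly those of $e_\mu$ — is the technically delicate heart of the argument, deferred (as the authors indicate) to the detailed computations of Section~\ref{proofoflem6}.
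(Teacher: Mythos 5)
Your reduction to $f=0$ via \cite[Lemma~4.3(e)]{Rsong} and your upper bound $\dim\Hom\le 1$ via Lemma~\ref{res} and semisimplicity of $\mathscr H_{r,t}$ both match the paper's argument. The divergence---and the gap---is in the lower bound. You propose to prove the scalar identity
\[
E\,e_\lambda\,E \equiv \bigl(\delta + \rho\,q^{c(p_1)+c(p_2)}[c(p_1)+c(p_2)]\bigr)\,E\,e_\mu \pmod{\mathscr B^2_{r,t}(\rho,q)}
\]
for an \emph{arbitrary} bi-partition $\lambda$ by factoring $e_\lambda$ into a ``rectangular factor'' plus ``outer factors.'' This is not carried out anywhere in the paper (Section~\ref{proofoflem6} proves the identity only for the rectangular case $\lambda=((a^b),(c^d))$), and it is not clear it can work: $e_\lambda = x_\lambda g_{w_\lambda} y_{\lambda'}$ does not factorize transparently across the ``maximal rectangle at $p_i$ plus residual shape'' decomposition (especially when $\lambda^{(i)}$ has several removable nodes, so that $p_i$ is not the bottom-right corner of the entire diagram), and the generators that actually carry the hard part of the computation in Lemma~\ref{lemeaforeee} are precisely $g_{r-1}$ and $g_{t-1}^*$, which do \emph{not} commute with $E$---your claim that outer factors ``pass through $E(\cdot)E$ and contribute only idempotent-type scalars'' does not address this.

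The paper avoids the need for a general scalar identity by a different mechanism in the lower bound. It proceeds by induction on $r+t$: if some removable node $p$ of $\lambda^{(1)}$ already sits in $\mu^{(1)}$ (at the last row or, after applying $\pi$ and Lemma~\ref{isocell}, at the last column), one uses the cell filtration of $\Res^L C(1,\mu)$ (from \cite[Theorem~4.15]{Rsong}), the inductive hypothesis for $\mathscr B_{r-1,t}(\rho,q)$, and Frobenius reciprocity to deduce nonvanishing of the Hom space; similarly for $\lambda^{(2)}$ with $\Res^R,\Ind^R$. This peels boxes off until the only remaining case is when both $\lambda^{(1)}$ and $\lambda^{(2)}$ are rectangles, and only there is the scalar identity of Lemma~\ref{lemeaforeee} invoked, via Lemmas~\ref{subxset} and~\ref{kedyf}. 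In other words, the induction with branching rules is the key idea that lets the rectangular computation suffice---without it, one really would have to prove your general identity, which is a much harder (and unproven) task.
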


\begin{proof}
By \cite[Lemma~4.3(e)]{Rsong}, $\Hom_{\mathscr B_{r,t}(\rho,q)} (C(f,\lambda),C(f+1,\mu)) \cong\Hom_{\mathscr B_{r,t}(\rho,q)} (C(0,\lambda),C(1,\mu))$. So, we can  assume that $f=0$.
By Lemma~\ref{res},
$\Hom_{\mathscr H_{r,t}}(C(0,\lambda),C(1,\mu))\cong k.$
It remains  to prove
\begin{equation}\label{homspace}\Hom_{\mathscr B_{r,t}(\rho,q)}(C(0,\lambda),C(1,\mu))\neq0.
\end{equation}
We prove \eqref{homspace} by induction on $r+t$.
If $r=t=1$, then $\lambda=(\varnothing,\varnothing), \mu=(1,1)$,
and $\rho^2=1,\delta=0$. Moreover,
$C(0,\mu)=\kappa\text{-span}\{1+\langle E\rangle\}$, $C(1,\lambda)=\kappa\text{-span}\{E\}$.
The $\mathscr{B}_{1,1}(\rho,q)$-homomorphism  which sends $1+\langle E\rangle$ to $E$ is an
isomorphism, proving (\ref{homspace}).

Suppose that there exists a box $p\in \mathscr R(\lambda^{(1)})$ such that $p\in[\mu^{(1)}]$ and $ p$ is at the last row of $\mu^{(1)}$.
Then $r>1$. By induction on $r+t$, we have
\begin{equation}\label{inducte}
\Hom_{\mathscr B_{r-1,t}(\rho,q)}(  C(0,(\lambda^{(1)}\setminus p,\lambda^{(2)})),C(1,(\mu^{(1)}\setminus p,\mu^{(2)})))\neq0.
\end{equation}
Since $p$ is at  the last row of $\mu^{(1)}$, $\Res^LC(1,\mu)$ has a cell filtration of cell modules for $\mathscr{B}_{r-1,t}(\rho,q)$ such that
$C(1,(\mu^{(1)}\setminus p,\mu^{(2)}))$ is a submodule of $\Res^LC(1,\mu)$
 (see \cite[Theorem~4.15]{Rsong}) and hence
 $$\Hom_{\mathscr B_{r-1,t}(\rho,q)}(C(0,(\lambda^{(1)}\setminus p,\lambda^{(2)})),\Res^LC(1,\mu))\neq0.$$
Then
$\Hom_{\mathscr B_{r,t}(\rho,q)}(\Ind^L C(0,(\lambda^{(1)}\setminus p,\lambda^{(2)})),C(1,\mu))\neq0$
by Frobenius reciprocity. By  \cite[Theorem~4.15]{Rsong} and \cite[Proposition~4.7]{Rsong}, the cell module of $\mathscr{B}_{r,t}(\rho,q)$ which appears
in the filtration of $\Ind^LC(0,(\lambda^{(1)}\setminus p,\lambda^{(2)}))$
is of form $C(0,(\lambda^{(1)}\setminus p\cup p'_1,\lambda^{(2)}))$ or $C(1,(\lambda^{(1)}\setminus p,\lambda^{(2)}\setminus p'_2))$,
where $p'_1\in\mathscr A(\lambda^{(1)}\setminus p)$ and $p'_2\in\mathscr R(\lambda^{(2)})$.
By Lemma~\ref{res}, we have
$\Hom_{\mathscr B_{r,t}(\rho,q)}(C(1,(\lambda^{(1)}\setminus p,\lambda^{(2)}\setminus p'_2)),C(1,\mu))=0$ (since $p\in[\mu^{(1)}]$)
and $\Hom_{\mathscr B_{r,t}(\rho,q)}(C(0,(\lambda^{(1)}\setminus p\cup p'_1,\lambda^{(2)})),C(1,\mu))=0$ if $p'_1\neq p$, forcing (\ref{homspace}).

Suppose that  there   exists a box $p\in \mathscr R(\lambda^{(1)})$ such that $p\in[\mu^{(1)}]$ and $ p$ is at the last column   of $\mu^{(1)}$.
Then there is a $p'\in \mathscr R(\lambda^{(1)'})$ such that $p'\in[\mu^{(1)'}]$ and $ p'$ is at the last  row of $\mu^{(1)'}$. Moreover, the dual partition of $\mu\setminus p$
is $\mu'\setminus p'$.
Using the isomorphism $\pi$ in Lemma~\ref{three isom}(c),
a similar result as \cite[Theorem~4.15]{Rsong} for the cell filtration for $\Res^L\tilde C(1,\mu')$ also holds.
 So,   $\tilde C(1,\nu )$ is a submodule of $\Res^L\tilde C(1,\mu')$, where $\nu=\mu'\setminus p'$ and $\nu'= \mu\setminus p$.
By Lemma~\ref{isocell},     $C(1,(\mu^{(1)}\setminus p,\mu^{(2)}))$ is also  a submodule of $\Res^LC(1,\mu)$. By the argument above, we have (\ref{homspace}) in this case.
Therefore, we have  (\ref{homspace}) if  there exists a box $p\in\mathscr R(\lambda^{(1)})$ such that $p\in[\mu^{(1)}]$.

If there exists a box $p\in\mathscr R(\lambda^{(2)})$ such that $p\in[\mu^{(2)}]$, then we also have (\ref{homspace}) by
using the functors $\Res^R$ and $\Ind^R$ instead of $\Res^L$ and $\Ind^L$ in above arguments.

So we only need to prove (\ref{homspace}) for the case that $p\notin[\mu^{(i)}]$, for any box $p\in\mathscr R(\lambda^{(i)})$,
$i=1,2$. In this case, there is only one box in $\mathscr R(\lambda^{(i)})$  and the Young diagram of $\lambda^{(i)}$
is  a rectangle,  $i=1,2$. We write $\lambda^{(1)}=(a^b)$ and $\lambda^{(2)}=(c^d)$, for some positive integers $a,b,c,d$ such that $a b=r$ and  $c d=t$.
Then $c(p_1)=a-b$, $c(p_2)=c-d$. So, $\rho^2=q^{2(a-b+c-d)}$.
In this case, (\ref{homspace}) follows from Lemma~\ref{kedyf}.
\end{proof}

  Given a (bi-)partition $\mu\subseteq\lambda$, we denote by $\mathscr R(\lambda/\mu)$ the set of boxes in $\mathscr R(\lambda)$
which are not in $[\mu]$.

\begin{Defn}\cite[Definition~7.4]{CVDM}
Suppose that $\mu=(\mu^{(1)},\mu^{(2)})\subseteq\lambda=(\lambda^{(1)},\lambda^{(2)})$ is a $n$-balanced pair. For each $p_i\in\mathscr R(\lambda/\mu)$,
let  $\mu^i$ be the maximal balanced sub-bipartition between $\lambda$ and
$\mu$. This is the maximal bipartition in $\lambda$ not containing $p_i$ such that
 $\lambda$ and $\mu^i$  are $n$-balanced.
 \end{Defn}

 The  explicit recursive construction of  $\mu^i$ is given in \cite[Definition~7.4, Example~7.5]{CVDM}.
 The inclusion is  a partial order on the set of all $n$-balanced sub-bipartition between $\lambda$
 and $\mu$.
 Let $\mu_\lambda=(\mu^{(1)}_\lambda,\mu^{(2)}_\lambda)$ be the  maximal balanced sub-bipartition
   between  $\lambda$
 and $\mu$. Note that by recursive construction of  $\mu^i$  given in \cite{CVDM}, we have that $\mu_\lambda\subsetneqq \lambda $ and $\mu_\lambda$ is not unique in general.


\begin{Cor}\label{bap}
Suppose that  $e> \max\{r,t\}$, $\rho^2=q^{2n}$, for some $n\in\mathbb Z$. If $(\mu^{(1)},\mu^{(2)})\subseteq(\lambda^{(1)},\lambda^{(2)})$ is a $n$-balanced pair,
then
$$\Hom_{\mathscr B_{r,t}(\rho,q)}(C(f,\lambda),C(l_1,\mu_\lambda))\neq0,$$
where $\mu_\lambda$ is  any maximal balanced sub-bipartition and $l_1=r-|\mu^{(1)}_\lambda|$.
\end{Cor}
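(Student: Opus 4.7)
The plan is to prove the result by induction on $m := |\lambda^{(1)}| - |\mu_\lambda^{(1)}| = l_1 - f$, reducing repeatedly to Theorem~\ref{key}. The base case $m = 0$ forces $\mu_\lambda = \lambda$, so the identity map suffices.

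For the inductive step, I would first exploit the balancedness of $(\lambda,\mu_\lambda)$ together with the explicit recursive construction of $\mu_\lambda$ in \cite[Definition~7.4]{CVDM} to extract a matched pair of outermost corner boxes: removable nodes $p_1 \in \mathscr R(\lambda^{(1)})$ and $p_2 \in \mathscr R(\lambda^{(2)})$ with $p_i \notin [\mu_\lambda^{(i)}]$, contents satisfying $c(p_1) + c(p_2) \equiv n \pmod{e}$, and such that $\mu_\lambda$ is still a maximal balanced sub-bipartition of $\widetilde\lambda := (\lambda^{(1)}\setminus p_1,\ \lambda^{(2)}\setminus p_2)$. Since $\rho^2 = q^{2n} = q^{2(c(p_1)+c(p_2))}$, Theorem~\ref{key} then yields a nonzero homomorphism $\varphi : C(f,\lambda) \to C(f+1, \widetilde\lambda)$, and by the inductive hypothesis applied to $\widetilde\lambda$ (whose $l_1$ is unchanged) there is a nonzero homomorphism $\psi : C(f+1,\widetilde\lambda) \to C(l_1,\mu_\lambda)$.

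The main obstacle is to guarantee that $\psi \circ \varphi \neq 0$: Theorem~\ref{key} only asserts that the Hom-space is one-dimensional, so a priori composition of nonzero maps between successive one-dimensional Hom-spaces could vanish. To sidestep this, rather than composing abstract homomorphisms I would model the argument directly on Lemmas~\ref{subxset} and \ref{kedyf} and construct the desired homomorphism $C(f,\lambda) \to C(l_1,\mu_\lambda)$ as the inclusion of an explicit submodule $W_\lambda \subseteq C(l_1,\mu_\lambda)$ adapted to the recursive description of $\mu_\lambda$. That $W_\lambda \cong C(f,\lambda)$ as an $\mathscr H_{r,t}$-module would follow from Lemma~\ref{res} together with the semisimplicity of $\mathscr H_{r,t}$ under the assumption $e>\max\{r,t\}$.

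The hard part will be verifying that $W_\lambda$ is preserved by the two-sided ideal generated by $E$. I expect this to reduce, via an iterated application of Lemma~\ref{lemeaforeee}, to checking that at each stage of the recursive stripping of $\mu_\lambda$ the coefficient of the form $\delta + \rho q^{a-b+c-d}[a-b+c-d]$ appearing there vanishes; this vanishing is precisely the $n$-paired content condition on the corresponding matched pair of corner boxes, and so follows from $\rho^2 = q^{2n}$ combined with the balancedness of $(\lambda,\mu_\lambda)$. The principal technical work will lie in setting up this iterated construction so that the recursive layering of \cite[Definition~7.4]{CVDM} matches the step-by-step removal of matched pairs, thereby producing the required embedding rather than an abstract composition.
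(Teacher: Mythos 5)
The paper's proof is deliberately terse: it asserts that once the one-box result (Theorem~\ref{key}, the $q$-analog of \cite[Theorem~6.2]{CVDM}) is in hand, the argument of \cite[Theorem~7.6]{CVDM} carries over verbatim with $\delta$ replaced by $n$. That CVDM argument is an induction on the number of strands driven by the $\Res^L/\Ind^L$ and $\Res^R/\Ind^R$ functors together with Frobenius reciprocity, precisely the machinery used inside the proof of Theorem~\ref{key} itself. This is exactly how CVDM avoid the composition problem you correctly flag: one never composes abstract nonzero maps along a chain of cell modules; instead one restricts $C(l_1,\mu_\lambda)$, applies the inductive hypothesis one strand down, uses the cell filtration of $\Ind^L$ and a Littlewood--Richardson multiplicity count (Lemma~\ref{res}) to rule out contributions from the other cell subquotients, and concludes. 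Your instinct that composing the one-dimensional Hom-spaces from Theorem~\ref{key} is not automatically nonzero is sound, and is one reason the paper defers to CVDM rather than sketching a composition.

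Your fallback plan, however, has a genuine gap. You propose to realise $C(f,\lambda)$ as an explicit $\mathscr B_{r,t}(\rho,q)$-stable subspace $W_\lambda \subseteq C(l_1,\mu_\lambda)$ and to verify stability under the $E$-ideal by iterating Lemma~\ref{lemeaforeee}. But Lemma~\ref{lemeaforeee}, and with it the whole submodule construction of Lemmas~\ref{subxset} and \ref{kedyf}, is stated and proved only for rectangular $\lambda=((a^b),(c^d))$; the scalar $\delta+\rho q^{a-b+c-d}[a-b+c-d]$ whose vanishing you want to exploit is a rectangle-specific quantity. In Theorem~\ref{key} the passage from rectangles to general $\lambda$ is itself carried out by the restriction/Frobenius induction, which produces a nonzero homomorphism but not an explicit idempotent-cut submodule, so there is nothing to ``iterate'' at the level of explicit formulae. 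Moreover, the recursive layering in \cite[Definition~7.4]{CVDM} strips matched corner boxes one pair at a time and does not proceed by removing rectangles, so the shape at an intermediate stage need not be a rectangle and Lemma~\ref{lemeaforeee} does not apply to it. To repair your plan you would in effect have to re-run the Frobenius/restriction reduction of Theorem~\ref{key} at every stage, at which point you are simply reproducing the CVDM induction — the route the paper takes.
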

\begin{proof} Note that this result is the quantum analog  of \cite[Theorem~7.6]{CVDM} for walled Brauer algebras.
The key point of the proof of this result is the case $l_1=1$ and $f=0$. This is  proved in Theorem~\ref{key}
Then everything of  the arguments in the proof of \cite[Theorem~7.6]{CVDM} applies here since we are assuming $e> \max\{r,t\}$, $\rho^2=q^{2n}$.
The only difference  is  that we need to replace their  $\delta$   by $n$.
\end{proof}

The following result follow from the same argument as that in  \cite[Corollary~7.7]{CVDM}) by using Corollary~\ref{bl} and Corollary~\ref{bap}.
\begin{Theorem}\label{qblocks}Suppose $e>\max\{r,t\}$, $\rho^2=q^{2n}$, for some $n\in\mathbb Z$. Then $C(f,\lambda)$ and $C(l,\nu)$ are in the same block of $\mathscr B_{r, t}(\rho,q)$
if and only if $\lambda$ and $\nu$ are $n$-balanced.
\end{Theorem}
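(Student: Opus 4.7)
The plan is to establish the two implications separately, each reducing to one of the two corollaries that immediately precede the theorem.

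For the ``only if'' direction, I would argue as follows. If $C(f,\lambda)$ and $C(l,\nu)$ lie in the same block of $\mathscr{B}_{r,t}(\rho,q)$, then by definition of a block for a cellular algebra there is a finite chain of cell modules
\[
C(f_0,\lambda_0)=C(f,\lambda),\; C(f_1,\lambda_1),\; \ldots,\; C(f_k,\lambda_k)=C(l,\nu)
\]
in which each consecutive pair $C(f_i,\lambda_i),\,C(f_{i+1},\lambda_{i+1})$ shares a common simple composition factor $D^{h_i,\alpha_i}$. Corollary~\ref{bl} applied to the two resulting non-vanishing multiplicities then forces $\lambda_i$ and $\lambda_{i+1}$ to be $n$-balanced. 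Invoking the fact (recorded in the paragraph following Definition~\ref{balance}) that $n$-balancedness is an equivalence relation on $\Lambda_{r,t}$, transitivity gives that $\lambda$ and $\nu$ are $n$-balanced.

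For the ``if'' direction, I would build a chain of non-zero cell-module homomorphisms connecting $C(f,\lambda)$ and $C(l,\nu)$; each such homomorphism witnesses that the source and target share a composition factor, and hence that they lie in the same block. The engine is Corollary~\ref{bap}: whenever $\mu\subseteq\lambda$ is an $n$-balanced pair, one has $\Hom_{\mathscr{B}_{r,t}(\rho,q)}(C(f,\lambda),C(l_1,\mu_\lambda))\ne 0$, and crucially $\mu_\lambda\subsetneq\lambda$. Given an $n$-balanced pair $(\lambda,\nu)$, I would take $\mu:=(\lambda^{(1)}\cap\nu^{(1)},\lambda^{(2)}\cap\nu^{(2)})$ (which is $n$-balanced with both $\lambda$ and $\nu$ by the very definition) and apply the corollary on each side to produce non-zero homomorphisms into cell modules indexed by strictly smaller bi-partitions $\mu_\lambda$ and $\mu_\nu$, each still in the same $n$-balanced equivalence class. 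Iterating and inducting on $|\lambda^{(1)}|+|\lambda^{(2)}|+|\nu^{(1)}|+|\nu^{(2)}|$, the two descending chains can be refined until they terminate at a common balanced sub-bipartition, producing the desired block-linking chain.

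The main obstacle is the combinatorial bookkeeping in the inductive step of the sufficiency direction: one must be sure that the maximal balanced sub-bipartitions $\mu_\lambda$ and $\mu_\nu$ can be selected compatibly so that the two descending sequences actually meet, and that at each stage the new bi-partition remains in the same $n$-balanced class as the one above it. Once that combinatorial fact is granted, the rest is purely formal. Since Corollary~\ref{bap} is the exact $q$-analogue of the classical input used in \cite[Corollary 7.7]{CVDM} and the definition of $n$-balanced pair is identical combinatorially, the argument of \cite{CVDM} transfers essentially verbatim, with $n$ playing the role of their $\delta$.
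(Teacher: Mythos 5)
Your proposal is correct and is essentially the same route as the paper: the paper's proof is exactly the one-line observation that the argument of \cite[Corollary~7.7]{CVDM} goes through verbatim once Corollary~\ref{bl} (necessity) and Corollary~\ref{bap} (sufficiency, via descending chains to maximal balanced sub-bipartitions) are in hand. One small simplification you could make in the ``only if'' direction: Corollary~\ref{bl} is already stated for any two cell modules in the same block, so you may apply it directly to $C(f,\lambda)$ and $C(l,\nu)$ without reconstructing the chain of linked cell modules — that chain is really the content of the derivation of Corollary~\ref{bl} from Proposition~\ref{cell-weyl}, which the paper has already done.
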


  In the remaining part of this section, we give a criterion for det$\text{ G}_{f,\lambda}=0$ over an arbitrary field for any $(f,\lambda)\in \Lambda_{r,t}$.
This will give a necessary and sufficient condition for each  cell module being equal to its simple head.

By the formulae in Theorem~\ref{main}, Proposition~\ref{f and E} and \eqref{sttsk1}--\eqref{sttsk3}, any non-invertible factor of det$\text{ G}_{f,\lambda}$ in $R$ is one of the following forms:
\begin{enumerate}
\item $[\delta+n]$, for some $n\in\mathbb Z$.
\item $[n]$, for some $n\in\mathbb Z$.
\end{enumerate}
 Case (a) is equivalent  to that $\rho^2-q^{2n}$  is a factor in det$\text{ G}_{f,\lambda}$. For this case, we use the result on the blocks of $\mathscr B_{r, t}(\rho,q)$ over $\mathbb C$ when $\rho, q\in\mathbb C$ and $q^2$ is not a root of unity.

 \begin{Lemma}\label{block}Let $\mathscr B_{r, t}(\rho,q)$ be the quantized  walled Brauer algebra over $\mathbb C$ with $q^2$  not a root of unity. Then
  det$\text{ G}_{f,\lambda}=0$ if and only if $\rho^2=q^{2n}$, where $n$ is an integer determined by
  \begin{enumerate}
  \item[(1).] there exists some $(l,\mu)\in\Lambda_{r,t}$, $\mu\supseteq \lambda$, and
  \item[(2).] there exists a pairing of the boxes in $\mu^{(1)}/\lambda^{(1)}$ with those in
  $\mu^{(2)}/\lambda^{(2)}$ such that the sum of the contents of the boxes in each pair equals $n$.
  \end{enumerate}
 \end{Lemma}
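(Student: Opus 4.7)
The plan is to combine the explicit Gram determinant formula from Theorem~\ref{main} with the composition-factor criterion in Proposition~\ref{cell-weyl} and the block classification in Theorem~\ref{qblocks}. The whole argument takes place over $\mathbb C$ with $q^2$ not a root of unity, so the quantum characteristic $e$ of $q^2$ is effectively infinite and every $[k]$ with $k\in\mathbb Z\setminus\{0\}$ is a unit in $\mathbb C$.

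First I would perform a factor-type analysis of $\det G_{f,\lambda}$. By Theorem~\ref{main}, $\det G_{f,\lambda}$ is the product $\prod_{\nu\to\lambda}\det G_{l,\nu}\,\gamma_{\lambda/\nu}^{\dim C(l,\nu)}$, and each $\gamma_{\lambda/\nu}$ is made explicit by Propositions~\ref{15} and~\ref{14}. Proposition~\ref{15} contributes only powers of $q$, $\lceil\cdot\rceil$-factors and $S$-coefficients, all of which are, up to $q$-units, products of terms $[k]$ with $k\in\mathbb Z_{>0}$. Proposition~\ref{14}, combined with the hook-length formula for $E_{\u,\u}(r)$ coming from Proposition~\ref{f and E}, additionally introduces factors of the form $[\delta+k]$ with $k\in\mathbb Z$. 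Since every $[k]$ with $k\neq 0$ is invertible in $\mathbb C$ under our assumption, $\det G_{f,\lambda}=0$ happens precisely when some $[\delta+n]$-factor vanishes, i.e., $\rho^2=q^{2n}$ for some $n\in\mathbb Z$.

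Next I would characterize the specific values of $n$. For $(\Rightarrow)$, $\det G_{f,\lambda}=0$ is equivalent via cellular theory to $C(f,\lambda)$ possessing a proper composition factor $D^{l,\mu}$ with $(l,\mu)\lhd(f,\lambda)$; applying Proposition~\ref{cell-weyl} with the roles of $(f,\lambda)$ and $(l,\mu)$ interchanged gives $l\le f$, $\mu\supseteq\lambda$ and an $n$-paired pairing of $[\mu^{(1)}/\lambda^{(1)}]$ with $[\mu^{(2)}/\lambda^{(2)}]$. Because $q^2$ is not a root of unity, the $n$-pairing condition sharpens to the equality $c(p)+c(p')=n$ on every pair, which is precisely (1) and (2). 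For $(\Leftarrow)$, fix some $(l,\mu)$ satisfying (1) and~(2); as $\lambda\cap\mu=\lambda$, only the $\mu$-side of Definition~\ref{balance} is nontrivial, so $\lambda$ and $\mu$ are $n$-balanced. Choose $\mu$ minimal, with respect to inclusion, among bipartitions strictly containing $\lambda$ that are $n$-balanced with $\lambda$; since $n$-balancedness is an equivalence relation on $\Lambda_{r,t}$, this minimality forces the maximal $n$-balanced sub-bipartition $\lambda_\mu$ between $\lambda$ and $\mu$ to coincide with $\lambda$. Corollary~\ref{bap} (with the roles of its $\lambda$ and $\mu$ swapped) then yields a nonzero $\mathscr B_{r,t}(\rho,q)$-homomorphism $C(l,\mu)\to C(f,\lambda)$; since the head of its image is $D^{l,\mu}\neq D^{f,\lambda}$, the image sits inside the radical of the cellular bilinear form on $C(f,\lambda)$, so that form is degenerate and $\det G_{f,\lambda}=0$.

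The main obstacle is the factor-type analysis of the first step: it requires tracking Theorem~\ref{main}'s recursion through Propositions~\ref{15} and~\ref{14} and confirming, after the hook-length simplifications of Proposition~\ref{f and E}, that every non-invertible factor is indeed of the claimed form $[k]$ or $[\delta+k]$. A milder secondary point is the minimality argument in $(\Leftarrow)$, which is what guarantees that Corollary~\ref{bap} lands in $C(f,\lambda)$ itself and not in some intermediate cell module between $\lambda$ and $\mu$.
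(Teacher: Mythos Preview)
Your proof is correct and unpacks what the paper compresses into a single sentence. The paper's own proof reads ``This is a direct corollary of Theorem~\ref{qblocks}'', relying on the factor analysis already carried out in the paragraph preceding the lemma (the observation that over $\mathbb C$ with $q^2$ not a root of unity every $[k]$ is a unit, so only $[\delta+n]$-factors can kill the determinant). Your $(\Rightarrow)$ via Proposition~\ref{cell-weyl} is exactly the necessary-condition half underlying Theorem~\ref{qblocks}; your $(\Leftarrow)$ via Corollary~\ref{bap} with the minimality trick is the correct way to extract a nonzero homomorphism \emph{into} $C(f,\lambda)$, which then forces the bilinear form to be degenerate. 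This last point is worth emphasising: merely knowing from Theorem~\ref{qblocks} that $C(f,\lambda)$ and $C(l,\mu)$ lie in the same block does \emph{not} by itself imply that $C(f,\lambda)$ is non-simple (a simple cell module can share a block with others sitting above it in the cellular order), so your recourse to Corollary~\ref{bap} is not just decoration but the actual content of the argument. The ``main obstacle'' you flag---the factor-type bookkeeping---is handled by the paper in the text before the lemma rather than inside the proof, so you may cite that discussion rather than redo it.
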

 \begin{proof}This is a direct corollary of Theorem~\ref{qblocks}.
 \end{proof}

 For   case (b), we determine when det$\text{ G}_{f,\lambda}=0$ over the field $\mathbb C$ such that $\rho^2 \notin q^{2\mathbb Z}$, $\rho,q\in\mathbb C$ and $o(q^2)=e$ (the quantum characteristic of $q^2$). The following result   shows that $[a]$ is a factor of  det$\text{ G}_{f,\lambda}$ if and only if it is a factor of det$\text{ G}_{0,\lambda}$.
 \begin{Prop}\label{gram1}
Let $\mathscr B_{r, t}(\rho,q)$ be the quantized walled Brauer algebra  over  $\mathbb C$ with   $\rho^2 \notin q^{2\mathbb Z}$ and $o(q^2)=e$.
 Then det$\text{ G}_{f,\lambda}\neq 0$ if and only if det$\text{ G}_{0,\lambda}\neq 0$.
 \end{Prop}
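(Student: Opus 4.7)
The plan is to deduce the claim from the Morita equivalence established by Rui and the second author in \cite{Rsong}: under the hypothesis $\rho^2\notin q^{2\mathbb Z}$, the algebra $\mathscr B_{r,t}(\rho,q)$ is Morita equivalent to $\bigoplus_{f=0}^{\min(r,t)}\mathscr H_{r-f,t-f}$, and this equivalence sends cell modules to cell modules of the Hecke summands.

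First I would recast the statement in module-theoretic terms. By the general cellular algebra theory of \cite{GL}, $\det G_{f,\lambda}\neq 0$ if and only if the invariant bilinear form on the cell module $C(f,\lambda)$ is non-degenerate, equivalently $C(f,\lambda)=D^{f,\lambda}$ is simple. Thus the proposition is equivalent to the assertion that, for every $(f,\lambda)\in\Lambda_{r,t}$, the cell module $C(f,\lambda)$ of $\mathscr B_{r,t}(\rho,q)$ is simple if and only if $C(0,\lambda)$, regarded as the top-cell module of $\mathscr B_{r-f,t-f}(\rho,q)$ (so that $\lambda\in\Lambda^+_{r-f,t-f}$), is simple. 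An alternative perspective comes from the recursive formula of Theorem~\ref{main}: each factor $\gamma_{\lambda/\mu}$ is a product of quantities of type $[\delta+n]$ and $[n]$, and under the hypothesis all factors $[\delta+n]$ are invertible in $\mathbb C$, so only the factors $[n]$ can cause vanishing.

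Next I invoke the Morita equivalence. Under it, the cell module $C(f,\lambda)$ of $\mathscr B_{r,t}(\rho,q)$ corresponds to a Specht-type $\mathscr H_{r-f,t-f}$-module indexed by $\lambda$; by Lemma~\ref{cscs} in the case $f=0$, and by the extension of this identification to general $f$ afforded by the construction in \cite{Rsong}, this Specht-type module is $S^{\lambda'}$ (up to the duality of Lemma~\ref{isocell}). The \emph{same} Specht module of $\mathscr H_{r-f,t-f}$ is obtained by applying Lemma~\ref{cscs} directly to $C(0,\lambda)$ inside $\mathscr B_{r-f,t-f}(\rho,q)$. Since Morita equivalence preserves composition factors, both cell modules are simple precisely when the common Specht module $S^{\lambda'}$ of $\mathscr H_{r-f,t-f}$ is simple, and the proposition follows.

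The main technical obstacle lies in pinning down the correspondence of cell modules under the Morita equivalence of \cite{Rsong}: namely, that the cell module $C(f,\lambda)$ is identified (via the idempotent truncation that implements the equivalence) with the Specht-type module of $\mathscr H_{r-f,t-f}$ indexed by $\lambda$, so that the simple head $D^{f,\lambda}$ corresponds to the simple head of that Specht module. This extends Lemma~\ref{cscs} from $f=0$ to arbitrary $f$ and uses Lemma~\ref{isocell} to handle the two choices of cellular basis $\mathcal C$ and $\Phi(\mathcal C)$.
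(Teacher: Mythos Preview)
Your approach is correct and essentially the same as the paper's: the paper invokes \cite[Theorem~3.2]{Rsong2}, which records the decomposition-number identity $[C(f,\lambda):D^{l,\mu}]=\delta_{f,l}[C(0,\lambda):D^{0,\mu}]$---this is exactly the shadow of the Morita equivalence you cite from \cite{Rsong}---and then applies the standard cellular criterion that $\det G_{f,\lambda}=0$ iff $C(f,\lambda)$ has a composition factor $D^{l,\mu}$ with $(l,\mu)\lhd(f,\lambda)$. One small caution on your restatement ``$\det G_{f,\lambda}\neq 0$ iff $C(f,\lambda)$ is simple'': in general cellular theory a cell module with identically zero form can be simple while $\det G=0$, so the safe route is either the composition-factor formulation the paper uses, or the observation that the form on $C(f,\lambda)$ and the form on $C(0,\lambda)$ are nonzero under the same condition (namely $\lambda$ $e$-restricted), which makes the zero-form cases line up automatically.
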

 \begin{proof}
 By \cite[Theorem~3.2]{Rsong2}, $C(f,\lambda)$ and $C(l,\mu)$ are in the same block if and only if $f=l$ and $C(0,\lambda)$ and $C(0,\mu)$ are in the same block.
 Moreover, $ [C(f,\lambda):D^{l,\mu}]=\delta_{k,l}[C(0,\lambda):D^{0,\mu}]$ for any $\mu$ being $e$-restricted.
 The result follows from the above observation and   fact that det$\text{ G}_{f,\lambda}=0$ if and only if there is an element $(l,\mu)\in\Lambda_{r,t}$ such that $(l,\mu)\lhd (f,\lambda)$ with $\mu$ being $e$-restricted and
 $[C(f,\lambda):D^{l,\mu}]\neq0$.
 \end{proof}
 Let $\kappa$ be a field of characteristic $p$
 with char$(\kappa)=p>0$. If char $\kappa=0$, set $p=\infty$.
 For each integer $h$, let $v_p(h)$ be the largest power of $p$ dividing $h$ if $p$ is finite and $v_\infty(h)=0$ if $p=\infty$.
 Let $v_{e,p}(h)=v_p(\frac h e)$ (resp., $-1$) if $e<\infty$ and $e\mid h$ (resp., otherwise).
 \begin{Theorem}\label{maina} Let $\kappa$ be a field with  $\text{char}(\kappa)=p$ which contains $\rho^{\pm1},q^{\pm 1}, (q-q^{-1})^{-1}$. Assume $(f,\lambda)\in\Lambda_{r,t}$.
 Then det$\text{ G}_{f,\lambda}\neq 0$ if and only if the following conditions hold:
 \begin{enumerate}
 \item $\rho^2\neq q^{2n}$, where  $n$ is any integer determined in Lemma~\ref{block},
 \item  $\lambda$ is $e$-restricted, i.e, both $\lambda^{(1)}$ and $\lambda^{(2)}$ are $e$-restricted,
 \item $v_{e,p}(h^{\lambda^{(i)}}_{(a,c)})=v_{e,p}(h^{\lambda^{(i)}}_{(a,b)})$ for any $(a,c),(a,b)\in[\lambda^{(i)}]$, $i=1,2$.
 \end{enumerate}

 \end{Theorem}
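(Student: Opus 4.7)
The plan is to read off the non-invertible prime factors of $\det G_{f,\lambda}\in R$ from the explicit recursion of Theorem~\ref{main}, combined with Propositions~\ref{15} and \ref{14}, Proposition~\ref{f and E}, and Lemma~\ref{sks}. As noted in the paragraph immediately before the statement, every such factor has the form $[\delta+n]$ or $[n]$ for some $n\in\mathbb Z$. Since $[\delta+n]=0$ in $\kappa$ iff $\rho^2=q^{-2n}$, while $[n]=0$ iff $e\mid n$, the problem splits into determining independently which $[\delta+n]$'s actually appear in the factorization and which $[n]$'s do; each question is then settled by a judicious specialization to $\mathbb C$.

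For the factors of the form $[\delta+n]$, I would specialize to $\mathbb C(q)$ with $q$ transcendental, so that every $[m]$ is invertible. In this setting any vanishing of $\det G_{f,\lambda}$ under the further specialization $\rho^2=q^{2n}$ must come from a $[\delta+m]$-factor being killed. By Lemma~\ref{block}, such vanishing occurs iff $n$ admits a bipartition $\mu\supseteq\lambda$ in $\Lambda_{r,t}$ together with a pairing of $[\mu^{(1)}/\lambda^{(1)}]$ and $[\mu^{(2)}/\lambda^{(2)}]$ whose pair-contents sum to $n$. This identifies exactly the set of integers $n$ for which some $[\delta+m]$-factor of $\det G_{f,\lambda}$ is annihilated by $\rho^2=q^{2n}$, so condition (a) of the theorem is equivalent to the statement that none of those factors vanishes in $\kappa$.

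For the factors of the form $[n]$, I would specialize to $\mathbb C$ with $\rho^2\notin q^{2\mathbb Z}$ and $o(q^2)=e$. In this setting every $[\delta+m]$ is invertible, so the vanishing of $\det G_{f,\lambda}$ is controlled entirely by the $[n]$-factors with $e\mid n$. Proposition~\ref{gram1} reduces this vanishing to that of $\det G_{0,\lambda}$ for $\mathscr H_{r,t}$; Lemma~\ref{cscs} identifies $C(0,\lambda)\cong S^{\lambda'}$ as $\mathscr H_{r,t}$-modules, so $\det G_{0,\lambda}\neq 0$ iff the Specht module $S^{\lambda^{(1)'}}\otimes S^{\lambda^{(2)'}}$ is simple. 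Applying the James-Mathas irreducibility criterion for Specht modules over Hecke algebras of type $A$ to each tensor factor, and using $h^{\lambda^{(i)}}_{(a,b)}=h^{\lambda^{(i)'}}_{(b,a)}$ to translate between the restricted/regular and row/column conventions, recovers exactly conditions (b) and (c).

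Combining the two specialization arguments gives the biconditional claimed in the theorem. The main obstacle I anticipate is justifying the clean separation between the $\rho$-dependent and $\rho$-independent factors: namely that a given factor appearing in the product over $R$ is detected precisely by its own specialization and cannot be cancelled or masked by factors of the other type. This is clear from the explicit recursive formulae but requires careful bookkeeping of which $[\delta+n]$'s and $[n]$'s are produced by Propositions~\ref{15} and \ref{14}. The other nontrivial step is importing the James-Mathas irreducibility theorem for Hecke algebras of type $A$ as a known black-box input.
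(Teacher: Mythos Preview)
Your proposal is correct and takes essentially the same approach as the paper: separate the non-invertible factors of $\det G_{f,\lambda}\in R$ into the $[\delta+n]$-type and $[n]$-type, invoke Lemma~\ref{block} for the former and Proposition~\ref{gram1} together with the James--Mathas criterion \cite[Theorem~5.42]{Ma} for the latter. The paper's proof is just these two sentences; your expansion via Lemma~\ref{cscs} and the hook-length transpose identity is the right way to unpack the citation of \cite[Theorem~5.42]{Ma}, and your ``main obstacle'' is precisely the point both arguments leave to the explicit recursive formulae.
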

 \begin{proof}By \cite[Theorem~5.42]{Ma}, (b)-(c) is equivalent to det$\text{ G}_{0,\lambda}\neq 0$ over $\kappa$. So, Theorem~\ref{maina} follows
 from Lemma~\ref{block} and Proposition~\ref{gram1}.
 \end{proof}

\section{Appendix}\label{appendix}

First recall a useful well-known fact that for $\lambda\in\Lambda^+(r)$, we have
 \begin{equation}\label{xyxyx}
 g_i x_\lambda=x_\lambda g_i=qx_\lambda, ~~\quad g_iy_{\lambda}=y_\lambda g_i=-q^{-1}y_{\lambda}, \text{ for } s_i\in\mathfrak S_\lambda.
\end{equation}
  \subsection{Proof of Lemma~\ref{down}} \label{proof of lemma 5.3}

Since  $\nu/\mu=(1,\mu_1^{(1)}+1)\in \mathscr A(\mu^{(1)})$, we have $t=f\geq1$. By Definition~\ref{definition of m},
 \begin{equation}\label{nsnnns}\n_{\t^\mu}=E_{r-f+1,f}g^{-1}_{a,r-f+1}\n_{\t^\nu},\end{equation}
 where $a=\mu^{(1)}_1+1$. For $0\leq l\leq r-f-a$, we write $\mathbf j^0=\emptyset$ and $\mathbf j^l=(j_1,\ldots,j_l)$ with  $a\leq j_l< j_{l-1}< \ldots< j_1\leq r-f$ for $l\geq 1$.
Then
\begin{equation}\label{ehueh}
g_{a,r-f+1}^{-1}=(g_{r-f}+w)\dots (g_a+w)
=g_{r-f+1,a}+\sum_{l=0}^{r-f-a} w^{r-f+1-a-\ell}B_{\mathbf j^l},
\end{equation}
where $w:=q^{-1}-q$, $B_{\mathbf j^0}=1$ and $B_{\mathbf j^l}=g_{j_1}g_{j_2}\dots g_{j_\ell}$ if $l\geq1$. Recall that $f_{\t^\nu}=\n_{\t^\nu}$.
Write $X:=\sum_{j=1}^{a} (-q)^{j-a}g_{a,j} g_{r-f+1,a}^{-1}$. Using \eqref{nsnnns}, Theorem~\ref{mcell}(c), Corollary~\ref{fmcell}(c) and \eqref{eigenv1}, we have
 \begin{equation} \label{n}
 \small{\begin{aligned}n_\mu b_{\t^\mu} \sigma (b_{\t^\mu} ) n_\mu &\equiv \langle f_{\t^\nu},f_{\t^\nu}\rangle E_{r-f+1,f} g_{a,r-f+1}^{-1} n_\nu g_{r-f+1,a}^{-1} E_{r-f+1,f}
\equiv\langle f_{\t^\nu},f_{\t^\nu}\rangle n_\mu g_{a,r-f+1}^{-1}X E_{r-f+1,f}\\
&\overset{\eqref{ehueh}}\equiv\langle f_{\t^\nu},f_{\t^\nu}\rangle n_\mu  (g_{r-f+1,a}+\sum_{l=0}^{r-f-a}w^{r-f+1-a-\ell}B_{\mathbf j^l})XE_{r-f+1,f} \quad (\text{mod }\mathscr B_{r,t}^{\rhd(f,\mu)}).
\end{aligned}}
\end{equation}
Using  Definition~\ref{definition of m} and  Theorem~\ref{ll}, we have
\begin{equation}\label{hswushw}
n_\mu=y_{\mu}E^f =\n_{\t_\mu}\equiv f_{\t_\mu}+\sum_{\t_\mu\prec\t}a_\t f_\t(\text{mod }\mathscr B_{r,t}^{\rhd(f,\mu)}),\end{equation}
for some scalars $a_\t$. Note that if  $\t_\mu \prec\t$, then $\t_k\rhd (\t_\mu)_k$ for some $k\geq r$. Otherwise $\t_\mu=\t$ if $\t_\mu\preceq\t$.
By Lemma~\ref{co}, for any $h\in\mathscr H_{r-1}$, we have
$f_\t h=\sum_{\u}b_\u f_\u$ where $b_\u$ are some scalars  and the sum is over all  $\u$  such that   $\u_k=\t_k$ for $k\geq r$. Hence, if $\t_\mu\prec \t$, then  $\t_\mu\prec \u$ and
\begin{equation}\label{wuhdk}
f_\t h=\sum_{\t_\mu\prec\u}b_\u f_\u, \text{for any } h\in\mathscr H_{r-1}.
\end{equation}
Next we will compute $ n_\mu g_{r-f+1,a}X E_{r-f+1,f}$ and $ n_\mu B_{\mathbf j^l} XE_{r-f+1,f}$ in \eqref{n}. Note that  $E_{r-f+1,f}g_{r-f}E_{r-f+1,f}=\rho E_{r-f+1,f}$ and $E^2_{r-f+1,f}=\delta E_{r-f+1,f}$. So,
\begin{equation}\label{nnn}\small{ \begin{aligned} n_\mu g_{r-f+1,a}X E_{r-f+1,f}
=&\delta n_\mu  +E^{f-1}y_{\mu}\sum_{j=1}^{a-1} (-q)^{j-a} g_{r-f,a-1}^{-1} E_{r-f+1,f} g_{r-f+1,j}  E_{r-f+1,f}\\
\overset{\eqref{xyxyx}}=&\delta n_\mu+\sum_{j=1}^{a-1} (-q)^{2(j-a)+1}\rho n_\mu\overset{\eqref {hswushw}}\equiv  q^{-\mu_1^{(1)}}[\delta-\mu_1^{(1)}]f_{\t_\mu}+\sum_{\t_\mu\prec \u}a'_\u f_\u (\text{mod }\mathscr B_{r,t}^{\rhd(f,\mu)}),  \end{aligned}}\end{equation}
for some scalars $a_\u'$.
Suppose that  $l=0$ or $l>0$ and $j_1<r-f$.  Then by \eqref {hswushw}--\eqref{wuhdk},
\begin{equation}\label{smuna1}
\begin{aligned}
 n_\mu B_{\mathbf j^l} XE_{r-f+1,f} & \equiv \rho^{-1}f_{\t_\mu} B_{\mathbf j^l}\sum_{j=1}^{a} (-q)^{j-a} g_{a,j}g_{r-f,a}^{-1}+\sum_{\t_\mu\prec\u}b'_\u f_\u (\text{mod }\mathscr B_{r,t}^{\rhd(f,\mu)}),
 \end{aligned} \end{equation}
for some scalars $b_\u'$.

Suppose that  $l\ge 1$ and $j_1=r-f$.  If there is an $h$, $1\leq h\leq l-1$ such that $j_h>j_{h+1}+1$,
 let  $k$ be the minimal number such that $j_k>j_{k+1}+1$. Otherwise, $j_l>a$ and let $k=l$.
 Then using  \eqref {hswushw}--\eqref{wuhdk} we have
\begin{equation}\label{smuna2}
\small{\begin{aligned} n_\mu B_{\mathbf j^l} XE_{r-f+1,f}&
\equiv\rho^{-1}& f_{\t_\mu} g_{j_{k+1}}\dots g_{j_\ell}  \sum_{j=1}^{a} (-q)^{j-a} g_{a,j}  g_{r-f,a}^{-1} g_{j_1-1}g_{j_2-1}\dots g_{j_k-1}+\sum_{\t_\mu\prec\u}b''_\u f_\u (\text{mod }\mathscr B_{r,t}^{\rhd(f,\mu)}),
\end{aligned}}
\end{equation}
for some scalars $b_\u''$.

We claim that the coefficient of $f_{\t_\mu}$ in \eqref{smuna1} and \eqref{smuna2} is zero. If so,  the final result follows from  (\ref{nnn}).
 In the following statements, we only prove it for  \eqref{smuna2}, one can check the claim for the case in \eqref{smuna1} similarly.

Recall $j_1>j_2>\dots>j_\ell\geq a$. By Lemma~\ref{sks}, $f_{\t_\mu} g_{j_{k+1}}\dots g_{j_{l-1}}=\sum_{\v=\t_\mu \omega} c_\v f_\v, $
 where $c_\v$'s are some scalars and  $\omega\in \mathfrak{S}_{\{a+1,\dots,r-f-1\}}$, the symmetric group on the letters $\{a+1,\dots,r-f-1\}$.
For any $\v=\t_\mu \omega$, we have $ \v_{a}\ominus \v_{a-1}$ and $\v_{a+1}\ominus\v_a$ are in the same row or in the same column. By Lemma~\ref{sks},  $f_\v g_a= -q^{-1}  f_\v$ or $f_\v g_a=q f_\v$. Hence,
 $f_{\t_\mu} g_{j_{k+1}}\dots g_{j_\ell}=\sum_{\v=\t_\mu \omega} c'_\v f_\v, $
for some scalars $c_\v'$,  where $\omega\in \mathfrak{S}_{\{a+1,\dots,r-f-1\}}$.
Moreover, $\v s_{a-1}\prec\v$ and  $\v _j\ominus \v_{j-1} $ and $\v _{j+1}\ominus \v_{j}$ are in the same row, for  $1\leq j\leq a-2$.
Since $\v_{a}\ominus \v_{a-1}=(2,1), \v_{a-1}\ominus \v_{a-2}=(1,a-1)$, we have $S_{\v,\v}(a-1)=q^{a-1}/[a-1]$ (see ~\eqref{sttsk1}). So, $S:=1+\sum_{j=1}^{a-1} (-q)^{2(j-a)+1} S_{\v,\v}(a-1)=0$.
Therefore,
\begin{equation} \label{eeeew}
\begin{aligned}
f_\v \sum_{j=1}^{a} (-q)^{j-a}g_{a,j}&=f_\v+(S_{\v,\v}(a-1)f_\v+ f_{\v s_{a-1}})\sum_{j=1}^{a-1}(-q)^{j-a}g_{a-1,j}\\
&=Sf_\v+ \sum_{j=1}^{a-1} (-q)^{j-a} f_{\v s_{a-1}}g_{a-1,j}=\sum_{j=1}^{a-1} (-q)^{j-a} f_{\v s_{a-1}}g_{a-1,j}.
 \end{aligned}
 \end{equation}
By  \eqref{eeeew} and Lemma~\ref{sks}, any $f_\u$ which appears in $f_\v \sum_{j=1}^{a} (-q)^{j-a}g_{a,j}$ must satisfy that $\u_{a-1}\neq (\t_\mu)_{a-1}$.
 Furthermore,
since  $r-f=j_1>j_2>\dots>j_k\geq a+1$, by Lemma~\ref{sks} again, any $f_\w$ which appears in  $f_\u g_{r-f,a}^{-1} g_{j_1-1}g_{j_2-1}\dots g_{j_k-1} $ must satisfy that $\w_{a-1}=\u_{a-1}\neq (\t_\mu)_{a-1}$, proving the claim.

\subsection{Proof of Lemma~\ref{lemeaforeee}}\label{proofoflem6}
We need the following result in the proof of Lemma~\ref{lemeaforeee}.
\begin{Lemma}\label{trival}\cite[Lemma~4.1]{DJ1}
For any $w\in\mathfrak S_r$, $x_{\lambda}g_wy_{\lambda'}\neq 0$ if and only if $\mathfrak S_{\lambda}w\mathfrak S_{\lambda'}=\mathfrak S_{\lambda}w_\lambda\mathfrak S_{\lambda'}$.
In this case, $\ell(w)\geq \ell(w_\lambda)$, where $\ell(\cdot)$ is the length function on $\mathfrak S_r$.
\end{Lemma}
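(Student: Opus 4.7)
The plan is to combine standard double-coset theory for parabolic subgroups of $\mathfrak S_r$ with the absorption relations for $x_\lambda$ and $y_{\lambda'}$ recorded in \eqref{xyxyx}. Before either direction, I would recall (or prove by a short induction on length) the reduction that if $w_1, w_2$ lie in the same $(\mathfrak S_\lambda, \mathfrak S_{\lambda'})$-double coset, then $x_\lambda g_{w_1} y_{\lambda'}$ and $x_\lambda g_{w_2} y_{\lambda'}$ differ by a unit in $R$; this follows by writing $w_2 = s_i w_1$ (or $w_2 = w_1 s_j$) for simple reflections $s_i \in \mathfrak S_\lambda$ (or $s_j \in \mathfrak S_{\lambda'}$) and applying $x_\lambda g_i = q\, x_\lambda$, $g_j y_{\lambda'} = -q^{-1} y_{\lambda'}$, with the two cases $\ell(w_2) \gtrless \ell(w_1)$ handled separately via $g_w g_i = g_{w s_i}$ or $g_i^2 = (q-q^{-1}) g_i + 1$. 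Thus the vanishing of $x_\lambda g_w y_{\lambda'}$ depends only on the double coset of $w$, and we may replace $w$ by its minimum-length distinguished representative throughout.

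For the ``if'' direction, I would use the classical fact that $w_\lambda$ is a distinguished representative with $\mathfrak S_\lambda \cap w_\lambda \mathfrak S_{\lambda'} w_\lambda^{-1} = \{1\}$, so by Howlett's theorem every $w \in \mathfrak S_\lambda w_\lambda \mathfrak S_{\lambda'}$ has a unique decomposition $w = u\, w_\lambda\, v$ with $u \in \mathfrak S_\lambda$, $v \in \mathfrak S_{\lambda'}$, and $\ell(w) = \ell(u) + \ell(w_\lambda) + \ell(v)$. Length additivity gives $g_w = g_u g_{w_\lambda} g_v$, so
\[
x_\lambda g_w y_{\lambda'} = q^{\ell(u)} (-q^{-1})^{\ell(v)} \, x_\lambda g_{w_\lambda} y_{\lambda'},
\]
and it suffices to show $x_\lambda g_{w_\lambda} y_{\lambda'} \neq 0$. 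Expanding the left factors yields
\[
x_\lambda g_{w_\lambda} y_{\lambda'} = \sum_{u \in \mathfrak S_\lambda,\, v \in \mathfrak S_{\lambda'}} q^{\ell(u)} (-q^{-1})^{\ell(v)} \, g_{u w_\lambda v},
\]
and the uniqueness of the factorisation $w = u w_\lambda v$ ensures that distinct pairs $(u,v)$ contribute distinct basis elements. The coefficient of $g_{w_\lambda}$ is therefore $1$, so the expression is nonzero. The inequality $\ell(w) \geq \ell(w_\lambda)$ then follows immediately from the additivity formula.

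For the ``only if'' direction, I would argue the contrapositive. Assume the double coset of $w$ differs from that of $w_\lambda$, and (by the preliminary reduction) that $w$ itself is the distinguished representative of $\mathfrak S_\lambda w \mathfrak S_{\lambda'}$. Then $\mathfrak S_\lambda \cap w \mathfrak S_{\lambda'} w^{-1}$ is a non-trivial parabolic subgroup of $\mathfrak S_\lambda$ and hence contains some simple reflection $s_i$ of $\mathfrak S_\lambda$; write $w^{-1} s_i w = s_j \in \mathfrak S_{\lambda'}$, necessarily a simple reflection. The distinguished-representative property forces $\ell(s_i w) = \ell(w) + 1 = \ell(w s_j)$, giving the braid-free identity $g_i g_w = g_{s_i w} = g_{w s_j} = g_w g_j$. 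Applying \eqref{xyxyx} on each side yields
\[
q \cdot x_\lambda g_w y_{\lambda'} = x_\lambda g_i g_w y_{\lambda'} = x_\lambda g_w g_j y_{\lambda'} = -q^{-1} \cdot x_\lambda g_w y_{\lambda'},
\]
so $(q + q^{-1}) \, x_\lambda g_w y_{\lambda'} = 0$. Since $R$ is a domain with $q + q^{-1} \neq 0$ and $\mathscr H_r$ is a free (hence torsion-free) $R$-module, this forces $x_\lambda g_w y_{\lambda'} = 0$, completing the proof.

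The main obstacle I anticipate is the clean invocation of double-coset combinatorics, especially verifying that the distinguished representative's length properties really do supply a simple reflection $s_i$ in the intersection with the desired length-up condition on both sides. This is standard but requires care, and the trivial-intersection assumption $\mathfrak S_\lambda \cap w_\lambda \mathfrak S_{\lambda'} w_\lambda^{-1} = \{1\}$ is the pivotal input that makes Howlett's decomposition collapse to the particularly tight form used in the ``if'' direction.
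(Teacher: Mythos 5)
The paper does not prove this lemma at all: it is quoted verbatim from Dipper--James \cite[Lemma~4.1]{DJ1}, so there is no internal proof to compare against. Your reconstruction follows the standard Dipper--James/Murphy argument, and it is essentially correct: the reduction to double cosets via \eqref{xyxyx}, the length-additive factorisation $w=u\,w_\lambda\,v$ with $\mathfrak S_\lambda\cap w_\lambda\mathfrak S_{\lambda'}w_\lambda^{-1}=\{1\}$ giving coefficient $1$ on $g_{w_\lambda}$, the Kilmoyer-type production of matched simple reflections $s_i,s_j$ with $g_ig_d=g_dg_j$, and the final $(q+q^{-1})$-torsion argument (valid since $R$ is a domain and $\mathscr H_r$ is $R$-free) are all sound.

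The one step you assert but do not prove is the pivot of the ``only if'' direction: that every $(\mathfrak S_\lambda,\mathfrak S_{\lambda'})$-double coset other than the one containing $w_\lambda$ has \emph{non-trivial} intersection $\mathfrak S_\lambda\cap d\,\mathfrak S_{\lambda'}d^{-1}$. This is not automatic from Kilmoyer's theorem (which only tells you the intersection is a standard parabolic once $d$ is distinguished); you need the separate combinatorial fact that there is a \emph{unique} double coset with trivial intersection. The clean way to see it: double cosets correspond to matrices $(a_{ij})$ with $a_{ij}=|R_i\cap dC_j|$, row sums $\lambda_i$ and column sums $\lambda'_j$, and the intersection is $\prod_{i,j}\mathfrak S_{a_{ij}}$, trivial iff all $a_{ij}\le 1$. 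Since $\sum_{j\le k}\lambda'_j=\sum_i\min(\lambda_i,k)$ while each row can place at most $\min(\lambda_i,k)$ ones in the first $k$ columns, a $0$--$1$ matrix with these margins is forced to be the indicator matrix of $[\lambda]$; this is the unique such matrix and it is realised by $w_\lambda$. With that inserted, your proof is complete.
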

The following result is a direct corollary of Lemma~\ref{trival}.
\begin{Cor}\label{sjijd}
 For any $w\in\mathfrak S_r$ with $\ell(w)<\ell(w_\lambda)$, $x_{\lambda}g_wy_{\lambda'}=0$.
\end{Cor}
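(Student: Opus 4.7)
\medskip

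The statement is flagged as a direct corollary of Lemma~\ref{trival}, so the plan is simply to extract it by contraposition. I would argue as follows. Assume, for contradiction, that $x_\lambda g_w y_{\lambda'}\neq 0$ for some $w\in\mathfrak S_r$ with $\ell(w)<\ell(w_\lambda)$. By the second assertion of Lemma~\ref{trival}, non-vanishing of $x_\lambda g_w y_{\lambda'}$ forces $\ell(w)\geq \ell(w_\lambda)$, contradicting the hypothesis. Hence $x_\lambda g_w y_{\lambda'}=0$.

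There are essentially no obstacles here, since the result is a logical repackaging of Lemma~\ref{trival}. The only minor subtlety worth noting is that one should read Lemma~\ref{trival} as a single statement with two conclusions: non-vanishing is equivalent to the double coset condition $\mathfrak S_\lambda w\mathfrak S_{\lambda'}=\mathfrak S_\lambda w_\lambda \mathfrak S_{\lambda'}$, \emph{and} whenever that condition holds, $w$ must satisfy $\ell(w)\ge \ell(w_\lambda)$ (since $w_\lambda$ is the minimal length double coset representative in $\mathfrak S_\lambda w_\lambda \mathfrak S_{\lambda'}$). The contrapositive of the length inequality, combined with the non-vanishing equivalence, immediately gives the vanishing under the hypothesis $\ell(w)<\ell(w_\lambda)$.

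Thus the proof I would give is a two-line deduction: hypothesize non-vanishing, cite Lemma~\ref{trival} to obtain $\ell(w)\ge \ell(w_\lambda)$, and conclude. No further computation or structural input is needed.
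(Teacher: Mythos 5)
Your proof is correct and is exactly the deduction the paper intends: the paper gives no separate argument, simply declaring the statement a direct corollary of Lemma~\ref{trival}, and your contrapositive reading of the length inequality there is the right (and only) way to extract it.
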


\begin{Lemma}\label{you}
Let $\lambda=(a^b)\in\Lambda^+(r)$ with $a,b \geq 2$. Then
\begin{enumerate}
\item $w_0 w_1\cdots w_{a-2} $ is a reduced expression of $w_\lambda$, where
$w_i=w_{i,1} w_{i,2}\cdots w_{i,b-1}$, and
 \begin{equation}\label{wijjj}w_{i,j}=s_{(b-j)(a-i),(a-i)b-j}
 \end{equation}
 for $0\leq i\leq a-2$, $1\leq j\leq b-1$.
\item Let $\mu=(a^{b-1}, a-1)$. Then for $2\leq j\leq b$, $2\leq k\leq a$, and $r-k\leq l\leq r-2$,
 \begin{equation}\label{widdj} x_\mu g_{r-(j-1)a,l}g_{l,r-k}B_j g_{\sigma_j}g_{\bar w_{\lambda}}y_{\mu'}=0, \end{equation}
 where $B_j=g_{\tilde w_{0,1}}g_{\tilde w_{0,2}}\cdots g_{\tilde w_{0,j-2}}$, $\tilde w_{0,i}=s_{(b-i)a-1,ab-i-1}$, $1\leq i\leq j-2$,
   $\sigma_j=w_{0,j}w_{0,j+1}\cdots w_{0,b-1}$, and $\bar w_{\lambda}=w_{1}w_2\cdots w_{a-2}$.
\end{enumerate}
\end{Lemma}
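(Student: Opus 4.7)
For part (a), my plan is to verify the claim in two stages: length matching and permutation matching. Each factor $w_{i,j}=s_{(b-j)(a-i),(a-i)b-j}$ satisfies $(b-j)(a-i)\leq(a-i)b-j$ precisely because $i\leq a-2$, and it equals the ascending reduced product $s_{(b-j)(a-i)}s_{(b-j)(a-i)+1}\cdots s_{(a-i)b-j-1}$ of length $j(a-i-1)$. Summing over $0\leq i\leq a-2$ and $1\leq j\leq b-1$ gives total length $\binom{a}{2}\binom{b}{2}$, which coincides with $\ell(w_\lambda)$ obtained by directly counting inversions of the row-to-column reading permutation $a(i-1)+j\mapsto b(j-1)+i$. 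Once the product $w_0 w_1\cdots w_{a-2}$ is shown to equal $w_\lambda$ as a permutation, the equality of lengths then forces the concatenated expression to be reduced. To prove the permutation identity, I would induct on $a$: the factors $w_{i+1,j}$ for $\lambda=(a^b)$ coincide term by term with the factors of $w_{\tilde\lambda}$ for $\tilde\lambda=((a-1)^b)$ embedded in $\mathfrak S_{(a-1)b}\hookrightarrow\mathfrak S_{ab}$, so by induction $w_1 w_2\cdots w_{a-2}=w_{\tilde\lambda}$. The remaining task is a direct check that premultiplication by $w_0$---which moves the last column of $\T^\lambda$ into positions $(a-1)b+1,\ldots,ab$ and simultaneously shifts the entries of the first $(a-1)$ columns into positions $1,\ldots,(a-1)b$---correctly completes the transformation $\T^\lambda\mapsto\T_\lambda$.

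For part (b), the strategy is to apply Lemma~\ref{trival}, according to which $x_\mu g_w y_{\mu'}=0$ whenever $\mathfrak S_\mu w \mathfrak S_{\mu'}\neq\mathfrak S_\mu w_\mu\mathfrak S_{\mu'}$. I plan to expand the product $g_{r-(j-1)a,l} g_{l,r-k} B_j g_{\sigma_j} g_{\bar w_\lambda}$ in $\mathscr H_r$ as a linear combination of $g_w$'s, first using the quadratic relation to resolve the visible collision $g_{l-1}\cdot g_{l-1}$ where the ascending factor $g_{r-(j-1)a,l}$ meets the descending factor $g_{l,r-k}$ and then iterating. By part (a), $g_{\sigma_j} g_{\bar w_\lambda}$ is a reduced suffix of $w_\lambda$, while the shifted prefix $B_j$ differs from the corresponding unshifted prefix $g_{w_{0,1}}\cdots g_{w_{0,j-2}}$ of $w_\lambda$ by a uniform translation of indices by $-1$. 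The key claim is that the missing middle factor $w_{0,j-1}$ excluded from $\sigma_j$, together with this index shift and the extra bridge $g_{r-(j-1)a,l}g_{l,r-k}$, misaligns the row/column structure required for membership in $\mathfrak S_\mu w_\mu \mathfrak S_{\mu'}$. Concretely, I would track the image of a witness block (the top row of $\T^\mu$, say) through any $g_w$ appearing in the expansion and show it fails to be a transversal of the column positions of $\T_\mu$, so Lemma~\ref{trival} yields the vanishing term by term.

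The main obstacle lies in part (b): since the total length of the product can exceed $\ell(w_\mu)$ when $l$ is near its upper bound $r-2$, a naive length bound via Corollary~\ref{sjijd} is insufficient, and one must inspect each summand produced by the quadratic relations individually rather than dismissing the whole product wholesale. Careful case analysis according to the values of $j$, $k$, and $l$ will likely be needed, separating the edge cases $j=b$ (where $\sigma_j=1$) and $j=2$ (where $B_j=1$), and using the commutation identities $x_\mu g_{s_i}=q\,x_\mu$ for $s_i\in\mathfrak S_\mu$ and $g_{s_i}y_{\mu'}=-q^{-1}y_{\mu'}$ for $s_i\in\mathfrak S_{\mu'}$ to absorb convenient factors at either boundary before invoking the double-coset criterion.
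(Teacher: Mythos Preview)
For part (a), your approach matches the paper's: both compute the total number of simple reflections in the product as $\binom{a}{2}\binom{b}{2}$, observe this equals $\ell(w_\lambda)$ via inversion counting, and verify the permutation identity (the paper simply asserts $\T^\lambda w=\T_\lambda$ without spelling out the induction on $a$ that you propose).

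For part (b), however, you have misjudged the situation. You write that ``a naive length bound via Corollary~\ref{sjijd} is insufficient'' and propose instead a term-by-term double-coset analysis via Lemma~\ref{trival}. But the paper's proof \emph{is} a length-based argument via Corollary~\ref{sjijd}, made to work by a structured factorization that you have not identified. The paper establishes the claim
\[
g_{r-(j-1)a,l}\,g_{l,r-k}\,B_j\,g_{\sigma_j}=\sum_w a_w\,f_w\,g_w\,h_w
\]
with $f_w\in\mathscr H_{\{r-a+1,\ldots,r-2\}}$, $h_w\in\mathscr H_{\{r-b+1,\ldots,r-2\}}$, and $\ell(w)<\ell(w_0)$. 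The support conditions mean $f_w$ lies in a row-parabolic of $\mu$ (so $x_\mu f_w$ is a scalar times $x_\mu$), while $h_w$ commutes past $g_{\bar w_\lambda}$ and lies in a row-parabolic of $\mu'$ (so $h_w y_{\mu'}$ is a scalar times $y_{\mu'}$). After these absorptions one is left with $x_\mu g_w g_{\bar w_\lambda}y_{\mu'}$, and $\ell(w)+\ell(\bar w_\lambda)<\ell(w_0)+\ell(\bar w_\lambda)=\ell(w_\lambda)=\ell(w_\mu)$, so Corollary~\ref{sjijd} kills every term. The factorization claim itself is proved by induction on $l$, using a recursion \eqref{ubb} for intermediate products $B_{i,k}$.

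Your instinct to absorb boundary factors via \eqref{xyxyx} is correct, but you treat it as a preliminary cleanup before a coset-membership check, rather than recognising it as the mechanism that restores the length bound. Your alternative of tracking a witness block through each summand is not developed enough to evaluate and would almost certainly be more laborious than the paper's factorization; the real missing idea in your proposal is precisely the decomposition $f_w g_w h_w$ with the parabolic support constraints and the strict bound $\ell(w)<\ell(w_0)$.
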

\begin{proof}Let $w:=w_0 w_1\cdots w_{a-2}$ and $N(w)=\{(j,k)\in\mathfrak S_r\mid 1\leq j<k\leq r \text{ and }jw>kw\}$. Then $\T^\lambda w=\T_\lambda$ (i.e, $w=w_\lambda$)
and  $|N(w)|=\frac{a(a-1)}{2}\frac {b(b-1)}{2}$.
It is well known that $\ell(w)=|N(w)|$.
Let $\hat\ell(w_{i,j})$ be the number of simple reflections  in  \eqref{wijjj}. Then $\hat\ell(w_{i,j})=(a-i+1)j$ and  the number of simple reflections in $w$
is $\sum_{0\leq i\leq a-2, 1\leq j\leq b-1}(a-i+1)j$, which is equal to $\frac{a(a-1)}{2}\frac {b(b-1)}{2}$. So, $w$ is a reduced expression of $w_\lambda$. This completes the proof of (a).

Let $\mathscr H_{\{s,s+1,\ldots,s+k\}}$ be the subalgebra of $\mathscr H_{s+k}$ generated by $g_s,g_{s+1}, \ldots,g_{s+k-1}$.
  We claim that
  \begin{equation}g_{r-(j-1)a,l}g_{l,r-k}B_jg_{\sigma_j}=\sum_wa_wf_wg_w h_w
   \end{equation}
   for some $f_w\in\mathscr H_{\{r-a+1,r-a+2,\ldots,r-2\}}$, $h_w\in\mathscr H_{\{r-b+1,r-b+2,\ldots,r-2\}}$ and $ a_w\in \kappa$ such that $ \ell(w)<\ell(w_0)$.
 By Definition~\ref{qwb}(b), $h_wg_{\bar w_{\lambda}}=g_{\bar w_{\lambda}}h_w $. By \eqref{xyxyx},  $x_\mu f_w=F_wx_{\mu}$ and $h_wy_{\mu'}=H_wy_{\mu'}$, for some scalars $F_w,H_w$.
 So,  \eqref{widdj} follows from the claim and Corollary~\ref{sjijd}. It remains to prove the claim.

 We prove the claim  by induction on $l$.  Suppose $l=r-k$. Note that $\ell(\tilde w_{0,i})=\ell(w_{0,i})$, $1\leq i\leq j-2$.
 If $k>j-1$, then $\ell(s_{r-a-1,r-k} )<\ell(w_{0,j-1})$ and
 $\ell(s_{r-a-1,r-k}\tilde w_{0,1}\cdots \tilde w_{0,j-2}   \sigma_j )<\ell(w_0)$. Hence
 $$g_{r-(j-1)a,r-k}B_jg_{\sigma_j}=\sum_{\ell(w)<\ell(w_0)}a_wg_w,$$   forcing the claim.
 If $2\leq k\leq j-1$, we define  $B_{i,k}:=g_{r-(j-1)a,r-k} g_{\tilde w_{0,i}}\cdots g_{\tilde w_{0,j-2}} $ for  $ k\geq i+1$ and  $1\leq i\leq j-2$.
Let $z_q=(q-q^{-1})$.
  By  Definition~\ref{qwb}(a)--(c),
 \begin{equation}\label{ubb}
 \begin{aligned}
 B_{i,k}& = g_{r-ia,r-k}g_{r-(j-1)a,r-k}g_{r-k-1,r-i-1}g_{\tilde w_{0,i+1}}\cdots g_{\tilde w_{0,j-2}}\\
 &=z_qU_{i,k}g_{r-j+1,r-i-1}+g_{r-ia,r-i-1}B_{i+1,k+1},
   \end{aligned} \end{equation}
where $U_{i,k}= g_{r-ia,r-k}g_{ w_{0,i+1}}\cdots g_{ w_{0,j-1}}$ and $B_{i+1,k+1}=1$ if $i=j-2$.
 Since $\ell(s_{r-ia,r-k})<\ell(w_{0,i})$,  we have
 $$ \text{$U_{i,k}=\sum_wa_w g_w$ with $\ell(w)<\ell(w_{0,i}\cdots w_{0,j-1})$ and $a_w\in\kappa$.}$$
 Note that $g_{r-j+1,r-i-1}\in \mathscr H _{\{r-j+1, \ldots, r-2\}}$ and $\ell( s_{r-ia,r-i-1})<\ell(w_{0,i})$.
So,
$$\text{$B_{j-2,k}=\sum_wa_w g_wh_w$ with $\ell(w)<\ell(w_{0,j-2} w_{0,j-1})$ and $h_w\in \mathscr H _{\{r-j+1, \ldots, r-2\}}$.}$$
By \eqref{ubb} and induction on $j-2-i$, we see that  $B_{i,k}=\sum_wa_w g_wh_w$
 with $\ell(w)<\ell(w_{0,i}\cdots w_{0,j-1})$ and $ h_w\in \mathscr H _{\{r-j+1, \ldots, r-2\}}$.
In particular, $\small{g_{r-(j-1)a,r-k}B_j=B_{1,k}=\sum _w a_w g_w h_w}$
 for some $h_w\in \mathscr H _{\{r-j+1, \ldots, r-2\}}$ with $\ell(w)<\ell(w_{0,1}\cdots w_{0,j-1})$.
 Then the claim follows for $l=r-k$ since  $h_w g_{\sigma_j}= g_{\sigma_j}h_w $ for $h_w\in \mathscr H _{\{r-j+1, \ldots, r-2\}}$.
Finally,  if $r-k<l\leq r-2$, then
$
g_{r-(j-1)a,l}g_{l,r-k}B_jg_{\sigma_j}=z_q g_{r-(j-1)a,r-k}B_jg_{\sigma_j}+ g_{l-1,r-k}g_{r-(j-1)a,l-1}g_{l-1,r-k}B_jg_{\sigma_j}
$.  Since $g_{l-1,r-k}\in \mathscr H_{\{r-a+1,r-a+2,\ldots,r-2\}}$, the claim follows from induction on $l$ and the result for $l=r-k$.
\end{proof}
\begin{example}\label{exofwlambda}
Suppose that $a=2,b=3$ and $\lambda=(2^3)$. Then $\T^\lambda=\tiny\young(12,34,56)$, $\T_\lambda=\tiny\young(14,25,36)$ and $w_\lambda=w_0=  w_{0,1} w_{0,2}$, where
$ w_{0,1}=s_{4}$, $ w_{0,2}=s_{2,4}$.
\end{example}

\begin{proof}[\bf Proof of Lemma~\ref{lemeaforeee}]First note that $w_{\lambda}=w_{\lambda^{(1)}}w_{\lambda^{(2)}}$, $w_\lambda=w_\mu$ and hence $e_\mu= x_\mu g_{w_\lambda}y_{\mu'}$.
To distinguish the factors of  $w_{\lambda^{(1)}}$ with that of $w_{\lambda^{(2)}}$, we denote by $w^*_i$ and $  w^*_{i,j}$
for the factors of $w_{\lambda^{(2)}}$ defined as that  in \eqref{wijjj}.
We know that $w_{\lambda^{(2)}}= w^*_0 w^*_1 \cdots  w^*_{c-1}$ and $w_{\lambda^{(1)}}= w_0 w_1 \cdots  w_{a-1}$, where $w_i$'s are given in Lemma~\ref{you} and
$$ w^*_i= w^*_{i,1}w^*_{i,2}\cdots w^*_{i,d-1}\text{ and } w^*_{i,j}=s^*_{(d-j)(c-i),(c-i)d-j},$$
 for $0\leq i\leq c-2$, $1\leq j\leq d-1$.
 Denote by $\bar w_{\lambda}=\bar w_{\lambda^{(1)}}\bar w_{\lambda^{(2)}}$, where $\bar   w_{\lambda^{(1)}}$ is given in Lemma~\ref{you}.
 So, $w_\lambda=w_0w^*_0 \bar w_{\lambda}$.
Write $x_\lambda=x_\mu (1+X_a)(1+X_c),y_{\lambda'}=(1+Y_b)(1+Y_d)y_{\mu'}, g_{w_\lambda}=g_{w_0}g^*_{w^*_0}g_{\bar w_{\lambda}},$
where $X_a=g_{r-1} \bar X_a$, $X_c=g^*_{t-1}\bar X_c $,  $Y_b= \bar Y_b g_{r-1}$, $Y_d=\bar Y_d g^*_{t-1}$
and
\begin{equation}\label{bary1y2}\small{
\begin{aligned}\bar X_a&= \sum_{j=2}^a q^{j-1}g_{r-1,r-j+1},~~~\bar X_c=\sum_{j=2}^c q^{j-1}g^*_{t-1,t-j+1},\\
\bar Y_b&= \sum_{j=2}^b (-q)^{1-j}g_{r-j+1,r-1},~~~\bar Y_d=\sum_{j=2}^d (-q)^{1-j}g^*_{t-j+1, t-1}\end{aligned}}.\end{equation}
Recall that $E=\Phi(E)=E_{r,t}$(see Lemma~\ref{three isom}(e)). By Definition~\ref{qwb}(b) and (g),
\begin{equation}\label{cimnsy}
g_{\bar w_{\lambda}}g_w=g_w g_{\bar w_{\lambda}}, Eg_{w_\lambda}=g_{w_\lambda}E\end{equation}
 for any $w\in \mathfrak S_{\{r-b+1,\ldots,r\}}\times\mathfrak S_{\{t-d+1,\ldots,t\}}$.
 Then $$E e_\lambda E= x_\mu E(1+X_a)(1+X_c)g_{w_0}g^*_{w_0^*}(1+Y_b)(1+Y_d)Eg_{\bar w_{\lambda}}y_{\mu'}.$$
We compute each summand as follows.

\textbf{Case 1 $(1,1,1,1)$: }
 By Definition~\ref{qwb}, $E^2=\delta E$ and
 \begin{equation}\label{comujh}
 E g_{w_0}=g_{w_0}E, Eg^*_{w_0^*}= g^*_{w_0^*}E.
 \end{equation}
 We can get  $x_\mu E g_{w_0}g^*_{w_0^*}Eg_{\bar w_{\lambda}}y_{\mu'}=\delta Ee_\mu$.

 \textbf{Case 2 $(X_a,1,1,1), (1,X_c,1,1),(1,1,Y_b,1),(1,1,1, Y_d) $: }
 By Definition~\ref{qwb},
 \begin{equation}\label{rpho}
 Eg_{r-1}E=Eg_{t-1}E=\rho E.
 \end{equation}
Then we have
$$\small{\begin{aligned}
 x_\mu EX_a g_{w_0}g^*_{w_0^*}Eg_{\bar w_{\lambda}}y_{\mu'}&\overset{\eqref{comujh}}=x_\mu EX_aEg_{w_{\lambda}}y_{\mu'}
 \overset{\eqref{rpho}}=\rho E x_\mu\bar X_ag_{w_{\lambda}}y_{\mu'}\overset{\eqref{xyxyx}}=\rho \frac{q^{2a-2}-1}{q-q^{-1}}Ee_\mu.
 \end{aligned}}$$
Using the isomorphism $\varphi$ in Lemma~\ref{three isom}(b), we have $x_\mu EX_c g_{w_0}g^*_{w_0^*}Eg_{\bar w_{\lambda}}y_{\mu'}= \rho \frac{q^{2c-2}-1}{q-q^{-1}}Ee_\mu$.
Similarly, using \eqref{xyxyx}, \eqref{cimnsy}--\eqref{rpho} and the isomorphism $\varphi$ in Lemma~\ref{three isom}(b), for the summand $(1,1,Y_b,1)$ and $(1,1,1,Y_d)$, we have $x_\mu E g_{w_0}g^*_{w_0^*}Y_i Eg_{\bar w_{\lambda}}y_{\mu'}=\rho \frac{q^{2-2i}-1}{q-q^{-1}}Ee_\mu$ for $i=b,d$.

\textbf{Case 3 $(X_a,X_c,1,1), (1,1,Y_b,Y_d)$, $ (X_a,1,1,Y_d),  (1,X_c,Y_b, 1)$:   }
By Definition~\ref{qwb}(a) and (k),
\begin{equation}\label{elwsn2}
\small{
Eg_{r-1}g^*_{t-1}E=E_{r,t}E_{r-1,t-1}+\rho(q-q^{-1})E
\equiv \rho(q-q^{-1})E (\text{mod} \mathscr B_{r,t}^2(\rho,q)).
}\end{equation}

Then we have
$$\small{\begin{aligned}
x_\mu EX_aX_c g_{w_0}g^*_{w_0^*}Eg_{\bar w_{\lambda}}y_{\mu'}&\overset{\eqref{comujh}}=x_\mu EX_aX_cEg_{ w_{\lambda}}y_{\mu'}
\overset{\eqref{elwsn2}}\equiv \rho(q-q^{-1})E x_\mu  \bar X_a\bar X_cg_{ w_{\lambda}}y_{\mu'}\\
&\overset{\eqref{xyxyx}}  \equiv\rho \frac{(q^{2a-2}-1)(q^{2c-2}-1)}{q-q^{-1}}Ee_\mu(\text{mod} \mathscr B_{r,t}^2(\rho,q)).
\end{aligned}}
$$
Similarly, using \eqref{xyxyx}, \eqref{comujh}--\eqref{cimnsy}, \eqref{elwsn2} and   the isomorphism $\varphi$ in Lemma~\ref{three isom}(b), we have
$$\small{x_\mu E g_{w_0}g^*_{w_0^*}Y_bY_d Eg_{\bar w_{\lambda}}y_{\mu'}\equiv\rho \frac{(q^{2-2b}-1)(q^{2-2d}-1)}{q-q^{-1}}Ee_\mu(\text{mod} \mathscr B_{r,t}^2(\rho,q))}, $$
and $x_\mu EX_i  g_{w_0}g^*_{w_0^*}Y_jEg_{\bar w_{\lambda}}y_{\mu'}\equiv\rho \frac{(q^{2i-2}-1)(q^{2-2j}-1)}{q-q^{-1}}Ee_\mu (\text{mod} \mathscr B_{r,t}^2(\rho,q))$ for $(i,j)\in\{(a,d),(c,b)\}$.

%
%

\textbf{Case 4: $(X_a,1,Y_b,1), (1,X_c,1,Y_d)$}

Recall
$B_j=g_{\tilde w_{0,1}}\cdots  g_{\tilde w_{0,j-2}}$ and $\sigma_j=w_{0,j}w_{0,j+1}\cdots w_{0,b-1}$ given in Lemma~\ref{you}(b). Similarly, we have the notations $B^*_j=g^*_{\tilde w^*_{0,1}}\cdots  g^*_{\tilde w^*_{0,j-2}}$ and $\sigma^*_j=w^*_{0,j}w^*_{0,j+1}\cdots w^*_{0,d-1}$ for $2\leq j\leq d$.
Let
\begin{equation}\label{xyxyhtilde}
\tilde X_1=\sum_{k=2}^{a}q^{k-1}g_{r-1,r-k},~~ \tilde X_2=\sum_{k=2}^{c}q^{k-1}g^*_{t-1,t-k}, ~~H_1=g_{\bar w_{\lambda^{(1)}}}y_{\mu^{(1)'}},~~ H_2=g^*_{\bar w_{\lambda^{(1)}}}y_{\mu^{(2)'}}.
\end{equation}
   By \eqref{wijjj},  $w_{0,j}=s_{r-ja,r-j}$, for $1\leq j\leq b-1$. By the braid relation, for $2\leq j\leq b$
\begin{equation}\label{elwsn2we}
\begin{aligned}
X_ag_{w_0}g_{r-j+1,r}&\overset{\eqref{wijjj}}=X_ag_{w_{0,1}}\cdots  g_{w_{0,j-1}}g_{r-j+1,r} g_{\sigma_j}
=g_{r-(j-1)a,r}\tilde X_aB_jg_{\sigma_j}.
\end{aligned}
\end{equation}
By \eqref{elwsn2we} and the same computation for $X_c  g^*_{w_0^*}Y_d$, we have
\begin{equation}\label{wxyshshh}
X_a  g_{w_0}Y_b= \sum_{j=2}^{b}(-q)^{1-j} g_{r-(j-1)a,r}\tilde X_1B_jg_{\sigma_j}, ~~X_c  g^*_{w_0^*}Y_d= \sum_{j=2}^{d}(-q)^{1-j} g^*_{t-(j-1)c,t}\tilde X_2B^*_jg^*_{\sigma^*_j}.
\end{equation}
Then using \eqref{comujh}--\eqref{rpho} and \eqref{wxyshshh} we have
\begin{equation}\label{whesqure22}
\begin{aligned}
x_\mu EX_a  g_{w_0}g^*_{w_0^*}Y_bEg_{\bar w_{\lambda}}y_{\mu'}=  \rho E\sum_{j=2}^b(-q)^{1-j}x_{\mu^{(1)}} g_{r-(j-1)a,r-1}\tilde X_1B_jg_{\sigma_j}H_1 e_{\mu^{(2)}},
\end{aligned}
\end{equation}
where $H=H_1H_2$.
By Definition~\ref{qwb}(a), we have $g_i^2=(q-q^{-1})g_i+1$. So,
\begin{equation}\label{whesqure}
 g_{r-(j-1)a,r-1}\tilde X_1\overset{Def.\ref{qwb} (c)} =(q-q^{-1})\sum_{k=2}^{a}q^{k-1}g_{r-1,r-k+1} g_{r-(j-1)a,r-1}+B,
  \end{equation}
 where  $B= g_{r-(j-1)a,r-2}\sum_{k=2}^{a}q^{k-1}g_{r-2,r-k}$. By \eqref{whesqure},
 \begin{equation}\label{qiwhs}
\small{\sum_{j=2}^b(-q)^{1-j} x_{\mu^{(1)}}g_{r-(j-1)a,r-1}\tilde X_1B_jg_{\sigma_j}H_1=(q-q^{-1})\sum_{j=2}^b(-q)^{1-j}A_j+ \sum_{j=2}^b(-q)^{1-j}x_{\mu^{(1)}} B B_jg_{\sigma_j}H_1},
 \end{equation}
where $A_j= x_{\mu^{(1)}}\sum_{k=2}^{a}q^{k-1}g_{r-1,r-k+1} g_{r-(j-1)a,r-1}B_jg_{\sigma_j}H_1$.
We have
\begin{equation}\label{coumofaj}
\begin{aligned}
A_j&\overset{\eqref{xyxyx}}=\sum_{k=2}^{a}q^{2k-3} x_{\mu^{(1)}} g_{r-(j-1)a,r-1}B_jg_{\sigma_j}H_1
\overset{Def.\ref{qwb} (c)} =\sum_{k=2}^{a}q^{2k-3}x_{\mu^{(1)}} g_{w_0}g_{r-j+1,r-1}H_1 \\
&\overset{\eqref{cimnsy}}=\sum_{k=2}^{a}q^{2k-3}x_{\mu^{(1)}} g_{w_{\lambda^{(1)}}}g_{r-j+1,r-1}y_{\mu^{(1)'}} \overset{\eqref{xyxyx}}=(-q)^{2-j}\sum_{k=2}^{a}q^{2k-3}e_{\mu^{(1)}}.
\end{aligned}
\end{equation}
By Lemma~\ref{you}(b),
\begin{equation}\label{bj=0}
x_{\mu^{(1)}} B B_jg_{\sigma_j}H_1=x_{\mu^{(1)}} g_{r-(j-1)a,r-2}\sum_{k=2}^{a}q^{k-1}g_{r-2,r-k}B_jg_{\sigma_j}g_{\bar w_{\lambda^{(1)}}}y_{\mu^{(1)'}}=0.
\end{equation}
Combining \eqref{qiwhs}--\eqref{bj=0} yields that
\begin{equation}\label{mu1iqa}
\sum_{j=2}^b(-q)^{1-j} x_{\mu^{(1)}}g_{r-(j-1)a,r-1}\tilde X_1B_jg_{\sigma_j}H_1= \frac{(q^{2-2b}-1)( q^{2a-2}-1)}{q-q^{-1}} e_{\mu^{(1)}}.
\end{equation}
By \eqref{whesqure22} and \eqref{mu1iqa},
$$
x_\mu EX_a  g_{w_0}g^*_{w_0^*}Y_bEg_{\bar w_{\lambda}}y_{\mu'}=\rho \frac{(q^{2a-2}-1)(q^{2-2b}-1)}{q-q^{-1}}Ee_\mu. $$
Using the isomorphism $\varphi$ in Lemma~\ref{three isom}(b), we have the same result as \eqref{mu1iqa} that
\begin{equation}\label{mu1iqa2}
\sum_{j=2}^d(-q)^{1-j} x_{\mu^{(2)}}g^*_{t-(j-1)c,t-1}\tilde X_2B^*_jg^*_{\sigma_j}H_2= \frac{(q^{2-2d}-1)( q^{2c-2}-1)}{q-q^{-1}} e_{\mu^{(2)}}.
\end{equation}
Similarly,  we have
$$x_\mu EX_c  g_{w_0}g^*_{w_0^*}Y_dEg_{\bar w_{\lambda}}y_{\mu'}=\rho \frac{(q^{2c-2}-1)(q^{2-2d}-1)}{q-q^{-1}}Ee_\mu. $$

\textbf{Case 5 $(X_a,X_c,Y_b,1), ( X_a,X_c,1,Y_d)$, $ (X_a,1,Y_b,Y_d), (1,X_c,Y_b,Y_d)$: }
For the summand $(X_a,X_c,Y_b,1)$, we have
$$
\begin{aligned}
x_\mu EX_aX_c g_{w_0}g^*_{w_0^*}Y_bEg_{\bar w_{\lambda}}y_{\mu'}&\overset {\substack{\eqref{cimnsy}\\ \eqref{wxyshshh}}}= \sum_{j=2}^b(-q)^{1-j}x_\mu E g_{r-(j-1)a,r} g^*_{t-1}E \tilde X_1 B_jg_{\sigma_j}  \bar X_c g^*_{w_0^*}H_1H_2\\
&\overset{\substack{\eqref{elwsn2}\\ \eqref{mu1iqa}}}\equiv\rho(q-q^{-1})E\frac{(q^{2-2b}-1)( q^{2a-2}-1)}{q-q^{-1}}x_\mu\bar X_c g^*_{w_0^*}H_1H_2\\
&\overset{\eqref{xyxyx}}\equiv\rho \frac{(q^{2-2b}-1)( q^{2a-2}-1)(q^{2c-2}-1)}{q-q^{-1}}Ee_\mu (\text{mod}\mathscr B_{r,t}^2(\rho,q)),
\end{aligned}
$$
 where $H_1, H_2$ and $\tilde X_1$ are given in \eqref{xyxyhtilde}.
 Using the isomorphism $\varphi$ in Lemma~\ref{three isom}(b), for the summand  $( X_a,X_c,1,Y_d)$ we have
 $$ x_\mu EX_aX_c g_{w_0}g^*_{w_0^*}Y_dEg_{\bar w_{\lambda}}y_{\mu'}\equiv\rho \frac{(q^{2-2d}-1)( q^{2a-2}-1)(q^{2c-2}-1)}{q-q^{-1}}Ee_\mu (\text{mod}\mathscr B_{r,t}^2(\rho,q)). $$
Similarly, for $i\in\{a,c\}$, $$x_\mu EX_i  g_{w_0}g^*_{w_0^*}Y_bY_dEg_{\bar w_{\lambda}}y_{\mu'}\equiv\rho \frac{(q^{2-2b}-1)( q^{2i-2}-1)(q^{2-2d}-1)}{q-q^{-1}}Ee_\mu (\text{mod}\mathscr B_{r,t}^2(\rho,q)).$$

\textbf{Case 6 $(X_a,X_c,Y_b,Y_d)$: }
By \eqref{wxyshshh} and \eqref{elwsn2},
\begin{equation}\label{qqwsa}
EX_a g_{w_0}Y_b X_cg^*_{w_0^*}Y_d E\equiv\rho(q-q^{-1})EQ Q^*(\text{mod}\mathscr B_{r,t}^2(\rho,q)).
\end{equation}
where $Q=\sum_{j=2}^b(-q)^{1-j}g_{r-(j-1)a,r-1} \tilde X_1 B_jg_{\sigma_j}$, $Q^*=\sum_{j=2}^d(-q)^{1-j}g^*_{t-(j-1)c,t-1} \tilde X_2 B^*_jg^*_{\sigma^*_j}$.
Recall that $H_i$ and $\tilde X_i$, $i=1,2$ are given in \eqref{xyxyhtilde}. We have
$$
\begin{aligned}
x_\mu EX_aX_c  g_{w_0}g^*_{w_0^*}Y_bY_dEg_{\bar w_{\lambda}}y_{\mu'}&\overset{\eqref{cimnsy}}=x_\mu E X_a g_{w_0}Y_b X_cg^*_{w_0^*}Y_d E H_1H_2
\overset{\eqref{qqwsa}}\equiv\rho(q-q^{-1})Ex_\mu QQ^*  H_1H_2\\
\overset{\eqref{mu1iqa},\eqref{mu1iqa2}}
\equiv&\rho \frac{(q^{2-2b}-1)( q^{2a-2}-1)(q^{2-2d}-1)( q^{2c-2}-1)}{q-q^{-1}}Ee_\mu (\text{mod}\mathscr B_{r,t}^2(\rho,q)).
\end{aligned}
$$

Finally, combining the above  computations for cases 1--6, we have
$$E e_\lambda E\equiv (\delta+\rho(q-q^{-1})^{-1}X) Ee_\mu (\text{mod}\mathscr B_{r,t}^2(\rho,q)),$$ where
$X=x+y+z+h+xy+xz+xh+yz+yh+zh+xyz+xyh+yzh+xzh+xyzh$
 and $x=q^{2a-2}-1, y=q^{2c-2}-1, z=q^{2-2b}-1, h=q^{2-2d}-1$.
 It is straightforward to check that $X=(x+1)(y+1)(z+1)(h+1)-1= q^{2(a-b+c-d)}-1$.
This completes the proof of Lemma~\ref{lemeaforeee}.
\end{proof}

\begin{example} \label{ex} Suppose $\lambda=((2^3),(1))$, $\mu=((2^2,1),\emptyset)$ and
\begin{equation}\label{tlar}
 \t^{\lambda}=\left(\tiny{\young(12,34,56), \quad \young(1)}\right),
\quad \text{ } \quad \t^{\mu}=\left (\tiny{\young(12,34,5),\quad\young(\times)}\right),
\quad \text{ } \quad \t_{\lambda'}=\left(\tiny{\young(135,246), \quad \young(1)}\right),
\quad \text{ }\quad \t^{\mu'}=\left (\tiny{\young(123,45), \quad \young(\times)}\right).
\end{equation}
Then $x_\lambda=x_\mu(1+qg_5)$, $y_{\lambda'}=(1-q^{-1}g_5+q^{-2}g_4g_5)y_{\mu'}$,  $X_a=q g_5$, $X_c=Y_d=0$, $Y_b=-q^{-1}g_5+q^{-2}g_4 g_5$,
 $w_\lambda=w_0=s_4s_2s_3$. Further,  $E x_\mu g_{w_0}y_{\mu'}E=\delta Ee_\mu $,
 $Ex_\mu X_a g_{w_0}y_{\mu'}E=\rho\frac{q^2-1}{q-q^{-1}} Ee_\mu$,
 $Ex_\mu g_{w_0}Y_b y_{\mu'}E= \rho\frac{q^{-4}-1}{q-q^{-1}} Ee_\mu$, and $Ex_\mu X_ag_{w_0}Y_b y_{\mu'}E \equiv\rho\frac{(q^{-4}-1)(q^2-1)}{q-q^{-1}} Ee_\mu(\text{mod}\mathscr B_{6,1}^2(\rho,q)) $.
 So, $E e_\lambda E\equiv(\delta+\rho\frac{q^{-2}-1}{q-q^{-1}}) Ee_\mu (\text{mod}\mathscr B_{6,1}^2(\rho,q))$.
\end{example}

%
%
%

%
%

\providecommand{\bysame}{\leavevmode ---\ } \providecommand{\og}{``}
\providecommand{\fg}{''} \providecommand{\smfandname}{and}
\providecommand{\smfedsname}{\'eds.}
\providecommand{\smfedname}{\'ed.}
\providecommand{\smfmastersthesisname}{M\'emoire}
\providecommand{\smfphdthesisname}{Th\`ese}

\newtheorem*{Acknowledgements*}{Acknowledgements}

\begin{Acknowledgements*} The authors are grateful to Hebing Rui for helpful conversations related to this research.

\end{Acknowledgements*}

\end{document}